\definecolor{mygray}{gray}{0.7}
\tikzset{blau node/.style={rectangle,rounded corners,fill=blue!20,draw,minimum size=1cm,inner sep=0pt},
            }
            \tikzset{marro node/.style={rectangle,rounded corners,fill=brown!20,draw,minimum size=1cm,inner sep=0pt},
            }
            \tikzset{lila node/.style={rectangle,rounded corners,fill=purple!20,draw,minimum size=1cm,inner sep=0pt},
            }
\tikzset{verd node/.style={rectangle,rounded corners,fill=green!20,draw,minimum size=1cm,inner sep=0pt},
            }
\tikzset{vermell node/.style={rectangle,rounded corners,fill=red!20,draw,minimum size=1cm,inner sep=0pt},
            }
\tikzset{taronja node/.style={rectangle,rounded corners,fill=orange!20,draw,minimum size=1cm,inner sep=0pt},
            }
\tikzset{groc node/.style={rectangle,rounded corners,fill=yellow!20,draw,minimum size=1cm,inner sep=0pt},
            }
\tikzset{magenta node/.style={rectangle,rounded corners,fill=magenta!20,draw,minimum size=1cm,inner sep=0pt},
            }
\tikzset{cross/.style={cross out, draw=black, minimum size=2*(#1-\pgflinewidth), inner sep=0pt, outer sep=0pt},
cross/.default={3pt}}
\newcommand\pr{\operatorname{pr}}
\newcommand\im{\operatorname{im}}
\newcommand{\D}{\operatorname{D}}
\newcommand\fg{\mathfrak{g}}
\newcommand\ff{\mathfrak{f}}
\newcommand\fp{\mathfrak{p}}
\newcommand\fq{\mathfrak{q}}
\newcommand\fb{\mathfrak{b}}
\newcommand\fh{\mathfrak{h}}
\newcommand\fk{\mathfrak{k}}
\newcommand\fm{\mathfrak{m}}
\newcommand\fsl{\mathfrak{sl}}
\newcommand\CC{\mathbb{C}}
\newcommand\NN{\mathbb{N}}
\newcommand\ZZ{\mathbb{Z}}
\newcommand\PP{\mathbb{P}}
\newcommand\VV{\mathbb{V}}
\newcommand\UU{\mathbb{U}}
\newcommand\fgl{\mathfrak{gl}}
\newcommand\gl{\mathfrak{gl}}
\renewcommand\sl{\mathfrak{sl}}
\newcommand\so{\mathfrak{so}}
\newcommand{\niplus}{\ni\hspace{-1.1em}+}
\newcommand\op[1]{\mathop{\rm #1}\nolimits}
\newcommand\pd{\partial}
\newcommand{\comm}[1]{}
\newcommand\I{{\rm{I}}}
\newcommand\IV{{\rm{IV}}}
\newcommand\cch{{\footnotesize\raisebox{1pt}{$\chi$}}}
\newcommand\C{{\mathbb C}}
\newcommand\g{{\mathfrak g}}
\newcommand\m{{\frak m}}
\newcommand\p{\partial}
\newcommand\cC{\mathcal{C}}
\newcommand\cD{\mathcal{D}}
\newcommand\hcD{\hat{\mathcal{D}}}
\newtheorem{theorem}{Theorem}[section]
\newtheorem{proposition}[theorem]{Proposition}
\newtheorem{lemma}[theorem]{Lemma}
\newtheorem{example}[theorem]{Example}
\newtheorem{definition}[theorem]{Definition}
\newtheorem{remark}[theorem]{Remark}
\newtheorem{corollary}[theorem]{Corollary}
\numberwithin{equation}{section}
\begin{document}

\title[Geometric realizations of the Lie superalgebra $D(2,1;a)$]{Geometric realizations of the Lie superalgebra 
$D(2,1;a)$}

\author{Anna Escofet, Boris Kruglikov, Dennis The}

\address{Department of Mathematics and Statistics, UiT The Arctic University of Norway, Troms\o\  90-37,Norway}

\email{\quad anna.e.pacreu@uit.no; \quad boris.kruglikov@uit.no; \quad dennis.the@uit.no}

\thanks{}

\begin{abstract}
For every parabolic subgroup $P$ of a Lie supergroup $G$, the homogeneous superspace $G/P$ carries a 
$G$-invariant supergeometry. We address the question whether $\fg=\text{Lie}(G)$ is the maximal supersymmetry 
of this supergeometry in the case of the exceptional Lie superalgebra $D(2,1;a)$. 
For each choice of parabolic $\fp\subset\fg$, we consider the corresponding negatively graded Lie subalgebra $\fm\subset\fg$, 
and compute its Tanaka--Weisfeiler prolongations, with reduction of the structure group when required,
thus realizing $D(2,1;a)$ via symmetries of supergeometries. 

This gives 6 inequivalent supergeometries:
one of these is a vector superdistribution, two are given by cone fields of supervarieties, 
and the remaining three are higher order structure reductions (a novel feature). 
We describe those super\-geometries and realize $D(2,1;a)$ supersymmetry explicitly in each case. 
\end{abstract}

\maketitle

\section{Introduction and main results}

In 1888, W.\ Killing classified all complex finite-dimensional simple Lie algebras, finding five exceptional 
simple Lie algebras in addition to the known matrix algebras. While the classical series $A_n$, $B_n$, $C_n$, $D_n$ 
come naturally as symmetries of a tensor on a vector space, the exceptional ones appeared abstractly. 
A concrete geometric realization for the simplest of these, the Lie algebra $G_2$, was achieved by E.\ Cartan \cite{Cartan} 
and F.\ Engel \cite{Engel} in 1893. In modern terms, this corresponds to the symmetry algebra of Klein geometries 
$G_2/P_1$, $G_2/P_{12}$ and $G_2/P_2$.
Realizations of $F_4$, $E_6$, $E_7$, $E_8$ are due to A. Borel, H. Freudental, J. Tits and others in early 1950s, see \cite{Fr,Ti}
and also \cite{The} for a universal approach to geometric realization of all simple Lie algebras.

In 1977,  V.\ Kac classified all complex finite-dimensional simple Lie superalgebras \cite{Kac}. Again, 
the classical series $A(m,n)$, $B(m,n)$, $C(m,n)$, $D(m,n)$, as well as two ``strange'' families $P(n)$ 
and $Q(n)$, come naturally as symmetries of a tensor on a supervector space. 
The so-called Cartan superalgebras $W(n)$ are all vector fields on an odd vector space $\CC^{0|n}$.
Two exceptional Lie superalgebras $G(3)$ and $F(4)$ 
were realized geometrically in \cite{G3super}, \cite{G3F4A} and \cite{F4super}.
The aim of the present paper is to provide a geometric realization for the remaining case:
a one-parameter family of exceptional Lie superalgebras $D(2,1;a)$ that are deformations of $D(2,1)$.

The Lie superalgebra $\fg=D(2,1;a)$ has 4 different simple root systems up to the Weyl group action \cite{Frappat,Serganova}. In other words, there are four different Borel subalgebras. 
The corresponding Dynkin diagrams are labeled with roman numerals. 
The first three are outer equivalent with a change of the parameter $a$, so we will use only DD-I and DD-IV, see Figure \ref{rootsystems}.
For each Dynkin diagram labeled $\Xi$, the table there indicates the corresponding simple root system 
$\Pi_\Xi= \{\alpha_1,\alpha_2,\alpha_3\}$. The parabolic $\fp^\Xi_\chi$ corresponds to crosses on 
nodes $\chi$, and it is generated by the Borel subalgebra and negative root vectors $e_{-\alpha}$ for non-crossed simple 
roots $\alpha$. This gives the grading 
 \begin{equation}\label{grading}
\fg=\underbrace{\fg^{\Xi,\cch}_{-\nu}\oplus\dots\oplus\fg^{\Xi,\cch}_{-1}}_{\fm^{\Xi}_{\cch}}
\oplus\underbrace{\fg^{\Xi,\cch}_0\oplus\fg^{\Xi,\cch}_{1}\dots\oplus \fg^{\Xi,\cch}_{\nu}}_{\fp^{\Xi}_{\cch}}.
 \end{equation}

The Tanaka--Weisfeiler prolongation \cite{Tanaka,Kac,G3super} of $\fm=\fg_-$ is the maximal graded 
Lie superalgebra $\op{pr}(\fm)=\oplus_i\op{pr}_i(\fm)$ with the negative part $\fm$, for which 
the only trivial $\fm$-submodule is $\mathfrak{z}(\fm)=\fg_{-\nu}$. 
For simple Lie superalgebras and any choice of parabolic (or grading as above) $\op{pr}(\fm)\supseteq\fg$.
We are interested in assessing whether $\op{pr}(\fm)=\fg$. In the case $\op{pr}_0(\fm)\neq\fg_0$,
one can define the restricted prolongation $\op{pr}(\fm,\fg_0)$ in a similar manner, and more generally 
one can define the algebraic prolongation $\op{pr}(\fg_{\leq k})$ for any $k\geq0$.
From an algebraic viewpoint, this prolongation generalizes the extension by derivations, and indeed
$\op{pr}_0(\fm)=\mathfrak{der}_0(\fm)$.

 \begin{theorem}\label{T1}
Up to isomorphism, there are 6 parabolic superalgebras of $\fg=D(2,1;a)$, indicated below. 
Figure \ref{6geometries} summarizes the algebraic data on the grading and the Tanaka--Weisfeiler prolongation.
 \end{theorem}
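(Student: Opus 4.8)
The plan is to turn the classification into a finite enumeration governed by the simple root systems, and then to collapse the resulting list under the symmetries of $\fg$. The starting point is that every parabolic subalgebra contains a Borel subalgebra: if $\fp\supseteq\fb$ and an inner automorphism realizing a Weyl group element carries $\fb$ to the standard Borel $\fb_\Xi$ of one of the four diagrams, then that same automorphism carries $\fp$ to a standard parabolic $\fp^\Xi_\chi$. Using the stated classification of simple root systems---four diagrams, with $\mathrm{DD}$-$\I,\II,\III$ identified via the outer symmetry together with a change of $a$---it therefore suffices to take $\Xi\in\{\I,\IV\}$ and to let the crossing set $\chi$ range over the nonempty subsets of $\{\alpha_1,\alpha_2,\alpha_3\}$. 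This yields at most $2\times(2^3-1)=14$ parabolics, and the content of the theorem is that exactly $6$ isomorphism classes survive.

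First I would tabulate, for each of the $14$ candidates, the grading \eqref{grading} defined by the grading element dual to $\chi$, and read off from $\Pi_\Xi$ the invariants that any isomorphism must preserve: the bigraded dimensions of $\fm^\Xi_\chi$ (by $\ZZ$-degree and by parity) and the reductive type of $\fg^{\Xi,\cch}_0$. Any automorphism of $\fg$ matching two parabolics forces these data to coincide, so candidates with distinct invariants already lie in different classes.

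Next I would use the $S_3$ of outer automorphisms permuting the three $\sl_2$ summands of $\fg_{\bar0}$ (and acting on $a$ by the anharmonic group) to identify candidates. On the $S_3$-symmetric diagram $\mathrm{DD}$-$\IV$ the three single crossings are mutually equivalent, and likewise the three double crossings, leaving only the patterns $|\chi|=1,2,3$, the last being the Borel. On $\mathrm{DD}$-$\I$ one applies the residual symmetry (the flip of the two outer nodes, when it extends to an automorphism of $\fg$) to identify $\{\alpha_1\}\sim\{\alpha_3\}$ and $\{\alpha_1,\alpha_2\}\sim\{\alpha_2,\alpha_3\}$. Comparing the reduced lists for $\Xi=\I$ and $\Xi=\IV$ against the invariants of the previous step, and recording any parabolic on $\mathrm{DD}$-$\I$ that coincides with one on $\mathrm{DD}$-$\IV$, leaves exactly six classes, for which I exhibit representatives together with the grading data collected in Figure \ref{6geometries}.

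The decisive and most delicate step is this last comparison. Whenever the bigraded dimensions and the $\fg_0$-type of two candidates agree, a numerical coincidence must be either promoted to a genuine equivalence or ruled out: I would either construct an explicit automorphism of $\fg$ carrying one parabolic onto the other, or exhibit a finer invariant---such as the isomorphism class of $\fm^\Xi_\chi$ as a graded Lie superalgebra, or the $\fg_0$-module structure of $\fg_{-1}$ together with the bracket $\fg_{-1}\otimes\fg_{-1}\to\fg_{-2}$---that separates them. Settling the few ambiguous pairs in this way is exactly what pins the count down to $6$.
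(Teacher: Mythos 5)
Your overall strategy---finite enumeration over standard parabolics, separation by graded invariants, identification by automorphisms---is the right shape, and it is close to what the paper does. But there are two concrete problems. First, your identifications on $\mathrm{DD}$-$\I$ are wrong as stated: in $\Pi_\I$ the odd isotropic root is $\alpha_1=\varepsilon_1-\varepsilon_2-\varepsilon_3$ (the central grey node) and the outer white nodes are $\alpha_2=2\varepsilon_2$, $\alpha_3=2\varepsilon_3$, so the diagram flip (induced by swapping the second and third $\sl_2$ factors, hence $a\mapsto 1/a$) fixes $\alpha_1$ and gives $\{\alpha_2\}\sim\{\alpha_3\}$ and $\{\alpha_1,\alpha_2\}\sim\{\alpha_1,\alpha_3\}$ --- not $\{\alpha_1\}\sim\{\alpha_3\}$ and $\{\alpha_1,\alpha_2\}\sim\{\alpha_2,\alpha_3\}$. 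The pairs you propose to merge are in fact inequivalent: $\fp_1^\I$ has depth $2$ with $\fg_0=\mathfrak{co}(4)$ while $\fp_3^\I$ has depth $1$ with $\fg_0=\gl(2|1)$, and $\fp_{12}^\I$ has depth $3$ while $\fp_{23}^\I$ has depth $2$ (see Figure \ref{6geometries}). Your own invariants from the tabulation step would flag this, but the identification step as written produces the wrong list.

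Second, and more importantly, the decisive step --- which you correctly single out as "the most delicate" --- is exactly the part you leave open. The cross-diagram isomorphisms $\fp_2^\I\cong\fp_j^\IV$ and $\fp_{23}^\I\cong\fp_{jk}^\IV$, and the \emph{non}-identification of the two Borels $\fp_{123}^\I\not\cong\fp_{123}^\IV$, are what pin the count to $6$, and matching graded dimensions plus $\fg_0$-type alone cannot decide them. The paper's mechanism is the odd-reflection lemma (Lemma \ref{andreuboris}): the odd reflection $r_{\alpha_j}$ at an isotropic simple root carries $\Pi$ to $\Pi'$ and induces an isomorphism $\fp_\chi^\Pi\cong\fp_{\chi'}^{\Pi'}$ precisely when $j\notin\chi$, because $r_{\alpha_j}$ permutes $\Delta^+$ away from the ray of $\alpha_j$. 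Applied to the reflection at the grey node $\alpha_1$ of $\mathrm{DD}$-$\I$ (which produces $\mathrm{DD}$-$\IV$), this gives all the needed identifications for $\chi\subset\{2,3\}$ and simultaneously explains why the Borels, for which $1\in\chi$, are exempt --- they are indeed inequivalent, having different depths ($4$ versus $3$) and different graded dimensions. Without this tool (or an equally explicit construction of the automorphisms in question), the proof is not complete: "either construct an explicit automorphism or exhibit a finer invariant" is a statement of the problem, not its solution.
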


The prolongation result for the depth-1 case $\fp_2^{\rm I}$ (Cartan prolongation) is due to \cite{Poletaeva}.

\begin{figure}[h]
\[
 \begin{array}{|c|c|c|c|c|} \hline
\vphantom{\frac{A^A}{A^A}}
\mbox{Parabolics} & \nu & \dim(\fg_0),\dots,\dim(\fg_{-\nu})& \fg_0 & \mbox{Prolongations}\\[2pt]
\hline\hline
\vphantom{\frac{A^A}{A^A}}
\fp_1^{\rm I},\,\fp_2^{\rm II},\,\fp_3^{\rm III} & 2 & (7|0,0|4,1|0) & \mathfrak{co}(4)  & 
\pr(\fm,\fg_0)\neq\fg,\quad\pr(\fg_{\leq1})=\fg \\[2pt]
\hline
\begin{array}{c}
\vphantom{\frac{A^A}{A^A}}
\fp_2^{\rm I},\,\fp_3^{\rm I},\,\fp_1^{\rm II},\,
\fp_3^{\rm II},\,\fp_1^{\rm III}\\ \fp_2^{\rm III},\,
\fp_1^{\rm IV},\,\fp_2^{\rm IV},\,\fp_3^{\rm IV}
\end{array}
  & 1 & (5|4,2|2) & \gl(2|1)& \pr(\fm)\neq\fg,\quad\pr(\fm,\fg_0)=\fg \\[2pt]
\hline
\begin{array}{c}
\vphantom{\frac{A^A}{A^A}}
\fp_{12}^{\rm I},\,\fp_{13}^{\rm I},\,\fp_{12}^{\rm II}\\
\fp_{23}^{\rm II},\,\fp_{13}^{\rm III},\,\fp_{23}^{\rm III}
\end{array}
 & 3 
& (5|0,1|2,0|2,1|0) & \gl(2)\oplus\CC &  \pr(\fm,\fg_0)\neq\fg,\quad\pr(\fg_{\leq1})=\fg \\[2pt]
\hline
\begin{array}{c}
\vphantom{\frac{A^A}{A^A}}
\fp_{23}^{\rm I},\,\fp_{13}^{\rm II},\,\fp_{12}^{\rm III},\,\\
\fp_{12}^{\rm IV},\,\fp_{13}^{\rm IV},\,\fp_{23}^{\rm IV} 
\end{array}
& 2 & (3|2,2|2,1|1) &  \gl(1,1)\oplus\CC  & \pr(\fm)\neq\fg,\quad\pr(\fm,\fg_0)=\fg \\[2pt]
\hline
\vphantom{\frac{A^A}{A^A}}
\fp_{123}^{\rm I},\,\fp_{123}^{\rm II},\,\fp_{123}^{\rm III} & 4 & (3|0,2|1,0|2,0|1,1|0) & 
\CC^{3|0} & \pr(\fm,\fg_0)\neq\fg,\quad\pr(\fg_{\leq1})=\fg \\[2pt]
\hline
\vphantom{\frac{A^A}{A^A}}
\fp_{123}^{\rm IV} & 3 & (3|0,0|3,3|0,0|1) & \CC^{3|0} & \pr(\fm)=\fg\\
\hline
  \end{array}
 \]
\caption{Parabolic subalgebras of $D(2,1;a)$, depth $\nu$, dimensions and structure algebras,  
prolongations and necessary reductions.}
 \label{6geometries}
\end{figure}

The computation of the above algebraic prolongation is closely related to the computation of the first 
(generalized) Spencer cohomology. This latter is the cohomology of the bi-graded Chevalley–Eilenberg complex
$\fg\otimes\Lambda(\fm^*)$ with the differential $\pd:\fg\otimes\Lambda^n\fm^*\to\fg\otimes\Lambda^{n+1}\fm^*$
preserving the grading associated with a choice of parabolic $\fp$.
Lower degree cohomology groups are presented in Table \ref{SpH}. The meaning of those is the following.

The zeroth cohomology is $H^0(\fm,\fg)=\mathfrak{z}(\fm)=\fg_{-\nu}$. 
The first cohomology controls the prolongation as follows \cite{KST2}: 
if $H^1(\fm,\fg)_{\geq0}=0$ then $\pr(\fm)=\fg$, if $H^1(\fm,\fg)_+=0$ then $\pr(\fm,\fg_0)=\fg$, 
and if $H^1(\fm,\fg)_i=0$ for $i\geq k>0$ then $\pr(\fg_{\geq k})=\fg$. 
The second cohomology $H^2(\fm,\fg)$ is often identified with the space of curvatures 
\cite{LPS,Poletaeva}, but this is not fully justified geometrically, see a discussion in \cite{G3super}.
We provide the corresponding data for completeness and future references. 

\begin{table}[h]\centering
\begin{tabular}{c|ccc}
& \quad $H^0(\fm,\fg)$ \quad & \quad $H^1(\fm,\fg)$ \quad & \quad $H^2(\fm,\fg)$ \quad \\[2pt]
    \hline\\[-8pt]
$\fm_1^\I$ & $\CC_{-2}^{1|0}$ & $\CC_1^{0|4}$ & $\CC_2^{9|0}$ \\[3pt]
$\fm_2^\I$ & $\CC_{-1}^{2|2}$ & $\CC_0^{3|4}$ & $\CC_2^{4|4}$  \\[3pt]
$\fm_{12}^\I$ & $\CC_{-3}^{1|0}$ & $\CC_{-2}^{1|0}\oplus\CC_1^{0|2}$ & $\CC_2^{3|0}\oplus\CC_3^{0|2}$  \\[3pt]
$\fm_{23}^\I$ & $\CC_{-2}^{1|1}$ & $\CC_{-1}^{2|2}\oplus\CC_0^{0|1}$ & $\CC_0^{1|1}\oplus\CC_2^{2|2}$  \\[3pt]
$\fm_{123}^\I$ & $\CC_{-4}^{1|0}$ & $\CC_{-3}^{2|0}\oplus\CC_1^{0|1}$ & $\CC_{-2}^{1|0}\oplus\CC_2^{1|0}\oplus\CC_3^{0|2}$  \\[3pt]
$\fm_{123}^\IV$ & $\CC_{-3}^{0|1}$ & $\CC_{-1}^{0|3}$ & $\CC_0^{1|0}\oplus\CC_2^{3|0}$  \\[3pt]\end{tabular}
\caption{The Spencer cohomology $H^{i,j}=H^j(\fm,\fg)_i$ of order $j\leq2$ 
for different choices of parabolics $\fp$, with indicated grading $i$ and super dimension.}\label{SpH}
\end{table}

From a geometric viewpoint, the Tanaka--Weisfeiler algebraic prolongation $\fg$ of $\fm$ or $(\fm,\fg_0)$ 
restricts the symmetry of the underlying geometric structure $(M,\mathcal{D},q)$ as follows 
\cite[Theorem 1.1]{KST2} (inequality for both even and odd dimensions of the corresponding superalgebras): 
 \begin{equation}\label{symineq}
\dim\mathfrak{sym}(M,\mathcal{D},q) \leq \dim\fg.
 \end{equation}
Here $M$ is a supermanifold with a vector superdistribution $\mathcal{D}$, corresponding to $\fg_{-1}$,
and $q$ is either a geometric structure on $\mathcal{D}$ or a structure reduction, which has the symbol given by 
$\fm$ and possibly $\fg_0$. The inequality in \eqref{symineq} also holds in the case of higher order reduction, 
where $\fg$ is the prolongation of $\fg_{\leq k}$ for some $k>0$.

For $M=G/P$ equipped only with the superdistribution $\mathcal{D}$, 
its full symmetry algebra is given in Table \ref{SymmA}.

 \begin{table}[h]\centering
\begin{tabular}{c|ccc}
& \quad $\pr(\fm)\simeq\mathfrak{sym}(G/P,\mathcal{D})$ \quad \\[2pt]
    \hline\\[-8pt]
$\fm_1^\I,\ \fm_{12}^\I,\ \fm_{123}^\I$ & $\mathfrak{cont}(1|4)$ \\[3pt]
$\fm_2^\I$ & $\mathfrak{vect}(2|2)$  \\[3pt]
$\fm_{23}^\I$ & $\bigl(\mathcal{O}(1|3)\otimes_{\mathcal{O}(1|2)}\mathfrak{cont}(1|2)\bigr)
\ltimes\bigl(\mathcal{O}(1|3)\otimes_{\mathcal{O}(0|1)}\mathfrak{vect}(0|1)\bigr)$  \\[3pt]
$\fm_{123}^\IV$ & $D(2,1;a)$ \\[3pt]\end{tabular}
\caption{Symmetries of $(G/P,\mathcal{D})$ without additional geometric structures.}\label{SymmA}
 \end{table}
It will be shown that in all cases, except for the $|1|$-graded geometry $G/P_2^\I$, the generalized flag variety $M$ 
is related to some contact manifold (so $D(2,1;a)$ will be naturally embedded as a subalgebra
of a contact algebra, as shown in Table \ref{SymmA}, except for $G/P_{23}^\I$ in which case this is true 
only partially) even though only $G/P_1^\I$ is naturally a (odd) contact space.

While \eqref{symineq} restricts the symmetry from above, the flat model $G/P$ carries $\fg$ as the global symmetry,
thus realizing this dimension. In other words, to realize the maximal symmetry dimension we consider the 
corresponding homogeneous superspace $M=G/P$ with the geometric structure according to the above algebraic computation. In this way, with a proper choice of (connected simply-connected) supergroup $G$ with 
$\op{Lie}(G)=\fg=D(2,1;a)$ and the corresponding parabolic subgroup $P$, we obtain:

 \begin{itemize}
\item
$M^{3|4}_\diamond=G/P_{123}^\IV$ with the geometry given by a vector superdistribution $\mathcal{D}$;
\item
$M^{2|2}=G/P_2^\I$ with structure reduction given by a supervariety $\mathcal{V}\subset\mathbb{P}
(\mathcal{T}M)$ and $M^{3|3}=G/P_{23}^\I$ with structure reduction given by a supervariety 
$\mathcal{W}\subset\mathbb{P}(\mathcal{C})$ for a supervector bundle $\mathcal{E}$ naturally associated 
to $\mathcal{D}$, cf.\ \cite{G3super,F4super};
\item
$M^{1|4}=G/P_1^\I$, $M^{2|4}=G/P_{12}^\I$, $M^{3|4}=G/P_{123}^\I$, each with a higher order reduction
of the frame bundle $F_1\subset\mathcal{F}r_1(M)=\op{Pr}_1(M,\mathcal{D})$, see \cite{KST2}.
 \end{itemize}

 \begin{theorem}\label{T2}
The geometries on $M^{1|4}$, $M^{2|4}$, $M^{3|4}$, $M^{2|2}$, $M^{3|3}$, $M^{3|4}_\diamond$
have the symmetry superalgebra equal to $\fg=D(2,1;a)$ locally and globally. 
The corresponding automorphism supergroup $\op{Aut}(M,\mathcal{D},q)$ has dimension $(9|8)$ and 
its connected component is covered by the Lie supergroup $G$.
 \end{theorem}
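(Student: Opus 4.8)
The plan is to pinch the symmetry dimension to exactly $\dim\fg=(9|8)$ (the even part being $\sl(2)\oplus\sl(2)\oplus\sl(2)$ and the odd part $\CC^2\otimes\CC^2\otimes\CC^2$) from two sides: an upper bound supplied by the Tanaka--Weisfeiler prolongation together with \eqref{symineq}, and a lower bound realized by the flat model $G/P$ itself. The prolongation facts are already recorded in Figure \ref{6geometries} by Theorem \ref{T1}, so the content here is to equip each $M=G/P$ with the correct invariant structure $q$ whose symbol reproduces that prolongation, and then to invoke \cite[Theorem 1.1]{KST2}.

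For the upper bound I would split into the three regimes of Figure \ref{6geometries}. In the depth-$3$ case $G/P_{123}^\IV=M^{3|4}_\diamond$ one has $\pr(\fm)=\fg$, so the bare vector superdistribution $\mathcal{D}$ already has symbol $\fm$ and \eqref{symineq} gives $\dim\sym(M,\mathcal{D})\leq\dim\fg$. For $G/P_2^\I=M^{2|2}$ and $G/P_{23}^\I=M^{3|3}$, where $\pr(\fm)\neq\fg$ but $\pr(\fm,\fg_0)=\fg$, the distribution must be supplemented by the first-order reduction carried by the supervariety $\mathcal{V}\subset\PP(\mathcal{T}M)$, respectively $\mathcal{W}\subset\PP(\mathcal{C})$; one checks that the infinitesimal stabilizer of this supervariety inside the full structure algebra $\pr_0(\fm)$ of the distribution is precisely $\fg_0$ ($=\gl(2|1)$, resp.\ $\gl(1,1)\oplus\CC$), so that the symbol of the reduced structure is $(\fm,\fg_0)$ and \eqref{symineq} applies. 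In the remaining cases $M^{1|4}$, $M^{2|4}$, $M^{3|4}$, where already $\pr(\fm,\fg_0)\neq\fg$ but $\pr(\fg_{\leq1})=\fg$, one needs the higher-order frame reduction $F_1\subset\mathcal{F}r_1(M)$, and the version of \eqref{symineq} for such reductions in \cite[Theorem 1.1]{KST2} again bounds $\dim\sym$ by $\dim\fg$.

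For the lower bound, $G$ acts on $M=G/P$ preserving each of these invariant structures by construction, so differentiation embeds $\fg\hookrightarrow\sym(M,\mathcal{D},q)$. Together with the upper bound this forces $\sym(M,\mathcal{D},q)=\fg$, the local statement. The passage to global symmetries uses that $G/P$ is a single $G$-orbit and that the prolongation is of finite type: any local symmetry is determined by a finite jet at one point, that jet injects into $\fg$ by the prolongation bound, and hence every local symmetry is the germ of one of the globally defined vector fields coming from the $G$-action. Consequently $\op{Aut}(M,\mathcal{D},q)$ has Lie superalgebra $\fg$, hence dimension $(9|8)$, and its identity component is the image of the connected, simply connected $G$; the kernel of $G\to\op{Aut}^0$ is the largest normal subgroup of $G$ contained in $P$, which is discrete, so $G$ covers the connected component.

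The main obstacle is the middle step of pinning down the symbol: showing that each $q$ is neither too weak nor too rigid, i.e.\ that its symbol equals exactly $(\fm,\fg_0)$ (or $\fg_{\leq1}$) rather than some larger reduction with a smaller prolongation. For the cone-field cases this means identifying $\mathcal{V}$ (resp.\ $\mathcal{W}$) with the $P$-orbit through the relevant projectivized fibre and computing its linear automorphisms inside $\pr_0(\fm)$, confirming they coincide with $\fg_0$. For the higher-order cases it means exhibiting the reduction $F_1$ explicitly and verifying, consistently with the Spencer cohomology of Table \ref{SpH}, that the prolongation of $\fg_{\leq1}$ stabilizes at $\fg$ with no spurious positive part. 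Once the symbols are fixed, \eqref{symineq} and the homogeneity of $G/P$ close the argument uniformly across all six cases.
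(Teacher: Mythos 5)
Your overall strategy -- upper bound from the Tanaka--Weisfeiler prolongations of Figure \ref{6geometries} via \eqref{symineq}, lower bound from the $G$-action on the flat model $G/P$, with the real work being the identification of a structure $q$ on each $G/P$ whose symbol is exactly $\fm$, $(\fm,\fg_0)$ or $\fg_{\leq1}$ -- is the same as the paper's, and for the cases $M^{3|4}_\diamond$, $M^{2|2}$, $M^{3|3}$ the paper's proofs (Theorems \ref{Tpiv123}, \ref{Tpi2}, \ref{Tpi23}) are literally this argument, with the symbol identification carried out by Lemma \ref{Le2}, Proposition \ref{supervariety} and their $\fgl(1|1)$ analogues. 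The one genuine divergence is in the three higher-order cases $M^{1|4}$, $M^{2|4}$, $M^{3|4}$: you propose to exhibit the frame reduction $F_1\subset\mathcal{F}r_1(M)$ and apply the higher-order version of \eqref{symineq} directly, whereas the paper instead encodes the reduction $\fg_1\subset\mathfrak{c}_1$ by a \emph{linear} super-PDE system (\eqref{Eq}, \eqref{PDEsys2}) inside the contact algebra and identifies the symmetry as the normalizer of $\fg$ in $\mathfrak{cont}(M)$, which equals $\fg$ by the maximality statement of Proposition \ref{normg} (itself resting on $\pr(\fg_{\leq1})=\fg$). Your route is more uniform and avoids the lift to $J^0M$ and the normalizer computation; the paper's route buys an explicit, checkable realization of the reduction as a differential system and makes the twistor comparisons of Section \ref{TwSec} transparent. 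Be aware, though, that your proposal is a strategy rather than a proof: the step you yourself flag as ``the main obstacle'' -- verifying that the stabilizer of $\mathcal{V}$ (resp.\ $\mathcal{W}$) in $\pr_0(\fm)$ is exactly $\fg_0$, and that $F_1$ prolongs to $\fg$ with no spurious positive part -- is precisely the content of Section \ref{S4} and the appendices, and nothing in your sketch discharges it. Your closing argument for the global statement (finite type, jet determination, homogeneity, discrete kernel of $G\to\op{Aut}^0$) is sound and is essentially what the paper leaves implicit.
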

 
More precisely, the geometric realizations of $D(2,1;a)$ for these cases are presented in Theorems 
\ref{Tpi1}, \ref{Tpi12}, \ref{Tpi123}, \ref{Tpi2}, \ref{Tpi23}, \ref{Tpiv123} respectively (see also 
Theorem \ref{Tpiv123plus}).

Let us note that in the classical case, for a complex simple Lie group $G$ and parabolic subgroup $P$,
the Lie algebra $\fg=\op{Lie}(G)$ is almost always realized by the symmetry of $G/P$ equipped with 
the vector distribution $\mathcal{D}$ corresponding to $\fg_{-1}$ and a possible reduction of the structure group 
$G_0\subset\op{End}(\mathcal{D})$. The only two exceptions, requiring higher order reductions,
are projective geometry $A_n/P_1$ and contact projective geometry $C_n/P_1$, see \cite{Yamaguchi}. 
For $D(2,1;a)$, half of the cases require higher order reductions, including the full flag variety
$G/B$, where $B=P_{123}^\I$. This makes a sharp difference both with the classical cases \cite{Yamaguchi}
and with the other exceptional Lie superalgebras \cite{G3F4A}.

The paper is organized as follows.
While referring to \cite{L,CCF} for the background on supergeometries and to \cite{Kac,Va} for the Lie superalgebras,
we will recall the basics related to the Lie superalgebra $D(2,1;a)$ in Section \ref{S2}.
Then we compute algebraic prolongations and demonstrate the computation of Spencer cohomology via the
Hochshild--Serre spectral sequence in Section \ref{S3}, justifying Theorem \ref{T1}, Figure \ref{6geometries} and Table \ref{SpH}. 
Computations are similar for different cases, so some technical details are delegated to appendices. 
Then we discuss geometric prolongations and describe the corresponding geometries in Section \ref{S4},
where we also present explicit realizations of $D(2,1;a)$ by vector fields.
Finally, we investigate twistor correspondences between them in Section \ref{S5}, finish the justification of 
Table \ref{SymmA} and make concluding remarks. 

Notations:
We will denote by $\VV_k$ and $\UU_k$ the even, resp.\ odd, irreducible $\sl_2$ modules of highest weight $k$.
Similarly, $\VV_{k,l}$ and $\UU_{k,l}$ will denote the highest weight modules for $\so_4=\sl_2\oplus\sl_2$.
The symbols $S^k$ and $\Lambda^k$ will mean super symmetric and super skew-symmetric tensor powers.

\section{Parabolics of $D(2,1;a)$, gradations and prolongations}\label{S2}

\subsection{The construction of $D(2,1;a)$}

This Lie superalgebra is given by $\fg=\fg_{\bar0}\oplus\fg_{\bar1}$ , where 
 $$
\fg_{\bar0}=\fsl_2\oplus\fsl_2\oplus\fsl_2,\qquad \fg_{\bar{1}} = \CC^2\boxtimes\CC^2\boxtimes\CC^2.
 $$
Each $\fsl_2$ has the standard basis $X,H,Y$ with commutation relations $[H,X]=2X$, $[H,Y]=-2Y$, $[X,Y]=H$. 
Each standard representation $\CC^2$ has coordinates $x,y$ (also: a basis in the space of linear functions)
with the action of $\fsl_2$ given via the operators $x\partial_y$, $x\partial_x-y\partial_y$, $y\partial_x$; thus 
the corresponding basis $X_i,H_i,Y_i$ of the $i$-th copy of $\fsl_2\subset\fg_{\bar0}$ 
acts on $i$-th factor $\CC^2$ of $\fg_{\bar1}$. 

Denote by $\eta_i$ the invariant symplectic forms on the respective copies of $\CC^2$,
normalized by $\eta_i(x,y)=1$, and introduce the isomorphisms 
$\varphi_i:S^2\CC^2\to\fsl_2$ (the $i$-th copy) given by 
$x^2\mapsto X_i$, $-2xy\mapsto H_i$, $-y^2\mapsto Y_i$. 
The odd-odd bracket $[\fg_{\bar{1}},\fg_{\bar{1}}]\subset\fg_{\bar{0}}$ is defined by the formula
 \begin{multline*}
[v_1\otimes v_2\otimes v_3,w_1\otimes w_2\otimes w_3]= 
s_1 \varphi_1(v_1w_1)\eta_2(v_2,w_2)\eta_3(v_3,w_3)\\
+ s_2 \eta_1(v_1,w_1)\varphi_2(v_2w_2)\eta_3(v_3,w_3)
+ s_3 \eta_1(v_1,w_1)\eta_2(v_2,w_2)\varphi_3(v_3w_3)
\end{multline*}
for $v_i,w_i\in\CC^2$ and some $ s_1, s_2, s_3\in\CC$ with $s_1+s_2+s_3=0$ by the Jacobi identity.
(In what follows, we write $xyx$ instead of $x\otimes y\otimes x$, etc.)

This Lie superalgebra will be denoted $\Gamma(s_1,s_2,s_3)$.
It is simple if and only if $ s_1 s_2 s_3\neq0$, see e.g.\ \cite{Scheunert}.
In addition, the Lie superalgebras $\Gamma(s_1,s_2,s_3)$ and $\Gamma(s'_1,s'_2,s'_3)$ are isomorphic 
if and only if $s_i'=\lambda s_{\tau(i)}$, $1\leq i\leq3$, for some $\tau\in S_3$ and $\lambda\in\CC\setminus\{0\}$. 
 
 \begin{definition}\label{d21alpha} 
For $a\in\CC\setminus\{0,-1\}$, $D(2,1;a)$ is the simple Lie superalgebra $\Gamma( s_1, s_2, s_3)$ with 
\begin{equation}\label{normalization}
 s_1=-1-a,\qquad s_2=1,\qquad s_3=a.
\end{equation}
 \end{definition}

\begin{remark}
The symmetric group $S_3$ changes the parameter so: $a\mapsto\frac1a$ and $a\mapsto-1-a$.
Therefore we can regard $a$ as an element of $\PP^1\setminus\{0,-1,\infty\}$ modulo the action of $S_3$. 
 \end{remark}

The Lie superalgebra $D(2,1;a)$ is isomorphic to $D(2,1)=\mathfrak{osp}(4|2)$ when $a\in\{1,-2,-\frac12\}$.

\subsection{An analogue to the Killing form}

For $D(2,1;a)$, the Killing form $\op{str}(\op{ad}_x\op{ad}_Y)$ vanishes, however there exists a nondegenerate 
bilinear form $B$ on $\fg$ that is: 
(i) consistent $B(u,v)=0$ if $|u|\neq|v|$, (ii) supersymmetric $B(u,v)=(-1)^{|u||v|}B(v,u)$, 
(iii) invariant $B([u,v],w)=B(v,[u,w])$. It is unique, up to scale, see \cite[Proposition 5.3.]{Kac}.

To get the explicit form of $B$, denote by $K_i$ the Killing form on the $i$-th copy of $\fsl_2$ in $\fg_{\bar{0}}$ ($1\leq i\leq 3$) and let $A=\eta_1\eta_2\eta_3$. Since $S^2\fg_{\bar{0}}^*$ contains three trivial modules for
$\fg_{\bar0}$ while $\Lambda^2(\fg_{\bar{1}}^*)$ only one, this analog of the Killing form 
$B:\fg\otimes\fg\longrightarrow\CC$ should be a linear combination of $K_i$ and $A$, and indeed a computation gives
 \begin{equation}\label{KFB}
B=\frac{1}{4s_1}K_1+\frac{1}{4s_2}K_2+\frac{1}{4s_3}K_3-A.
 \end{equation}

\subsection{Root systems, Cartan matrices and Dynkin diagrams}

Recall that a Cartan subalgebra $\fh\subset\fg$ is a Cartan subalgebra $\fh$ of $\fg_{\bar0}$.
We adopt the root conventions in \cite[\S 2.19]{Frappat}.
In particular, a root $\alpha$ is even if $\fg_\alpha\cap\fg_{\bar{0}}\neq\emptyset$, and odd 
if $\fg_\alpha\cap\fg_{\bar{1}}\neq\emptyset$; the set of roots is split accordingly 
$\Delta=\Delta_{\bar0}\cup\Delta_{\bar1}\subset\fh^*$, and there is also a splitting 
$\Delta=\Delta^-\cup\Delta^+$ into negative and positive roots. 
A simple root system $\Pi\subset\Delta$ is a basis $\{\alpha_i\}_{i\in\mathcal{A}}$ of $\fh^*$ that $\ZZ_+$-generates $\Delta^+$; here $\mathcal{A}=\{1,2,3\}$.

For the Lie superalgebra $\fg=D(2,1;a)$, we fix $\fh=\langle H_1,H_2,H_3\rangle$ and let $\{\varepsilon_1,\varepsilon_2,\varepsilon_3\}$ be the functionals given by $\varepsilon_i(\sum_{i=1}^3h_iH_i)=h_i$.
The $D(2,1;a)$ root system is (see \cite[\S 2.20]{Frappat}):
 $$
\Delta_{\bar0}=\{\pm2\varepsilon_1,\pm2\varepsilon_2,\pm2\varepsilon_3\}\qquad\text{ and }\qquad\Delta_{\bar1}=\{\pm\varepsilon_1\pm\varepsilon_2\pm\varepsilon_3\}.
 $$

For any $\alpha\in\fh^*$, there exists $h_\alpha\in\fh$ such that $B(h,h_\alpha)=\alpha(h)$ for all $h\in\fh$, 
and we can define a scalar product $\langle\cdot,\cdot\rangle=B_{|\fh\otimes\fh}$ with $\langle\alpha,\beta\rangle=B(h_\alpha,h_\beta)=\alpha(h_\beta)=\beta(h_\alpha)$ \cite{Kac}.  Note that $h_{\varepsilon_k}=s_kH_k$ and $B(\varepsilon_i,\varepsilon_j)=s_i\delta_{ij}$, so that all odd roots are isotropic. 

 \begin{definition}
Associate to a simple root system $\Pi$ the symmetric Cartan matrix by $A_{ij}=\langle\alpha_i,\alpha_j\rangle$ where $\alpha_i,\alpha_j\in\Pi$.  A Cartan matrix $C$ is obtained henceforth by the normalizations:
 \begin{itemize}
  \item[(i)] if $\alpha_j\in\Delta_{\bar{0}}$, rescaling the $j$-th row so that $C_{jj}=2$,
  \item[(ii)] if $\alpha_j\in\Delta_{\bar{1}}$ and $\langle\alpha_j,\alpha_j\rangle\neq0$, rescaling the $j$-th row so that $C_{jj}=1$,
  \item[(iii)] if $\alpha_j\in\Delta_{\bar{1}}$ and $\langle\alpha_j,\alpha_j\rangle=0$, rescaling the $j$-th row freely.
 \end{itemize}
\end{definition}
The Dynkin diagram is associated to a simple root system $\Pi$:
 \begin{itemize}
  \item[(a)] Nodes for simple roots: white for even, black for odd non-isotropic ($C_{ii}\neq0$), grey for odd isotropic ($C_{ii}=0$); for $D(2,1;a)$ there are only white and grey nodes.
  \item[(b)] Edges connect nodes with $C_{ij}\neq0$, with multiplicity $\max\{|C_{ij}|,|C_{ji}|\}$ marked on edges
(multiplicity 0: no edge, multiplicity 1: no mark); for $D(2,1;a)$ edges do not direct. 
  \end{itemize}

If $\alpha\in\Delta_{\bar0}$, then $\langle\alpha,\alpha\rangle\neq0$. The even reflection $r_\alpha$ is defined by 
 \begin{equation*}\label{eq:evenreflection}
r_\alpha(\beta) = \beta-\frac{2\langle\beta,\alpha\rangle}{\langle\alpha,\alpha\rangle}\alpha.
 \end{equation*}
The Weyl group $W=\langle r_\alpha:\alpha\in\Delta_{\bar0}\rangle$ preserves $\Delta_{\bar0}$ and $\Delta_{\bar1}$. 

If $\alpha\in\Delta_{\bar1}$ is an odd isotropic simple root ($\langle\alpha,\alpha\rangle=0$), then the odd 
reflection $r_\alpha$ maps the simple root system $\Pi=\{\alpha_i\}_{i\in\mathcal{A}}$ to 
a $W$-inequivalent simple root system $\Pi'=\{\alpha'_i\}_{i\in\mathcal{A}}$ by the rule (see \cite{Serganova})
 \begin{equation*}\label{eq:oddreflection}
r_\alpha(\beta)= \begin{cases}
  \beta + \alpha, & \langle\alpha, \beta \rangle \neq 0;\\
  \beta, & \langle\alpha, \beta \rangle = 0, \, \beta \neq \alpha;\\
  -\alpha, & \beta = \alpha. \end{cases}
 \end{equation*}
This map extends to a permutation of $\Delta$, but it is not a linear map of $\fh^*$ and it does not preserve 
$\Delta_{\bar0}$ and $\Delta_{\bar1}$. Odd reflections successively generate all inequivalent simple root systems \cite{Serganova}. 

Up to $W$-equivalence, there are four simple root systems for $D(2,1;a)$ displayed in Figure \ref{rootsystems}, 
along with their respective Cartan matrices. Red dashed arrows represent the odd reflections between 
different Dynkin diagrams. (We write $a$ on the edges, while some others write $|a|$.)

\begin{figure}[h]
\begin{center}
\scalebox{1}{
\begin{tabular}{|c|c|c|c|}
	\hline
 \vphantom{$\frac{A^a}{A}$}   I   & II  & III & IV \\ \hline
    \raisebox{-0.2in}{
 \begin{tikzpicture}[remember picture]
 
 \node (A1)[draw, circle, scale=1] {};
 \node [draw,circle,fill=mygray] {};
 \node (B1)[draw, circle, scale=1, above right of=A1, xshift=0.5cm] {};
 \node (C1)[draw, circle, scale=1, below right of=A1, xshift=0.5cm] {};
 
 \node [left of=A1, xshift=0.5cm]{$\alpha_1$};
 \node [right of=B1, xshift=-0.5cm]{$\alpha_2$};
 \node [right of=C1, xshift=-0.5cm]{$\alpha_3$};
 
 \draw (A1) -- (B1);
 \draw (A1) --node[anchor=north, xshift=-0.15cm, yshift=0.05cm]{$a$} (C1);
 \end{tikzpicture}}
     & 
     \raisebox{-0.2in}{
 \begin{tikzpicture}[remember picture]
 
 \node (A2)[draw, circle, scale=1] {};
 \node [draw,circle,fill=mygray] {};
 \node (B2)[draw, circle, scale=1, above right of=A2, xshift=0.5cm] {};
 \node (C2)[draw, circle, scale=1, below right of=A2, xshift=0.5cm] {};
 
 \node [left of=A2, xshift=0.5cm]{$\alpha_2$};
 \node [right of=B2, xshift=-0.5cm]{$\alpha_3$};
 \node [right of=C2, xshift=-0.5cm]{$\alpha_1$};
 
 \draw (A2) --node[anchor=south, xshift=-0.35cm, yshift=-0.1cm]{$1+a$} (B2);
 \draw (A2) -- (C2);
 \end{tikzpicture}}
 &
 \raisebox{-0.2in}{
 \begin{tikzpicture}[remember picture]
 
 \node (A3)[draw, circle, scale=1] {};
 \node [draw,circle,fill=mygray] {};
 \node (B3)[draw, circle, scale=1, above right of=A3, xshift=0.5cm] {};
 \node (C3)[draw, circle, scale=1, below right of=A3, xshift=0.5cm] {};
 
 \node [left of=A3, xshift=0.5cm]{$\alpha_3$};
 \node [right of=B3, xshift=-0.5cm]{$\alpha_2$};
 \node [right of=C3, xshift=-0.5cm]{$\alpha_1$};
 
 \draw (A3) --node[anchor=south, xshift=-0.35cm, yshift=-0.1cm]{$1+a$} (B3);
 \draw (A3) --node[anchor=north, xshift=-0.15cm, yshift=0.05cm]{$a$} (C3);
 \end{tikzpicture}}
 &
 \raisebox{-0.2in}{
 \begin{tikzpicture}[remember picture]
 
 \node (A4)[draw, circle, scale=1] {};
 \node [draw,circle,fill=mygray] {};
 \node (B4)[draw, circle, scale=1, above right of=A4, xshift=0.5cm] {};
 \node [draw,circle,fill=mygray, above right of=A4, xshift=0.5cm] {};
 \node (C4)[draw, circle, scale=1, below right of=A4, xshift=0.5cm] {};
 \node [draw,circle,fill=mygray, below right of=A4, xshift=0.5cm] {};
 
 \node [left of=A4, xshift=0.5cm]{$\alpha_3$};
 \node [right of=B4, xshift=-0.5cm]{$\alpha_1$};
 \node [right of=C4, xshift=-0.5cm]{$\alpha_2$};
 
 \draw (A4) --node[anchor=south, xshift=-0.15cm, yshift=-0.05cm]{$a$} (B4);
 \draw (A4) --node[anchor=north, xshift=-0.35cm, yshift=0.1cm]{$1+a$} (C4);
 \draw (1.25,-0.55) -- (1.25,0.55);
 \end{tikzpicture}}
 \\ \hline
  $\begin{array}{l}
 \alpha_1 = \varepsilon_1-\varepsilon_2-\varepsilon_3\\
 \alpha_2 = 2\varepsilon_2\\
 \alpha_3 = 2\varepsilon_3
 \end{array}$ & 
 $\begin{array}{l}
 \alpha_1 = 2\varepsilon_2\\
 \alpha_2 = -\varepsilon_1-\varepsilon_2+\varepsilon_3\\
 \alpha_3 = 2\varepsilon_1
 \end{array}$ &
 $\begin{array}{l}
 \alpha_1 = 2\varepsilon_3\\
 \alpha_2 = 2\varepsilon_1\\
 \alpha_3 = -\varepsilon_1+\varepsilon_2-\varepsilon_3
 \end{array}$ &
 $\begin{array}{l}
 \alpha_1 = -\varepsilon_1+\varepsilon_2+\varepsilon_3\\
  \alpha_2 = \varepsilon_1+\varepsilon_2-\varepsilon_3\\
 \alpha_3 = \varepsilon_1-\varepsilon_2+\varepsilon_3
 \end{array}$  \\  \hline
 $\begin{pmatrix}
    0 & 1 & a \\
    -1 & 2 & 0\\
    -1 & 0 & 2
  \end{pmatrix}$ & 
 $\begin{pmatrix}
    2 & -1 & 0 \\
    -1 & 0 & 1+a\\
    0 &   -1 & 2
  \end{pmatrix}$ &
 $\begin{pmatrix}
    2 & 0 & -1 \\
    0 & 2 & -1\\
    -a & 1+a & 0
  \end{pmatrix}$ &
 $\begin{pmatrix}
    0 & 1 & a \\
    1 & 0 & -1-a\\
    a &    -1- a & 0
  \end{pmatrix}$  \\ \hline
 \end{tabular} 

}
\begin{tikzpicture}[overlay, red, remember picture]
\draw[<->, ultra thick, densely dotted] (A1) to[in=150, out =10] (B4);
\draw[<->, ultra thick, densely dotted] (A2) to[in=-150, out = -1] (C4);
\draw[<->, ultra thick, densely dotted] (A3) to[in=-150, out = 15] (A4);
 \end{tikzpicture}
\end{center}
\caption{Simple roots of $D(2,1;a)$, Cartan matrices and Dynkin diagrams.}
\label{rootsystems}
\end{figure}

\subsection{Parabolic subalgebras and flag supervarieties}

A choice of simple root system $\Pi$ defines the Borel subalgebra
$\fb=\fh\oplus\bigoplus_{\alpha\in\Delta^+}\fg_\alpha$.
A parabolic $\fp$ is a subalgebra of $\fg$ containing $\fb$.

For a subset $\chi\subset\mathcal{A}=\{1,2,3\}$, marked by crosses on the Dynkin diagram, let 
$Z=\sum_{i\in\chi}Z_i$ with the basis $Z_i\in\fh$ dual to $\alpha_i\in\fh^*$, that is $\alpha_i(Z_j)=\delta_{ij}$. 
Then the grading of $\fg$ is $\fg_k=\bigoplus_{Z(\alpha)=k}\fg_{\alpha}$.  
The corresponding parabolic subalgebra is $\fp_\chi=\fg_{\geq0}$.

 \begin{lemma}\label{andreuboris}\cite{Serganova,G3F4A}
Let $\Pi=\{\alpha_i\}_{i\in\mathcal{A}}$ be a simple root system and let $\alpha_j\in\Pi$ be an odd isotropic root. Consider the odd reflection $r_{\alpha_j}:\Pi\longrightarrow\Pi'=\{\alpha'_i\}_{i\in\mathcal{A}}$ that 
permutes $\chi\subset\mathcal{A}$ to $\chi'\subset\mathcal{A}$. Then the parabolic subalgebras 
$\fp_\chi^\Pi$ and  $\fp_{\chi'}^{\Pi'}$ are isomorphic if $\chi\subset\mathcal{A}\setminus\{j\}$.
 \end{lemma}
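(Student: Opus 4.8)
The plan is to exploit the fact, recalled just before the statement, that the parabolic $\fp_\chi^\Pi$ is completely determined by a single grading element $Z=\sum_{i\in\chi}Z_i\in\fh$, via $\fp_\chi^\Pi=\fg_{\geq0}$ for the grading $\fg_k=\bigoplus_{\alpha(Z)=k}\fg_\alpha$. The crucial observation is that $Z$ lives in the Cartan subalgebra $\fh$, which is common to both simple root systems $\Pi$ and $\Pi'$; only its \emph{description} through the dual basis changes under the odd reflection. Thus the whole task reduces to tracking how the dual basis $\{Z_i\}$ transforms, and checking that when $j\notin\chi$ the element $Z$ is in fact unchanged, so that the two parabolics are the graded subalgebra attached to one and the same $Z$.

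First I would record the transformation of simple roots under $r_{\alpha_j}$, namely $\alpha'_j=-\alpha_j$, with $\alpha'_i=\alpha_i+\alpha_j$ when $\langle\alpha_i,\alpha_j\rangle\neq0$ and $\alpha'_i=\alpha_i$ otherwise, as given by the odd-reflection rule above. Denoting by $\{Z'_i\}\subset\fh$ the basis dual to $\{\alpha'_i\}$, a direct computation with the duality relations $\alpha'_k(Z'_i)=\delta_{ki}$ yields $Z'_i=Z_i$ for every $i\neq j$ (together with an expression for $Z'_j$ that we shall not need). Equivalently, evaluating on $Z=\sum_{k\in\chi}Z_k$ and using $\alpha_j(Z)=0$ (because $j\notin\chi$), one finds $\alpha'_i(Z)=\alpha_i(Z)$ for every $i\neq j$ and $\alpha'_j(Z)=-\alpha_j(Z)=0$, where $\alpha_i(Z)$ equals $1$ for $i\in\chi$ and $0$ otherwise. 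In particular all $\alpha'_i(Z)\geq0$, so the Borel $\fb'$ attached to $\Pi'$ is contained in $\fg_{\geq0}(Z)$, and the set of nodes on which $Z$ takes the value $1$ is again $\chi$ in the (raw) $\Pi'$-labeling. Hence $Z=\sum_{k\in\chi}Z'_k$ defines with respect to $\Pi'$ the very same parabolic, i.e.\ $\fp_\chi^\Pi=\fg_{\geq0}(Z)=\fp_{\chi}^{\Pi'}$ before any relabeling.

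It remains to pass from the raw reflected system $\Pi'$ to its standard Dynkin form; this relabeling is a permutation of $\mathcal{A}$ carrying $\chi$ to $\chi'$ and, being a mere renaming of nodes, identifies $\fp_\chi^{\Pi'}$ with $\fp_{\chi'}^{\Pi'}$. Combining the two steps gives $\fp_\chi^\Pi\cong\fp_{\chi'}^{\Pi'}$ (in fact they coincide as subalgebras of $\fg$). The one point demanding care — and which I regard as the crux — is the hypothesis $\chi\subset\mathcal{A}\setminus\{j\}$: it is exactly what forces $\alpha_j(Z)=0$, hence $\alpha'_j(Z)=0\geq0$ and $Z'_k=Z_k$ for all $k\in\chi$. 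Were $j\in\chi$ instead, one would get $\alpha'_j(Z)=-1<0$, so that $\fb'\not\subset\fg_{\geq0}(Z)$ and $Z$ would no longer define a standard parabolic for $\Pi'$; thus the restriction on $\chi$ is not merely a technical convenience but is essential for the conclusion.
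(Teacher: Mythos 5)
Your argument is correct and complete. Note that the paper itself supplies no proof of this lemma --- it is quoted from \cite{Serganova,G3F4A} --- so the only in-text hint of a justification is the displayed observation right after the statement, that $r_{\alpha_j}$ maps $\Delta^+$ bijectively onto $\Delta^+$ with the ray of $\alpha_j$ replaced by its negative. Your route via the grading element makes this precise in the cleanest possible way: the duality relations give $Z'_i=Z_i$ for all $i\neq j$ (since $\alpha'_k=\alpha_k+\epsilon_k\alpha_j$ with $\alpha_j(Z_i)=0$ and $\alpha'_j=-\alpha_j$), hence $Z=\sum_{k\in\chi}Z_k=\sum_{k\in\chi}Z'_k$ whenever $j\notin\chi$; and because the grading $\fg_k=\bigoplus_{\alpha(Z)=k}\fg_\alpha$ is nothing but the eigenspace decomposition of $\op{ad}_Z$, it depends only on $Z\in\fh$ and not on the choice of simple system, so the two parabolics actually coincide as subalgebras of $\fg$ --- stronger than the asserted isomorphism. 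Your verification that $\alpha'_i(Z)\geq0$ for all $i$ (so that $\fb'\subset\fg_{\geq0}(Z)$ and $Z$ is a legitimate standard grading element for $\Pi'$, with crossed set again $\chi$ up to the relabeling $\chi\mapsto\chi'$) is exactly where the hypothesis $\chi\subset\mathcal{A}\setminus\{j\}$ enters, and your closing remark that $j\in\chi$ would force $\alpha'_j(Z)=-1<0$ correctly explains why that hypothesis cannot be dropped. The equivalent bookkeeping suggested by the paper's displayed fact --- only the ray of $\alpha_j$ changes sign under $r_{\alpha_j}$, and when $j\notin\chi$ that ray already sits in $\fg_0$ --- leads to the same conclusion but is no shorter than what you wrote.
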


It is also useful to note that the odd reflection preserves $\Delta^+$ minus the ray of $\alpha$:
 $$
r_\alpha:\Delta^+\xrightarrow{\,\smash{\raisebox{-0.45ex}{\ensuremath{\scriptstyle\sim}}}\,}
(\Delta^+\setminus\{k\alpha:k>0\})\cup(\Delta^-\cap\{k\alpha:k<0\}).
 $$
Using these properties, one concludes that from 28 parabolic subalgebras $\fp^\Xi_\chi$ of $\fg$, 
only 6 are non-equivalent (using also a change of the parameter $a$ in odd reflections).
The equivalence types are noted in Figure \ref{6geometries}. We also display the graph of inclusions in
Figure \ref{pinclusions} (largest at the top).

\begin{small}
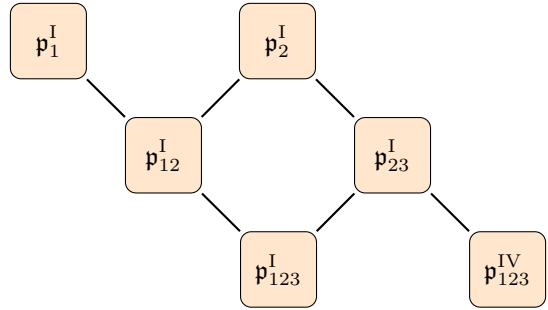
\begin{figure}[h!]\begin{center}\begin{tikzpicture}
     \node[taronja node] (T1) {$\fp_1^\I$};
     \node[taronja node] (T2) [right = 2cm of T1] {$\fp_2^\I$};
     \node[taronja node] (T3) [below right = 0.5cm and 0.5cm of T1] {$\fp_{12}^\I$};
     \node[taronja node] (T4) [below right = 0.5cm and 0.5cm of T2] {$\fp_{23}^\I$};
     \node[taronja node] (T5) [below right = 0.5cm and 0.5cm of T3]  {$\fp_{123}^\I$};
     \node[taronja node] (T6) [below right = 0.5cm and 0.5cm of T4] {$\fp_{123}^\IV$};
      \path[draw,thick]
     (T1) edge node {} (T3)
     (T2) edge node {} (T3)
     (T2) edge node {} (T4)
     (T3) edge node {} (T5)
     (T4) edge node {} (T5)
     (T4) edge node {} (T6);
 \end{tikzpicture}\end{center}
 \caption{Inclusions of parabolics of $D(2,1;a)$.}\label{pinclusions}
 \end{figure}
 \end{small}

The corresponding parabolic subgroups $P^\Xi_\chi$ in the Lie supergroup $G$ of $D(2,1;a)$
give rise to the homogeneous superspaces $G/P^\Xi_\chi$, and these are related by fiber bundles
(we refer to \cite{KST2} for details) with reverse directions to the above inclusions.
These will be discussed below when further enhanced with additional geometric structures.

\subsection{Tanaka--Weisfeiler prolongation}

Let $\fm=\fg_{-\nu}\oplus\cdots\oplus\fg_{-1}$ be a graded Lie superalgebra
such that $\fg_{-1}$ is fundamental, i.e.\ it bracket-generates $\fm$, and $\mathfrak{z}(\fm)=\fg_{-\nu}$. 
 \begin{definition}
The maximal prolongation of $\fm$ is the (unique) maximal graded Lie superalgebra 
 \begin{equation*}
\pr(\mathfrak{m})=\bigoplus_{i=-\nu}^{+\infty}\pr_i(\fm)
 \end{equation*}
\begin{enumerate}
 \item that extends $\mathfrak{m}$, that is $\pr_{<0}(\fm)=\fm$; \label{prolongation1}
 \item is effective, that is $[X,\mathfrak{g}_{-1}]\neq0$ for all $0\neq X\in\mathfrak{g}_{\geq0}$. \label{prolongation2}
\end{enumerate}
Let $\mathfrak{der}_0(\fm)$ be the Lie superalgebra of grade-preserving derivations of $\fm$. Given 
$\fg_0\subset\mathfrak{der}_0(\fm)$, we define $\pr(\fm,\fg_0)$ as the maximal graded Lie superalgebra 
that extends $\mathfrak{m}\oplus\mathfrak{g}_0$ and satisfies effectivity (\ref{prolongation2}).
 \end{definition}
 
There is a necessary condition for rigidity of $\fm$, which means $\dim\op{pr}(\fm)<\infty$, as follows. 
In the classical case it is also sufficient (over $\CC$: \cite{Ot}), but generally this fails in the super case. 

 \begin{proposition}\label{inftype}
Suppose there exists $0\neq v\in\fg_{-1}$ and $V\subset\fm$ of codimension 1 such that
$[v,V]=0$. Then $\dim\pr(\fm)=\infty$.
(In other words, for the rigidity of $\fm$ it is necessary that such $v,V$ cannot exist.)
 \end{proposition}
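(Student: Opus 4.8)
The plan is to prove the contrapositive constructively: from the degenerate pair $(v,V)$ I will manufacture, in arbitrarily high positive degree, a nonzero element of $\pr(\fm)$, so that the graded algebra cannot terminate. I work with the characterization of the nonnegative part: for $k\ge0$ an element of $\pr_k(\fm)$ is a degree-$k$ super-derivation $\phi\colon\fm\to\pr_{\bullet+k}(\fm)$ (with $\pr_{<0}=\fm$), determined inductively by the Leibniz identity $\phi([x,y])=[\phi(x),y]+(-1)^{k|x|}[x,\phi(y)]$. Equivalently, by Tanaka's theorem \cite{Tanaka,KST2} the algebra $\pr(\fm)$ is the full (weighted) symmetry algebra of the flat distribution $\cD=\fg_{-1}$ on the formal model $M=\exp(\fm)$, so by maximality it suffices to embed an infinite-dimensional nonnegatively graded subalgebra extending $\fm$ and satisfying effectivity.

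First I would normalize the hypothesis. Since $V$ has codimension one and $[v,V]=0$, the operator $\op{ad}_v\colon\fm\to\fm$ (homogeneous of degree $-1$) has rank at most one, whence $V=\ker(\op{ad}_v)$ is graded. If $\op{ad}_v=0$ then $v\in\mathfrak{z}(\fm)=\fg_{-\nu}$; as $v\in\fg_{-1}\setminus\{0\}$ this forces $\nu=1$, so $\fm=\fg_{-1}$ is abelian, $\der_0(\fm)=\fgl(\fg_{-1})$, and $\pr(\fm)$ is the algebra of formal vector fields on $\fg_{-1}$, already infinite-dimensional. Otherwise $\op{ad}_v$ has rank exactly one and, by homogeneity, there are a homogeneous functional $f\in\fg_{j_0}^*$ and a homogeneous vector $u\in\fg_{j_0-1}\setminus\{0\}$ (for a single $j_0\le-1$) with
\[ [v,x]=f(x)\,u\qquad(x\in\fm). \]
Note that $(\op{ad}_v)^2=0$ since $f(u)=0$, and $V=\ker f$.

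Next I would write down the infinite family. Choose a homogeneous basis of $\fm$ containing $v$ and the vector dual to $f$, with dual linear coordinates on $M$, and realize $\fm$ by the left-invariant fields $\widehat{e_a}$, so that $\cD$ is the function-module spanned by those $\widehat{e_a}$ with $e_a\in\fg_{-1}$. Let $\partial_v$ be the constant field in the $v$-direction and let $f$ also denote the linear coordinate it defines. I then propose the vector fields
\[ Y_k:=f^{\,k}\,\partial_v,\qquad k\ge1, \]
which are homogeneous of weight $k\,(-j_0)-1$, an unbounded strictly increasing sequence; they are linearly independent and commute with one another. It remains to verify $Y_k\in\pr(\fm)$, i.e.\ that each $Y_k$ preserves $\cD$. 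Computing $[Y_k,\widehat{e_b}]$ for $e_b\in\fg_{-1}$, the leading term is proportional to $f^{\,k-1}(\partial_{x^b}f)\,\partial_v\in\cD$, and every correction coming from the higher-order part of $\widehat{e_b}$ is governed by $\op{ad}_v$; the rank-one identity $[v,x]=f(x)u$ makes these collapse back into the $\cD$-module. This verification is the crux: one must show that the obstruction to $Y_k$ being an infinitesimal symmetry vanishes identically, which is exactly where the hypothesis $[v,V]=0$ (rank one) is indispensable, since without it the bracket would leak into directions transverse to $\cD$ and the $Y_k$ would fail to prolong.

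Granting this, the subalgebra of $\pr(\fm)$ generated by $\fm$ and $\{Y_k\}_{k\ge1}$ is nonnegatively graded, effective, extends $\fm$, and contains the independent family $Y_k$, whence $\dim\pr(\fm)=\infty$. I expect the main difficulty to be precisely the $\cD$-invariance computation in the higher-depth cases $j_0\le-2$, where $\widehat{e_b}$ carries nontrivial Baker--Campbell--Hausdorff corrections; in the contact-type case $j_0=-1$ (for instance $\fm_1^\I$, where the $Y_k=f^{k}\partial_v$ are literally the contact vector fields producing $\mathfrak{cont}(1|4)$) the check is transparent.
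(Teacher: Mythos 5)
Your overall strategy is the same as the paper's: iterate the rank-one element determined by $v$ and the annihilating covector $\omega\in V^\perp$ to produce nonzero prolongation elements in every positive degree. The paper does this purely algebraically, checking that $A_k=v\otimes\omega^{\otimes k}$ (i.e.\ $x\mapsto\omega(x)\,A_{k-1}$) satisfies the Leibniz identity \eqref{conditionprolongation} — a two-line verification using $\omega|_{\fg_{\leq-2}}=0$ and $[v,\ker\omega]=0$ — and that $A_k\neq0$ since $A_k(w,\dots,w)=v$. However, your geometric implementation has a genuine gap, and it is exactly at the step you flag as ``the crux'': the fields $Y_k=f^k\,\partial_v$ do \emph{not} preserve $\cD$ for $k\geq2$ once $\fm$ is non-abelian. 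Test this on the classical Heisenberg algebra $\fg_{-1}=\langle v,w\rangle$, $[v,w]=u$, with coordinates $(g,f,h)$ dual to $(v,w,u)$ and $\widehat{v}=\p_g-\tfrac{f}{2}\p_h$, $\widehat{w}=\p_f+\tfrac{g}{2}\p_h$: one finds $[f^k\p_g,\widehat{w}]=-kf^{k-1}\p_g+\tfrac12 f^k\p_h$, which lies in $\cD$ only for $k=1$; the genuine degree-$(k-1)$ symmetry is $f^k\p_g+\tfrac{1-k}{2(k+1)}f^{k+1}\p_h$, carrying a $k$-dependent tail in the $\fg_{-2}$-direction that your ansatz omits (and replacing $\p_v$ by $\widehat{v}$ gives the wrong tail too). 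So the asserted ``collapse back into the $\cD$-module'' fails as stated, and since you explicitly defer this verification rather than carry it out, the proof is incomplete. The repair is precisely the paper's move: work with the abstract prolongation elements, whose Leibniz check is immediate, rather than with explicit coordinate vector fields whose lower-order corrections you would otherwise have to solve for at every degree.

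Two smaller points. First, your case distinction over $j_0$ is vacuous: since $\fg_{-1}$ bracket-generates $\fm$, $[v,\fg_{-1}]=0$ already forces $[v,\fm]=0$ by the Jacobi identity, so in the rank-one case the missing direction $w$ lies in $\fg_{-1}$, $V\supseteq\fg_{\leq-2}$, and $\omega\in\fg_{-1}^*$ automatically — there is no ``higher-depth case $j_0\leq-2$'' to worry about. Second, in the degenerate branch $\op{ad}_v=0$ you conclude that $\pr(\fm)=\fm\otimes S(\fm^*)$ is ``already infinite-dimensional''; this is false when $\fg_{-1}$ is purely odd, where $S(\fm^*)$ is an exterior algebra and $\pr(\fm)=W(n)$ is finite-dimensional. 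The paper's proof dismisses this branch as ``obvious'' and so shares the caveat, but you should at least note that the statement is being used only in situations where $\fg_{-1}$ has a nonzero even part or $\nu\geq2$.
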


 \begin{proof}
If $[v,\fm]=0$, i.e.\ $\mathfrak{z}(\fm)\ni v$, the claim is obvious. So let $w\in\fg_{-1}\setminus V$ be such
that $[v,w]\neq0$. Choose $\omega\in V^\perp\cap\fg_{-1}^*$ such that $\omega(w)\neq0$. Then 
$v\otimes\omega\in\fg_0$ and, more generally, $v\otimes\omega^{\otimes k}\in\fg_{k-1}$ for $k>0$.
One can see that this, as an element of $\op{Hom}(\fg_{k-2},\fg_{-1})$, is non-zero.
 \end{proof}
 
A further generalization can be defined as follows. Suppose we already computed $\pr_{\leq k}(\fm)$
or $\pr_{\leq k}(\fm,\fg_0)$. Select its $\fm\oplus\fg_0$-submodule $\fg_{\leq k}=\fg_{-\nu}\oplus\dots\oplus\fg_k$
that contains $\fm$ and is stable with respect to the brackets already defined $[\fg_i,\fg_j]\subset\fg_{i+j}$ for $i+j\leq k$. (Note that, in general, $\fg_{\leq k}$ is not a Lie superalgebra, and 
only partial graded brackets are defined as indicated.)
 \begin{definition}
Under the above conditions, the maximal prolongation of $\fg_{\leq k}$ is the (unique) maximal graded Lie superalgebra 
$\pr(\fg_{\leq k})=\bigoplus_{i=-\nu}^{+\infty}\pr_i(\fg_{\leq k})$
that extends $\fg_{\leq k}$ and is effective as in above (\ref{prolongation2}).
 \end{definition}
Following \cite{G3super}, we call $\pr(\mathfrak{m})$ and $\pr(\mathfrak{g}_{\leq k})$ for $k\geq0$ the 
Tanaka--Weisfeiler prolongations. For $i>k$, they are computed by the following inductive formula \cite{Tanaka}. 
If $\pr_{<j}(\fg_{\leq k})=\oplus_{i=-\nu}^{j-1}\fg_i$ then
 \begin{equation}\label{conditionprolongation}
\pr_j(\fg_{\leq k})=\{A\in\op{Hom}_{+j}\bigl(\fm,\pr_{<j}(\fg_{\leq k})\bigr): 
A[x,y]=\left[Ax,y\right]+(-1)^{|x||A|}\left[x,Ay\right],\,\forall x,y\in\fm\}.
 \end{equation}
The structure is of finite type if $\pr_j(\fg_{\leq k})=0$ for $j>\mu$ for some $\mu$. In this case, 
the obtained prolongation $\fg=\oplus_{i=-\nu}^\infty\fg_i$ is a Lie superalgebra.

Now consider a supermanifold $M$ with a vector superdistribution $\mathcal{D}\subset\mathcal{T}M$
(here, both the tangent bundle and its subbundle $\mathcal{D}$ are considered as sheaves over the
structure sheaf $\mathcal{O}_M$ of the supermanifold). If the superdistribution is bracket-generating,
then there is a sheaf of graded nilpotent Lie superalgebras $\fm_M$ over $M$. In the case this is modelled on
a fixed Lie superalgebra $\fm$, a tower of geometric prolongations $\mathcal{F}r_0\to M$ and further 
$\mathcal{F}r_k\to\mathcal{F}r_{k-1}$ for $k>0$ (bundles with fibers $\fg_k$) was constructed in \cite{KST2}. 

A structure reduction of order 0 is given by a subbundle $\mathfrak{F}_0\subset\mathcal{F}r_0$,
which is often given by a geometric structure on $\mathcal{D}$. 
Higher order reductions $\mathfrak{F}_k\subset\mathcal{F}r_k$, $k>0$, are also discussed in \cite{KST2}.
We will consider those for the generalized flag supervarieties $M=G/P^\Xi_\chi$ in what follows.

\section{Algebraic prolongations for $D(2,1;a)$}\label{S3}

In this section, we compute prolongations of $\fm$ and more generally of $\fg_{\leq k}$ for 6 
gradations of $\fg=D(2,1;a)$. We will use both spectral sequences and direct computations based on 
\eqref{conditionprolongation}. Tedious and repeated computations will be shortened or delegated to
the appendix. The result has an independent computer verification, which is based on integer
arithmetic and hence has full mathematical rigor. 

\subsection{Parabolic $\fp_{1}^{\rm I}$}\label{pi1}

Consider $D(2,1;a)$ with the contact grading $\fg=\fg_{-2}\oplus\fg_{-1}\oplus\fg_0\oplus\fg_1\oplus\fg_2$.  

\begin{figure}[h]
 \centering
  \begin{minipage}{0.3\textwidth}
  \centering
 \begin{small}\begin{align*}
   \begin{array}{|c|c|c|} \hline
    k & \Delta_{\bar{0}}(k)  &  \Delta_{\bar{1}}(k) \\ \hline\hline
    0 & \pm\alpha_2,\ \pm\alpha_3 & \\ \hline
    1 &  &  \begin{array}{l} \alpha_1,\ \alpha_1+\alpha_2+\alpha_3, \\  
    \alpha_1+\alpha_2,\ \alpha_1+\alpha_3 \end{array}\\ \hline
    2 & 2\alpha_1+\alpha_2+\alpha_3 & \\ \hline
   \end{array}
  \end{align*}\end{small}
 \end{minipage}
 \begin{minipage}{0.25\textwidth}
  \centering
  \raisebox{-0.2in}{
  \!\!\begin{tikzpicture}[remember picture]
   \node (A1)[draw, circle, scale=1] {};
   \node [draw,circle,fill=mygray] {};
   \node[below] at (0,-0.15) {$\times$};
   \node (B1)[draw, circle, scale=1, above right of=A1, xshift=0.5cm] {};
   \node (C1)[draw, circle, scale=1, below right of=A1, xshift=0.5cm] {};
   \node [left of=A1, xshift=0.5cm]{$\alpha_1$};
   \node [right of=B1, xshift=-0.5cm]{$\alpha_2$};
   \node [right of=C1, xshift=-0.5cm]{$\alpha_3$};
   \draw (A1) -- (B1);
   \draw (A1) --node[anchor=north]{
   } (C1);
  \end{tikzpicture}}
 \end{minipage}
 \begin{minipage}{0.25\textwidth}
  \centering
  \begin{small}\begin{align*}
   \begin{array}{|c|c|c|} \hline
\vphantom{\frac{A}A}   k & (\fg_k)_{\bar0}/\fh  &  (\fg_k)_{\bar1} \\ \hline\hline
    0 & \begin{array}{l} X_2,\ Y_2 , \\  X_3,\ Y_3 \end{array} & \\ \hline
    1 &  &  \begin{array}{l} xyy,\ xxy, \\  xyx,\ xxx \end{array}\\ \hline
    2 & X_1 & \\ \hline
   \end{array}
  \end{align*}\end{small}
 \end{minipage}
 \caption{The grading $\mathfrak{p}_1^{\rm I}$.}
\end{figure}

 \begin{proposition}
For the grading of $\fg=D(2,1;a)$ associated with $\fp_1^{\rm I}$: $\pr(\fm)=\pr(\fm,\fg_0)\neq\fg$.
In fact, $\dim\pr(\fm,\fg_0)=\infty$.
 \end{proposition}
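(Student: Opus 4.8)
The plan is to read the structure of $\fm=\fg_{-2}\oplus\fg_{-1}$ off the grading table, reduce the asserted equality $\pr(\fm)=\pr(\fm,\fg_0)$ to the identity $\der_0(\fm)=\fg_0$, and then deduce infinite dimensionality from the rigidity obstruction of Proposition \ref{inftype}. First I would make $\fm$ explicit. Here $\fg_{-1}=\CC^{0|4}$ is purely odd, spanned by $xxx,xxy,xyx,xyy$, i.e.\ $\fg_{-1}=x\otimes\CC^2\otimes\CC^2$, and $\fg_{-2}=\langle X_1\rangle=\CC^{1|0}$ is even and central. For $u=x\otimes v_2\otimes v_3$ and $u'=x\otimes w_2\otimes w_3$ the odd--odd bracket formula, together with $\eta_1(x,x)=0$, kills the $s_2$- and $s_3$-terms, leaving $[u,u']=s_1\,\eta_2(v_2,w_2)\,\eta_3(v_3,w_3)\,X_1$. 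Thus the bracket $\fg_{-1}\otimes\fg_{-1}\to\fg_{-2}$ is, up to the nonzero scalar $s_1$, the form $\langle v_2\otimes v_3,\,w_2\otimes w_3\rangle=\eta_2(v_2,w_2)\,\eta_3(v_3,w_3)$, the tensor product of two symplectic forms, hence a non-degenerate supersymmetric (symmetric on the odd space) bilinear form on $\fg_{-1}\cong\CC^4$. In other words $\fm$ is the symbol of the odd contact structure $\mathfrak{cont}(1|4)$: one even contact direction and four odd Darboux coordinates.

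Next I would establish $\pr(\fm)=\pr(\fm,\fg_0)$ by proving $\der_0(\fm)=\fg_0=\mathfrak{co}(4)$. A grade-preserving derivation $D$ is determined by $A=D|_{\fg_{-1}}\in\mathrm{End}(\fg_{-1})$ and a scalar $\mu=D|_{\fg_{-2}}$, and the derivation identity $\mu\langle v,w\rangle=\langle Av,w\rangle+\langle v,Aw\rangle$ forces $A$ into the conformal algebra of the form, $A\in\mathfrak{co}(4)=\so(4)\oplus\CC$. Conversely $\fg_0$ acts faithfully on $\fg_{-1}$ by exactly this algebra: the second and third $\sl_2$-factors of $\fg_{\bar0}$ give $\so(4)$ preserving $\eta_2\otimes\eta_3$, while the grading element $H_1$ acts as the conformal scaling. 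Comparing dimensions, $\dim\mathfrak{co}(4)=7=\dim\fg_0$, so $\der_0(\fm)=\fg_0$. Since $\pr_0(\fm)=\der_0(\fm)$, prolonging with prescribed degree-zero part $\fg_0$ or with the full $\der_0(\fm)$ gives the same maximal object, whence $\pr(\fm)=\pr(\fm,\fg_0)$.

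Finally, for infinite dimensionality I would apply Proposition \ref{inftype}. The form $\eta_2\otimes\eta_3$ vanishes on every decomposable vector, so $v=xxx$ is isotropic, $\langle v,v\rangle=0$. Taking $V=\fg_{-2}\oplus v^\perp$ with $v^\perp=\{w\in\fg_{-1}:\langle v,w\rangle=0\}$ yields a subspace of codimension one (with $\fg_{-1}/v^\perp$ one-dimensional and odd) containing $v$, and $[v,V]=0$ because $[v,\fg_{-2}]=0$ and $[v,v^\perp]=\langle v,v^\perp\rangle X_1=0$. Proposition \ref{inftype} then gives $\dim\pr(\fm)=\infty$, and by the equality just proved $\dim\pr(\fm,\fg_0)=\infty$ as well; in particular $\pr(\fm,\fg_0)\neq\fg$, since $\fg=D(2,1;a)$ is finite-dimensional. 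As a cross-check, one recognizes the full prolongation as the odd contact superalgebra $\mathfrak{cont}(1|4)$ recorded in Table \ref{SymmA}, which is manifestly infinite-dimensional.

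The step demanding the most care is the derivation computation $\der_0(\fm)=\fg_0$, as this is precisely what upgrades the infinite-dimensionality of $\pr(\fm)$ to the asserted \emph{equality} $\pr(\fm)=\pr(\fm,\fg_0)$; the infinite-dimensionality itself is immediate from Proposition \ref{inftype} once one observes that decomposable vectors for $\eta_2\otimes\eta_3$ are automatically isotropic.
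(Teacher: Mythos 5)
Your proposal is correct. The skeleton agrees with the paper's: identify $\fm$ as the odd Heisenberg superalgebra whose bracket $\fg_{-1}\otimes\fg_{-1}\to\fg_{-2}$ is the nondegenerate symmetric form $\eta_2\otimes\eta_3$ (scaled by $s_1\neq0$), and observe that $\fg_0$ already exhausts $\mathfrak{co}(4)$, so that $\pr(\fm)=\pr(\fm,\fg_0)$. You go further than the paper in two places, in opposite directions. First, you actually prove $\der_0(\fm)=\mathfrak{co}(4)=\fg_0$ via the derivation identity and a dimension count, where the paper simply asserts that the structure algebra of the odd distribution is the full conformal symplectic algebra; this is a welcome filling-in of detail. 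Second, for infinite-dimensionality the paper invokes the identification $\pr(\fm)=\mathfrak{cont}(1|4)$ (the prolongation of a Heisenberg symbol is the contact algebra, manifestly infinite-dimensional, cf.\ Table \ref{SymmA} and the weight table in Section \ref{Gpi1}), whereas you instead apply the rigidity obstruction of Proposition \ref{inftype} to an isotropic odd vector $v$ with $V=\fg_{-2}\oplus v^\perp$. Both are valid; your route is more elementary and self-contained, and it is a nice complement to the paper's later remarks that for $\fp_{12}^\I$ and $\fp_{123}^\I$ the hypothesis of Proposition \ref{inftype} fails -- here it holds precisely because the quadratic form $\eta_2\otimes\eta_3$ on $\CC^{0|4}$ has isotropic vectors (every decomposable vector is one). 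The only blemish is notational: in the grading associated with $\fp_1^\I$ the negative part is spanned by $yxx,yyx,yxy,yyy$ and $Y_1$ (your $xxx,\dots,xyy$ and $X_1$ span $\fg_1\oplus\fg_2$); since $\fg_\pm$ are isomorphic as graded Lie superalgebras via $\varphi_1(y^2)=-Y_1$ in place of $\varphi_1(x^2)=X_1$, this does not affect the argument.
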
 

 \begin{proof}
The prolongation of the Heisenberg superalgebra $\fm$ is the contact algebra 
$\pr(\fm)=\mathfrak{cont}(\fm)$. The distribution $\fg_{-1}$ is odd, so its 
structure algebra is the conformal symplectic algebra
$\fg_0=\mathfrak{co}(4)$. Thus $\pr(\fm)=\pr(\fm,\fg_0)$ and the claim follows.
 \end{proof}

 \begin{proposition}\label{propp1I}
For the grading of $\fg=D(2,1;a)$ associated with $\fp_1^{\rm I}$: $\pr(\fg_{\leq1})=\fg$.
 \end{proposition}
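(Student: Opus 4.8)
The plan is to pair the general lower bound with a short positive-degree computation. Because $\fg=\bigoplus_{i=-2}^{2}\fg_i$ is a graded extension of $\fg_{\leq1}$ that is effective (as $\fg$ is simple), maximality gives $\fg\subseteq\pr(\fg_{\leq1})$, and it remains to prove that the prolongation is no larger. By the inductive formula \eqref{conditionprolongation} it is enough to check $\pr_2(\fg_{\leq1})=\fg_2=\CC X_1$ and $\pr_3(\fg_{\leq1})=0$. Indeed, effectivity (\ref{prolongation2}) makes the map $\pr_{j+1}(\fg_{\leq1})\to\op{Hom}(\fg_{-1},\pr_j(\fg_{\leq1}))$, $A\mapsto[A,\cdot\,]|_{\fg_{-1}}$, injective, so $\pr_j(\fg_{\leq1})=0$ propagates to $\pr_{j+1}(\fg_{\leq1})=0$; thus $\pr_3=0$ forces all higher terms to vanish and $\pr(\fg_{\leq1})=\fg_{\leq2}=\fg$. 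The point of passing from $\fg_0$ to $\fg_{\leq1}$ is that, although $\pr(\fm,\fg_0)$ is infinite-dimensional by the previous Proposition, imposing the datum $\fg_1\subsetneq\pr_1(\fm,\fg_0)$ removes precisely the degree-$1$ over-prolongation.

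For the degree-$2$ term, an element $A\in\pr_2(\fg_{\leq1})$ is even and determined by $\bar A:=A|_{\fg_{-1}}\in\op{Hom}(\fg_{-1},\fg_1)$, the value $A(Y_1)=:A_0\in\fg_0$ on the generator $Y_1$ of $\fg_{-2}$ being fixed by the derivation law. Writing $\omega$ for the contact bracket $\fg_{-1}\times\fg_{-1}\to\fg_{-2}=\CC Y_1$, the conditions \eqref{conditionprolongation} for the pairs $(\fg_{-1},\fg_{-1})$ and $(\fg_{-2},\fg_{-1})$ become
\[
[\bar Av,w]+[v,\bar Aw]=\omega(v,w)\,A_0,\qquad [A_0,y]+[Y_1,\bar Ay]=0,
\]
for all $v,w,y\in\fg_{-1}$. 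The first requires that the $\fg_0$-valued symmetric form $(v,w)\mapsto[\bar Av,w]+[v,\bar Aw]$ be a multiple of $\omega$. Decomposing over $\fg_0=\mathfrak{co}(4)=\sl_2\oplus\sl_2\oplus\CC H_1$, with $\fg_{\pm1}\cong\VV_{1,1}$, $\fg_{\pm2}\cong\VV_{0,0}$ and $S^2\fg_{-1}^*\cong\VV_{2,2}\oplus\VV_{0,0}$ (the trivial summand being $\omega$), one has the $\fg_0$-equivariant decomposition
\[
\op{Hom}(\fg_{-1},\fg_1)\cong\VV_{1,1}\otimes\VV_{1,1}\cong\VV_{2,2}\oplus\VV_{2,0}\oplus\VV_{0,2}\oplus\VV_{0,0}.
\]
Evaluating the two conditions on a highest-weight vector of each summand then shows that the nontrivial summands $\VV_{2,2}$, $\VV_{2,0}$, $\VV_{0,2}$ are all excluded, leaving only the trivial line $\VV_{0,0}=\CC\operatorname{ad}_{X_1}$, which does satisfy both conditions. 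Hence $\pr_2(\fg_{\leq1})=\CC X_1=\fg_2$.

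The vanishing $\pr_3(\fg_{\leq1})=0$ is a shorter computation of the same kind: such an $A$ is odd, given by $\fg_{-1}\to\fg_2=\CC X_1$ and $\fg_{-2}\to\fg_1$, and matching $\fg_0$-module types in the analogue of the first equation forces $\bar A=0$ and $A(Y_1)=0$, hence $A=0$. I expect the genuine work to be the degree-$2$ step — verifying equivariantly that the three nontrivial summands really are excluded — since this is the only place where the explicit odd–odd bracket of $\fg$ and the $\fg_1$-action on $\fg_{-1}$ are needed; the rest is bookkeeping. Finally, note that this calculation is exactly the assertion $H^1(\fm,\fg)_i=0$ for all $i\geq2$, consistent with Table \ref{SpH}, where $H^1(\fm_1^\I,\fg)=\CC_1^{0|4}$ is concentrated in degree $1$; via the cohomological criterion of \cite{KST2} recalled in the introduction this again yields $\pr(\fg_{\leq1})=\fg$.
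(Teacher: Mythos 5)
Your proposal is correct in outline and takes essentially the same route as the paper: both decompose $\fg_1\otimes\fg_{-1}^\ast\cong\VV_{2,2}\oplus\VV_{2,0}\oplus\VV_{0,2}\oplus\VV_{0,0}$ over $\fg_0$, test the prolongation conditions \eqref{conditionprolongation} on a highest (or lowest) weight vector of each summand via Schur's lemma to isolate $\pr_2(\fg_{\leq1})=\VV_{0,0}=\fg_2$, and then kill $\pr_3(\fg_{\leq1})\hookrightarrow\fg_2\otimes\fg_{-1}^\ast\cong\UU_{1,1}$.

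Two caveats. First, the decisive step — that $\VV_{2,2}$, $\VV_{2,0}$, $\VV_{0,2}$ are actually excluded — is asserted rather than verified; this is exactly where the explicit odd--odd bracket of $\Gamma(s_1,s_2,s_3)$ enters (e.g.\ for $A=f_1\otimes\omega^4\in\VV_{2,2}$ one finds $0=A[e_2,e_4]=[e_2,f_1]=s_3X_3\neq0$), so the proof is not complete as written, though the strategy would go through. Second, your justification of $\pr_3(\fg_{\leq1})=0$ by ``matching $\fg_0$-module types'' does not work as stated: $\fg_2\otimes\fg_{-1}^\ast$ \emph{is} the irreducible module $\UU_{1,1}$, so no type obstruction is available, and the same kind of explicit evaluation on a weight vector is unavoidable (the paper uses $C=X_1\otimes\omega^4$ and the relation $0=C[e_2,e_4]=-[e_2,X_1]=f_2\neq0$). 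Your framing of the two prolongation conditions in terms of $\bar A$ and $A_0$, the propagation of vanishing to higher degrees by effectivity, and the closing remark identifying the computation with $H^1(\fm,\fg)_{\geq2}=0$ are all consistent with the paper.
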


 \begin{proof}
As $\fg_0^{ss}=\sl_2\oplus\sl_2$ modules, $\fg_{-1}=\UU_{1,1}=\fg_1$. 
We first claim that $\pr_2(\fg_{\leq1})=\fg_2$. The space
$\fg_{-1}$ has basis $\{e_1=yxx,e_2=yyx,e_3=yxy,e_4=yyy\}$ with the dual coframe 
$\{\omega^1,\omega^2,\omega^3,\omega^4\}$ and $\fg_1$ has basis $\{f_1=xxx,f_2=xyx,f_3=xxy,f_4=xyy\}$.

We have $\pr_2(\fg_{\leq1}) \hookrightarrow\fg_1\otimes\fg_{-1}^\ast=
\mathbb{V}_{2,2}\oplus\mathbb{V}_{2,0}\oplus\mathbb{V}_{0,2}\oplus\mathbb{V}_{0,0}$, 
and the summands have the following highest weight vectors:
 \begin{align*}
\VV_{2,2}:\, f_1\otimes\omega^4,\qquad
\VV_{0,2}:\, f_3\otimes\omega^4+f_1\otimes\omega^2,\qquad
\VV_{2,0}:\, f_2\otimes\omega^4+f_1\otimes\omega^3,\\
\VV_{0,0}:\, f_1\otimes\omega^1+f_2\otimes\omega^2+f_3\otimes\omega^3+f_4\otimes\omega^4.
 \end{align*}
By Schur's lemma, to determine $\pr_2(\fg_{\leq1})$, it suffices to test \eqref{conditionprolongation} 
on the highest weight vectors:
 \begin{itemize}
  \item If $A=f_1\otimes\omega^4\in\VV_{2,2}$ is in $\pr_2(\fg_{\leq1})$, then 
$0=A[e_2,e_4]=[A e_2,e_4]+[e_2,A e_4]=[e_2,f_1]=s_3X_3\neq0$, which is a contradiction. Therefore $\VV_{2,2}\not\subset\pr_2(\fg_{\leq1})$.
  \item If $B=f_3\otimes\omega^4+f_1\otimes\omega^2$ is in $\pr_2(\fg_{\leq1})$, then $0=B[e_2,e_4]=[Be_2,e_4]+[e_2,Be_4]=[f_1,e_4]+[e_2,f_3]=-s_3H_3\neq0$. 
Therefore $\VV_{0,2}\not\subset\pr_2(\fg_{\leq1})$.
  \item Similarly we can see that $\VV_{2,0}\not\subset\pr_2(\fg_{\leq1})$.
  \item Since $\fg_2\subset\pr_2(\fg_{\leq1})$, then $\pr_2(\fg_{\leq1})=\fg_2=\VV_{0,0}$.
 \end{itemize}
Finally, we have $\pr_3(\fg_{\leq1})\hookrightarrow\fg_2\otimes\fg_{-1}^\ast=\UU_{1,1}$ as a $\fg_0$-module. 
This has highest weight vector $C=X_1\otimes\omega^4\in\UU_{1,1}$. If $C\in\pr_3(\fg_{\leq1})$, then 
$0=C[e_2,e_4]=-[e_2,X_1]=f_2\neq 0$. Therefore $\pr_3(\fg_{\leq1})=0$. This proves that $\pr(\fg_{\leq1})=\fg$.
\end{proof}

\subsection{Parabolic $\fp_{2}^{\rm I}$}\label{pi2}

Consider $D(2,1;a)$ with the grading $\fg=\fg_{-1}\oplus\fg_0\oplus\fg_1$ associated with $\fp_{2}^{\rm I}$.
   
\begin{figure}[h]
    \centering
\begin{minipage}{0.3\textwidth}\centering
\begin{small}\begin{align*}
 \begin{array}{|c|c|c|} \hline
 k & \Delta_{\bar{0}}(k) & \Delta_{\bar{1}}(k) \\ \hline\hline
 0 & \pm\alpha_3 & \pm\alpha_1,\ \pm(\alpha_1+\alpha_3)\\ \hline
 1 & \alpha_2,\ 2\alpha_1+\alpha_2+\alpha_3 & \alpha_1+\alpha_2,\ \alpha_1+\alpha_2+\alpha_3\\ \hline
 \end{array}
 \end{align*}\end{small}
\end{minipage} 
    \begin{minipage}{0.25\textwidth}\centering
\!\!\!\!\raisebox{-0.2in}{\begin{tikzpicture}[remember picture]
\node (A1)[draw, circle, scale=1] {};
\node [draw,circle,fill=mygray] {};
\node (B1)[draw, circle, scale=1, above right of=A1, xshift=0.5cm] {};
\node[below] at (1.2,0.55) {$\times$};
\node (C1)[draw, circle, scale=1, below right of=A1, xshift=0.5cm] {};
\node [left of=A1, xshift=0.5cm]{$\alpha_1$};
\node [right of=B1, xshift=-0.5cm]{$\alpha_2$};
\node [right of=C1, xshift=-0.5cm]{$\alpha_3$};
\draw (A1) -- (B1);
\draw (A1) --node[anchor=north]{} (C1);
 \end{tikzpicture}}\!\!
\end{minipage}
\begin{minipage}{0.25\textwidth}\centering
\begin{small}\begin{align*}
 \begin{array}{|c|c|c|} \hline
 k & \!(\fg_k)_{\bar{0}}/\fh\! & (\fg_k)_{\bar{1}} \\ \hline\hline
 0 &  X_3,\ Y_3 & \!\!\begin{array}{l} xyy,\ xyx,\\ yxx,\ yxy \end{array}\!\! \\ \hline
 1 & X_2,\ X_1 & \!\!xxy,\ xxx\!\! \\ \hline
 \end{array}
 \end{align*}\end{small}
\end{minipage} 
        \caption{The grading $\mathfrak{p}_2^{\rm I}$.}
\end{figure}

 \begin{proposition}
For the $\fp_{2}^{\rm I}$ associated grading of $\fg=D(2,1;a)$: $\pr(\fm)\neq\fg$. 
In fact, $\dim\pr(\fm)=\infty$.
 \end{proposition}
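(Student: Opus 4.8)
The plan is to exploit that the $\fp_2^{\rm I}$ grading has depth $\nu=1$: the whole negative part is a single slot $\fm=\fg_{-1}$, which is therefore abelian, since $[\fg_{-1},\fg_{-1}]\subseteq\fg_{-2}=0$. From the grading table, $\fg_{-1}$ is the super vector space of super-dimension $(2|2)$ with even part $\langle Y_1,Y_2\rangle$ and odd part $\langle yyx,yyy\rangle$. The key point is then that the only datum attached to $\fm$ is the distribution $\mathcal{D}=\fg_{-1}$, which in a depth-$1$ grading fills out the whole tangent space and hence imposes no bracket-generating constraint to cut down the prolongation.

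Concretely, I would first compute $\pr_0(\fm)=\mathfrak{der}_0(\fm)$. For an abelian $\fm=V$ concentrated in degree $-1$, the derivation identity in \eqref{conditionprolongation} is vacuous at $j=0$, so every grade-preserving endomorphism of $V$ is a derivation and $\pr_0(\fm)=\gl(V)=\gl(2|2)$. Already this strictly exceeds $\fg_0=\gl(2|1)$, which gives $\pr(\fm)\neq\fg$. To obtain the sharper statement I would identify the full prolongation: for abelian $V$ with $\pr_0=\gl(V)$ the Tanaka prolongation is the classical Cartan prolongation, with $\pr_k(\fm)=\op{Hom}(S^{k+1}V,V)=S^{k+1}V^*\otimes V$ for $k\geq-1$ (super-symmetric powers). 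Summing over $k\geq-1$ recovers the Lie superalgebra of formal vector fields on $V\cong\CC^{2|2}$, i.e.\ $\mathfrak{vect}(2|2)$, matching Table \ref{SymmA}. Since $V\neq0$ this is infinite-dimensional, whence $\dim\pr(\fm)=\infty$ and in particular $\pr(\fm)\neq\fg=D(2,1;a)$, the latter being finite-dimensional.

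A one-line alternative is to invoke Proposition \ref{inftype} directly: as $\fm$ is abelian, $[v,\fm]=0$ for every $v\in\fg_{-1}$, so any nonzero $v\in\fg_{-1}$ together with any codimension-one $V\subset\fm$ satisfies its hypotheses, forcing $\dim\pr(\fm)=\infty$ at once. There is no genuine obstacle in the argument; the only point warranting care is the claim $\pr_0(\fm)=\gl(2|2)$ rather than a proper subalgebra, which is precisely where abelianness (the vacuity of the derivation identity) enters, and the identification $\pr(\fm)=\mathfrak{vect}(2|2)$ then doubles as a consistency check against Table \ref{SymmA}.
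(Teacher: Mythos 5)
Your argument is correct and is essentially the paper's own: the paper justifies this proposition with the single remark that in the $|1|$-graded case $\fm$ is abelian and $\pr(\fm)=\fm\otimes S(\fm^*)\simeq\mathfrak{vect}(2|2)$, which is exactly the Cartan-prolongation computation you spell out (and your observation that already $\pr_0(\fm)=\gl(2|2)\supsetneq\gl(2|1)=\fg_0$, together with the fallback via Proposition \ref{inftype}, are valid but unneeded extras).
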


Indeed, in the $|1|$-graded case the prolongation is $\pr(\fm)=\fm\otimes S(\fm^*)$, which geometrically 
corresponds to the Lie superalgebra of all supervector fields $\mathfrak{vect}(\fm)$.

\begin{proposition}\cite{Poletaeva}
For the grading of $\fg=D(2,1;a)$ associated with $\fp_{2}^{\rm I}$:  $\pr(\fm,\fg_0)=\fg$. 
 \end{proposition}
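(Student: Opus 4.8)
The plan is to exploit that the $\fp_2^{\rm I}$-grading has depth $\nu=1$, so $\fm=\fg_{-1}$ is abelian and the Tanaka--Weisfeiler prolongation $\pr(\fm,\fg_0)$ collapses to the (super) Cartan prolongation of the pair $(\fg_{-1},\fg_0)$, with $\fg_0=\gl(2|1)$ acting on the $(2|2)$-dimensional module $\fg_{-1}$. Indeed, since $[\fg_{-1},\fg_{-1}]=0$, condition \eqref{conditionprolongation} for $A\in\pr_1\hookrightarrow\fg_0\otimes\fg_{-1}^*$ reads $[Ax,y]+(-1)^{|x||A|}[x,Ay]=0$ in $\fg_{-1}$ for all $x,y\in\fg_{-1}$, which is exactly the super-symmetry condition defining the first Cartan prolongation $\fg_0^{(1)}$; the higher $\pr_k(\fm,\fg_0)$ are the iterated prolongations $\fg_0^{(k)}$. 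As $\fg$ is itself graded, the adjoint action $z\mapsto(x\mapsto[z,x])$ gives an automatic inclusion $\fg_1\subseteq\pr_1(\fm,\fg_0)$ (the graded Jacobi identity with $[x,y]=0$ verifies the condition), so the entire content is the reverse inclusion $\pr_1\subseteq\fg_1$ together with $\pr_2(\fm,\fg_0)=0$; once $\pr_2=0$, all higher prolongations vanish, since a degree-$j$ prolongation maps $\fg_{-1}$ into $\pr_{j-1}$ and is determined by that component.

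I would verify both facts by the highest-weight-vector method of Proposition \ref{propp1I}. Fix the basis $\{Y_1,Y_2\,|\,yyx,yyy\}$ of $\fg_{-1}$ with dual coframe $\{\omega^1,\dots,\omega^4\}$ and the basis $\{X_2,X_1\,|\,xxy,xxx\}$ of $\fg_1$, and organize $\fg_0\otimes\fg_{-1}^*$ and $\fg_1\otimes\fg_{-1}^*$ by the joint weights of the even semisimple part $\sl_2^{(3)}\subset\fg_0$ (the third copy, whose roots $\pm\alpha_3=\pm2\varepsilon_3$ lie in degree $0$) and of the Cartan $\fh$. For $\pr_1$ one tests \eqref{conditionprolongation} on each highest-weight vector lying outside $\fg_1$: as in the three bullet points of Proposition \ref{propp1I}, each should produce a nonzero bracket $[Ax,y]+(-1)^{|x||A|}[x,Ay]\in\fg_{-1}$, ruling it out and forcing $\pr_1=\fg_1$. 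For $\pr_2\hookrightarrow\fg_1\otimes\fg_{-1}^*$, each candidate $A\colon\fg_{-1}\to\fg_1$ should yield a nonzero element $[Ax,y]+(-1)^{|x||A|}[x,Ay]\in\fg_0$ (there is no $\fg_2$ to absorb it), giving $\pr_2=0$ and hence $\pr(\fm,\fg_0)=\fg_{-1}\oplus\fg_0\oplus\fg_1=\fg$.

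The main obstacle is the interplay of the Koszul signs in the super-symmetry condition with the fact that $\fg_0=\gl(2|1)$ is not reductive as a Lie superalgebra: its odd part interchanges the even weight spaces $\langle Y_1,Y_2\rangle$ and the odd weight spaces $\langle yyx,yyy\rangle$ of $\fg_{-1}$, so $\fg_0\otimes\fg_{-1}^*$ need not be completely reducible and one cannot simply read off prolongations summand by summand. I would circumvent this by working only with the genuinely reductive $\sl_2^{(3)}$-action refined by the $\fh$-weights, where the finitely many candidate highest-weight vectors separate cleanly and can be checked by hand. As a consistency check, this reproduces the Cartan-prolongation computation of \cite{Poletaeva} and is also predicted by Table \ref{SpH}: for $\fm_2^{\rm I}$ one has $H^1(\fm,\fg)=\CC_0^{3|4}$ concentrated in degree $0$, so $H^1(\fm,\fg)_+=0$, which by the criterion of \cite{KST2} recalled in the introduction gives $\pr(\fm,\fg_0)=\fg$ at once.
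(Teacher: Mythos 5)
Your strategy is sound and is, in essence, the standard direct Tanaka--Weisfeiler computation that the paper carries out for every other parabolic but deliberately omits here: the paper's own treatment of this proposition consists of the citation to \cite{Poletaeva} together with the one-line remark that the result was confirmed by an independent symbolic computation, so no written argument is given at all. Your reduction to the Cartan prolongation (using that $\fm=\fg_{-1}$ is abelian in the $|1|$-graded case, so \eqref{conditionprolongation} degenerates to the super-symmetry condition $[Ax,y]+(-1)^{|x||A|}[x,Ay]=0$), the automatic inclusion $\fg_1\subseteq\pr_1(\fm,\fg_0)$ via the graded Jacobi identity, the observation that $\pr_2=0$ forces all higher prolongations to vanish, and the decision to decompose only under the even reductive part $\sl_2^{(3)}\oplus\fh$ rather than under the non-semisimple $\gl(2|1)$ (whose modules are indeed not completely reducible, as Lemma \ref{Le2} illustrates) are all correct and match the methodology of Proposition \ref{propp1I} and Appendix \ref{B0}. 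What you have not done is the actual verification: the highest-weight tests in the $(18|18)$-dimensional space $\fg_0\otimes\fg_{-1}^*$ and in $\fg_1\otimes\fg_{-1}^*$ are asserted to ``produce a nonzero bracket'' but not computed, so your text is a correct plan rather than a proof. Your closing appeal to Table \ref{SpH} (for $\fm_2^\I$ one has $H^1(\fm,\fg)=\CC_0^{3|4}$, hence $H^1(\fm,\fg)_+=0$, hence $\pr(\fm,\fg_0)=\fg$ by the criterion of \cite{KST2}) is logically the cleanest complete route, but note that the paper only writes out the spectral-sequence computation behind Table \ref{SpH} for $\fp_1^\I$; the $\fm_2^\I$ row is itself only computer-verified there, so this shortcut relocates the computational burden rather than removing it.
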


We have confirmed this result by an independent symbolic computation.

\subsection{Parabolic $\fp_{12}^{\rm I}$}\label{pi12}
  
Consider $D(2,1;a)$ with the grading  $\fg=\fg_{-3}\oplus\cdots\oplus\fg_3$ associated with $\fp_{12}^{\rm I}$.
 
 \begin{figure}[h]
    \centering
\begin{minipage}{0.3\textwidth}\centering
\begin{align*}\begin{small}
 \begin{array}{|c|c|c|} \hline
 k & \Delta_{\bar{0}}(k)  & \Delta_{\bar{1}}(k) \\ \hline\hline
 0 & \pm\alpha_3 & \\ \hline
 1 &  \alpha_2 & \alpha_1,\ \alpha_1+\alpha_3\\ \hline
 2 & & \alpha_1+\alpha_2,\ \alpha_1+\alpha_2+\alpha_3\\ \hline
 3 &  2\alpha_1+\alpha_2+\alpha_3 & \\ \hline
 \end{array}\end{small}
 \end{align*}
 \end{minipage}
\begin{minipage}{0.25\textwidth}\centering
\!\!\raisebox{-0.2in}{
 \begin{tikzpicture}[remember picture]
  \node (A1)[draw, circle, scale=1] {};
 \node [draw,circle,fill=mygray] {};
 \node (B1)[draw, circle, scale=1, above right of=A1, xshift=0.5cm] {};
\node[below] at (0,-0.15) {$\times$};
\node[below] at (1.2,0.55) {$\times$};
  \node (C1)[draw, circle, scale=1, below right of=A1, xshift=0.5cm] {};
  \node [left of=A1, xshift=0.5cm]{$\alpha_1$};
 \node [right of=B1, xshift=-0.5cm]{$\alpha_2$};
 \node [right of=C1, xshift=-0.5cm]{$\alpha_3$};
  \draw (A1) -- (B1);
 \draw (A1) --node[anchor=north]{} (C1);
 \end{tikzpicture}}
\end{minipage}
\begin{minipage}{0.25\textwidth}\centering
\begin{align*}\begin{small}
 \begin{array}{|c|c|c|} \hline
 k & (\fg_k)_{\bar{0}}/\fh & (\fg_k)_{\bar{1}} \\ \hline\hline
 0 & X_3,\ Y_3 & \\ \hline
 1 & X_2 &  xyy,\ xyx \\ \hline
 2& & xxy,\ xxx \\ \hline
 3& X_1 & \\ \hline
 \end{array}\end{small}
 \end{align*}
 \end{minipage}
        \caption{The grading $\mathfrak{p}_{12}^{\rm I}$.}
\end{figure}
   
 \begin{proposition}
For the grading of $\fg=D(2,1;a)$ associated with $\fp_{12}^{\rm I}$: $\pr(\fm)=\pr(\fm,\fg_0)\neq\fg$. 
In fact, $\dim\pr(\fm,\fg_0)=\infty$.
 \end{proposition}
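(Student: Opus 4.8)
The plan is to establish the two assertions separately. The identity $\pr(\fm)=\pr(\fm,\fg_0)$ reduces to checking $\pr_0(\fm)=\der_0(\fm)=\fg_0$, a finite linear-algebra computation, while the infinite-dimensionality is obtained by realizing the contact superalgebra $\mathfrak{cont}(1|4)$ as a prolongation, exploiting that the $\fp_{12}^\I$-grading refines the contact grading $\fp_1^\I$ (indeed $\fp_{12}^\I\subset\fp_1^\I$).

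First I would record the brackets of $\fm=\fg_{-3}\oplus\fg_{-2}\oplus\fg_{-1}$. With $\fg_{-1}=\langle Y_2;\,e_1=yxx,\,e_2=yxy\rangle$, $\fg_{-2}=\langle f_1=yyx,\,f_2=yyy\rangle$ and $\fg_{-3}=\langle Y_1\rangle$, the odd--odd bracket formula gives that the only nonzero brackets are $[Y_2,e_1]=f_1$, $[Y_2,e_2]=f_2$, $[e_1,f_2]=-s_1Y_1$, $[e_2,f_1]=s_1Y_1$, and in particular all brackets among $e_1,e_2$ vanish. A grade-$0$ derivation $D$ is determined by its action on the generating space $\fg_{-1}$. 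The even ones form a free family parametrized by an arbitrary endomorphism of $\langle e_1,e_2\rangle$ together with a scaling of $Y_2$, all of which extend consistently to $\fg_{-2}\oplus\fg_{-3}$, so $\der_0^{\bar0}(\fm)=\gl(2)\oplus\CC=\fg_0$. For an odd derivation, writing $D(Y_2)=\alpha e_1+\beta e_2$ and $D(e_i)=\gamma_i Y_2$, the relations $[e_i,e_j]=0$ force $\gamma_i=0$ and the relations $[Y_2,f_i]=0$ force $\alpha=\beta=0$; hence $\der_0^{\bar1}(\fm)=0$. This yields $\pr_0(\fm)=\fg_0$ and therefore $\pr(\fm)=\pr(\fm,\fg_0)$.

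For the infinite-dimensionality I would use that the present grading element is $Z_1+Z_2$, refining the contact element $Z_1$ whose prolongation is $K:=\pr(\fm_1^\I)=\mathfrak{cont}(1|4)$. The element $Z_2\in\fh$ lies in the contact grade-zero algebra $\mathfrak{co}(4)$ and is semisimple; assigning nonnegative $Z_2$-weights to the contact coordinates on $\CC^{1|4}$ (so that every contact vector field has $Z_2$-weight at least $-1$, attained by the Reeb field) shows that $\op{ad}_{Z_2}$ has integer eigenvalues bounded below by $-1$ on all of $K$. Hence $Z_1+Z_2$ defines a genuine $\ZZ$-grading $K=\bigoplus_i\tilde K_i$, bounded below, and a short weight count gives $\tilde K_{<0}=\fm$, $\tilde K_0=\gl(2)\oplus\CC=\fg_0$, with $\tilde K_{\geq0}$ contained in the non-negative part of the contact grading. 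It then remains to verify effectivity with respect to the smaller space $\tilde K_{-1}=\langle Y_2,e_1,e_2\rangle$: if $A\in\tilde K_{\geq0}$ annihilates $\tilde K_{-1}$, then $[A,f_i]=[A,[Y_2,e_i]]=0$ by the super-Jacobi identity, so $A$ annihilates the whole contact $\fg_{-1}$ and vanishes by effectivity of $\mathfrak{cont}(1|4)$. Thus $K$ is an effective prolongation of $(\fm,\fg_0)$, giving $\mathfrak{cont}(1|4)\subseteq\pr(\fm,\fg_0)$ and $\dim\pr(\fm,\fg_0)=\infty\neq\dim\fg$.

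The main obstacle is this passage from the contact prolongation of $\fp_1^\I$ to the refined grading: one must confirm both that the refinement extends to a bounded-below $\ZZ$-grading of the whole infinite-dimensional algebra $\mathfrak{cont}(1|4)$ — which rests on the nonnegativity of the $Z_2$-weights of the contact coordinates — and that effectivity survives the restriction from the contact $\fg_{-1}$ to the smaller $\fg_{-1}$ of the $\fp_{12}^\I$-grading, where the relations $f_i=[Y_2,e_i]$ together with the super-Jacobi identity are exactly what is needed.
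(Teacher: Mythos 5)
Your proof is correct, but it takes a genuinely different route from the paper's. The paper disposes of both claims at once by a geometric argument deferred to Section 5.1: the derived distribution $\hcD^2=[\hcD,\hcD]$ on $M^{2|4}=G/P_{12}^{\I}$ has a single Cauchy characteristic (the even vector $Y_2$), and the quotient by it is the contact space $M^{1|4}$, so $\pr(\fm)\simeq\mathfrak{cont}(1|4)$ with a regraded $\ZZ$-grading; since $\fg_0=\gl(2)\oplus\CC$ exhausts degree zero there, $\pr(\fm,\fg_0)=\pr(\fm)=\mathfrak{cont}(1|4)$. You instead (i) compute $\der_0(\fm)=\gl(2)\oplus\CC=\fg_0$ directly from the structure constants of $\fm$ --- your brackets $[Y_2,e_i]=f_i$, $[e_1,f_2]=-s_1Y_1$, $[e_2,f_1]=s_1Y_1$ are correct, and the vanishing of $\der_0(\fm)_{\bar1}$ does follow from $[e_i,e_j]=0$ and $[Y_2,f_i]=0$ as you argue --- and (ii) embed the contact algebra, regraded by $Z_1+Z_2=\tfrac32H_1+\tfrac12H_2$, into $\pr(\fm,\fg_0)$ as an effective graded extension. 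Your regrading is exactly the algebraic shadow of the paper's twistor fibration $\pi_1$; what it buys is a self-contained argument independent of the geometric machinery of Section 5.1, at the price of yielding only the inclusion $\mathfrak{cont}(1|4)\subseteq\pr(\fm,\fg_0)$ rather than equality --- which, however, is all the proposition requires. Two points in your ``short weight count'' deserve to be made explicit: the bound that every contact field has $Z_2$-weight $\geq-1$ gives only $(Z_1+Z_2)$-weight $\geq0$ on $\mathfrak{c}_1$, so to conclude $\tilde K_0=\fg_0$ one must check that $\mathfrak{c}_1$ carries no weight-zero vectors (it does not: only two of the five coordinates have $Z_2$-weight zero, so every cubic generating function has positive weight, and the eigenvalues on $\mathfrak{c}_1$ are $1$ and $2$); and in the effectivity step one should record that $\tilde K_{\geq0}\subset\mathfrak{c}_{\geq0}$, since all of $\mathfrak{c}_{-1}\oplus\mathfrak{c}_{-2}$ has negative $(Z_1+Z_2)$-weight, so that after your reduction from $\tilde K_{-1}$ to the contact $\mathfrak{c}_{-1}$ via $f_i=[Y_2,e_i]$ the effectivity of $\mathfrak{cont}(1|4)$ genuinely applies. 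Both are finite checks and your argument goes through.
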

 
 \begin{proof}
It will be shown in Section \ref{TwSec} by geometric arguments that the Lie superalgebra $\pr(\fm)$ for $\fp_{12}^\I$ is
isomorphic to its counterpart for $\fp_1^\I$, namely the Lie superalgebra $\mathfrak{cont}(1|4)$,
though with a different grading. In this case $\fg_0=\gl_2\oplus\CC$ and $\fg_1=\CC^{1|0}\oplus\CC^{0|2}$,
while $\pr_1(\fm,\fg_0)=\CC^{0|2}\oplus\CC^{1|0}\oplus\CC^{0|2}$. Thus also
$\pr(\fm,\fg_0)=\mathfrak{cont}(1|4)$.
 \end{proof}

Let us note that the claim on infinite-dimensionality of $\pr(\fm)$ does not follow from Proposition \ref{inftype},
as the required vector $v\in\fg_{-1}$ does not exist. Consequently, this is an example where the
rigidity criterion for the Tanaka--Weisfeiler prolongation is not sufficient.
 
 \begin{proposition}
For the grading of $\fg=D(2,1;a)$ associated with $\fp_{12}^{\rm I}$: $\pr(\fg_{\leq1})=\fg$.
 \end{proposition}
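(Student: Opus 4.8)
The plan is to compute the prolongation $\pr_j(\fg_{\leq 1})$ degree by degree using the inductive formula \eqref{conditionprolongation}, exactly as was done for $\fp_1^{\rm I}$ in Proposition \ref{propp1I}, and to show that the prolongation terminates precisely at $\fg$. From Figure 2 we read off the grading data: $\fm=\fg_{-3}\oplus\fg_{-2}\oplus\fg_{-1}$ with dimensions $(1|0,1|2,0|2)$ and $\fg_0=\gl_2\oplus\CC$ acting on these pieces. Since $\fp_{12}^{\rm I}$ is already a reduction of $\fp_1^{\rm I}$ (see Figure \ref{pinclusions}), I would first identify each graded piece as a module for the semisimple part $\fg_0^{ss}$ together with the grading element(s), choosing explicit bases from the root data: the odd pieces $\fg_{-1}=\langle yxy, yxx\rangle$, $\fg_{-2}=\langle yyy, yyx\rangle$, the even pieces in $\fg_{-2}$ and $\fg_{-3}=\langle Y_1\rangle$, with the dual coframe on $\fg_{-1}$. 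The key structural input is that $\fg_{-1}$ bracket-generates $\fm$ (fundamentality), so a prolongation element is determined by its restriction to $\fg_{-1}$ and must satisfy the Leibniz-type compatibility on all brackets $[\fg_{-1},\fg_{-1}]$, $[\fg_{-1},\fg_{-2}]$, etc.

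First I would verify $\pr_1(\fg_{\leq 1})=\fg_1$ and $\pr_2(\fg_{\leq 1})=\fg_2$ by embedding $\pr_j \hookrightarrow \bigoplus_i \fg_{i+j}\otimes\fg_i^*$ (restricted to the degree-$+j$ Hom space into $\fm$) and decomposing into $\fg_0^{ss}$-irreducibles. By Schur's lemma it suffices to test the defining derivation identity on the highest-weight vectors of each candidate summand not already contained in $\fg_j$. For each spurious summand I expect to produce a pair $x,y\in\fm$ for which $A[x,y]\neq [Ax,y]\pm[x,Ay]$, landing on a nonzero root vector (an $H_i$ or an $X_i$ with a nonzero structure constant $s_i$), thereby ruling it out — this mirrors the three bullet-point contradictions in Proposition \ref{propp1I}. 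The crucial termination step is $\pr_3(\fg_{\leq 1})=\fg_3=\langle X_1\rangle$ and then $\pr_4(\fg_{\leq 1})=0$: here I would show that any hypothetical degree-4 element, having highest weight matching $\fg_4=0$, must vanish, or directly that testing it on a suitable bracket forces it into a nonzero element of $\fm$, contradicting that it maps into $\fg_3$.

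The main obstacle I anticipate is bookkeeping rather than conceptual. Because this is a $|3|$-graded case (depth $\nu=3$) with several mixed even/odd pieces, the Hom-module $\pr_j \hookrightarrow \bigoplus_i \op{Hom}(\fg_i,\fg_{i+j})$ receives contributions from more than one source degree, so the $\fg_0$-decompositions are larger and the compatibility conditions must be checked against brackets originating in different graded components simultaneously — an element of $\pr_j$ restricted to $\fg_{-1}$ must propagate consistently to its action on $\fg_{-2}$ and $\fg_{-3}$. The delicate point is ensuring that the highest-weight testing is exhaustive: one must confirm that the compatibility constraints coming from $[\fg_{-1},\fg_{-2}]\subset\fg_{-3}$ (which detect how a candidate acts beyond the bracket-generating layer) do not accidentally admit extra solutions. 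I would streamline this by invoking the Spencer cohomology computation: since Table \ref{SpH} records $H^1(\fm,\fg)_i$ for $\fm_{12}^{\rm I}$ concentrated in degrees $-2$ and $1$ with no positive part beyond degree $1$, the criterion quoted after Table \ref{SpH} (if $H^1(\fm,\fg)_i=0$ for all $i\geq k$ then $\pr(\fg_{\geq k})=\fg$) applied at $k=2$ immediately gives $\pr_j(\fg_{\leq 1})=\fg_j$ for all $j\geq 2$, reducing the explicit work to the single degree $\pr_1$, where the $\fg_0$-reduction from $\pr_1(\fm,\fg_0)$ down to $\fg_1$ is exactly the content of the order-1 structure reduction. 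This cohomological shortcut is the cleanest route and avoids grinding through the higher degrees by hand.
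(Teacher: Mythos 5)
Your primary plan (embed $\pr_j(\fg_{\leq1})$ into $\fg_{j-1}\otimes\fg_{-1}^*$, decompose into $\fg_0^{ss}$-irreducibles, and kill the spurious summands by testing the Leibniz identity on highest weight vectors) is exactly the paper's route, and it does work -- the paper carries it out for $\pr_2$, $\pr_3$ and $\pr_4$, finding $\fg_1\otimes\fg_{-1}^*=\VV_0\oplus\UU_1\oplus\UU_1\oplus\VV_2\oplus\VV_0$ and so on, and terminating with $\pr_4(\fg_{\leq1})=0$. However, your setup contains a concrete error that would derail the computation: for $\fp_{12}^{\rm I}$ the component $\fg_{-1}$ has dimension $(1|2)$, with basis $\{Y_2,\,yxx,\,yxy\}$ -- it contains the \emph{even} root vector $Y_2$ (root $-\alpha_2$) -- while $\fg_{-2}=\langle yyx,yyy\rangle$ is purely odd of dimension $(0|2)$ and $\fg_{-3}=\langle Y_1\rangle$. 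You have shifted the dimension vector by one slot and declared $\fg_{-1}$ purely odd. This is not cosmetic: without $Y_2$ the brackets $[\fg_{-1},\fg_{-1}]$ cannot produce a $(0|2)$-dimensional $\fg_{-2}$ (an odd $\CC^{0|2}$ has even symmetric square $\CC^3$), so fundamentality fails, and every $\fg_0$-module decomposition of $\fg_{j-1}\otimes\fg_{-1}^*$ and every test bracket you would write down comes out wrong. Fix the grading before anything else.

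The ``cohomological shortcut'' you offer as the cleanest route is not a valid substitute for the computation. First, it is logically backwards about where the work lies: $\pr_1(\fg_{\leq1})=\fg_1$ holds by definition (the prolongation of $\fg_{\leq1}$ extends $\fg_1$ by fiat), so nothing needs to be checked in degree $1$; the entire content of the proposition is in degrees $2,3,4$, which are precisely the degrees your shortcut proposes to skip. Second, since $H^1(\fm,\fg)_1=\CC_1^{0|2}\neq0$ for $\fm_{12}^{\rm I}$, the criterion as stated (vanishing of $H^1_i$ for $i\geq k$) applies only with $k=2$ and hence yields $\pr(\fg_{\leq2})=\fg$; to descend to $\pr(\fg_{\leq1})=\fg$ you must still verify $\pr_2(\fg_{\leq1})=\fg_2$ by hand, which is the first and most involved step of the paper's proof. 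Third, and most seriously, invoking Table~2 here is essentially circular: the paper only establishes the Spencer cohomology in detail for $\fp_1^{\rm I}$ (Appendix C) and asserts the remaining five rows on the strength of ``similar but longer computations'' plus computer verification, and the vanishing of $H^1(\fm,\fg)_i$ for $i\geq2$ encodes exactly the prolongation statement you are trying to prove ($H^1_i\cong\pr_i(\fg_{\leq i-1})/\fg_i$ in positive degrees). A self-contained proof must do the direct computation; once you correct the basis of $\fg_{-1}$, your first plan carries it through.
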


 \begin{proof}
Let $\{e_1=Y_2, e_2=yxx, e_3=yxy\}$ be a basis of $\fg_{-1}$ and $\omega^1,\omega^2,\omega^3$
be the dual basis; let $\{e_4=yyx, e_5=yyy\}$ be a basis of $\g_{-2}$ and $\{e_6=Y_1\}$ that for $\g_{-3}$.

Next, let $\{f_1=X_2, f_2=xyx, f_3=xyy\}$ be a basis of $\fg_1=\VV_0\oplus\UU_1$. 
We have $\pr_2(\fg_{\leq1})\hookrightarrow \fg_1\otimes\fg_{-1}^\ast
=\VV_0\oplus\UU_1\oplus\UU_1\oplus\VV_2\oplus\VV_0$
with highest weight vectors of the isotypic components:
 \begin{eqnarray*}
  \VV_0\oplus\VV_0:\,f_1\otimes\omega^1,\,  f_3\otimes\omega^3+f_2\otimes\omega^2;\qquad
  \UU_1\oplus\UU_1:\, f_1\otimes\omega^3,\, f_2\otimes\omega^1;\qquad
  \VV_2:\, f_2\otimes\omega^3.
 \end{eqnarray*}
Let us test the highest weight vectors in $\fg_1\otimes\fg_{-1}^\ast$:
\begin{itemize}
  \item If $A=f_2\otimes\omega^3\in\VV_2$ is in $\pr_2(\fg_{\leq1})$, then $0=A[e_2,e_3]=[A e_2,e_3]+[e_2,Ae_3]=[e_2,f_2]=-s_3 X_3\neq0$, which is a contradiction. Therefore $\VV_2\not\subset\pr_2(\fg_{\leq1})$.
 \item If $B=af_1\otimes\omega^1+b(f_3\otimes\omega^3+ f_2\otimes\omega^2)$ is in $\pr_2(\fg_{\leq1})$, 
then $B e_4=B[e_1,e_2]=[af_1,e_2]+[e_1,bf_2]=0$. Thus $0=B[e_1,e_4]=[af_1,e_4]=ae_2$, so $a=0$.
Similarly, $B[e_2,e_3]=[bf_2,e_3]+[e_2,bf_3]=bs_3H_3$, so $b=0$. 
 \item If $C=cf_1\otimes\omega^3+d\,f_2\otimes\omega^1$ is in $\pr_2(\fg_{\leq1})$, then 
\begin{equation*}
 Ce_5=C[e_1,e_3]=[df_2,e_3]+[e_1,cf_1]=d\left(\frac{s_1}{2}H_1-\frac{s_2}{2}H_2+\frac{s_3}{2}H_3\right)-cH_2, 
\end{equation*}
and $0=C[e_1,e_5]=[df_2,e_5]+[e_1,d\left(\frac{s_1}{2}H_1-\frac{s_2}{2}H_2+\frac{s_3}{2}H_3\right)-cH_2]=(-2ds_2-2c) Y_2$, therefore $c=-ds_2$, and $\UU_1\oplus\UU_1\not\subset\pr_2(\fg_{\leq1})$. 
Since $\fg_2\subset\pr_2(\fg_{\leq1})$, so $\pr_2(\fg_{\leq1})=\fg_2=\UU_1$.
 \end{itemize}

Let $\{f_4=xxy,f_5=xxx\}$ be a basis of $\fg_2=\UU_1$.
We have $\pr_3(\fg_{\leq1})\hookrightarrow\fg_2\otimes\fg_{-1}^\ast=\UU_1\oplus\VV_2\oplus\VV_0$
with highest weight vectors of the irreducible components:
 \begin{align*}
  \UU_1:\, f_5\otimes\omega^1;\qquad
  \VV_2:\, f_5\otimes\omega^3;\qquad
  \VV_0:\, f_5\otimes\omega^2+f_4\otimes\omega^3.
\end{align*}
Let us test the highest weight vectors in $\fg_2\otimes\fg_{-1}^\ast$.
\begin{itemize}
  \item If $D=f_5\otimes\omega^1\in\UU_1$ is in $\pr_3(\fg_{\leq1})$, then 
  $D e_4=D[e_1,e_2]=[f_5,e_2]=0
  $ and $0=D[e_1,e_4]=[f_5,e_4]=s_3 X_3\neq 0$. Therefore $\UU_1\not\subset\pr_3(\fg_{\leq1})$.
\item If $E=f_5\otimes\omega^3\in\VV_2$ is in $\pr_3(\fg_{\leq1})$, then $Ee_5=E[e_1,e_3]=[e_1,Ee_3]=[e_1,f_5]=f_2$ and $0=E[e_3,e_5]=[f_5,e_5]+[e_3,f_2]=-s_2 H_2\neq0$. Therefore $\VV_2\not\subset\pr_3(\fg_{\leq1})$.
\item Since $\fg_3\subset\pr_3(\fg_{\leq1})$, we conclude $\pr_3(\fg_{\leq1})=\fg_3=\VV_0$.
 \end{itemize}

Finally, let $\{f_6=X_1\}$ be a basis of $\fg_3=\VV_0$.
We have $\pr_4(\fg_{\leq1})\hookrightarrow\fg_3\otimes\fg_{-1}^\ast=\VV_0\oplus\UU_1$,
and these isotypic components have highest weight vectors:
 \begin{align*}
  \VV_0:\, f_6\otimes\omega^1\qquad
  \UU_1:\, f_6\otimes\omega^3
\end{align*}
Let us test the highest weight vectors in $\fg_3\otimes\fg_{-1}^\ast$.
\begin{itemize}
  \item If $F=f_6\otimes\omega^1\in\VV_0$ is in $\pr_4(\fg_{\leq1})$, then
  $F e_4=F[e_1,e_2]=[f_6,e_2]=f_5
  $, and $0=F[e_1,e_4]=[f_6,e_4]+[e_1,f_5]=2f_2\neq 0$. Therefore $\VV_0\not\subset\pr_4(\fg_{\leq1})$.
\item If $G=f_6\otimes\omega^3\in\UU_1$ is in $\pr_4(\fg_{\leq1})$, then 
$0=G[e_2,e_3]=[e_2,f_6]=-f_5\neq0$. Therefore $\UU_1\not\subset\pr_4(\fg_{\leq1})$, and 
we conclude $\pr_4(\fg_{\leq1})=\fg_4=0$.
 \end{itemize} 
This proves that $\pr(\fg_{\leq1})=\fg$.
 \end{proof}

\subsection{Parabolic $\fp_{23}^{\rm I}$}\label{pi23}

Consider $D(2,1;a)$ with the grading $\fg=\fg_{-2}\oplus\cdots\oplus\fg_2$ associated with $\fp_{23}^{\rm I}$.

\begin{figure}[h]
    \centering
\begin{minipage}{0.3\textwidth}\centering
 \begin{align*}\begin{small}
 \begin{array}{|c|c|c|} \hline
 k & \Delta_{\bar{0}}(k) & \Delta_{\bar{1}}(k) \\ \hline\hline
 0 & & \pm\alpha_1 \\ \hline
 1 &  \alpha_2,\ \alpha_3 & \alpha_1+\alpha_2,\ \alpha_1+\alpha_3\\ \hline
 2 & 2\alpha_1+\alpha_2+\alpha_3 & \alpha_1+\alpha_2+\alpha_3 \\ \hline
 \end{array}\end{small}
 \end{align*}
 \end{minipage}
\begin{minipage}{0.25\textwidth}\centering
\!\!\raisebox{-0.2in}{
 \begin{tikzpicture}[remember picture]
  \node (A1)[draw, circle, scale=1] {};
 \node [draw,circle,fill=mygray] {};
 \node (B1)[draw, circle, scale=1, above right of=A1, xshift=0.5cm] {};
\node[below] at (1.2,0.55) {$\times$};
\node[below] at (1.2,-0.85) {$\times$};
  \node (C1)[draw, circle, scale=1, below right of=A1, xshift=0.5cm] {};
  \node [left of=A1, xshift=0.5cm]{$\alpha_1$};
 \node [right of=B1, xshift=-0.5cm]{$\alpha_2$};
 \node [right of=C1, xshift=-0.5cm]{$\alpha_3$};
  \draw (A1) -- (B1);
 \draw (A1) --node[anchor=north]{} (C1);
 \end{tikzpicture}}
\end{minipage}
\begin{minipage}{0.25\textwidth}\centering
 \begin{align*}\begin{small}
 \begin{array}{|c|c|c|} \hline
 k & (\fg_k)_{\bar{0}}/\fh & (\fg_k)_{\bar{1}} \\ \hline\hline
 0 & & xyy,\ yxx\\ \hline
 1 &  X_2,\ X_3 & xyx,\ xxy \\ \hline
 2 & X_1 & xxx \\ \hline
 \end{array}\end{small}
 \end{align*}
 \end{minipage}
        \caption{The grading $\mathfrak{p}_{23}^{\rm I}$.}
\end{figure}
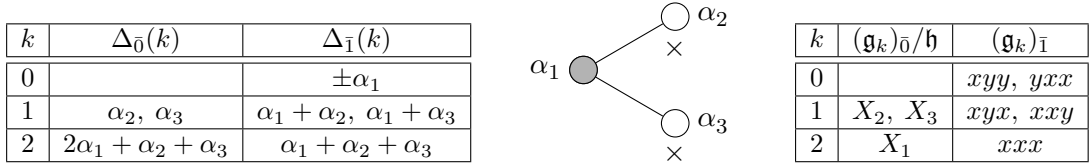

 \begin{proposition}
For the $\fp_{23}^{\rm I}$ associated grading of $\fg=D(2,1;a)$: $\pr(\fm)\neq\fg$. 
In fact, $\dim\pr(\fm)=\infty$.
 \end{proposition}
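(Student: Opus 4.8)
The plan is to deduce infinite-dimensionality directly from Proposition \ref{inftype}: it suffices to produce a nonzero $v\in\fg_{-1}$ together with a codimension-one subspace $V\subset\fm$ with $[v,V]=0$. Reading the grading off the figure, $\fm=\fg_{-1}\oplus\fg_{-2}$, where $\fg_{-1}$ has even part $\langle Y_2,Y_3\rangle$ and odd part $\langle yxy,yyx\rangle$ (super dimension $2|2$), while $\fg_{-2}=\langle Y_1\rangle\oplus\langle yyy\rangle$ (super dimension $1|1$) is central, being $\mathfrak{z}(\fm)=\fg_{-\nu}$; indeed $[\fg_{-1},\fg_{-2}]\subset\fg_{-3}=0$.

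First I would compute the brackets $\fg_{-1}\times\fg_{-1}\to\fg_{-2}$ needed for the candidate $v=Y_2$, using the $\fsl_2$-action $Y\cdot x=y$, $Y\cdot y=0$ on the relevant (second) tensor slot. This gives
\[
[Y_2,Y_3]=0,\qquad [Y_2,yyx]=0,\qquad [Y_2,yxy]=yyy,
\]
the first vanishing because distinct $\fsl_2$ summands commute, and the next two because $Y$ kills $y$ and sends $x\mapsto y$ in the second slot. Hence $\op{ad}_{Y_2}$ has image $\langle yyy\rangle$ and kernel $\langle Y_2,Y_3,yyx\rangle$, of codimension $0|1$ in $\fg_{-1}$.

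I would then take $v=Y_2$ and $V=\langle Y_2,Y_3,yyx\rangle\oplus\fg_{-2}$. This $V$ has codimension $1$ in $\fm$, and $[v,V]=0$: its $\fg_{-1}$-part lies in $\ker\op{ad}_{Y_2}$ by the computation above, while its $\fg_{-2}$-part is central. Proposition \ref{inftype} then yields $\dim\pr(\fm)=\infty$, and since $\dim\fg<\infty$ we conclude $\pr(\fm)\neq\fg$. The $\alpha_2\leftrightarrow\alpha_3$ symmetry of the diagram shows that $v=Y_3$ serves equally well.

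There is no serious obstacle here — the argument is a short bracket computation feeding into an already-established criterion. The one point deserving care is that $v$ must be chosen in the \emph{even} part of $\fg_{-1}$: for an odd choice such as $v=yyx$ one has $[yyx,Y_2]=0=[yyx,yyx]$ but $[yyx,Y_3]=-yyy\neq0$ and $[yyx,yxy]=s_1Y_1\neq0$ (recall $s_1=-1-a\neq0$), so its centralizer in $\fg_{-1}$ is only $\langle Y_2,yyx\rangle$, of codimension $2$, and Proposition \ref{inftype} would not apply.
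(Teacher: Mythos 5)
Your proof is correct and follows essentially the same route as the paper: both invoke Proposition \ref{inftype} with a vector in $\fg_{-1}$ whose centralizer in $\fm$ has codimension one, the only cosmetic difference being that the paper states its witness in terms of the positive root vectors ($v=X_2$, $V=\langle X_1,X_2,X_3,xxy,xxx\rangle$) while you work directly with the negative nilradical ($v=Y_2$, $V=\langle Y_1,Y_2,Y_3,yyx,yyy\rangle$), which are exchanged by the standard involution. Your bracket computations and the closing remark on why an odd choice of $v$ fails are both accurate.
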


 \begin{proof}
Let us demonstrate the last claim, which implies the first one. The precise description of the prolongation will
be given in Section \ref{S3322} in geometric terms. Note that the vector $v=X_2$ and the subspace 
$V=\langle X_1,X_2,X_3,xxy,xxx\rangle$ satisfy the conditions of Proposition \ref{inftype}, as well
as the vector $v=X_3$ and the subspace $V=\langle X_1,X_2,X_3,xyx,xxx\rangle$.
Therefore $\dim\pr(\fm)=\infty$.
 \end{proof}
 
 \begin{proposition}\label{propPi23prol}
For the grading of $\fg=D(2,1;a)$ associated with $\fp_{23}^{\rm I}$: $\pr(\fm,\fg_0)=\fg$.
 \end{proposition}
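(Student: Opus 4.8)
The plan is to compute the positive-degree prolongation directly via the Tanaka formula \eqref{conditionprolongation}, degree by degree, exactly as in the $\fp_1^{\rm I}$ and $\fp_{12}^{\rm I}$ cases, and show that it closes off at $\fg_2$, i.e.\ $\pr_j(\fm,\fg_0)=\fg_j$ for $j=1,2$ and $\pr_3(\fm,\fg_0)=0$. Since $\pr_0(\fm,\fg_0)=\fg_0=\gl(1,1)\oplus\CC$ is given, the first task is to record the $\fg_0$-module structure of each graded piece. Here $\fg_{-1}$ is $(2|2)$-dimensional, mixing an even part (from $\alpha_2,\alpha_3$) and an odd part, so the relevant representation theory is that of $\gl(1,1)\oplus\CC$ rather than a reductive $\sl_2$-story; I would first decompose $\fg_1\otimes\fg_{-1}^\ast$ and $\fg_2\otimes\fg_{-1}^\ast$ into isotypic components under $\fg_0$ and identify a generating set of highest (or extremal) weight vectors in each.

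The main computational step is the first-degree check. Unlike the depth-$\geq 3$ cases where $\fg_0$ is already the full $\pr_0$, here we must verify $\pr_1(\fm,\fg_0)=\fg_1$: any candidate $A\in\operatorname{Hom}_{+1}(\fm,\pr_{<1})$ satisfying the derivation identity on $\fm$ must actually lie in $\fg_1$. The plan is to embed $\pr_1(\fm,\fg_0)\hookrightarrow(\fg_0\oplus\CC)\otimes\fg_{-1}^\ast$, then test the derivation condition $A[x,y]=[Ax,y]+(-1)^{|x||A|}[x,Ay]$ on brackets $[\fg_{-1},\fg_{-1}]\subset\fg_{-2}$ and on the mixed bracket structure that produces the depth-2 elements. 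By Schur's lemma it suffices to evaluate on the extremal vectors of each isotypic summand and exhibit, for every summand not equal to $\fg_1$, at least one pair $x,y\in\fg_{-1}$ on which the identity fails (yielding a nonzero element of $\fg_{-1}$ or $\fg_0$, as in the contradiction $[e_i,f_j]=\pm s_k H_k\neq 0$ seen in the earlier proofs). I expect this to be the main obstacle, both because $\fg_0$ is non-reductive (so ``isotypic'' must be handled with care, watching the nilpotent action of the odd part of $\gl(1,1)$) and because $H^1(\fm,\fg)$ is nonzero in negative degrees for this parabolic (see Table \ref{SpH}, $\fm_{23}^\I$), so one cannot simply invoke vanishing of the relevant cohomology and must genuinely rule out the spurious positive-degree pieces by hand.

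Once $\pr_1(\fm,\fg_0)=\fg_1$ is established, the degree-2 step repeats the pattern: embed $\pr_2(\fm,\fg_0)\hookrightarrow\fg_1\otimes\fg_{-1}^\ast$, and for each isotypic component other than the one matching $\fg_2$ produce a bracket on which \eqref{conditionprolongation} fails, giving a nonzero element of $\fg_0$ and hence a contradiction; since $\fg_2\subset\pr_2$ automatically, this forces $\pr_2(\fm,\fg_0)=\fg_2$. For the termination step I would show $\pr_3(\fm,\fg_0)\hookrightarrow\fg_2\otimes\fg_{-1}^\ast$ has no nonzero elements satisfying the identity: testing a highest weight vector $C=f\otimes\omega$ against a suitable pair $x,y\in\fg_{-1}$ should give $C[x,y]=[x,Cy]\neq 0$ (a nonzero element of $\fg_1$), exactly mirroring the $C$-computation closing the $\fp_1^{\rm I}$ argument in Proposition \ref{propp1I}. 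Concluding $\pr_3(\fm,\fg_0)=0$ then yields $\pr(\fm,\fg_0)=\fg$, as the structure is of finite type and the resulting graded superalgebra coincides with $D(2,1;a)$.
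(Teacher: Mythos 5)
Your degree-by-degree plan ($\pr_1=\fg_1$, $\pr_2=\fg_2$, $\pr_3=0$, starting from the given $\fg_0$) is exactly the structure of the paper's proof in Appendix \ref{B1}, and your identification of the first-degree check as the essential new step (compared with the depth-$1$ and higher-order-reduction cases) is on target. Where you diverge is in method: the paper makes no attempt to decompose $\fg_{j-1}\otimes\fg_{-1}^\ast$ into isotypic pieces. It simply writes the most general even and odd element with undetermined coefficients, imposes the Leibniz identity \eqref{conditionprolongation} on every bracket of $\fm$ (including the degree-$(-2)$ brackets $[e_1,e_3]=e_6$, $[e_2,e_4]=e_6$, $[e_3,e_4]=s_1e_5$ and the vanishing self-brackets $[e_3,e_3]=[e_4,e_4]=0$, which do a lot of the work), and solves the resulting linear system. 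This brute-force route buys immunity from precisely the issue you flag: $\fg_0=\gl(1|1)\oplus\CC$ is not reductive, its modules are not completely reducible (cf.\ the projective covers $\mathcal{P}_h(n)$ appearing in Section \ref{Gpi23}), so ``decompose into isotypic components and test extremal vectors by Schur'' is not literally available --- a $\fg_0$-submodule of $\fg_{j-1}\otimes\fg_{-1}^\ast$ need not be a sum of the summands you would list, and you would have to track the full submodule lattice of each indecomposable block. You acknowledge this caveat but do not resolve it; to make your version airtight you would either have to carry out that finer analysis (as the paper does for a different purpose in Lemma \ref{quotientp23i}) or fall back on the undetermined-coefficient computation, which is what the paper actually does. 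The termination step you describe for $\pr_3$ matches the paper's, which also checks degree-$(-2)$ arguments ($\varphi[e_3,e_5]$, $\varphi[e_4,e_5]$) to kill the last coefficients.
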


The proof is by a long computation, which we delegate to Appendix \ref{B1}.

\subsection{Parabolic $\fp_{123}^{\rm I}$}\label{pi123}
 
Consider $D(2,1;a)$ with the grading $\fg=\fg_{-4}\oplus\cdots\oplus\fg_4$ associated with $\fp_{123}^{\rm I}$.
Here $\fg_0=\fh$.
 
 \begin{figure}[h]
    \centering
\begin{minipage}{0.3\textwidth}\centering
\begin{align*}\begin{small}
 \begin{array}{|c|c|c|} \hline
 k & \Delta_{\bar{0}}(k) & \Delta_{\bar{1}}(k) \\ \hline\hline
 1 &  \alpha_2,\ \alpha_3 & \alpha_1\\ \hline
 2 &   & \alpha_1+\alpha_2,\ \alpha_1+\alpha_3 \\ \hline
 3 &  & \alpha_1+\alpha_2+\alpha_3 \\ \hline
 4 &   2\alpha_1+\alpha_2+\alpha_3 & \\ \hline
 \end{array}\end{small}
 \end{align*}
 \end{minipage}
\begin{minipage}{0.25\textwidth}\centering
\!\!\raisebox{-0.2in}{
 \begin{tikzpicture}[remember picture]
  \node (A1)[draw, circle, scale=1] {};
 \node [draw,circle,fill=mygray] {};
 \node (B1)[draw, circle, scale=1, above right of=A1, xshift=0.5cm] {};
\node[below] at (0,-0.15) {$\times$};
\node[below] at (1.2,0.55) {$\times$};
\node[below] at (1.2,-0.85) {$\times$};
  \node (C1)[draw, circle, scale=1, below right of=A1, xshift=0.5cm] {};
  \node [left of=A1, xshift=0.5cm]{$\alpha_1$};
 \node [right of=B1, xshift=-0.5cm]{$\alpha_2$};
 \node [right of=C1, xshift=-0.5cm]{$\alpha_3$};
  \draw (A1) -- (B1);
 \draw (A1) --node[anchor=north]{} (C1);
 \end{tikzpicture}}
\end{minipage}
\begin{minipage}{0.25\textwidth}\centering
\begin{align*}\begin{small}
 \begin{array}{|c|c|c|} \hline
 k & (\fg_k)_{\bar{0}} & (\fg_k)_{\bar{1}} \\ \hline\hline
 1 & X_2,\ X_3 & xyy \\ \hline
 2 &  &   xxy,\ xyx \\ \hline
 3 &  & xxx \\ \hline
 4 & X_1 &  \\ \hline
 \end{array}\end{small}
 \end{align*}
 \end{minipage}
        \caption{The grading $\mathfrak{p}_{123}^{\rm I}$.}
\end{figure}
   
 \begin{proposition}
For the grading of $\fg=D(2,1;a)$ associated with $\fp_{123}^{\rm I}$: $\pr(\fm)=\pr(\fm,\fg_0)\neq\fg$. 
In fact, $\dim\pr(\fm,\fg_0)=\infty$.
 \end{proposition}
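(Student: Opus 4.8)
The plan is to prove the two assertions separately: first the equality $\pr(\fm)=\pr(\fm,\fg_0)$ by computing the grade-preserving derivations, and then $\dim\pr(\fm,\fg_0)=\infty$ (so that in particular it differs from $\fg$) by exhibiting $\fm$ as the negative part of the contact superalgebra $\mathfrak{cont}(1|4)$ obtained by refining the $\fp_1^{\rm I}$ contact grading. For the first equality it suffices to show $\pr_0(\fm)=\mathfrak{der}_0(\fm)=\fg_0=\fh$, since then the two prolongation procedures are verbatim the same. A grade-$0$ derivation is determined by its values on the generating space $\fg_{-1}=\langle Y_2,Y_3,yxx\rangle$, so I would write $\phi(Y_2)=aY_2+bY_3$, $\phi(Y_3)=cY_2+dY_3$, $\phi(yxx)=e\,yxx$ for an even derivation together with a parity-reversing ansatz $\psi(Y_2)=p\,yxx$, $\psi(Y_3)=q\,yxx$, $\psi(yxx)=rY_2+sY_3$ for a possible odd one. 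Propagating through $yyx=[Y_2,yxx]$, $yxy=[Y_3,yxx]$ and then imposing the derivation identity on the Serre-type relations $[Y_2,yyx]=0$ and $[Y_3,yxy]=0$ forces $b=c=0$, leaving exactly the diagonal (i.e.\ $\fh$) part; imposing it on $[Y_2,Y_3]=0$ and $[yxx,yxx]=0$ forces $p=q=r=s=0$. Hence $\mathfrak{der}_0(\fm)=\fh=\fg_0$ and $\pr(\fm)=\pr(\fm,\fg_0)$.

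For the infinite-dimensionality I would use the inclusion $\fp_{123}^{\rm I}\subset\fp_1^{\rm I}$. As established for $\fp_1^{\rm I}$, the contact prolongation is $\pr(\fm)=\mathfrak{cont}(1|4)=:L$, which is infinite-dimensional. The $\fp_{123}$-grading refines the contact grading: writing $Z_\chi=\sum_{i\in\chi}Z_i$, one has $Z_{123}=2Z_1+E$ with $E=-Z_1+Z_2+Z_3$ lying in the Cartan $\fh$ of the structure algebra $\mathfrak{co}(4)$ of the contact grading, so that the $\fp_{123}$-degree of a weight vector equals twice its contact degree plus the eigenvalue of $\op{ad}_E$. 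The crucial point is that, viewing $L$ as contact Hamiltonians in the variable $t$ and the odd variables $\theta_1,\dots,\theta_4$ on $\CC^{1|4}$, the weight $E$ acts on the $\theta_i$ with eigenvalues $\{1,0,0,-1\}$, hence only detects the two extreme odd variables $\theta_\pm$; since each $\theta_i$ occurs at most once in any monomial, the eigenvalue of $\op{ad}_E$ lies in $\{-1,0,1\}$ on all of $L$. It follows that the $\fp_{123}$-negative part of $L$ is exactly $\fm$ and its degree-$0$ part is exactly $\fh=\fg_0$, so $L$ with the refined grading is an effective graded Lie superalgebra extending $\fm\oplus\fg_0$. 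By maximality of the Tanaka--Weisfeiler prolongation $L\subseteq\pr(\fm)=\pr(\fm,\fg_0)$, whence $\dim\pr(\fm,\fg_0)=\infty\neq\dim\fg$. The sharper identification $\pr(\fm)=L\cong\mathfrak{cont}(1|4)$ recorded in Table \ref{SymmA} then follows from $\pr_0(\fm)=L_0$ together with the geometric realization in Section \ref{S4}.

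I expect the infinite-dimensionality to be the main obstacle, precisely because it does \emph{not} follow from the rigidity criterion of Proposition \ref{inftype}: no nonzero homogeneous $v\in\fg_{-1}$ admits a codimension-$1$ subspace $V\subset\fm$ with $[v,V]=0$, since a direct check shows $\op{ad}_v$ has rank at least $2$ on $\fm$ for every such $v$ (rank $2$ for $v\in\langle Y_2,Y_3\rangle$ and rank $3$ for $v=yxx$). The genuine work is therefore the regrading argument, and its heart is the elementary but decisive observation that the refining weight $E$ takes only the values $-1,0,1$ on the entire contact algebra; this is what guarantees that passing from the coarse contact grading to the fine $\fp_{123}$-grading enlarges the negative part only by $Y_2,Y_3$, thereby recovering $\fm$ exactly and placing the infinite positive tail of $\mathfrak{cont}(1|4)$ inside the prolongation.
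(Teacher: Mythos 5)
Your proof is correct, but it reaches the conclusion by a different, more self-contained algebraic route than the paper. The paper's own proof is a short deferral to the geometric arguments of Section \ref{TwSec}: the fibrations $\pi_{12}$ and $\pi_1$ (quotients by Cauchy characteristics of the derived distributions) push symmetries forward isomorphically, yielding the full identification $\pr(\fm)\cong\mathfrak{cont}(1|4)$ with the regrading whose weight table ($w(\theta_1)=1$, $w(\theta_2)=w(\nu_2)=2$, $w(\nu_1)=3$, $w(y)=4$) is displayed in Section \ref{Gpi123}; both claims of the proposition then follow by reading off that table. You instead prove only what the proposition needs: a direct computation $\mathfrak{der}_0(\fm)=\fh=\fg_0$ (your constraints from $[Y_2,yyx]=0$, $[Y_3,yxy]=0$, $[Y_2,Y_3]=0$ and $[yxx,yxx]=0$ do kill the off-diagonal and odd parts, and the surviving diagonal is realized by $\fh$, so $\pr(\fm)=\pr(\fm,\fg_0)$ without any geometry), plus the one-sided inclusion $\mathfrak{cont}(1|4)\hookrightarrow\pr(\fm,\fg_0)$ via the regrading $Z_{123}=2Z_1+E$. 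Your key observation — that $\op{ad}_E$ takes only the values $-1,0,1$ on the whole contact algebra because $E$ annihilates the even contact coordinate and weights the four odd coordinates by $1,0,0,-1$, each occurring at most once per monomial — is a clean derivation of the paper's weight assignment and correctly gives $L_{<0}=\fm$ and $L_0=\fg_0$ in the refined grading. What you give up is the exact identification $\pr(\fm)=\mathfrak{cont}(1|4)$ recorded in Table \ref{SymmA}; what you gain is independence from the twistor machinery of Section \ref{S5}. One point worth making explicit is effectivity of the regraded algebra: since the refined non-negative part lies inside the contact non-negative part and $\fg_{-1}$ bracket-generates $\fm\supset\mathfrak{c}_{-1}\oplus\mathfrak{c}_{-2}$, the condition $[X,\fg_{-1}]=0$ propagates to $[X,\mathfrak{c}_{<0}]=0$ and hence $X=0$ by transitivity of $\mathfrak{cont}(1|4)$, so the embedding into the maximal prolongation is legitimate.
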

 
 \begin{proof}
It will be shown in Section \ref{TwSec} by geometric arguments that also the Lie superalgebra $\pr(\fm)$ for 
$\fp_{123}^\I$ is isomorphic to its counterpart for $\fp_1^\I$, namely the Lie superalgebra $\mathfrak{cont}(1|4)$,
though with yet another different grading. Thus again $\pr(\fm,\fg_0)=\mathfrak{cont}(1|4)$.
 \end{proof}

The claim on infinite-dimensionality of $\pr(\fm)$ again does not follow from Proposition \ref{inftype},
as the required vector $v\in\fg_{-1}$ does not exist. Thus, this is yet another example where the
rigidity criterion for the Tanaka--Weisfeiler prolongation is not sufficient.
 
 \begin{proposition}\label{propPi123prol}
For the grading of $\fg=D(2,1;a)$ associated with $\fp_{123}^{\rm I}$: $\pr(\fg_{\leq1})=\fg$.
 \end{proposition}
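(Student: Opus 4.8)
Proof plan for Proposition \ref{propPi123prol} ($\pr(\fg_{\leq1})=\fg$ for $\fp_{123}^{\rm I}$).

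The plan is to compute the prolongation one degree at a time from the inductive formula \eqref{conditionprolongation}, exactly as in the cases $\fp_1^{\rm I}$ and $\fp_{12}^{\rm I}$, establishing in turn that $\pr_2(\fg_{\leq1})=\fg_2$, $\pr_3(\fg_{\leq1})=\fg_3$, $\pr_4(\fg_{\leq1})=\fg_4$, and $\pr_5(\fg_{\leq1})=0$; since $\fg$ itself extends $\fg_{\leq1}$, each inclusion $\fg_j\subseteq\pr_j(\fg_{\leq1})$ is automatic, so every step amounts to showing that no additional cochain survives. First I would fix the bases $e_1=Y_2$, $e_2=Y_3$, $e_3=yxx$ of $\fg_{-1}$ with dual coframe $\omega^1,\omega^2,\omega^3$, together with $yyx,yxy$ for $\fg_{-2}$, $yyy$ for $\fg_{-3}$, $Y_1$ for $\fg_{-4}$, and $X_2,X_3,xyy$; $xxy,xyx$; $xxx$; $X_1$ for $\fg_1,\dots,\fg_4$. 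The structural difference from $\fp_{12}^{\rm I}$ is that here $\fg_0=\fh$ is abelian, so there is no semisimple part and Schur's lemma is unavailable; instead I would organize everything by $\fh$-weights, using that each $\pr_j(\fg_{\leq1})\hookrightarrow\fg_{j-1}\otimes\fg_{-1}^\ast$ is $\fh$-invariant and hence a sum of weight spaces that may be analyzed one weight at a time.

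For the degree-$2$ step, $\pr_2(\fg_{\leq1})\hookrightarrow\fg_1\otimes\fg_{-1}^\ast$ is $(5|4)$-dimensional, and an $\fh$-weight count shows that the weights $\varepsilon_1+\varepsilon_2-\varepsilon_3$ and $\varepsilon_1-\varepsilon_2+\varepsilon_3$ of $xxy$ and $xyx$, as well as $2\varepsilon_2+2\varepsilon_3$, occur with multiplicity two, while the remaining weights are simple. The adjoint maps occupy one line in each of the first two multiplicity-two spaces; for instance $\operatorname{ad}(xxy)$ sends $Y_2\mapsto-xyy$ and $yxx\mapsto-s_2X_2$, so it equals $-xyy\otimes\omega^1-s_2\,X_2\otimes\omega^3$. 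The decisive test for a candidate $A$ is to extend it to all of $\fm$ by the Leibniz rule and then evaluate \eqref{conditionprolongation} on a pair $x,y$ with $[x,y]=0$: the left side vanishes while the right side $[Ax,y]+(-1)^{|x||A|}[x,Ay]$ must then also vanish. Choosing deep vanishing pairs such as $(Y_2,yyy)$, where $[Y_2,yyy]=0$, is what exposes the spurious cochains; e.g.\ for $A=X_2\otimes\omega^1$, whose induced value on $yyy$ vanishes, the pair $(Y_2,yyy)$ yields $0=[X_2,yyy]=yxy\neq0$, a contradiction. Running this over all weight spaces leaves exactly $\operatorname{ad}(xxy)$ and $\operatorname{ad}(xyx)$, so $\pr_2(\fg_{\leq1})=\fg_2$.

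The degree-$3$ and degree-$4$ steps proceed identically: $\pr_3(\fg_{\leq1})\hookrightarrow\fg_2\otimes\fg_{-1}^\ast$ and $\pr_4(\fg_{\leq1})\hookrightarrow\fg_3\otimes\fg_{-1}^\ast$ are split into $\fh$-weight spaces, and every candidate outside $\operatorname{ad}(\fg_3)$, resp.\ $\operatorname{ad}(\fg_4)$, is eliminated by extending through the Leibniz rule and testing a suitable vanishing bracket, leaving $\pr_3(\fg_{\leq1})=\langle xxx\rangle=\fg_3$ and $\pr_4(\fg_{\leq1})=\langle X_1\rangle=\fg_4$. For termination I would examine $\pr_5(\fg_{\leq1})\hookrightarrow\fg_4\otimes\fg_{-1}^\ast=\langle X_1\otimes\omega^1,\,X_1\otimes\omega^2,\,X_1\otimes\omega^3\rangle$ and kill each generator: $X_1\otimes\omega^3$ already fails on the pair $(yxx,yxx)$, where $[yxx,yxx]=0$ but $[X_1,yxx]-[yxx,X_1]=2xxx\neq0$; while $X_1\otimes\omega^1$ (and symmetrically $X_1\otimes\omega^2$) requires first propagating to $\fg_{-3}$ to fix the induced value on $yyy$, after which the vanishing pair $(Y_2,yyy)$ forces $0=[X_1,yyy]+[Y_2,xxy]=2xyy\neq0$. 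Hence $\pr_5(\fg_{\leq1})=0$, the prolongation stops, and $\pr(\fg_{\leq1})=\fg_{-4}\oplus\cdots\oplus\fg_4=\fg$.

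The main obstacle is organizational rather than conceptual: without a semisimple $\fg_0$ to bundle weight vectors into a few irreducible modules, there are many individual weight spaces to check, and---as the degree-$5$ analysis shows---a cochain can look consistent on all of $\fg_{-1}\times\fg_{-1}$ yet be obstructed only by a vanishing bracket involving a deeper element such as $yyy\in\fg_{-3}$. The practical key is therefore to locate, for each weight space, the right vanishing pair on which the two sides of \eqref{conditionprolongation} land in visibly different parts of $\fg$; the only real risk is mis-tracking the super-signs $(-1)^{|x||A|}$ together with the antisymmetry signs coming from the forms $\eta_i$ and the maps $\varphi_i$ in the odd--odd bracket.
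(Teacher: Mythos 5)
Your plan is correct and follows essentially the same route as the paper's direct algebraic proof in Appendix~\ref{B2}: a degree-by-degree computation of $\pr_j(\fg_{\leq1})$ via \eqref{conditionprolongation}, eliminating spurious cochains by extending them through the Leibniz rule and testing on vanishing brackets in $\fm$ (your spot-checked computations, e.g.\ the $(Y_2,yyy)$ and $(yxx,yxx)$ tests, agree with the paper's); organizing by $\fh$-weights rather than by a general coefficient ansatz is only a cosmetic difference. The paper additionally records a geometric shortcut (the twistor identification with the $\fp_1^{\rm I}$ geometry in Section~\ref{TwSec}), but that is not needed for the algebraic argument you give.
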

 
This statement follows from geometric considerations in Section \ref{TwSec} that demonstrate equivalence of 
symmetries of the corresponding flat geometry and that for $\fp_1^\I$. The explicit symmetries also show
the place of reduction. A direct algebraic proof is delegated to Appendix \ref{B2}.

\subsection{Parabolic $\fp_{123}^{\rm IV}$}\label{piv123}

Consider $D(2,1;a)$ with the grading $\fg=\fg_{-3}\oplus\cdots\oplus\fg_3$ associated with $\fp_{123}^{\rm IV}$.
Here $\fg_0=\fh$.
 
\begin{figure}[h]
    \centering
\begin{minipage}{0.3\textwidth}\centering
\begin{align*}\begin{small}
 \begin{array}{|c|c|c|} \hline
 k & \Delta_{\bar{0}}(k) & \Delta_{\bar{1}}(k) \\ \hline\hline
 1 & & \alpha_1,\ \alpha_2,\ \alpha_3\\ \hline
 2 & \begin{array}{l} \alpha_1+\alpha_2,\ \alpha_1+\alpha_3,\\ \alpha_2+\alpha_3 \end{array}
 & \\ \hline
 3 &  & \alpha_1+\alpha_2+\alpha_3 \\ \hline
 \end{array}\end{small}
 \end{align*}
 \end{minipage}
\begin{minipage}{0.25\textwidth}\centering
\!\!\raisebox{-1.1in}{
 \begin{tikzpicture}[remember picture]
  \node (A1)[draw, circle, scale=1] {};
 \node [draw,circle,fill=mygray] {};
 \node (B1)[draw, circle, scale=1, above right of=A1, xshift=0.5cm, fill=mygray] {};
\node[below] at (0,-0.15) {$\times$};
\node[above] at (1.2,0.85) {$\times$};
\node[below] at (1.2,-0.85) {$\times$};
  \node (C1)[draw, circle, scale=1, below right of=A1, xshift=0.5cm, fill=mygray] {};
  \node [left of=A1, xshift=0.5cm, yshift=-0.05cm]{$\alpha_1$};
 \node [right of=B1, xshift=-0.5cm, yshift=-0.05cm]{$\alpha_2$};
 \node [right of=C1, xshift=-0.5cm, yshift=-0.05cm]{$\alpha_3$};
 \draw (A1) -- (B1); \draw (A1) -- (C1); \draw (B1) -- (C1);
 \end{tikzpicture}}
\end{minipage}
\begin{minipage}{0.25\textwidth}\centering
\begin{align*}\begin{small}
 \begin{array}{|c|c|c|} \hline
 k & (\fg_k)_{\bar{0}} & (\fg_k)_{\bar{1}} \\ \hline\hline
 1 &  & yxx,\ xxy,\ xyx \\ \hline
 2 & X_1,\ X_2,\ X_3 &  \\ \hline
 3 &  & xxx \\ \hline
 \end{array}\end{small}
 \end{align*}
 \end{minipage}
        \caption{The grading $\mathfrak{p}_{123}^{\rm IV}$.}
\end{figure}
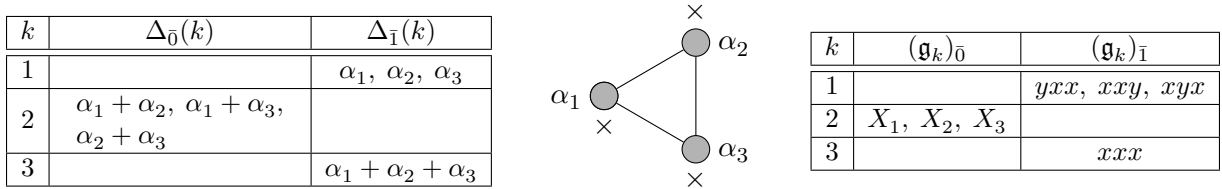

 \begin{proposition}\label{propPiv123prol}
For the grading of $\fg=D(2,1;a)$ associated with $\fp_{123}^\IV$: $\pr(\fm)=\fg$.
 \end{proposition}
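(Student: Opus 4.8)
The plan is to compute the homogeneous components $\pr_j(\fm)$ for $j\ge 0$ directly from the inductive formula \eqref{conditionprolongation}, showing $\pr_j(\fm)=\fg_j$ for $0\le j\le 3$ and $\pr_4(\fm)=0$. Since $\fg\subseteq\pr(\fm)$ always holds, and since $\pr_{j+1}(\fm)$ embeds into $\op{Hom}(\fg_{-1},\pr_j(\fm))$ by effectivity, the vanishing $\pr_4(\fm)=0$ propagates to all $j\ge 4$, and we conclude $\pr(\fm)=\fg$. The computation is organized by $\fh$-weights: because $\fg_0=\fh$ is a torus and every weight space of $\fm$ is one-dimensional with pairwise distinct weights, each $\pr_j(\fm)$ is an $\fh$-module that splits into weight lines, so \eqref{conditionprolongation} need only be tested on one weight vector per weight. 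It is convenient to write $\fg_{-1}=\langle e_1,e_2,e_3\rangle$ with $e_i$ of weight $-\alpha_i$, $\fg_{-2}=\langle Y_1,Y_2,Y_3\rangle$, $\fg_{-3}=\langle yyy\rangle$, and to record $[e_i,e_j]=\pm s_k Y_k$ (for $\{i,j,k\}=\{1,2,3\}$, the odd--odd bracket being super-symmetric) together with the three relations of the form $[Y_i,e_j]=yyy$ and $[Y_i,e_j]=0$ for the remaining index pairs.

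The first and decisive step is degree $0$: proving $\pr_0(\fm)=\der_0(\fm)=\fh$. An odd grade-$0$ derivation must vanish, since it would send the purely odd generating space $\fg_{-1}$ into $(\fg_{-1})_{\bar0}=0$; hence $\pr_0(\fm)$ is purely even, consistent with $\fg_0=\CC^{3|0}$. A weight-$0$ (hence diagonal) even derivation is checked to extend consistently for every choice of eigenvalues, and this $3$-dimensional space is exactly $\fh$. The content lies in excluding nonzero-weight derivations: the only candidates are the maps $e_i\mapsto e_j$ ($i\ne j$), one per weight, and imposing the derivation identity on the \emph{vanishing} bracket $[e_i,Y_k]=0$ forces $s_k=0$. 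This is excluded by simplicity, $s_1s_2s_3\ne 0$, i.e.\ $a\notin\{0,-1\}$, so no extra grade-$0$ derivations exist.

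For $j=1,2,3$ the same weight analysis applies. A candidate $A\in\pr_j(\fm)$ of a fixed weight is supported on a single $e_i$ with value in $\fg_{j-1}$; its action on $\fg_{-2}$ and $\fg_{-3}$ is then determined by the bracket relations, and the constraints arising from the three expressions for $yyy$ as $[Y_i,e_j]$ together with the vanishing brackets cut the candidate space down to the single line spanned by $\op{ad}$ of the corresponding root vector of $\fg_j$. Concretely, at degree $1$ a weight-$\alpha_1$ candidate is $e_1\mapsto h\in\fh$; the nonzero-bracket relations impose one linear condition on $h$ and the vanishing brackets a second, independent one, leaving exactly $\op{ad}(yxx)$. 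By the $S_3$-symmetry permuting the three $\sl_2$-factors (and $s_1,s_2,s_3$) it suffices to treat one weight per degree.

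Finally, at degree $4$ a candidate would send $e_1\mapsto xxx$ (the unique weight allowing a nonzero value in $\fg_3$), and the analogous bookkeeping yields an overdetermined, inconsistent system, so $\pr_4(\fm)=0$; termination then follows by the embedding argument above. I expect the degree-$0$ nonexistence of extra derivations to be the main obstacle, since it is the only place where the parameter $a$ and the non-vanishing of all $s_i$ genuinely intervene; the higher degrees are bounded above by $\fg$ and close off automatically. This is consistent with the first Spencer cohomology $H^1(\fm,\fg)$ of Table~\ref{SpH} being concentrated in degree $-1$, so that $H^1(\fm,\fg)_{\ge0}=0$.
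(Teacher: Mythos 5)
Your proposal is correct and takes essentially the same route as the paper's proof (Appendix \ref{B3}): a direct degree-by-degree computation of $\pr_j(\fm)$ from \eqref{conditionprolongation}, giving $\pr_j(\fm)=\fg_j$ for $0\le j\le 3$ and $\pr_4(\fm)=0$, with termination via the restriction embedding $\pr_{j+1}(\fm)\hookrightarrow\op{Hom}(\fg_{-1},\pr_j(\fm))$. The only differences are organizational — you exploit the $\fh$-weight decomposition (legitimate here since $\fg_0=\fh$ and all weights of $\fm$ are simple and distinct) and the $S_3$-equivariance of $\Gamma(s_1,s_2,s_3)$ to reduce to one component per degree, where the appendix grinds through a general nine-parameter map at each level — plus a harmless index slip at degree $0$: the vanishing bracket $[e_1,Y_2]=0$ applied to the candidate $e_1\mapsto e_2$ yields the factor $s_3/s_2$ (cf.\ the boxed $a_2=0$ in Appendix \ref{B3}), so it is the complementary parameter rather than $s_k$ whose non-vanishing kills the off-diagonal derivation, and the conclusion stands since $s_1s_2s_3\neq0$.
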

 
Again, the proof follows the same computational scheme, and this long calculation is delegated to Appendix \ref{B3}.

\section{Geometric realizations as symmetries}\label{S4}

Let $G$ be an exceptional Lie supergroup $G$ of type $D(2,1;a)$ (for instance, $\exp\fg$) and $P$ its
parabolic subgroup marked according to the choice of parabolic subalgebra $\fp\subset\g$.
For each such choice, we will construct a geometry on the homogeneous superspace $G/P$, see
\cite{CCF,Va} for details on super quotients and \cite{V1,G3F4A} for examples of flag supermanifolds.

\subsection{Superspace $M^{1|4}=G/P_1^\I$}\label{Gpi1}

Consider the space $M^{1|4}=G/P_1^I$ with local coordinates $(x|\xi_i)$, $1\leq i\leq4$, and its
invariant odd contact structure $\mathcal{C}$.
This space can be locally identified with the jet space $J^1(\C^{0|2},\C^{1|0})$ with local coordinates
$(\theta_1,\theta_2,u,u_{\theta_1},u_{\theta_2})$; however, we will not rely on this identification
because of the following:

 \begin{remark}\label{rmk1}
We have $J^2(\C^{0|2},\C^{1|0})\simeq
\C^{2|4}(\theta_1,\theta_2,u,u_{\theta_1},u_{\theta_2},u_{\theta_1\,\theta_2})$ and furthermore
$J^2\simeq J^3\simeq\dots\simeq J^\infty$.
The Lie superalgebra $D(2,1;a)$, realized by contact vector fields, acts transitively on this superspace, 
hence a geometric realization in the form of invariant differential equation on $u=u(\theta_1,\theta_2)$
is impossible.
 \end{remark}
 
Instead, we will use the contact structure in Darboux form ($x$ even, $\xi_i$ odd)
 \begin{equation}\label{Darboux1}
\sigma=dx+\sum_{i=1}^4\xi_id\xi_i.
 \end{equation}
The contact distribution, annihilating $\sigma$, is
 $$
\mathcal{C}=\langle D_{\xi_i}=\p_{\xi_i}+\xi_i\p_x\rangle. 
 $$
Contact vector fields correspond to functions $f\in\mathcal{O}_M$ and have the form 
 \begin{equation}\label{Xf}
X_f=f\p_x +(-1)^{|f|}\tfrac12\sum_{j=1}^4(D_{\xi_j}f)D_{\xi_j}.
 \end{equation}
The Lagrange--Jacobi bracket, given by $[X_f,X_g]=X_{\{f,g\}}$, has the formula
 \begin{equation}\label{JLB}
\{f,g\}=f\p_xg-(-1)^{|f|\,|g|}g\p_xf+(-1)^{|f|}\tfrac12\sum_{j=1}^4(D_{\xi_j}f)(D_{\xi_j}g).
 \end{equation}
This gives an isomorphism between the Lie superalgebra of contact vector fields $(\mathfrak{cont}(M),[,]$)
and the Jacobi superalgebra $(\mathcal{O}_M,\{,\})$. If the elements (vector fields, resp.\ functions) are 
restricted to be polynomial in $x$, then both superalgebras are weighted with the shifted weight $w(X_f)=w(f)-2$. 
The weights of the Jacobi superalgebra are multiplicative with $w(1)=0$, $w(\xi_i)=1$, $w(x)=2$, and similarly $w(\p_{\xi_i})=-1$, $w(\p_x)=-2$.
 
The following table shows the gradation of a basis of these two Lie superalgebras that we denote 
$[\mathfrak{c}=\oplus_{k\ge-2}\mathfrak{c}_k]\simeq[\mathfrak{j}=\oplus_{k\ge0}\mathfrak{j}_k]$.
Both gradations are parity-consistent, however note that $w([X_f,X_g])=w(X_f)+w(X_g)$ 
while $w(\{f,g\})=w(f)+w(g)-2$. We represent elements $X_f\in\mathfrak{c}_k$ via their 
generating functions $f\in\mathfrak{j}_{k+2}$, but use the grading $k$ of $X_f$.
 \begin{center}
 \begin{tabular}{c|c|c}
$k$ &  $\mathfrak{c}_{k}$ & $\dim\mathfrak{c}_{k}$\\
\hline
$-2$ & 1 & 1\\
$-1$ & $\xi_i$ & 4\\
$0$ & $x$, $\xi_i\xi_j$ & $1+6=7$\\
$1$ & $x\xi_i$, $\xi_i\xi_j\xi_k$ & $4+4=8$\\
$2$ & $x^2$, $x\xi_i\xi_j$, $\xi_i\xi_j\xi_k\xi_l$ & $1+6+1=8$
 \end{tabular}
 \end{center}

Note that $\mathfrak{c}_0=\mathfrak{co}(4)=\mathfrak{sl}(2)\oplus\mathfrak{sl}(2)\oplus\C$
and, as $\mathfrak{c}_0$ module, $\mathfrak{c}_1=\C^4\oplus\C^4$, where each irreducible
summand can be also represented as $\C^2\boxtimes\C^2$.

That is where we do a reduction: choosing one standard $\C^4$ submodule in $\mathfrak{c}_1=\C^4\otimes\C^2$
(the latter factor is trivial as a module) is equivalent to choosing a point 
$\varepsilon\in\mathbb{P}(\C^2)=\mathbb{P}^1$: this factor is generated by $x\xi_i+\varepsilon\xi_i^\vee$, 
where $\xi_i^\vee=\p_{\xi_i}\nu$ for $\nu=\xi_1\xi_2\xi_3\xi_4$.

Calling the reduced Lie superalgebra $\g=\oplus_i\g_i$ with $\g_i=\mathfrak{c}_i$ for $-2\leq i\leq0$, 
$\g_1=\C^4_\varepsilon\subset\mathfrak{c}_1$ and $\g_i=\op{pr}_i(\g_{\leq1})$, we get
$\g_2=\langle\tfrac12x^2-\varepsilon\nu\rangle$ and $\g_i=0$ for $i>2$.

 \begin{proposition}
The graded Lie superalgebra in $\mathfrak{c}$ given by 
 \begin{equation}\label{jg}
\fg=\langle 1,\ \xi_i,\ x,\ \xi_i\xi_j,\ x\xi_i+\varepsilon\xi_i^\vee,\ \tfrac12x^2-\varepsilon\nu\rangle. 
 \end{equation}
is isomorphic to $D(2,1;a)$ for $a=\frac{1-\varepsilon}{1+\varepsilon}$, where $\varepsilon\neq\pm1,\infty$. 
 \end{proposition}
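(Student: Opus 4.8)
The plan is to use that, by the construction preceding the statement, $\fg=\bigoplus_{i=-2}^{2}\fg_i$ is already a graded Lie subalgebra of $\mathfrak{cont}(1|4)$ of super-dimension $(9|8)$, which equals $\dim D(2,1;a)$. Hence it suffices to recognize its abstract isomorphism type. I would identify the even part $\fg_{\bar0}=\fg_{-2}\oplus\fg_0\oplus\fg_2$ with $\sl_2\oplus\sl_2\oplus\sl_2$ and the odd part $\fg_{\bar1}=\fg_{-1}\oplus\fg_1$ with $\CC^2\boxtimes\CC^2\boxtimes\CC^2$; once this is done, the $\fg_{\bar0}$-equivariance of the bracket forces $\fg\cong\Gamma(s_1,s_2,s_3)$ for some constants with $s_1+s_2+s_3=0$, and only their ratios, read off from one odd--odd bracket, remain to be matched against \eqref{normalization} up to the $S_3\times\CC^\times$-ambiguity of Definition~\ref{d21alpha}.

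For the even part I would first note that the six quadratics $\xi_i\xi_j$ span $\Lambda^2\CC^4\cong\so(4)=\sl_2\oplus\sl_2$ acting on $\langle\xi_i\rangle=\CC^4$ in the standard way (indeed $\{\xi_i\xi_j,\xi_k\}=\tfrac12(\delta_{ik}\xi_j-\delta_{jk}\xi_i)$), and that the two $\sl_2$-summands are the $\pm1$-eigenspaces of the Hodge duality $\star\colon\xi_i\xi_j\mapsto\p_{\xi_i}\p_{\xi_j}\nu$ on $\Lambda^2\CC^4$, an involution whose eigenspaces are the (anti-)self-dual $2$-forms. The third, ``contact'', $\sl_2$ is $\langle 1,\,x,\,\tfrac12x^2-\varepsilon\nu\rangle$: the Jacobi bracket gives $\{1,\tfrac12x^2-\varepsilon\nu\}=x$, $\{x,1\}=-1$ and $\{x,\tfrac12x^2-\varepsilon\nu\}=\tfrac12x^2-\varepsilon\nu$, an $\sl_2$-triple whose Cartan $2x$ is the grading element ($\{2x,f\}=\deg(f)\,f$). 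For the odd part, the two contact weights furnish the third $\CC^2$ and the index $i\in\{1,\dots,4\}$ furnishes $\CC^4=\CC^2\boxtimes\CC^2$; this is confirmed by $\{\tfrac12x^2-\varepsilon\nu,\xi_i\}=-\tfrac12(x\xi_i+\varepsilon\xi_i^\vee)$, showing that the contact raising operator carries $\fg_{-1}$ onto $\fg_1$ while preserving the $\so(4)$-index, so that $\fg_{\bar1}\cong\CC^2\boxtimes\CC^2\boxtimes\CC^2$ and the reduction $\fg_1=\CC^4_\varepsilon$ is exactly the contact image of $\fg_{-1}$.

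Next I would extract the $s_i$ from the mixed odd--odd bracket, which I expect to be
\[
\{\xi_i,\,x\xi_j+\varepsilon\xi_j^\vee\}=-\tfrac12\delta_{ij}\,x+\tfrac12\bigl(\xi_i\xi_j-\varepsilon\,\p_{\xi_i}\p_{\xi_j}\nu\bigr),
\]
supplemented by $\{\xi_i,\xi_j\}=-\tfrac12\delta_{ij}$. The $\so(4)$-valued part of this bracket is governed by the single operator $\tfrac12(1-\varepsilon\,\star)$, whose eigenvalues on the self-dual and anti-self-dual summands are $\tfrac12(1-\varepsilon)$ and $\tfrac12(1+\varepsilon)$. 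Since these two summands are the two $\sl_2$-factors of $\so(4)$, this yields $s_2:s_3=(1-\varepsilon):(1+\varepsilon)$, and then $s_1+s_2+s_3=0$ forces $s_1:s_2:s_3=-2:(1-\varepsilon):(1+\varepsilon)$. Matching against \eqref{normalization}, $(s_1,s_2,s_3)=(-1-a,1,a)$ up to scale and permutation, gives $-1-a=\tfrac{-2}{1+\varepsilon}$, i.e.\ $a=\frac{1-\varepsilon}{1+\varepsilon}$. Finally $\varepsilon\neq\pm1$ makes all $s_i\neq0$, so $\fg$ is simple (as in Definition~\ref{d21alpha}), while $\varepsilon\neq\infty$ keeps the realization \eqref{jg} well-defined.

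I expect the main obstacle to be bookkeeping rather than conceptual. The crux is the observation that the $\so(4)$-part of the odd--odd bracket is the operator $\tfrac12(1-\varepsilon\,\star)$: this is what makes the two $\sl_2$-coefficients read off with a \emph{common} constant (so that the ratio $(1-\varepsilon):(1+\varepsilon)$ is meaningful), which in turn rests on the fact that the two canonical $\varphi_i$-normalized equivariant maps onto the $\sl_2$-factors are exchanged by the orientation-reversing symmetry of $\CC^4$ that sends $\nu\mapsto-\nu$ and hence $\star\mapsto-\star$. The remaining care is to track the Hodge-duality signs in $\p_{\xi_i}\p_{\xi_j}\nu$ and, since by the Remark after Definition~\ref{d21alpha} the parameter is defined only up to $a\mapsto\tfrac1a,-1-a$, to confirm that the computed triple lands on the representative $a=\frac{1-\varepsilon}{1+\varepsilon}$ rather than on an $S_3$-translate.
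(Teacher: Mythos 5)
Your proposal is correct and follows exactly the route the paper indicates (its ``proof'' is the single sentence that one verifies the isomorphism by identifying bases and expressing $s_1,s_2,s_3$ via $\varepsilon$): you identify the three $\sl_2$'s as the two Hodge-eigenspaces of $\langle\xi_i\xi_j\rangle=\Lambda^2\CC^4$ plus the contact triple $\langle 1,x,\tfrac12x^2-\varepsilon\nu\rangle$, and read the ratios $s_1:s_2:s_3=-2:(1-\varepsilon):(1+\varepsilon)$ off the odd--odd bracket, whose $\so(4)$-part is indeed $\tfrac12(1-\varepsilon\,\star)$. All the displayed brackets check out against \eqref{JLB}, and the resulting parameter agrees with the $S_3$-orbit recorded in Remark~\ref{Rkp1p12}.
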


The verification is by identifying the bases, allowing to express $s_1,s_2,s_3$ via $\varepsilon$ (see also Remark \ref{Rkp1p12}).
Note that the standard Lie superalgebra $D(2,1)=\mathfrak{osp}(4|2)$ corresponds to $\varepsilon\in\{0,\pm3\}$
(the $S_3$ action has stabilizer $\ZZ_2$ on those).
The other special values $a=\frac{-1\pm i\sqrt{3}}{2}$ (the $S_3$ action has stabilizer $\ZZ_3$ on those), 
correspond to $\varepsilon=\pm i\sqrt{3}$.
 
 \begin{proposition}\label{normg}
The subalgebra $\g\subset\mathfrak{c}$ is maximal. 
 \end{proposition}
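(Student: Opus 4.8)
The plan is to prove maximality by showing that any subalgebra $\mathfrak{h}$ with $\g\subseteq\mathfrak{h}\subseteq\mathfrak{c}$ and $\mathfrak{h}\neq\g$ must be all of $\mathfrak{c}$. The crucial first step is to observe that $\mathfrak{h}$ is automatically graded. Indeed, $\g_0=\mathfrak{c}_0=\mathfrak{co}(4)=\sl(2)\oplus\sl(2)\oplus\CC$ contains the grading (Euler) element $E$ spanning the central $\CC$, with $[E,\cdot]=k\cdot\id$ on $\mathfrak{c}_k$. Since $E\in\g\subseteq\mathfrak{h}$, the subspace $\mathfrak{h}$ is $\op{ad}_E$-invariant, and an invariant subspace for a semisimple operator equals the direct sum of its intersections with the eigenspaces $\mathfrak{c}_k$. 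Hence $\mathfrak{h}=\bigoplus_k\mathfrak{h}_k$ with $\mathfrak{h}_k=\mathfrak{h}\cap\mathfrak{c}_k$; from $\g\subseteq\mathfrak{h}$ and $\g_k=\mathfrak{c}_k$ for $k\leq0$ we get $\mathfrak{h}_k=\mathfrak{c}_k$ for all $k\leq0$.

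Next I would pin down $\mathfrak{h}_1$. As $\mathfrak{c}_0=\mathfrak{h}_0\subseteq\mathfrak{h}$, the piece $\mathfrak{h}_1$ is a $\mathfrak{c}_0$-submodule of $\mathfrak{c}_1$ containing $\g_1=\CC^4_\varepsilon$. Over the semisimple part $\sl(2)\oplus\sl(2)$ one has $\mathfrak{c}_1\cong W\otimes\CC^2$ with $W=\CC^2\boxtimes\CC^2$ irreducible and $\CC^2$ a trivial multiplicity space (the center $E$ acts by a fixed scalar, hence preserves every subspace), so by Schur the submodules are exactly $W\otimes U$ for $U\subseteq\CC^2$. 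Since $\g_1$ corresponds to the line $\langle\varepsilon\rangle\subseteq\CC^2$, the only submodules containing it are $\g_1$ itself and $\mathfrak{c}_1$. This yields a clean dichotomy $\mathfrak{h}_1=\g_1$ or $\mathfrak{h}_1=\mathfrak{c}_1$.

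In the first case $\mathfrak{h}_1=\g_1$, so $\mathfrak{h}_{\leq1}=\g_{\leq1}$, and $\mathfrak{h}$ is an effective graded Lie superalgebra extending $\g_{\leq1}$. Effectivity holds because for $0\neq a\in\mathfrak{h}_{\geq0}$ with $[a,\g_{-1}]=0$ the Jacobi identity gives $[a,\g_{-2}]=[a,[\g_{-1},\g_{-1}]]=0$, so $a$ acts trivially on $\fm$, contradicting effectivity of $\mathfrak{c}$. By maximality of the Tanaka--Weisfeiler prolongation, $\mathfrak{h}\subseteq\pr(\g_{\leq1})$, and Proposition \ref{propp1I} gives $\pr(\g_{\leq1})=\g$. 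Thus $\mathfrak{h}=\g$, contradicting $\mathfrak{h}\neq\g$. Therefore we are necessarily in the second case $\mathfrak{h}_1=\mathfrak{c}_1$, that is $\mathfrak{h}_{\leq1}=\mathfrak{c}_{\leq1}$.

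Finally, assuming $\mathfrak{h}_{\leq1}=\mathfrak{c}_{\leq1}$, I would invoke bracket-generation of the positive part of the contact algebra, $[\mathfrak{c}_1,\mathfrak{c}_k]=\mathfrak{c}_{k+1}$ for all $k\geq1$, verified directly from the Lagrange--Jacobi bracket \eqref{JLB}; for instance $\{x\xi_i,x\xi_j\}=-\tfrac12\delta_{ij}x^2$ already recovers $x^2\in\mathfrak{c}_2$, and the brackets of $x\xi_i$ with the cubic generators produce the remaining $x\xi_a\xi_b$ and $\nu$. Starting from $\mathfrak{c}_2=[\mathfrak{c}_1,\mathfrak{c}_1]\subseteq\mathfrak{h}$ and inducting, $\mathfrak{c}_{k+1}=[\mathfrak{c}_1,\mathfrak{c}_k]=[\mathfrak{h}_1,\mathfrak{h}_k]\subseteq\mathfrak{h}_{k+1}$, whence $\mathfrak{h}=\mathfrak{c}$. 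The conceptual core is the reduction to graded subalgebras together with the prolongation maximality that eliminates the case $\mathfrak{h}_1=\g_1$; the one genuinely computational input, and thus the main obstacle, is the degree-by-degree verification that $[\mathfrak{c}_1,\mathfrak{c}_k]$ exhausts $\mathfrak{c}_{k+1}$ as a $\mathfrak{c}_0$-module.
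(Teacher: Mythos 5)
Your proof is correct and follows essentially the same route as the paper: reduce to graded subalgebras via the degree‑$0$ action, observe that $\mathfrak{h}_1$ must be either $\g_1$ or all of $\mathfrak{c}_1$, dispose of the first case by $\pr(\g_{\leq1})=\g$ (Proposition \ref{propp1I}) and of the second by the fact that $\mathfrak{c}_1$ bracket-generates $\mathfrak{c}_+$. You are somewhat more explicit than the paper about the gradedness of $\mathfrak{h}$, the Schur-lemma dichotomy, and the effectivity needed to invoke the universal property of the prolongation, while the one computational claim you leave as "direct verification" ($[\mathfrak{c}_1,\mathfrak{c}_k]=\mathfrak{c}_{k+1}$) is asserted without proof in the paper as well.
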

 
 \begin{proof}
Suppose there exists a larger proper subalgebra $\hat{\g}\supset\g$. Note that $\hat{\g}_0=\g_0=\mathfrak{co}(4)$, 
so we can split all $\hat{\g}_i$, $i>0$, into $\g_0$-modules. If $\g_1\subsetneq\hat{\g}_1$ then
$\hat{\g}_1=\mathfrak{c}_1$ and this by brackets generates the entire $\mathfrak{c}_+$, which together with
$\g$ gives $\mathfrak{c}$. Thus $\hat{\g}_1=\g_1$.

The $\g_0$-module $\mathfrak{c}_2$ consists of two trivial and one standard module. If the standard module was
present in $\hat{\g}_2$, then its brackets with $\g_{-1}$ by \eqref{JLB} would generate $\mathfrak{c}_1$. 
Similarly if both trivial modules were present in $\hat{\g}_2$, this would again imply $\hat{\g}_1=\mathfrak{c}_1$.
Thus $\hat{\g}_2=\g_2$.

One can proceed similarly with $\mathfrak{c}_3$ generated by $x^2\xi_i$, $x\xi_i\xi_j\xi_k$, and so on,
but in fact the equalities $\hat{\g}_s=0$ for $s>2$ (as well as $\hat{\g}_2=\g_2$) are equivalent to 
the Tanaka--Weisfeiler prolongation computation $\op{pr}(\g_{\le1})=\g$ established in Proposition \ref{propp1I}. 
 \end{proof}

 \begin{remark}
The proof actually implies that $D(2,1;a)$-module 
$\mathfrak{c}/\g=(\mathfrak{c}_1/\g_1)\oplus(\mathfrak{c}_2/\g_2)\oplus\g_3\oplus\g_4\oplus\dots$
is infinite-dimensional irreducible.
 \end{remark}

The space of generating functions $f$ for $\g$ is described by the system $\mathcal{E}$ of differential equations 
 \begin{equation}\label{Eq}
(\varepsilon\,\p_x^2+\p_{\xi_1}\p_{\xi_2}\p_{\xi_3}\p_{\xi_4})(f)=0,\quad
\xi_i(\varepsilon\,\p_x\p_{\xi_i}+\p_{\xi_i^\vee})(f)=0,\quad 
\p_x\p_{\xi_i}\p_{\xi_j}(f)=0,
 \end{equation}
we do not have summation by $i$ in the middle, and $\p_{\xi_i^\vee}=\p_{\xi_j}\p_{\xi_k}\p_{\xi_l}$
if $\xi_i^\vee=\xi_j\xi_k\xi_l$. Note that these equations imply $\p_x^3(f)=0$ as well as $\p_x^2\p_{\xi_i}(f)=0$.

Recall \cite{KL} that a differential equation $\mathcal{E}$ can be considered geometrically as a submanifold 
in jets and its infinitesimal symmetry is a vector field on the base whose natural prolongation to the space of jets (acting via the Lie derivative) preserves the ideal of the system.  Then a symmetry maps solutions to solutions, and this is a linear map for linear differential equations.

We will consider only point symmetries, i.e.\ vector fields on the space $J^0M=M\times\C$ of dependent 
and independent variables $(x,\xi_i,h)$ with even $h$.  Moreover, we restrict to contact symmetries of the contact base $M$ lifted to $J^0M$ by the rule 
 \begin{equation}\label{lift}
\mathfrak{cont}(M)\ni X_f\mapsto X_f^{(0)}:=X_f+f_xh\p_h\in\mathfrak{vect}(J^0M). 
 \end{equation}
One can verify that this is a homomorphism of Lie algebras, in fact a symplectization:

 \begin{proposition}
Map \eqref{lift} has its image in the Lie superalgebra of Hamiltonian vector fields $\mathfrak{symp}(J^0M)$
with respect to the symplectic structure $\omega=d(-h^{-1}\sigma)$ for $\sigma$ from \eqref{Darboux1}.
 \end{proposition}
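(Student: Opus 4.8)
The plan is to exhibit each lifted field $X_f^{(0)}$ as a globally Hamiltonian vector field, so that the image of \eqref{lift} lands in $\mathfrak{symp}(J^0M)$. I work with the primitive $1$-form $\lambda:=-h^{-1}\sigma$ on $J^0M$, for which $\omega=d\lambda$ by definition, and reduce everything to Cartan's formula $\mathcal{L}_V=\iota_V d+d\,\iota_V$. The expected conclusion, which I want the computation to confirm, is that $X_f^{(0)}$ is the Hamiltonian field of $H=h^{-1}f$.

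First I would record the conformal behaviour of $\sigma$ under contact fields. Since $\mathcal{C}=\ker\sigma$ is preserved by $X_f$ and $\partial_x$ is the Reeb field of $\sigma$ (i.e.\ $\sigma(\partial_x)=1$ and $\iota_{\partial_x}d\sigma=0$), the standard contact identity reads
\begin{equation*}
\mathcal{L}_{X_f}\sigma=(\partial_x f)\,\sigma=f_x\,\sigma ,
\end{equation*}
which I verify directly from \eqref{Xf} and the Darboux form \eqref{Darboux1}; the vanishing $\xi_j^2\partial_x=0$ keeps this short. This lemma is exactly what pins down the coefficient $f_x h\partial_h$ in the lift \eqref{lift}.

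Next I compute $\mathcal{L}_{X_f^{(0)}}\lambda$ and show it vanishes. Because $\sigma$ and $\lambda$ contain no $dh$, the vertical part $f_x h\partial_h$ of $X_f^{(0)}$ acts only on the scalar $h^{-1}$, giving $X_f^{(0)}(h^{-1})=f_x h\,\partial_h(h^{-1})=-f_x h^{-1}$, while the horizontal part $X_f$ acts only on $\sigma$. Combining with the lemma,
\begin{equation*}
\mathcal{L}_{X_f^{(0)}}\lambda=-\bigl(X_f^{(0)}h^{-1}\bigr)\sigma-h^{-1}\mathcal{L}_{X_f}\sigma=f_x h^{-1}\sigma-h^{-1}f_x\sigma=0 .
\end{equation*}
Hence $X_f^{(0)}$ preserves $\lambda$ strictly, so $\mathcal{L}_{X_f^{(0)}}\omega=d\,\mathcal{L}_{X_f^{(0)}}\lambda=0$ and the field is symplectic. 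Feeding the vanishing $\mathcal{L}_{X_f^{(0)}}\lambda$ into Cartan's formula gives $\iota_{X_f^{(0)}}\omega=-d\bigl(\iota_{X_f^{(0)}}\lambda\bigr)$, exhibiting $X_f^{(0)}$ as Hamiltonian with $H=-\iota_{X_f^{(0)}}\lambda$. Since $D_{\xi_j}\in\ker\sigma$ and $\sigma(\partial_x)=1$, the contraction collapses to $\iota_{X_f}\sigma=f$, whence $\iota_{X_f^{(0)}}\lambda=-h^{-1}f$ and $H=h^{-1}f$, recovering the expected symplectization of the contact Hamiltonian.

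The one genuinely delicate point is the bookkeeping of Koszul signs in the super setting: $\iota$ and $\mathcal{L}$ are odd operators on odd arguments, and $d\xi_i$ is an \emph{even} $1$-form, so $d\xi_i\wedge d\xi_i\neq0$ and $\sigma$ is a total-parity-odd form. I would fix the sign conventions at the outset and apply them consistently in both $\mathcal{L}_{X_f}\sigma$ and $\iota_{X_f}\sigma$; once this is done the cancellations above are forced. Nondegeneracy of $\omega$, required to call it symplectic, is not an obstacle: $\omega=d(r\sigma)$ with $r=-h^{-1}$ is the usual symplectization of the contact form $\sigma$, and the explicit expression $\omega=h^{-2}\,dh\wedge\sigma-h^{-1}\sum_i d\xi_i\wedge d\xi_i$ shows the even block $dh\wedge dx$ and the odd block $\sum_i d\xi_i\wedge d\xi_i$ are each nondegenerate.
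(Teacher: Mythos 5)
Your argument is correct and is essentially the paper's own: both rest on the standard contact identities $\mathcal{L}_{X_f}\sigma=f_x\sigma$ and $\iota_{X_f}\sigma=f$, and both conclude that $X_f^{(0)}$ is the Hamiltonian field of $\tilde f=h^{-1}f$ with respect to $\omega=d(-h^{-1}\sigma)$. Your packaging via the exact invariance $\mathcal{L}_{X_f^{(0)}}(-h^{-1}\sigma)=0$ followed by Cartan's formula is only a minor reorganization of the computation the paper compresses into $X_f^{(0)}=\omega^{-1}(d\tilde f)$.
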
 
 
 \begin{proof}
We use the standard formulae $L_{X_f }\sigma=f_x\sigma$, $\iota_{X_f}\sigma=f$ and hence 
$\iota_{X_f}d\sigma=f_x\sigma-df$ for the contact fields (with even contact form). 
Since $\omega=h^{-2}dh\wedge\sigma-dh^{-1}\sigma$ is an even symplectic form on $\tilde{M}:=J^0M$, 
we get  $\tilde{X}_{\tilde{f}}:=X^{(0)}_f=\omega^{-1}(d\tilde{f})$ for $\tilde{f}:=h^{-1}f$.
Thus $\tilde{X}_{\tilde f}$ is the Hamiltonian vector field wrt $\omega$, which determines the Poisson bracket 
by $[\tilde{X}_{\tilde{f}},\tilde{X}_{\tilde{g}}]=\tilde{X}_{\{\tilde{f},\tilde{g}\}_{PB}}$. We conclude
 \begin{equation}\label{PBJB}
\{\tilde{f},\tilde{g}\}_{PB}=\widetilde{\{f,g\}_{JB}}, 
 \end{equation}
where in the rhs we specified the notation for the contact Lagrange-Jacobi brackets. 
 \end{proof}

Given the lift \eqref{lift}, the standard formulae for the prolongations of vector fields in the space of jets apply, 
see \cite{KL,G3super}, so we get prolonged vector fields $X_f^{(k)}$ on the jet space $J^kM$. 
The equation $\mathcal{E}$ also prolongs to a submanifold $\mathcal{E}^{(k)}\subset J^kM$ for all $k$ up to infinity.

 \begin{theorem}\label{Tpi1}
The Lie superalgebra $D(2,1;a)$ with $a=\frac{1-\varepsilon}{1+\varepsilon}$ is the symmetry subalgebra 
of the differential system $\mathcal{E}^\infty$ defined by equations \eqref{Eq} within the algebra 
of lifted vector fields $\mathfrak{cont}^{(\infty)}(M)\subset\mathfrak{vect}(J^\infty M)$.
  \end{theorem}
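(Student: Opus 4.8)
The plan is to reduce the geometric symmetry condition for a lifted contact field to a purely algebraic normalizer condition inside the contact superalgebra $\mathfrak{c}=\mathfrak{cont}(1|4)$, and then to invoke the simplicity/irreducibility results established above. By construction the solution space of the linear system $\mathcal{E}$ from \eqref{Eq} is exactly the function space $V\subset\mathcal{O}_M$ spanned in \eqref{jg}, that is, $V$ is the space of generating functions of $\g\cong D(2,1;a)$.

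First I would translate the symmetry condition into an algebraic one. For a lifted contact field $X_f^{(0)}=X_f+f_x h\,\p_h$ from \eqref{lift}, a direct computation of its evolutionary representative on the dependent variable $h$ gives the generating function $\psi=f_x h-X_f(h)=-\{f,h\}$ (up to the sign dictated by parity), where $\{,\}$ is the Lagrange--Jacobi bracket \eqref{JLB}. Since $\mathcal{E}$ is linear, the infinitesimal symmetry criterion on $\mathcal{E}^\infty$ reduces to demanding that $\psi$ again solve $\mathcal{E}$ whenever $h$ does; that is, $X_f^{(0)}$ is a symmetry if and only if $\{f,V\}\subseteq V$. Hence the symmetry subalgebra of $\mathcal{E}^\infty$ inside $\mathfrak{cont}^{(\infty)}(M)$ equals the normalizer $N_{\mathfrak{c}}(V)=\{f\in\mathfrak{c}:\{f,V\}\subseteq V\}$.

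The lower bound is then immediate: since $V$ is the function space of the Lie subalgebra $\g$, it is closed under the bracket, so $\g\subseteq N_{\mathfrak{c}}(V)$ and every $X_f^{(0)}$ with $f\in\g$ is a symmetry; together with the isomorphism $\g\cong D(2,1;a)$ established above this realizes the superalgebra. For the upper bound I would prove $N_{\mathfrak{c}}(V)=\g$: if $f\in N_{\mathfrak{c}}(V)$ then $\{g,f\}\in\g$ for all $g\in\g$, so the class $[f]$ is a $\g$-invariant vector of the quotient module $\mathfrak{c}/\g$. By the Remark following Proposition \ref{normg}, $\mathfrak{c}/\g$ is an infinite-dimensional irreducible $D(2,1;a)$-module and therefore carries no nonzero invariants; thus $[f]=0$ and $f\in\g$. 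Equivalently, one may combine the maximality of $\g$ in $\mathfrak{c}$ (Proposition \ref{normg}) with the observation that $\g$ cannot be an ideal, since $\g$ acts nontrivially on $\mathfrak{c}/\g$. Either way, both bounds give that the symmetry subalgebra equals $\g\cong D(2,1;a)$.

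The main obstacle is the first step: rigorously establishing that the symmetry condition on the infinite prolongation $\mathcal{E}^\infty$ is equivalent to the algebraic condition $\{f,V\}\subseteq V$. This requires the prolongation formalism of \cite{KL,G3super}, namely computing the evolutionary form of the lift carefully across both parities and checking that, because $\mathcal{E}$ is a linear system of finite type with finite-dimensional solution space $V$, the symmetry condition is governed entirely by the action on $V$, with no further constraints appearing at higher jet order. The symplectization picture (the preceding Proposition on $\mathfrak{symp}(J^0M)$ together with \eqref{PBJB}) is what guarantees that the lift is a Lie superalgebra homomorphism and that $\psi$ is indeed controlled by the contact bracket, which is the technical heart of the argument.
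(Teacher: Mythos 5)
Your proposal is correct and follows essentially the same route as the paper: reduce the symmetry condition on $\mathcal{E}^\infty$ to the normalizer condition $\{f,\g\}\subseteq\g$ in $\mathfrak{cont}(M)$ via the lift \eqref{lift} and the identity $\tilde{X}_{\tilde f}(\tilde g)=\widetilde{\{f,g\}}$ from \eqref{PBJB}, then conclude the normalizer equals $\g$ from Proposition \ref{normg}. Your justification of the upper bound via the irreducibility of $\mathfrak{c}/\g$ (the Remark after Proposition \ref{normg}) is a slightly more explicit version of the paper's appeal to maximality, but it is the same argument in substance.
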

  
The requirement on the symmetry is to preserve not only the PDE system, but also the contact ideal on $M$,
or equivalently the symplectic form $\omega$ on $J^0M$ and the homogeneity in $h$.
 
 \begin{proof}
Symmetries of $\mathcal{E}$ map the space of solutions of $\mathcal{E}$ to itself. 
Since $\mathcal{E}$ is linear, so is its solution space $\mathcal{S}$. It has the basis 
of generating functions given by \eqref{jg}. Due to \eqref{Xf} and \eqref{JLB} we have:
 $$
X_f(g)=\{f,g\}+f_x g.
 $$
The last term does not, in general, belong to the subspace in $\mathfrak{c}$ given by \eqref{jg} for all 
$f,g\in\g$, and that's why we introduced lift \eqref{lift}, $\tilde{\fg}\subset\mathfrak{symp}(J^0M)$,
which yields using \eqref{PBJB} with the notations from the previous proof:
 $$
\tilde{X}_{\tilde{f}}(\tilde{g})=\widetilde{\{f,g\}}.
 $$
Thus $X^{(0)}_f$ preserves the class of solutions $h=f(x,\xi)$ of $\mathcal{E}$, or equivalently 
the space of functions $\{\widetilde{g(x,\xi)}\}\simeq\mathcal{S}$ iff $\{f,g\}\in\g$ 
$\forall g\in\g$. In other words, $f$ should belong to the normalizer of $\g$ within $\mathfrak{cont}(M)$,
which by Proposition \ref{normg} is equal to $\g$.
 \end{proof}

 \begin{remark}
Reduction of $\mathfrak{c}_1=\C^4\oplus\C^4$ to $\g_1$ is equivalent to the restriction to one 
$\mathfrak{c}_0$ submodule $\C^4\simeq\g_{-1}^*$, that is given geometrically through a subbundle 
of the higher frame bundle \cite{KST2}.
This resembles a higher order reduction in contact projective structures \cite{CS}. 
However, there the submodule $\g_1\subset\g_0\otimes\g_{-1}^*$ is uniquely determined,
while in our case it is given by a parameter $\varepsilon$. In addition, since $\g_1$ is odd, this suggests
a class of odd connections, but they do not exist in non-neutral parities \cite{BG}.
This is manifested by the non-existence of invariant nonlinear super PDEs in the lowest dimension 
as in Remark \ref{rmk1}.
 \end{remark}

\subsection{Superspace $M^{2|2}=G/P_2^\I$}\label{Gpi2}

This is a Hermitian supersymmetric space, since $\fg=\fg_{-1}\oplus\fg_0\oplus\fg_1$ is the grading of 
$\fg=D(2,1;a)$, according to Section \ref{pi2}. Here $\fg_0=\fgl(2|1)=\CC\oplus\fsl(2|1)$ has a basis
 \begin{align*}
Z=\frac{1}{2}H_1+\frac{1}{2}H_2,\quad X=X_3,\quad H=H_3,\quad Y=Y_3,\quad I=\frac{s_1}{s_3}H_1+\left(\frac{s_1}{s_3}+1\right)H_2,\\ F^+=-yxx,\quad F^-=yxy,\quad \overline{F}^+=\frac{1}{s_3}xyx,\quad \overline{F}^-=\frac{1}{s_3}xyy,
 \end{align*}
where $Z$ is the grading element and the rest generates $\ff=\fg_{0}^{ss}=\fsl(2|1)$.

This Lie superalgebra is represented in the linear spaces $\fg_{-1}$ and $\fg_{-1}^*$, with 
the bases and weights given in Table \ref{tab:g-1_weights}.
We refer to Appendix \ref{appendixsl21} for the basics of representation theory of $\ff$.

\begin{table}[h]\centering
    \begin{tabular}{|c|c||c|c|}\hline
Vector & Weight & Vector & Weight \\ \hline
$e_1=Y_2$ & $(b-1,0)$ & $\omega^1$ & $(-b+1,0)$ \\ \hline
$e_2=Y_1$ & $(b+1,0)$ & $\omega^2$ & $(-b-1,0)$\\ \hline
$e_3=yyx$ & $(b,1)$ & $\omega^3$ & $(-b,-1)$\\ \hline
$e_4=yyy$ & $(b,-1)$ & $\omega^4$ & $(-b,1)$\\ \hline
    \end{tabular}
\caption{Bases of $\fg_{-1}$ and $\fg_{-1}^\ast$ with $(I,H)$-weights, $b=-\frac{s_1}{s_3}+\frac{s_2}{s_3}$.}
    \label{tab:g-1_weights}
\end{table}

 \begin{lemma}\label{Le2}
There are precisely two Lie subalgebras $\fk$ with $\fg_0\subset\fk\subset\fgl(\fg_{-1})$, namely
 \begin{eqnarray*}
\fk_1&=&\fg_0+\langle \varphi_1=e_1\otimes\omega^2, \varphi_2=-s_1e_1\otimes\omega^4
   +e_3\otimes\omega^2, \varphi_3=-s_1e_1\otimes\omega^3-e_4\otimes\omega^2\rangle,\\
\fk_2&=&\fg_0+\langle\varphi_4=e_2\otimes\omega^1, \varphi_5=\frac{s_2}{s_3}e_2\otimes\omega^4
   -\frac{1}{s_3}e_3\otimes\omega^1, \varphi_6=-\frac{s_2}{s_3}e_2\otimes\omega^3
   -\frac{1}{s_3}e_4\otimes\omega^1\rangle.
 \end{eqnarray*}
 \end{lemma}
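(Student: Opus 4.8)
The plan is to turn the classification into a representation-theoretic enumeration followed by a closure check. First I would note that any subalgebra $\fk$ with $\fg_0\subseteq\fk\subseteq\fgl(\fg_{-1})$ is $\op{ad}(\fg_0)$-stable, since $[\fg_0,\fk]\subseteq[\fk,\fk]\subseteq\fk$; hence $\fk$ corresponds to a $\fg_0$-submodule $\bar W$ of the quotient module $\fgl(\fg_{-1})/\fg_0$, where $\fgl(\fg_{-1})=\fg_{-1}\otimes\fg_{-1}^*$ carries its adjoint action. So the classification reduces to (i) listing the $\fg_0$-submodules of this $(3|4)$-dimensional quotient, and then (ii) deciding, for each lift $\fk=\fg_0\oplus W$, whether $[W,W]\subseteq\fk$, the bracket being the supercommutator of endomorphisms of $\fg_{-1}$.

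For step (i) I would decompose using the $(I,H)$-weights of $\fg_{-1}$ and $\fg_{-1}^*$ recorded in Table \ref{tab:g-1_weights}: adding weights produces the $16$ weights of $\fg_{-1}\otimes\fg_{-1}^*$, and removing the $(5|4)$ weights carried by $\fg_0$ leaves two even extremal vectors $\varphi_1=e_1\otimes\omega^2$ and $\varphi_4=e_2\otimes\omega^1$ (of weights $(-2,0)$ and $(2,0)$), four odd vectors of weights $(\pm1,\pm1)$, and one even $(0,0)$ ``supertrace'' direction $\tau$. Generating under $\fg_0$, one finds $\varphi_2,\varphi_3$ as the odd $\fg_0$-descendants of $\varphi_1$ and $\varphi_5,\varphi_6$ as those of $\varphi_4$, giving two irreducible submodules $M_-=\langle\varphi_1,\varphi_2,\varphi_3\rangle$ and $M_+=\langle\varphi_4,\varphi_5,\varphi_6\rangle$, each of dimension $(1|2)$. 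A short computation shows $\tau$ alone is not $\fg_0$-stable --- the odd generators of $\fg_0$ carry it into the odd parts of both $M_-$ and $M_+$ --- so the submodule lattice is $0\subset M_\pm\subset M_-\oplus M_+\subset\fgl(\fg_{-1})/\fg_0$, and the only candidate subalgebras are $\fg_0\oplus M_-$, $\fg_0\oplus M_+$, $\fg_0\oplus(M_-\oplus M_+)$, and $\fgl(\fg_{-1})$ itself.

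For step (ii), the brackets inside $M_-$ vanish or land in $\langle\varphi_1\rangle$ by weight reasons, so $\fk_1=\fg_0\oplus M_-$ is a subalgebra, and symmetrically $\fk_2=\fg_0\oplus M_+$; these are the two asserted algebras, and the explicit $\varphi_j$ in the statement are simply the descendants produced above. The only genuinely delicate case is $\fg_0\oplus(M_-\oplus M_+)$: the sole brackets that could leave it are the weight-$(0,0)$ products $[\varphi_1,\varphi_4]$, $\{\varphi_2,\varphi_6\}$, $\{\varphi_3,\varphi_5\}$, and one must determine whether these carry a $\op{str}$-component. This is exactly the step on which ``precisely two'' hinges, and it is where I would concentrate the careful computation.

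The main obstacle is therefore twofold. Representationally, since $\fsl(2|1)$ is not reductive, complete reducibility is unavailable, so the submodule lattice cannot be read off a naive weight list and must be established by explicit extremal-vector arguments, with particular care that $\tau$ generates no additional submodule. Computationally, the crux is the Koszul sign bookkeeping in the odd--odd anticommutators $\{\varphi_2,\varphi_6\}$ and $\{\varphi_3,\varphi_5\}$: the count of admissible $\fk$ depends entirely on whether these weight-$(0,0)$ brackets are $\op{str}$-free (hence in $\fg_0$, so that $\fg_0\oplus(M_-\oplus M_+)$ would close to $\fsl(\fg_{-1})$) or not, and a single sign slip here reverses the conclusion. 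I would thus perform these three brackets with the correct supercommutator signs and use the outcome to exclude the $M_-\oplus M_+$ candidate, leaving exactly $\fk_1$ and $\fk_2$.
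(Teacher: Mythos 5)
Your overall strategy coincides with the paper's: both arguments reduce the classification to finding $\fg_0$-submodules of $\fgl(\fg_{-1})/\fg_0$ and then checking closure under the supercommutator. The paper obtains the submodule structure by quoting the decomposition of $\rho(b,1)\otimes\rho(-b,1)$ for $\fsl(2|1)$, identifies the two $(1|2)$-dimensional pieces $\rho_\pm(1)$ by applying $F^\pm,\overline{F}^\pm$ to the extremal vectors $e_1\otimes\omega^2$ and $e_2\otimes\omega^1$, and verifies $[V_i,V_i]=0$; your weight-by-weight construction of $M_\pm$ is an equivalent, more hands-on route to the same two modules, and your remark that the $(0,0)$ direction sits as a quotient $\niplus\rho(0,0)$ rather than as a submodule agrees with the paper.

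The gap is in the step you yourself single out as decisive. You propose to exclude $\fg_0\oplus(M_-\oplus M_+)$ by showing that the weight-$(0,0)$ brackets $[\varphi_1,\varphi_4]$, $\{\varphi_2,\varphi_6\}$, $\{\varphi_3,\varphi_5\}$ carry a $\op{str}$-component. They cannot: $\op{str}(AB)=(-1)^{|A||B|}\op{str}(BA)$, so every supercommutator (and every anticommutator of odd endomorphisms) is supertrace-free, and no bracket ever escapes into the supertrace direction. In fact $\fg_0$ (whose image is the supertrace-free matrix \eqref{sl13=so22}) and all six $\varphi_j$ lie in $\fsl(\fg_{-1})$, and the dimension count $(5|4)+(1|2)+(1|2)=(7|8)=\dim\fsl(2|2)$ shows $\fg_0\oplus M_-\oplus M_+=\fsl(\fg_{-1})$, which is certainly closed under brackets. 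So if you carried out the three computations you defer, you would find all three landing in $\fg_0$ (e.g.\ $\{\varphi_2,\varphi_6\}$ is a supertrace-free diagonal endomorphism, hence in $\fg_0$), and your exclusion argument would collapse rather than conclude. What can actually be proved — and what the paper's proof establishes, and what the application in Proposition \ref{supervariety} needs — is the classification of the \emph{minimal} subalgebras strictly containing $\fg_0$, equivalently the irreducible submodules $\rho_\pm(1)$ of the quotient: any $\fk\supsetneq\fg_0$ contains $\fk_1$ or $\fk_2$. Your write-up should target that refined statement; as it stands, the crucial computation is both left undone and premised on a false expectation about where its output will land.
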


 \begin{proof}
Let us first describe how $\fg_0=\fgl(2|1)$ is embedded in $\fgl(2|2)=\fgl(\fg_{-1})$.

In terms of the representation theory of $\ff$ (Appendix \ref{appendixsl21}), we have $\fg_{-1}=\rho(b,1)$, 
i.e.\ it is an irreducible representation of dimension 4, with highest $H$-weight $1$ and corresponding $I$-weight 
$b=-2\frac{s_1}{s_3}-1=1+\frac2a$. Also, $\fg_{-1}^*=\rho(-b,1)$.
The tensor product $\fg_{-1}\otimes \fg_{-1}^*$ by \cite{Frappat} is equal to:
 \begin{equation*}
\rho(b,1)\otimes\rho(-b,1)=\rho(0,2)\oplus\rho(0;-1,1;0),
 \end{equation*}
where $\rho(0;-1,1;0)$ is a non-completely reducible $\fsl(2|1)$-representation that decomposes as 
semi-direct sums of $\fsl(2|1)$ irreducible representations:
\begin{equation*}
 \rho(0;-1,1;0)=\rho(0,0)\begin{array}{l}
                             \niplus \rho_+(1)\\
                             \niplus \rho_-(1)
                             \end{array}
                             \niplus\rho(0,0).
\end{equation*}
The semi-direct sum symbol $\niplus$ means that the space to the left is an $\ff$-submodule 
(and the space to the right is a quotient), while
 \begin{equation*}
\rho_+(1)=\pi_1(1)\oplus\pi_0(2),\qquad\rho_-(1)=\pi_1(-1)\oplus\pi_0(-2).
 \end{equation*}

We want to find all Lie superalgebras $\fk$ with $\fg_0\subset\fk\subset\fgl(\fg_{-1})$, and those define
$\fg_0$-submodules $V$ of $\fgl(\fg_{-1})/\fg_0$. Thus we proceed by identifying those submodules $V$.
 
Since the grading element $Z$ acts trivially on $\fgl(\fg_{-1})$, $Z$ generates the invariant $\rho(0,0)$ 
in $\rho(0;-1,1;0)$, and
  \begin{equation*}
\frac{\fgl(\fg_{-1})}{\fg_0}\cong\frac{\rho(0,2)\oplus\left(\rho(0,0)
\begin{array}{l}\niplus \rho_+(1)\\ \niplus \rho_-(1)\end{array}\niplus\rho(0,0)\right)}
{\rho(0,0)\oplus\rho(0,2)}\cong
\begin{array}{l}\rho_+(1)\\ \rho_-(1)\end{array}\niplus\rho(0,0)
  \end{equation*}
has precisely two non-trivial $\fg_0$-invariant subspaces. 
Up to scalar multiple, the only vector in $\fgl(\fg_{-1})$ with weight $(-2,0)$ 
is $\varphi_1=e_1\otimes\omega^2\in\pi_0(-2)\subset\rho_-(1)$, $\varphi_1\not\in\fg_0$. 
Recalling that the dual action is given via the negative of the super-transpose (cf.\ \cite{Frappat}), we compute
 \begin{equation*}
\varphi_2=F^+\cdot\varphi_1=-s_1e_1\otimes\omega^4+e_3\otimes\omega^2,\quad
\varphi_3=F^-\cdot\varphi_1=-s_1e_1\otimes\omega^3-e_4\otimes\omega^2, 
 \end{equation*}
and also $\overline{F}^+\cdot\varphi_1=\overline{F}^-\cdot\varphi_1=0$. The three vectors 
$\varphi_1,\varphi_2,\varphi_3$ generate a subspace $V_1$ with $[V_1,V_1]=0$, $[\fg_0,V_1]\subset V_1$. 
Therefore 
$\mathfrak{k}_1=\fg_0+V_1$ is a Lie superalgebra and $\fg_0\subset \mathfrak{k}_1\subset\fgl(\fg_{-1})$. 

Similarly, $\varphi_4=e_2\otimes\omega^1$, $\varphi_5=\overline{F}^+\cdot\varphi_4=\frac{s_2}{s_3}e_2\otimes\omega^4-\frac{1}{s_3}e_3\otimes\omega^1$ and
$\varphi_6=\overline{F}^-\cdot\varphi_4=-\frac{s_2}{s_3}e_2\otimes\omega^3-\frac{1}{s_3}e_4\otimes\omega^1$ generate an Abelian superalgebra and $\fg_0$-module $V_2$. Therefore
we obtain a Lie superalgebra $\mathfrak{k}_2=\fg_0+V_2$ and $\fg_0\subset\fk_2\subset\fgl(\fg_{-1})$.
 \end{proof}

 \begin{remark}
Geometrically, the reduction of the structure group to $G_0=COSp(2|2)\subset GL(2|2)$ is given 
by a $G_0=\exp\fg_0$ principal bundle $\mathfrak{F}_0\to M$, see \cite{KST2}.
The embedding of the corresponding Lie superalgebras $\fg_0\subset\fgl(\fg_{-1})$, or equivalently,
$\mathfrak{gl}(1|2)=\mathfrak{cosp}(2|2)\subset\mathfrak{gl}(2|2)$
is encoded by the following matrix:
 \begin{equation}\label{sl13=so22}
\begin{pmatrix}
  a_1 & 0 & -s_2b_4 & s_2b_3\\
  0 & a_2 & -s_1b_2 & s_1b_1\\
  b_1 & b_3 & a_1+a_3 & a_4\\
  b_2 & b_4 & a_5 & a_2-a_3
\end{pmatrix}.
 \end{equation}
where $a_1,\dots,a_5$ are even coordinates, $b_1,\dots,b_4$ are odd coordinates
(these describe the embedding through the functor of points as is customary in supergeometry \cite{Va,CCF},
see also \cite[\S2.2]{KST2}). 
For the parameters $s_1,s_2,s_3$ ($s_1+s_2+s_3=0$), only their combination $a=s_3/s_2\in\mathbb{P}^1/S_3$ 
is essential ($a\neq0,-1,\infty$).
 \end{remark}
 
 Figure \ref{F:g0e1e2} gives a diagram for the action of $\fg_0$ in $\fg_{-1}$.
 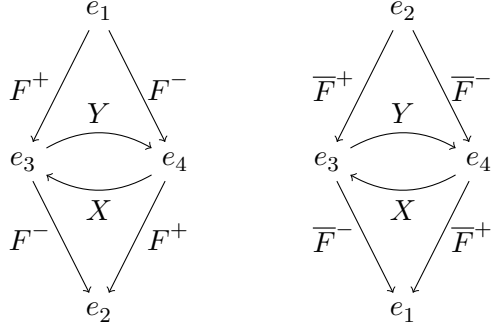
\begin{figure}[h]\centering
\begin{tikzpicture}
  \node (e1l) at (0, 2) {$e_1$};
  \node (e3l) at (-1, 0) {$e_3$};
  \node (e4l) at (1, 0) {$e_4$};
  \node (e2l) at (0, -2) {$e_2$};
  \draw[->] (e1l) -- node[left] {$F^+$} (e3l);
  \draw[->] (e1l) -- node[right] {$F^-$} (e4l);
  \draw[->] (e3l) -- node[left] {$F^-$} (e2l);
  \draw[->] (e4l) -- node[right] {$F^+$} (e2l);
  \draw[->] (e3l) to[bend left] node[above] {$Y$} (e4l);
  \draw[->] (e4l) to[bend left] node[below] {$X$} (e3l);
  \node (e2r) at (4, 2) {$e_2$};
  \node (e3r) at (3, 0) {$e_3$};
  \node (e4r) at (5, 0) {$e_4$};
  \node (e1r) at (4, -2) {$e_1$};
  \draw[->] (e2r) -- node[right] {$\overline{F}^-$} (e4r);
  \draw[->] (e2r) -- node[left] {$\overline{F}^+$} (e3r);
  \draw[->] (e3r) -- node[left] {$\overline{F}^-$} (e1r);
  \draw[->] (e4r) -- node[right] {$\overline{F}^+$} (e1r);
  \draw[->] (e3r) to[bend left] node[above] {$Y$} (e4r);
  \draw[->] (e4r) to[bend left] node[below] {$X$} (e3r);
\end{tikzpicture}
  \caption{The $\fg_0$-orbits through $e_1$ and $e_2$.}
  \label{F:g0e1e2}
 \end{figure}

 \begin{proposition}\label{supervariety} \quad
 \begin{enumerate}
 \item The $G_0$-orbit $\mathcal{V}_1$ through the line $[e_1]$ is preserved by $\fk_1$, but not $\fk_2$.
 \item The $G_0$-orbit $\mathcal{V}_2$ through the line $[e_2]$ is preserved by $\fk_2$, but not $\fk_1$.
 \end{enumerate}
 \end{proposition}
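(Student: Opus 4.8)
\emph{Strategy.} The plan is to reduce the global statement about the orbits to a single infinitesimal tangency check at the base points $[e_1]$ and $[e_2]$, and then to propagate it along each orbit using homogeneity. Recall that an operator $A\in\fgl(\fg_{-1})$ generates a vector field on $\PP(\fg_{-1})$ whose value at a point $[v]$ is the class of $Av$ in $\fg_{-1}/\langle v\rangle$; hence $A$ is tangent to the orbit $\mathcal{V}=G_0\cdot[v]$ exactly when $Av\in\fg_0\cdot v+\langle v\rangle$, the right-hand side being the affine tangent space to the cone over $\mathcal{V}$ at $v$. First I would record the following propagation principle: if $\fk=\fg_0+V$ is a subalgebra of $\fgl(\fg_{-1})$ containing $\fg_0$ with $V\cdot v\subset\fg_0\cdot v+\langle v\rangle$, then every $\varphi\in V$ is tangent to $\mathcal{V}$ at each point $g\cdot v$, $g\in G_0$. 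Indeed $\varphi(gv)=g\bigl(\op{Ad}_{g^{-1}}\varphi\bigr)v$, and since $\op{Ad}_{G_0}$ preserves $\fk$ (because $\fg_0\subset\fk$ is a subalgebra) we may split $\op{Ad}_{g^{-1}}\varphi\in\fg_0\oplus V$ and apply the base-point hypothesis to the $V$-component, landing in $g(\fg_0\cdot v)=\fg_0\cdot(gv)=T_{gv}\mathcal{V}$. Thus it suffices to work at the base point.

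\emph{Part (1).} Here I would read off $\fg_0\cdot e_1$ directly from Figure \ref{F:g0e1e2}: the grading element and $I$ act on $e_1$ by scalars, $F^+$ and $F^-$ send $e_1$ to (multiples of) $e_3$ and $e_4$, while $X,H,Y$ and $\overline{F}^\pm$ annihilate $e_1$; hence $\fg_0\cdot e_1+\langle e_1\rangle=\langle e_1,e_3,e_4\rangle$. Using the formulas of Lemma \ref{Le2} and the dual pairings of Table \ref{tab:g-1_weights}, each generator of $V_1$ in fact annihilates $e_1$: $\varphi_1\cdot e_1=\omega^2(e_1)\,e_1=0$, and likewise $\varphi_2\cdot e_1=\varphi_3\cdot e_1=0$ since $\omega^2,\omega^3,\omega^4$ all vanish on $e_1$. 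Therefore $V_1\cdot e_1=0\subset\fg_0\cdot e_1$, and the propagation principle shows $\fk_1$ preserves $\mathcal{V}_1$. On the other hand $\varphi_4\cdot e_1=\omega^1(e_1)\,e_2=e_2\notin\langle e_1,e_3,e_4\rangle$, so $\varphi_4\in\fk_2$ is not tangent to $\mathcal{V}_1$ at $[e_1]$ and $\fk_2$ does not preserve $\mathcal{V}_1$.

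\emph{Part (2).} This follows by the identical computation with the roles of $e_1,e_2$ and of $\fk_1,\fk_2$ exchanged. From Figure \ref{F:g0e1e2} one gets $\fg_0\cdot e_2+\langle e_2\rangle=\langle e_2,e_3,e_4\rangle$, now with $\overline{F}^\pm$ rather than $F^\pm$ the raising operators out of $e_2$; the generators $\varphi_4,\varphi_5,\varphi_6$ of $V_2$ all kill $e_2$, so $\fk_2$ preserves $\mathcal{V}_2$, while $\varphi_1\cdot e_2=\omega^2(e_2)\,e_1=e_1\notin\langle e_2,e_3,e_4\rangle$ shows $\fk_1$ does not.

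\emph{Main obstacle.} The only genuinely delicate point — and the step I would treat most carefully — is the meaning of ``preservation'' in the supergeometric setting and the justification that infinitesimal tangency at the base point is the correct condition. I would phrase the orbits $\mathcal{V}_1,\mathcal{V}_2\subset\PP(\fg_{-1})$ and their tangency through the functor of points, so that the odd generators $\varphi_2,\varphi_3$ (resp.\ $\varphi_5,\varphi_6$) are treated on equal footing with the even ones; the propagation principle is then a statement about the $G_0$-equivariant bundle $T\PP(\fg_{-1})$ restricted to the homogeneous $\mathcal{V}_i$, and the graded-antisymmetry signs used in the Figure \ref{F:g0e1e2} computations must be tracked consistently. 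Once this framing is fixed, the four tangency checks are immediate evaluations of the dual pairings.
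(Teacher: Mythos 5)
Your proof is correct, and the four decisive evaluations ($\varphi_1\cdot e_1=\varphi_2\cdot e_1=\varphi_3\cdot e_1=0$, $\varphi_4\cdot e_1=e_2$, and their mirror images at $e_2$) are exactly the ones the paper's argument turns on. The route differs in how the tangent space of the orbit is obtained: the paper parametrizes $\mathcal{V}_1$ explicitly by exponentiating $F^\pm$ with odd parameters, computes the affine tangent filtration $\widehat{T}^{(0)}\subset\widehat{T}^{(1)}\subset\widehat{T}^{(2)}$ from derivatives of $l(\theta,\varphi)$, and then checks that $\fk_1$ maps the orbit into $\widehat{T}^{(1)}$ while $\varphi_4\cdot[e_1]$ falls outside it; you instead identify the first-order affine tangent space directly as $\fg_0\cdot v+\langle v\rangle$ and make the reduction to the base point explicit via the $\op{Ad}_{G_0}$-stability of $\fk_i$ (using that $V_i$ is a $\fg_0$-module, as established in Lemma \ref{Le2}). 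Your version is leaner and makes precise a propagation step that the paper only asserts (its claim ``$r\cdot m\in\widehat{T}^{(1)}_{[e_1]}\mathcal{V}_1$ for all $m\in\mathcal{V}_1$'' is really a statement about the translated tangent space at $m$); the paper's explicit parametrization, on the other hand, is not wasted effort, since it is reused immediately afterwards to write down the homogeneous equations \eqref{Vsupervar} defining the odd bi-quadric $\mathcal{V}=\mathcal{V}_1\cup\mathcal{V}_2$, which your infinitesimal argument does not produce. Your closing caveat about phrasing tangency through the functor of points is exactly the right thing to worry about and is consistent with the paper's conventions.
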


In what follows, we generate a supervariety by the action of the exponential on a supervector. 
Following \cite[(2.18)-(2.19)]{G3super}, we use the convention of letting column vectors denote {\em right}-coordinates with respect a given basis (and similarly the columns of a matrix describe right-coordinates for the images of corresponding basis vectors).  This is so that matrix multiplication on the left works properly and commutes with right scalar multiplication.  With this convention, composition of linear operators corresponds to matrix multiplication as usual.  Differentiation acts from the left, and commutes with right scalar multiplication.  We will use these conventions throughout.

 \begin{proof}
Let $V=\fg_{-1}=\langle e_1=Y_2,e_2=Y_1\,|\,e_3=yyx,e_4=yyy\rangle$ and $\ff$ be as above.

(1)
Consider the point $[e_1]\in\mathbb{P}(V_{\bar0})$, $\fq\subset\ff$ its stabilizer parabolic subalgebra and 
the corresponding grading $\ff=\ff_{-1}\oplus\ff_0\oplus\ff_1$. We have $\ff_{-1}=\{F^+,F^-\}$, where
 \begin{equation*}
   F^+|_{\fg_{-1}}=\begin{pmatrix}
               0 & 0 & 0 & 0\\
               0 & 0 & 0 & s_1\\
               1 & 0 & 0 & 0\\
               0 & 0 & 0 & 0\\
              \end{pmatrix},\quad
              F^-|_{\fg_{-1}}=\begin{pmatrix}
               0 & 0 & 0 & 0\\
               0 & 0 & s_1 & 0\\
               0 & 0 & 0 & 0\\
               -1 & 0 & 0 & 0\\
              \end{pmatrix},
  \end{equation*}
 and since columns describe {\em right}-coordinates with respect to the given basis, we have
 \begin{equation*}
 e^{\theta F^+}|_{\fg_{-1}}=\begin{pmatrix}
               1 & 0 & 0 & 0\\
               0 & 1 & 0 & s_1\theta\\
               -\theta & 0 & 1 & 0\\
               0 & 0 & 0 & 1\\
              \end{pmatrix},\quad
 e^{\varphi F^-}|_{\fg_{-1}}=\begin{pmatrix}
               1 & 0 & 0 & 0\\
               0 & 1 & s_1\varphi & 0\\
               0 & 0 & 1 & 0\\
               \varphi & 0 & 0 & 1\\
              \end{pmatrix},
  \end{equation*}
where $\theta$ and $\varphi$ are odd parameters, and
 \begin{equation*}
\begin{pmatrix}
1 \\ 0\\ 0 \\ 0
\end{pmatrix}\overset{e^{\theta F^+}}{\longmapsto}
\begin{pmatrix}
1 \\ 0 \\ -\theta \\ 0
\end{pmatrix}\overset{e^{\varphi F^-}}{\longmapsto}
\begin{pmatrix}
1 \\ s_1\theta\varphi \\ -\theta \\ \varphi
\end{pmatrix}=l(\theta,\varphi).
 \end{equation*}
We have the affine tangent space
 \begin{equation*}
\widehat{T}^{(1)}_{[e_1]}\mathcal{V}_1=\Big\langle l(\theta,\varphi), \frac{\p}{\p\theta}l(\theta,\varphi),
\frac{\p}{\p\varphi}l(\theta,\varphi)\Big\rangle\bigg|_{\substack{\theta=0\\ \varphi=0}}=
\left\langle\begin{pmatrix}1 \\ 0 \\ 0 \\ 0\end{pmatrix},
\begin{pmatrix}0 \\ 0 \\ 1 \\ 0\end{pmatrix},
\begin{pmatrix}0 \\ 0 \\ 0 \\ 1\end{pmatrix}\right\rangle
 \end{equation*}
and
 \begin{equation*}
\widehat{T}^{(2)}_{[e_1]}\mathcal{V}_1=\Big\langle l(\theta,\varphi), \frac{\p}{\p\theta}l(\theta,\varphi), 
\frac{\p}{\p\varphi}l(\theta,\varphi), \frac{\p^2}{\p\theta\p\varphi}l(\theta,\varphi)\Big\rangle
\bigg|_{\substack{\theta=0\\ \varphi=0}}=V.
 \end{equation*}
We obtain the $\fq$-invariant filtration:
 \begin{equation*}
[e_1]= \widehat{T}^{(0)}_{[e_1]}\mathcal{V}_1\subset \widehat{T}^{(1)}_{[e_1]}\mathcal{V}_1\subset  \widehat{T}^{(2)}_{[e_1]}\mathcal{V}_1=V.
 \end{equation*}
For all $m\in\mathcal{V}_1$ and all $r\in\fk_1$, we have $r\cdot m\in\widehat{T}^{(1)}_{[e_1]}\mathcal{V}_1$, 
but $\varphi_4\cdot[e_1]\notin \widehat{T}^{(1)}_{[e_1]}\mathcal{V}_1$ (notation $\varphi_4$ is from the proof of
Lemma \ref{Le2}). Therefore, $\mathcal{V}_1$ is preserved by $\fk_1$, but not $\fk_2$.

(2)
Consider the point $[e_2]\in\mathbb{P}(V_{\bar0})$, $\fq\subset\ff$ its stabilizer parabolic subalgebra 
and the corresponding grading $\ff=\ff_{-1}\oplus\ff_0\oplus\ff_1$. 
We have $\ff_{-1}=\{\overline{F}^+,\overline{F}^-\}$, where
 \begin{equation*}
\overline{F}^+|_{\fg_{-1}}=\begin{pmatrix}
               0 & 0 & 0 & -s_2/s_3\\
               0 & 0 & 0 & 0\\
               0 & -1/s_3 & 0 & 0\\
               0 & 0 & 0 & 0\\
              \end{pmatrix},\quad
              \overline{F}^-|_{\fg_{-1}}=\begin{pmatrix}
               0 & 0 & s_2/s_3 & 0\\
               0 & 0 & 0 & 0\\
               0 & 0 & 0 & 0\\
               0 & -1/s_3 & 0 & 0\\
              \end{pmatrix}
  \end{equation*}
and
  \begin{equation*}
\begin{pmatrix}
0 \\ 1\\ 0 \\ 0
\end{pmatrix}\overset{e^{\tau \overline{F}^+}}{\longmapsto}
\begin{pmatrix}
0 \\ 1 \\ \tau/s_3 \\ 0
\end{pmatrix}\overset{e^{\nu \overline{F}^-}}{\longmapsto}
\begin{pmatrix}
-s_2\tau\nu/s_3^2 \\ 1 \\ \tau/s_3 \\ \nu/s_3
\end{pmatrix}
 \end{equation*}
where $\tau$ and $\nu$ are odd parameters. Then we obtain similarly to case (1):
 \begin{equation*}
\widehat{T}_{[e_2]}^{(1)} \mathcal{V}_2 =\left\langle\begin{pmatrix}
0 \\ 1 \\ 0 \\ 0
\end{pmatrix},\begin{pmatrix}
0 \\ 0 \\ 1 \\ 0
\end{pmatrix},
\begin{pmatrix}
0 \\ 0 \\ 0 \\ 1
\end{pmatrix}\right\rangle\qquad\text{ and }\qquad  \widehat{T}^{(2)}_{[e_2]} \mathcal{V}_2 =V.
 \end{equation*}  
  
For all $m\in\mathcal{V}_2$ and all $r\in\fk_2$, we have $r\cdot m\in\widehat{T}^{(1)}_{[e_2]}\mathcal{V}_2$, 
but $\varphi_1\cdot[e_2]\notin\widehat{T}^{(1)}_{[e_2]}\mathcal{V}_2$ (notation $\varphi_1$ is from the proof 
of Lemma \ref{Le2}). Therefore, $\mathcal{V}_2$ is preserved by $\fk_2$, but not $\fk_1$.
 \end{proof}

Now we can encode the $G_0$-reduction of the structure group through the reduced supervariety 
$\mathcal{V}=\mathcal{V}_1\cup\mathcal{V}_2$ in $\mathbb{P}^{1|2}=\mathbb{P}(\C^{2|2})$;
each $\mathcal{V}_i$ is an odd quadratic variety, which allows to call $\mathcal{V}$ an odd bi-quadric. 
By Proposition \ref{supervariety}, the description of components of $\mathcal{V}$ is the following:
 \begin{equation*}
\mathcal{V}_1=\{[x_1:x_2:\xi_1:\xi_2]=[1:(1+a)\theta\varphi:\theta:\varphi]\},\quad
\mathcal{V}_2=\{[x_1:x_2:\xi_1:\xi_2]=[-\tau\nu:a^2:a\tau:a\nu]\},
 \end{equation*}
where $\theta,\varphi,\tau,\nu$ are odd parameters, i.e.\ the supervarieties are ``images'' of some 
homogeneous morphisms $\C^{1|2}\to\C^{2|2}$ (then the expressions of $x_1,...,\xi_2$ 
are pullback functions), that is, the map $\mathbb{P}^{0|2}\to\mathbb{P}^{1|2}$. 
Note that they can be described by the following homogeneous relations:
 \begin{equation}\begin{aligned}\label{Vsupervar}
\mathcal{V}_1: &&\quad x_1x_2-(1+a)\xi_1\xi_2=0,\ x_2\xi_1=0,\ x_2\xi_2=0,\ x_2^2=0;\\ 
\mathcal{V}_2: &&\quad x_1x_2+\xi_1\xi_2=0,\ x_1\xi_1=0,\ x_1\xi_2=0,\ x_1^2=0.
 \end{aligned}\end{equation}
 
 \begin{remark}
The bi-quadric $\mathcal{V}$ satisfies the relation $(x_1x_2-(1+a)\xi_1\xi_2)\cdot(x_1x_2+\xi_1\xi_2)=0$,
however it is not characterized by it: the ring $\mathcal{O}_M$ has zero-divisors, and the other defining equations are crucial.
 \end{remark}

Define the cone structure 
$\mathcal{V}_M:=M^{2|2}\times\mathcal{V}\subset\mathbb{P}\mathcal{T}(M^{2|2})$ 
as the subbundle of the (projectivized) tangent bundle of $M^{2|2}$ obtained by the reduction 
$\mathcal{V}=\mathcal{V}_1\cup\mathcal{V}_2$ translated via the flat connection given by 
commuting vector fields $\p_{x_i},\p_{\xi_j}$.

 \begin{theorem}\label{Tpi2}
The symmetry of the cone structure $\mathcal{V}_M$ is the Lie superalgebra $D(2,1;a)$. 
 \end{theorem}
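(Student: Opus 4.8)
The plan is to establish $\mathfrak{sym}(\mathcal{V}_M)=\fg$ by squeezing the symmetry superalgebra between a lower bound coming from the homogeneous $G$-action and an upper bound coming from the Tanaka--Weisfeiler prolongation. For the lower bound, recall that $\mathcal{V}=\mathcal{V}_1\cup\mathcal{V}_2\subset\mathbb{P}(\fg_{-1})$ was built as a $G_0$-invariant subvariety and that $\mathcal{V}_M$ is obtained by spreading it over $M^{2|2}$ with the flat connection; this construction is manifestly equivariant, so the homogeneous action of $\fg=D(2,1;a)$ on $M=G/P_2^\I$ preserves $\mathcal{V}_M$. Hence $\fg\subseteq\mathfrak{sym}(\mathcal{V}_M)$, already realizing the claimed symmetries. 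The point of the theorem is the reverse inequality: since $\fm=\fg_{-1}$ is abelian, the bare tangent geometry has $\pr(\fm)=\mathfrak{vect}(\fm)$ infinite-dimensional, and it is precisely the cone reduction that must cut the symmetries down to $\fg$.

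The decisive step is to identify the linear isotropy, that is, to show that the subalgebra $\mathfrak{aut}(\mathcal{V})\subseteq\fgl(\fg_{-1})$ of linear maps preserving the bi-quadric $\mathcal{V}$ equals exactly $\fg_0$. First I would observe that the connected stabilizer of $\mathcal{V}$ preserves each irreducible component, so at the infinitesimal level $\mathfrak{aut}(\mathcal{V})=\mathfrak{stab}(\mathcal{V}_1)\cap\mathfrak{stab}(\mathcal{V}_2)$, where $\mathfrak{stab}(\mathcal{V}_i)$ is the subalgebra preserving $\mathcal{V}_i$. Each $\mathfrak{stab}(\mathcal{V}_i)$ contains $\fg_0$ and contains $\fk_i$ (by Proposition \ref{supervariety}, $\fk_i$ preserves $\mathcal{V}_i$ and $\fg_0\subset\fk_i$), while it is proper in $\fgl(\fg_{-1})$ because $\fk_{3-i}\subseteq\fgl(\fg_{-1})$ does \emph{not} preserve $\mathcal{V}_i$. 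By Lemma \ref{Le2} the only subalgebras strictly between $\fg_0$ and $\fgl(\fg_{-1})$ are $\fk_1$ and $\fk_2$, which forces $\mathfrak{stab}(\mathcal{V}_i)=\fk_i$. Intersecting, $\mathfrak{aut}(\mathcal{V})=\fk_1\cap\fk_2$; this intersection again lies in the list $\{\fg_0,\fk_1,\fk_2,\fgl(\fg_{-1})\}$, is properly contained in each distinct $\fk_j$, and therefore equals $\fg_0$.

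With the isotropy pinned to $\fg_0$, the cone field $\mathcal{V}_M$ is exactly an order-$0$ structure reduction $\mathfrak{F}_0\subset\mathcal{F}r_0$ on $M^{2|2}$ with symbol $\fm=\fg_{-1}$ and structure algebra $\fg_0=\fgl(2|1)$: the fibrewise stabilizer of the cone is precisely $G_0=\exp\fg_0$, so automorphisms of $\mathcal{V}_M$ coincide with automorphisms of this $G_0$-structure. The prolongation result $\pr(\fm,\fg_0)=\fg$ of Section \ref{pi2} then bounds the symmetries from above: by the inequality \eqref{symineq} of \cite[Theorem 1.1]{KST2} we obtain $\dim\mathfrak{sym}(\mathcal{V}_M)\le\dim\fg$. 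Combined with the lower bound, this yields $\mathfrak{sym}(\mathcal{V}_M)=\fg=D(2,1;a)$.

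The main obstacle is the isotropy computation of the second paragraph, and in particular the essential use of the \emph{union} of both quadrics. A single component $\mathcal{V}_i$ has the strictly larger stabilizer $\fk_i$, whose prolongation $\pr(\fm,\fk_i)$ would exceed $\fg$ and over-count symmetries; it is exactly the bi-quadric $\mathcal{V}=\mathcal{V}_1\cup\mathcal{V}_2$ that reduces the structure algebra to $\fg_0$ and makes the prolongation bound sharp. A secondary point to verify is that the cone at a point genuinely recovers the $G_0$-reduction (so that the prolongation governs $\mathfrak{sym}(\mathcal{V}_M)$ rather than a coarser invariant), which is exactly the content of $\mathfrak{aut}(\mathcal{V})=\fg_0$ established above.
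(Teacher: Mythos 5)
Your proof is correct and follows essentially the same route as the paper: the lower bound comes from the $G$-equivariance of the construction (the paper phrases this as the supertranslations $\fg_{-1}$, the linear cone symmetries $\fg_0$, and their prolongation), and the upper bound comes from $\pr(\fm,\fg_0)=\fg$ together with the majorization of the symmetry algebra by the Tanaka--Weisfeiler prolongation from \cite{KST2}. Your explicit verification that $\mathfrak{aut}(\mathcal{V})=\fk_1\cap\fk_2=\fg_0$ via Lemma \ref{Le2} and Proposition \ref{supervariety} is precisely the content of ``the above results'' that the paper's one-paragraph proof invokes implicitly, so you have simply spelled out the same argument in more detail.
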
 
 
 \begin{proof}
This straightforwardly follows from the above results and the computation of Tanaka--Weisfeiler prolongation
from Section \ref{pi2}. Indeed, by construction, the symmetries of $\mathcal{V}_M$ contain supertranslations, 
which form $\fg_{-1}$, and linear symmetries of the cone, which form $\fg_0$. Next computation of
symmetries contains the prolongation $\fg_1$ and, in fact, coincides with it as, by \cite{KST2}, the
Tanaka--Weisfeiler prolongation majorizes the symmetry algebra.
 \end{proof}

\subsection{Superspace $M^{2|4}=G/P_{12}^\I$}\label{Gpi12}

This superspace $M^{2|4}$ is a $\mathbb{P}^1$-bundle over $M^{1|4}$.
The fiber consists of self-dual 2-planes $\Pi^2\subset\mathcal{C}$ in the odd contact distribution. Indeed,
since the natural conformal symplectic structure on $\mathcal{C}$ is, in classical sense, a conformal metric,
this identification is a twistor correspondence, usually denoted $M^{2|4}=Z(M^{1|4})$,
however in non-holonomic sense, precisely as was considered in \cite{KrMa}. 
(With this it becomes an instance of Legendrian contact structure, i.e.\ parabolic geometry of type $(A_3,P_{13})$,
with the parity of $\fg_{-1}$ inverted.)

To construct $M^{2|4}$ from $M^{1|4}$, select one of the one-parametric families of 
$\alpha$- or $\beta$-planes, i.e.\ self-dual or anti-self dual null planes of half dimension with respect to
the conformal metric on $\mathcal{C}$, which swap upon the change of orientation of $\mathcal{C}$. 
Equivalently, select one of the two $\mathfrak{sl}(2)$ factors in the semisimple part $\mathfrak{so}(4)$ 
of $\g_0$, or choose a cross on one of two white nodes of the Dynkin diagram of $D(2,1;a)$; 
those interchange by an outer automorphism $\fp_{12}^I\simeq\fp_{13}^I$.

In an affine chart, $M^{2|4}$ can be identified with the jet-space $J^2(\C^{0|2},\C^{1|0})$ that fibers over 
$J^1(\C^{0|2},\C^{1|0})$; indeed the entire supermanifold can be identified with the ``compact version'' 
of the jet-space (here compact refers only to even coordinates, while in odd coordinates the superspace 
is ``cylindrical'').

As in the classical Lie-B\"acklund theorem (cf.\ \cite{KL}), the canonical transformations of $J^2$ are 
prolongations of contact vector fields on $J^1$, that was investigated in Section \ref{pi1}.
To get explicit formulae for the generating functions of the vector fields, we have to change the coordinates
 $$
(x,\xi_1,\xi_2,\xi_3,\xi_4)\mapsto(y,\theta_1,\theta_2,\nu_1,\nu_2):=
\bigl(x-i(\xi_1\xi_2+\xi_3\xi_4),\xi_1+i\xi_2,\xi_3+i\xi_4,\xi_1-i\xi_2,\xi_3-i\xi_4\bigr),
 $$
where $i=\sqrt{-1}$. In new coordinates the Darboux form \eqref{Darboux1} becomes
 \begin{equation}\label{Darboux2}
\sigma=dy+\nu_1\,d\theta_1 +\nu_2\,d\theta_2=dy-d\theta_1\cdot\nu_1-d\theta_2\cdot\nu_2.
 \end{equation}

Denoting $D_{\theta_j}=\p_{\theta_j}+\nu_j\p_y$, then similarly to \eqref{Xf},
the contact vector field corresponding to $f\in\mathcal{O}_M$ has the form 
 \begin{equation}\label{Xf2}
X_f=f\p_y +(-1)^{|f|}\sum_{j=1}^2D_{\theta_j}(f)\p_{\nu_j}+\p_{\nu_j}(f)D_{\theta_j}.
 \end{equation}
The Lagrange--Jacobi bracket, analogous to \eqref{JLB}, becomes
 \begin{equation}\label{JLB2}
\{f,g\}=f\p_yg-(-1)^{|f|\,|g|}g\p_yf+(-1)^{|f|}\sum_{j=1}^2
(\p_{\nu_j}f)(D_{\theta_j}g)+(D_{\theta_j}f)(\p_{\nu_j}g).
 \end{equation}

The contact algebra $\mathfrak{c}\simeq\mathfrak{j}$ (with a shift of gradation) on $J^1=M^{1|4}$ naturally 
prolongs to a transitive action on $J^2=M^{2|4}$, but now we choose the following weights on generating functions: 
$w(1)=0$, $w(\theta_j)=1$, $w(\nu_j)=2$, $w(y)=3$, and let $w(X_f)=w(f)-3$. 
With these specifications we get a $|3|$-grading on the vector fields representing $\g=D(2,1;a)$. 

The following shows the initial gradation of $\mathfrak{c}\supset\g$
(and $\op{s}\!.\!\dim$ specifies both even and odd dimensions):
 \begin{center}
 \begin{tabular}{c|c|c}
$k$ &  $\mathfrak{c}_{k}$ & $\op{s}\!.\!\dim\mathfrak{c}_{k}$\\
\hline
$-3$ & 1 & $(1|0)$ \\
$-2$ & $\theta_1,\theta_2$ & $(0|2)$ \\
$-1$ & $\theta_1\theta_2$, $\nu_1$, $\nu_2$ & $(1|2)$ \\
$0$ & $y$, $\theta_1\nu_1,\theta_1\nu_2,\theta_2\nu_1,\theta_2\nu_2$ & $(5|0)$ \\
$1$ & $y\theta_1,y\theta_2$, $\nu_1\nu_2$, $\theta_1\theta_2\nu_1,\theta_1\theta_2\nu_2$ & $(1|4)$ \\
$2$ & $y\theta_1\theta_2$, $y\nu_1,y\nu_2$, $\theta_1\nu_1\nu_2,\theta_2\nu_1\nu_2$ & $(1|4)$ \\
$3$ & $y^2$, $y\theta_1\nu_1$, $y\theta_1\nu_2$, $y\theta_2\nu_1,y\theta_2\nu_2$, $\theta_1\theta_2\nu_1\nu_2$ & $(6|0)$
 \end{tabular}
 \end{center}

With respect to $\g_0=\C\oplus\mathfrak{gl}(2)$ the space $\mathfrak{c}_1$ decomposes into $\C^{1|0}$ 
and two standard representations of changed parity $\C^{0|2}\oplus\C^{0|2}=\C^{0|2}\otimes\C^2$.
Our higher order structure reduction effects in reducing the latter to one 
$\C^{0|2}_a=\langle y\theta_1+a\theta_1\theta_2\nu_2,y\theta_2-a\theta_1\theta_2\nu_1\rangle$,
so the reduction is given by $\g_1=\C^{1|0}(\nu_1\nu_2)\oplus\C^{0|2}_a\subset\mathfrak{c}_1$.

This prolongs to $\g_2=\langle y\nu_1+(a+1)\theta_2\nu_1\nu_2,y\nu_2-(a+1)\theta_1\nu_1\nu_2\rangle$,
$\g_3=\langle y^2-(\theta_1\nu_1+\theta_2\nu_2)y-(a+1)\theta_1\theta_2\nu_1\nu_2\rangle$
and $\g_i=0$ for $i>3$.
This Lie superalgebra is isomorphic to $D(2,1;a)$. 
In fact, identifying the bases we get the expressions $s_1=-1,s_2=1+a,s_3=-a$, whence
the parameter is $s_3/s_2=-\frac{a}{a+1}$ but this is $S_3$-equivalent to $a$.

The generating functions of $\fg$ are determined by the following super PDE system:
 \begin{gather}
\bigl((a+1)\p_y^2+2\p_{\theta_1}\p_{\theta_2}\p_{\nu_1}\p_{\nu_2}\bigr)(f)=0,\quad
\p_y\p_{\theta_1}\p_{\nu_2}(f)=0,\quad \p_y\p_{\theta_2}\p_{\nu_1}(f)=0,\notag\\ 
\p_y\p_{\theta_1}\p_{\theta_2}(f)=0,\qquad \p_y\p_{\nu_1}\p_{\nu_2}(f)=0,\qquad
\p_y\p_{\theta_1}\p_{\nu_1}(f)=\p_y\p_{\theta_2}\p_{\nu_2}(f),\label{PDEsys2}\\
\bigl((a+1)\p_y\p_{\nu_k}\!-(-1)^k\p_{\theta_k}\p_{\nu_1}\p_{\nu_2}\bigr)(f)=0,\quad
\nu_k\bigl(a\p_y\p_{\theta_k}\!-(-1)^k\p_{\theta_1}\p_{\theta_2}\p_{\nu_k}\bigr)(f)=0.\notag
 \end{gather}

 \begin{theorem}\label{Tpi12}
The Lie superalgebra $D(2,1;a)$ is the symmetry subalgebra of the differential system $\mathcal{E}^\infty$ 
defined by equations \eqref{PDEsys2} within the algebra of (lifted, prolonged) 
vector fields $\mathfrak{cont}^{(\infty)}(M)\subset\mathfrak{vect}(J^\infty M)$.
  \end{theorem}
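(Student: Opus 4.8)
\emph{Proof plan.} The plan is to follow the proof of Theorem~\ref{Tpi1} step for step, replacing the contact data \eqref{Xf}--\eqref{JLB} on $M^{1|4}$ by its $|3|$-graded counterpart \eqref{Xf2}--\eqref{JLB2} in the coordinates $(y,\theta_j,\nu_j)$. First I would record that the system $\mathcal{E}$ cut out by \eqref{PDEsys2} is linear with constant coefficients, so its space of polynomial solutions $\mathcal{S}$ is a graded linear superspace, and I would identify $\mathcal{S}$ with the generating functions of $\g=\bigoplus_{k=-3}^{3}\g_k$ listed in Section~\ref{Gpi12}, of superdimension $(9|8)$. One inclusion is immediate: each displayed generator (e.g.\ $\nu_1\nu_2$, $y\theta_1+a\theta_1\theta_2\nu_2$, $y\nu_1+(a+1)\theta_2\nu_1\nu_2$, and $y^2-(\theta_1\nu_1+\theta_2\nu_2)y-(a+1)\theta_1\theta_2\nu_1\nu_2$) is annihilated by every operator in \eqref{PDEsys2}. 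For the reverse inclusion I would differentiate the first equation in \eqref{PDEsys2} and use $\p_y\p_{\theta_1}\p_{\theta_2}(f)=0$ together with $a+1\neq0$ to force $\p_y^{3}(f)=0$, so that only finitely many weights occur; a weight-by-weight count then shows the polynomial solution space has exactly superdimension $(9|8)$ and hence equals $\g$.

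Next I would reduce ``symmetry of $\mathcal{E}^\infty$'' to a normalizer condition, exactly as in Theorem~\ref{Tpi1}. Restricting to contact fields lifted by \eqref{lift} and prolonged to $J^\infty$, such an $X_f$ is an infinitesimal symmetry iff it preserves the linear solution space $\mathcal{S}$. The symplectization identity $\tilde{X}_{\tilde{f}}(\tilde{g})=\widetilde{\{f,g\}}$, i.e.\ the analogue of \eqref{PBJB} now taken with the bracket \eqref{JLB2}, shows that $X_f^{(0)}$ preserves $\mathcal{S}\simeq\{\tilde{g}:g\in\g\}$ iff $\{f,g\}\in\g$ for all $g\in\g$. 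Thus the symmetry subalgebra equals the normalizer $N(\g)$ of $\g$ inside $\mathfrak{c}=\mathfrak{cont}(1|4)$, and it remains to prove $N(\g)=\g$.

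For maximality I would exploit that, for the $|3|$-grading, the non-positive parts already coincide: comparing the tables in Section~\ref{Gpi12}, the spaces $\mathfrak{c}_{-3},\mathfrak{c}_{-2},\mathfrak{c}_{-1}$ and $\mathfrak{c}_0=\gl_2\oplus\CC$ are exactly $\g_{-3},\dots,\g_0$, so $\mathfrak{c}_{\le0}=\g_{\le0}$. Hence $N(\g)$ is graded with $N(\g)_{\le0}=\g_{\le0}$, and the only freedom is in positive degree. As a $\g_0$-module $\mathfrak{c}_1=\CC^{1|0}\oplus(\CC^{0|2}\otimes\CC^2)$ while $\g_1=\CC^{1|0}(\nu_1\nu_2)\oplus\CC^{0|2}_a$ is one standard copy, so the quotient $\mathfrak{c}_1/\g_1$ is the single irreducible $\CC^{0|2}$; a one-line check that a generator of this extra copy fails the condition $\{f,g\}\in\g$ for a suitable $g\in\g$ (equivalently, that it does not normalize $\g$) gives $N(\g)_1=\g_1$. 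Once $N(\g)_{\le1}=\g_{\le1}$ is secured, $N(\g)$ is a graded, effective Lie superalgebra extending $\g_{\le1}$ inside the effective algebra $\mathfrak{c}$, so the universal property of the Tanaka--Weisfeiler prolongation gives $N(\g)\subseteq\pr(\g_{\le1})=\g$ --- the latter equality being the computation carried out for $\fp_{12}^\I$ in Section~\ref{pi12}. Therefore $N(\g)=\g$ and the theorem follows.

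I expect the genuine work, beyond transcribing the $\fp_1^\I$ argument, to be concentrated in two routine-but-unavoidable checks: the dimension count identifying $\mathcal{S}$ with $\g$, and the single finite bracket computation showing that the extra standard $\CC^{0|2}\subset\mathfrak{c}_1$ does not normalize $\g$. The latter is the real crux, since it is what makes $\g$ maximal, and once $N(\g)_1=\g_1$ holds the positive direction is automatic from $\pr(\g_{\le1})=\g$. A tempting shortcut --- transferring Proposition~\ref{normg} through an automorphism of $\mathfrak{cont}(1|4)$ intertwining the two gradings --- I would avoid unless such an intertwiner is produced explicitly, since the two copies of $D(2,1;a)$ arise from different reductions and their conjugacy is not obvious a priori.
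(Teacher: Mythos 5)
Your proposal is correct and follows essentially the same route the paper intends: the paper omits this proof with the remark that it is ``similar to that of Theorem \ref{Tpi1}'', and your plan is exactly that transcription --- identify the solution space of \eqref{PDEsys2} with the generating functions of $\fg$, reduce symmetry to the normalizer condition via the symplectization identity, and establish maximality of $\fg$ in $\mathfrak{cont}(1|4)$ by the graded argument culminating in $\pr(\fg_{\leq1})=\fg$, just as in Proposition \ref{normg}. Your caution about not transferring Proposition \ref{normg} through an unproduced intertwiner is well placed, and the direct check that the complementary $\g_0$-submodule $\CC^{0|2}\subset\mathfrak{c}_1$ fails to normalize $\fg$ is immediate since it is a nontrivial $\g_0$-module meeting $\fg$ trivially.
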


The proof is similar to that of Theorem \ref{Tpi1} and hence omitted. 

 \begin{remark}\label{Rkp1p12}
We can rewrite the generators of $\g$ from Section \ref{Gpi1} in the new contact coordinates of this section.
In particular, $\g_{-2}=\langle1\rangle$, $\g_{-1}=\langle\theta_1,\theta_2,\nu_1,\nu_2\rangle$,
$\g_0=\langle y,\theta_1\theta_2,\theta_1\nu_1,\theta_1\nu_2,\theta_2\nu_1,\theta_2\nu_2,\nu_1\nu_2\rangle$,
and then we get the reduction
 $$
\g_1= \langle y\theta_1+\tfrac{\varepsilon-1}2\theta_1\theta_2\nu_2,y\theta_2+\tfrac{\varepsilon-1}2\theta_1\theta_2\nu_1,
y\nu_1+\tfrac{\varepsilon+1}2\theta_2\nu_1\nu_2,y\nu_2-\tfrac{\varepsilon+1}2\theta_1\nu_1\nu_2\rangle.
 $$
Since $\{\g_{-1},\g_2\}=\g_1$, this gives a relation between the reductions of $\fp_1^\I$ and $\fp_{12}^\I$ through regrading 
of $\g$ using $\theta_1\theta_2$: the grading element changes from 
$Z_1=2y-\theta_1\nu_1-\theta_2\nu_2$ for $\fp_1^\I$ grading of $\g$ to 
$Z_{12}=y+Z_1$ for $\fp_1^\I$ grading of $\g$, and $\theta_1\theta_2$ is the eigenvector of minimal eigenvalue ($+1$) 
for the operator $\op{ad}_y=\{y,\cdot\}$ in $\fg_0$ of $\fp_1^\I$.
It also relates with the parameter $a$ of $D(2,1;a)$ because its $S_3$ orbit is
 $$
\Bigl\{\frac{\varepsilon-1}2,\frac2{\varepsilon-1},\frac{\varepsilon+1}{-2},\frac{-2}{\varepsilon+1},
\frac{1-\varepsilon}{1+\varepsilon},\frac{1+\varepsilon}{1-\varepsilon}\Bigr\}.
 $$
 \end{remark}


\subsection{Superspace $M^{3|3}=G/P_{23}^\I$}\label{Gpi23}

This is yet another $G_0$ reduction, however here $\fg_0$ is not reductive and to encode the
geometry we need some algebraic preliminaries. Let
 \[
Z=H_1+\frac{1}{2}H_2+\frac{1}{2}H_3,\quad E=-\frac{s_1}{2}H_1+\frac{s_2}{2}H_2+\frac{s_3}{2}H_3,\quad N=\frac{H_2+H_3}{2},\quad \psi^+=yxx,\quad \psi^-=xyy.
 \]
Then $\fg_0=\CC\oplus\fgl(1|1)$, with $Z\in\CC$ the grading element and 
$\fgl(1|1)=\langle E,N\,|\,\psi^+,\psi^-\rangle$ with structure equations:
 \[
[N,\psi^\pm]=\pm\psi^\pm,\quad[\psi^+,\psi^-]=E.
 \]
The basis of $\g_{-1}$ and its dual of $\g_{-1}^*$ together with their weights are given in 
Table \ref{tab:g-1_weightsp23i}:
 \begin{table}[h]\centering
    \begin{tabular}{|c|c||c|c|}\hline
        Vector & Weight & Vector & Weight \\ \hline
$e_1=Y_2$ & $(-s_2,-1)$ & $\omega^1$ & $(s_2,1)$ \\ \hline
$e_2=Y_3$ & $(-s_3,-1)$ & $\omega^2$ & $(s_3,1)$\\ \hline
$e_3=yxy$ & $(-s_3,0)$ & $\omega^3$ & $(s_3,0)$\\ \hline
$e_4=yyx$ & $(-s_2,0)$ & $\omega^4$ & $(s_2,0)$\\ \hline
    \end{tabular}
\caption{Bases of $\fg_{-1}$ and $\fg_{-1}^\ast$ with $(E,N)$-weights.}
\label{tab:g-1_weightsp23i}
 \end{table}

 \begin{remark}
The reduction $\fg_0\subset\fgl(\fg_{-1})$ is encoded by the following matrix:
 \begin{equation}
 \begin{pmatrix}
a_1 & 0 & 0 & s_2b_1\\
0 & a_2 & s_3b_1 & 0\\
0 & -b_2 & a_1+a_3 & 0\\
-b_2 & 0 & 0 & a_2-a_3
 \end{pmatrix}.
 \end{equation}
where $a_1,a_2,a_3$ parametrize the even and $b_1,b_2$ parametrize the odd parts of $\g_0$. 
The action of $\psi_-$ on $\g_{-1}$ corresponds to $b_1=1$ (rest zero) and that of $\psi_+$ to $b_2=1$ (rest zero).
Observe that $\g_{-1}$ is decomposable.
\end{remark}

\begin{figure}[h]\centering
    \begin{tikzpicture}
\node (e1) at (0, 0) {$e_1$};
\node (e2) at (2, 0) {$e_2$};
\node (e3) at (4, 0) {$e_3$};
\node (e4) at (6, 0) {$e_4$};
\draw[->] (e1) to[bend left] node[above] {$\psi^+$} (e4);
\draw[->] (e2) to[bend left] node[above] {$\psi^+$} (e3);
\draw[->] (e4) to[bend left] node[below] {$\psi^-$} (e1);
\draw[->] (e3) to[bend left] node[below] {$\psi^-$} (e2);
    \end{tikzpicture}
\caption{Action of $\fg_0$ on $\fg_{-1}$.}
\end{figure}

 \begin{definition}
The typical Kac module $\langle c,n\rangle$ is the $(1|1)$-dimensional highest weight irreducible 
$\fgl(1|1)$-representation characterized by its $E$-eigenvalue $c\in\mathbb{C}_\times$ and
$N$-eigenvalue $n\in\CC$. More precisely, in the basis denoted in \cite{GQS2007} as
$\langle n\rangle$, $\langle n-1\rangle$ it is given by 
 \begin{equation*}
E=\begin{pmatrix}c & 0\\ 0 & c\end{pmatrix}, \quad
N=\begin{pmatrix}n & 0\\ 0 & n-1\end{pmatrix}, \quad
\psi^+=\begin{pmatrix}0 & c\\ 0 & 0\end{pmatrix}, \quad
\psi^-=\begin{pmatrix}0 & 0\\ 1 & 0\end{pmatrix}.
 \end{equation*}
 \end{definition}

 \begin{definition}
The projective cover $\mathcal{P}_h(n)$, $n\in\CC$, is the four-dimensional indecomposable 
$\fgl(1|1)$-representation that admits no extension and is given by 
 \begin{equation*}
\hspace{-2pt}
N=\begin{pmatrix}
n&0&0&0\\0&n+1&0&0\\0&0&n-1&0\\0&0&0&n
 \end{pmatrix}\!\!,\,
\psi^+=\begin{pmatrix}
0&0&0&0\\1&0&0&0\\0&0&0&0\\0&0&1&0
 \end{pmatrix}\!\!,\,
\psi^-=\begin{pmatrix}
0&0&0&0\\0&0&0&0\\1&0&0&0\\0&-1&0&0
 \end{pmatrix}\!\!,\,
E=\begin{pmatrix}
0&0&0&0\\0&0&0&0\\0&0&0&0\\0&0&0&0
 \end{pmatrix}\!\!.
 \end{equation*}
The above basis $\langle n\rangle$, $\langle n+1\rangle$, $\langle n-1\rangle$, $\langle n\rangle$ in the
notations of \cite{GQS2007} has parities even-odd-odd-even.
 
Note that $\mathcal{P}_h(0)$ is the adjoint representation of $\fgl(1|1)$. Also,
 \begin{equation*}
  \mathcal{P}_h(n)=\langle n\rangle
\begin{array}{l}\niplus \langle n+1\rangle\\
\niplus \langle n-1\rangle\end{array}                             
\niplus\langle n\rangle.
 \end{equation*}
 \end{definition}

The tensor product of two typical Kac modules is obtained from the following:
 \begin{proposition}\label{product}(\cite[Eq.(13)]{GQS2007}) If $c_1\neq0,c_2\neq0$ then
\begin{equation*}
  \langle c_1,n_1\rangle\otimes\langle c_2,n_2\rangle\ = \ 
\begin{cases} 
\ \mathcal{P}_h(n_1+n_2-1)  &\text{ for }c_1+c_2=0\,,\\
\ \bigoplus_{p=0}^{1} \ \langle c_1+c_2,n_1+n_2-p\rangle &\text{ for }c_1+c_2\neq0\,.
       \end{cases}
\end{equation*}
 \end{proposition}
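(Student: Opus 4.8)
The plan is to compute the $\fgl(1|1)$-action on the four-dimensional tensor product directly and read off its decomposition. Fix bases $\{a_0,a_1\}$ of $\langle c_1,n_1\rangle$ and $\{b_0,b_1\}$ of $\langle c_2,n_2\rangle$ as in the definition of the Kac module, so that $a_0,b_0$ are even of $N$-weight $n_1,n_2$ and $a_1,b_1$ are odd of $N$-weight $n_1-1,n_2-1$, with $\psi^+a_1=c_1a_0$, $\psi^-a_0=a_1$ (and likewise for $b$). The product has basis $a_0{\otimes}b_0,\ a_0{\otimes}b_1,\ a_1{\otimes}b_0,\ a_1{\otimes}b_1$ of $N$-weights $n_1{+}n_2,\ n_1{+}n_2{-}1,\ n_1{+}n_2{-}1,\ n_1{+}n_2{-}2$. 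Since $E$ is central it acts on the product as the scalar $c_1+c_2$ by the graded Leibniz rule, so the entire content of the computation is the action of the odd generators, obtained from $\psi^\pm(v\otimes w)=(\psi^\pm v)\otimes w+(-1)^{|v|}v\otimes(\psi^\pm w)$.

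First I would treat $c_1+c_2\neq0$. Here the even vector $a_0\otimes b_0$ is annihilated by $\psi^+$ and generates, via $\psi^-$, the submodule $\langle a_0\otimes b_0,\ a_1\otimes b_0+a_0\otimes b_1\rangle$ on which $E=c_1+c_2\neq0$; being irreducible this is the typical module $\langle c_1+c_2,n_1+n_2\rangle$. Complementary to it, the vectors $a_1\otimes b_1$ and $u:=c_1\,a_0\otimes b_1-c_2\,a_1\otimes b_0$ span a second submodule with $\psi^+(a_1\otimes b_1)=u$, $\psi^-u=(c_1+c_2)\,a_1\otimes b_1$, and $\psi^+u=0$; this is again typical with $E$-eigenvalue $c_1+c_2$, and its $\psi^+$-closed vector $u$ sits at $N$-weight $n_1+n_2-1$. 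Checking that the two subspaces intersect trivially and span everything yields $\langle c_1+c_2,n_1+n_2\rangle\oplus\langle c_1+c_2,n_1+n_2-1\rangle$; here I would record that the labels $n_1+n_2-p$ refer to the $N$-weight of the $\psi^+$-highest vector of each summand, and that the second summand carries the opposite parity, a point worth stating explicitly to match the conventions of \cite{GQS2007}.

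For $c_1+c_2=0$ the scalar $E$ vanishes on the whole module, so it cannot split into typicals and must instead be identified with the projective cover. I would compute $\ker\psi^+\cap\ker\psi^-$ and find it one-dimensional, spanned by $a_0\otimes b_1+a_1\otimes b_0$ of $N$-weight $n_1+n_2-1$; a module with simple socle is indecomposable, and being four-dimensional with $E=0$ it is forced to be $\mathcal{P}_h(m)$ with $m$ the socle weight $n_1+n_2-1$. To finish I would exhibit a cyclic generator (e.g.\ $a_0\otimes b_1$, from which $\psi^\pm$ reach all four basis vectors) and write down the explicit change of basis carrying the action into the defining matrices for $N,\psi^\pm,E$ of $\mathcal{P}_h(n_1+n_2-1)$, thereby pinning down $m=n_1+n_2-1$.

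The routine linear algebra is light; the only real obstacle is the atypical case $c_1+c_2=0$, where one must recognize the indecomposable (rather than semisimple) structure and match it to $\mathcal{P}_h(n_1+n_2-1)$. The delicate bookkeeping is the parity: the natural socle and cyclic vectors of the tensor product come out with parity opposite to the even reference vectors in the matrices defining $\mathcal{P}_h$ and $\langle c,n\rangle$, so the isomorphism holds only up to a global parity reversal. Stating this convention carefully is what makes the clean formula of the proposition hold on the nose.
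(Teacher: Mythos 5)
Your computation is correct, but note that the paper does not prove this proposition at all: it is imported verbatim as \cite[Eq.\ (13)]{GQS2007}, so there is no in-paper argument to compare against. What you supply is a self-contained verification, and it checks out. In the case $c_1+c_2\neq 0$ your two submodules are genuine: $\psi^+(a_1\otimes b_0+a_0\otimes b_1)=(c_1+c_2)\,a_0\otimes b_0$ and $\psi^-(a_1\otimes b_0+a_0\otimes b_1)=0$ close up the first summand, while $\psi^+u=0$, $\psi^-u=(c_1+c_2)\,a_1\otimes b_1$, $\psi^+(a_1\otimes b_1)=u$ close up the second, and the transversality of the two weight-$(n_1+n_2-1)$ lines is exactly the condition $c_1+c_2\neq0$. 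In the atypical case your identification of the socle as $\langle a_0\otimes b_1+a_1\otimes b_0\rangle$ is right, and the phrase ``four-dimensional with $E=0$, hence forced to be $\mathcal{P}_h(m)$'' is the only compressed step --- by itself, indecomposability plus dimension does not single out $\mathcal{P}_h$ among $\gl(1|1)$-indecomposables --- but you immediately repair this by exhibiting the cyclic generator $a_0\otimes b_1$ and matching the action to the defining matrices, which is the honest finish. Your parity caveat is also well taken: with the paper's conventions (even reference vector $\langle n\rangle$ at the top of each module) the second typical summand and the projective cover both appear with globally reversed parity, and the proposition as stated is parity-blind; this is consistent with how the paper later uses the formula (e.g.\ in Lemma \ref{quotientp23i}, where parities are tracked separately through the explicit bases). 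In short: the paper buys brevity by citation; your route buys self-containedness at the cost of a page of linear algebra, all of which is sound.
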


Similar to Section \ref{Gpi2}, we aim to identify subalgebras of $\fgl(\g_{-1})$ containing $\g_0$,
to search for a geometric reduction based on $\g_{-1}$. In what follows we assume $s_i\neq s_j$ for $i\neq j$.

 \begin{lemma}\label{quotientp23i} The following is a decomposition of $\g_0$-modules
 \begin{eqnarray*}
 \frac{\fgl(\fg_{-1})}{\fg_0}&\cong&\left(\begin{array}{l}
\langle 1\rangle\\ \langle-1\rangle\end{array}
                             \niplus\langle 0\rangle\right)
\oplus\langle s_2-s_3,0\rangle\oplus\langle s_2-s_3,1\rangle\oplus\langle s_3-s_2,0\rangle\oplus\langle s_3-s_2,1\rangle.
\end{eqnarray*}
with indecomposable parts
 \begin{eqnarray*}
\left(\begin{array}{l}\langle 1\rangle\\ \langle-1\rangle\end{array}
\niplus\langle 0\rangle\right)&=&\langle e_1\otimes\omega^1,e_4\otimes\omega^1, e_1\otimes\omega^4\rangle\\
\langle s_3-s_2,1\rangle&=&\langle e_4\otimes\omega^2, s_2e_1\otimes\omega^2 +s_3e_4\otimes\omega^3\rangle\\
\langle s_3-s_2,0\rangle&=&\langle e_1\otimes\omega^3, -e_1\otimes\omega^2-e_4\otimes\omega^3\rangle,\\
\langle s_2-s_3,1\rangle&=&\langle e_3\otimes\omega^1, s_3e_2\otimes\omega^1+s_2e_3\otimes\omega^4\rangle,\\
\langle s_2-s_3,0\rangle&=&\langle e_2\otimes\omega^4, -e_2\otimes\omega^1-e_3\otimes\omega^4\rangle.
 \end{eqnarray*}
\end{lemma}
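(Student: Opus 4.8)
The plan is to reduce the whole computation to the representation theory of $\fgl(1|1)$, exploiting that the central grading element $Z\in\fg_0$ acts on $\fg_{-1}$ as the scalar $-1$ and therefore contributes nothing to the module structure; only the $\fgl(1|1)=\langle E,N\mid\psi^+,\psi^-\rangle$-action matters. First I would decompose $\fg_{-1}$ and $\fg_{-1}^*$ into Kac modules. From Table~\ref{tab:g-1_weightsp23i} and the displayed $\psi^\pm$-action, the pairs $\langle e_1,e_4\rangle$ and $\langle e_2,e_3\rangle$ are each invariant and irreducible; reading off the $(E,N)$-weights gives $\fg_{-1}\cong\langle -s_2,0\rangle\oplus\langle -s_3,0\rangle$, both typical since $s_2,s_3\neq0$. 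Dualizing (the action on $\fg_{-1}^*$ being minus the super-transpose) yields $\fg_{-1}^*\cong\langle s_2,1\rangle\oplus\langle s_3,1\rangle$, which is confirmed directly on the basis $\omega^1,\dots,\omega^4$.

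Next I would expand $\fgl(\fg_{-1})\cong\fg_{-1}\otimes\fg_{-1}^*$ into four tensor products and apply Proposition~\ref{product}. The two \emph{diagonal} factors $\langle -s_2,0\rangle\otimes\langle s_2,1\rangle$ and $\langle -s_3,0\rangle\otimes\langle s_3,1\rangle$ have $c_1+c_2=0$, so each produces a copy of $\mathcal{P}_h(0)$; geometrically these are $\op{End}(\langle e_1,e_4\rangle)$ and $\op{End}(\langle e_2,e_3\rangle)$, i.e.\ the block-diagonal endomorphisms. The two \emph{off-diagonal} factors have $c_1+c_2=\pm(s_2-s_3)\neq0$ (this is where the standing hypothesis $s_2\neq s_3$ enters), so each splits into $\langle\pm(s_2-s_3),1\rangle\oplus\langle\pm(s_2-s_3),0\rangle$. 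These four summands are built from cross-block tensors $e_i\otimes\omega^j$ on which the block-diagonal $\fg_0$ acts without leaving them, so they descend unchanged to the quotient; matching the explicit bases given in the lemma is then a short weight-and-parity check together with a single application of $\psi^-$.

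The crux is the block-diagonal piece $\mathcal{P}_h(0)\oplus\mathcal{P}_h(0)$ modulo the image of $\fg_0$. Since $E,N,\psi^+,\psi^-,Z$ all act block-diagonally, $\fg_0$ embeds into this $(4|4)$-dimensional space as a $(3|2)$-dimensional subspace: the $N,\psi^+,\psi^-$ directions map diagonally ($x\mapsto x_1+x_2$), while the two socle directions are $E\mapsto -s_2\,\id_1-s_3\,\id_2$ and $Z\mapsto -(\id_1+\id_2)$. The key point is that \emph{precisely because} $s_2\neq s_3$ these two vectors span the entire plane $\langle\id_1,\id_2\rangle$, so both identities $\id_1,\id_2$ die in the quotient, leaving exactly three classes: the weight-$(0,0)$ vector $e_1\otimes\omega^1$ and the weight-$(0,\pm1)$ vectors $e_4\otimes\omega^1$, $e_1\otimes\omega^4$. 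I would finish by verifying that $E$ acts trivially (it commutes with every block-diagonal operator), that $\psi^+\cdot(e_1\otimes\omega^1)=e_4\otimes\omega^1$ and $\psi^-\cdot(e_1\otimes\omega^1)=e_1\otimes\omega^4$ are nonzero, and that $\psi^\mp$ send the weight-$\pm1$ classes into the quotiented-out socle (e.g.\ $[\psi^-,e_4\otimes\omega^1]=\id_1\equiv0$). This exhibits the summand as the non-split indecomposable $\bigl(\begin{smallmatrix}\langle1\rangle\\\langle-1\rangle\end{smallmatrix}\bigr)\niplus\langle0\rangle$, with head $\langle0\rangle$ and submodule $\langle1\rangle\oplus\langle-1\rangle$.

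The main obstacle I anticipate is this last step. One must confirm that the block-diagonal quotient is \emph{genuinely indecomposable} rather than $\langle0\rangle\oplus\langle1\rangle\oplus\langle-1\rangle$, and that it is oriented correctly as a semidirect sum; both hinge on tracking the socles carefully and on the inequality $s_2\neq s_3$. Keeping the parities straight and checking that the image of $\fg_0$ accounts for \emph{both} identity operators is where the bookkeeping is delicate, whereas the four off-diagonal Kac summands are routine once Proposition~\ref{product} is in hand.
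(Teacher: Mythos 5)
Your proposal is correct and follows essentially the same route as the paper's proof: split $\fg_{-1}$ and $\fg_{-1}^*$ into typical Kac modules, apply Proposition~\ref{product} to the four tensor factors, read off the four off-diagonal summands by weights and a single application of $\psi^\pm$, and quotient the block-diagonal $\mathcal{P}_h(0)\oplus\mathcal{P}_h(0)$ by the image of $\fg_0$ to obtain the indecomposable $\mathbb{A}$. Your explicit accounting of that last quotient — that the images of $E$ and $Z$ span both socle directions $\langle\id_1,\id_2\rangle$ precisely because $s_2\neq s_3$, leaving the $(1|2)$-dimensional anti-diagonal with head $\langle0\rangle$ — correctly fills in a step the paper states only as the displayed quotient formula.
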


 \begin{proof}
We have $\fg_0$-invariant split $\fg_{-1}=\langle e_1,e_4\rangle\oplus\langle e_2,e_3\rangle$, 
$\fg_{-1}^\ast=\langle \omega^1,\omega^4\rangle\oplus\langle \omega^2,\omega^3\rangle$, where
 \begin{equation*}
\langle e_1,e_4\rangle=\langle-s_2,0\rangle,\quad\langle e_2,e_3\rangle=\langle-s_3,0\rangle,\quad \langle \omega^1,\omega^4\rangle=\langle s_2,1\rangle,\quad\langle \omega^2,\omega^3\rangle=\langle s_3,1\rangle.
 \end{equation*} 
Thus, using Proposition \ref{product}, we conclude
 \begin{eqnarray*}
 \frac{\fgl(\fg_{-1})}{\fg_0}&\cong&\frac{\mathcal{P}_h(0)\oplus\mathcal{P}_h(0)\oplus \langle s_2-s_3,0\rangle\oplus\langle s_2-s_3,1\rangle\oplus\langle s_3-s_2,0\rangle\oplus\langle s_3-s_2,1\rangle}{\CC\langle Z\rangle\oplus\mathcal{P}_h(0)}.
\end{eqnarray*}

The only vectors in $\fg_{-1}\otimes\fg_{-1}^\ast$ with weight $(s_3-s_2,1)$ are constant multiples of 
$e_4\otimes\omega^2$. Let $\varphi_1=e_4\otimes\omega^2\in\langle s_3-s_2,1\rangle$, 
$\varphi_2=\psi^-\cdot\varphi_1=s_2e_1\otimes\omega^2+s_3e_4\otimes\omega^3$, then 
$\mathbb{B}_1=\langle s_3-s_2,1\rangle=\langle\varphi_1,\varphi_2\rangle$. 
We can argue similarly to get bases of $\mathbb{B}_2$, $\mathbb{B}_3$ and $\mathbb{B}_4$:
 \begin{align*}
\mathbb{B}_1=  \langle s_3-s_2,1\rangle &=\langle \varphi_1=e_4\otimes\omega^2,
\psi^-\cdot\varphi_1=\varphi_2=s_2e_1\otimes\omega^2+s_3e_4\otimes\omega^3\rangle, &
\psi^-\cdot\varphi_1=0,\\
\mathbb{B}_2= \langle s_3-s_2,0\rangle &=\langle \varphi_3=e_1\otimes\omega^3,
\psi^+\cdot\varphi_3=\varphi_4=-e_1\otimes\omega^2-e_4\otimes\omega^3\rangle, &
\psi^+\cdot\varphi_3=0,\\
\mathbb{B}_3= \langle s_2-s_3,1\rangle &=\langle\varphi_5=e_3\otimes\omega^1,
\psi^-\cdot\varphi_5=\varphi_6=s_3e_2\otimes\omega^1+s_2e_3\otimes\omega^4\rangle, &
\psi^-\cdot\varphi_5=0,\\
\mathbb{B}_4= \langle s_2-s_3,0\rangle &=\langle\varphi_7=e_2\otimes\omega^4,
\psi^+\cdot\varphi_7=\varphi_8=-e_2\otimes\omega^1-e_3\otimes\omega^4\rangle, &
\psi^+\cdot\varphi_7=0.
 \end{align*}
Notice that any vector $v\in\mathbb{B}_i$, $i=1,\dots,4$, has non-zero $E$-eigenvalue, therefore $v\notin\fg_0$. Moreover, since the grading element $Z$ acts trivially on $\fgl(\fg_{-1})$, $Z$ generates the invariant $\langle0\rangle$ in $\mathcal{P}_h(0)$ and
 \begin{equation*}
  \frac{\fgl(\fg_{-1})}{\fg_0}\cong\frac{\left(\langle 0\rangle
\begin{array}{l}\niplus \langle1\rangle\\ \niplus \langle-1\rangle\end{array} \niplus\langle0\rangle\right)\oplus\mathbb{B}_1\oplus\mathbb{B}_2\oplus\mathbb{B}_3\oplus\mathbb{B}_4}{\CC\langle Z\rangle}\cong\mathbb{A}\oplus\mathbb{B}_1\oplus\mathbb{B}_2\oplus\mathbb{B}_3\oplus\mathbb{B}_4,
 \end{equation*}
where
 \begin{equation}\label{descriptionArep}
     \mathbb{A}=\begin{array}{l}
\langle1\rangle\\ \langle-1\rangle
    \end{array}\niplus\langle0\rangle.
 \end{equation}
Next, the action of $\fg_0$ on $\xi=e_1\otimes\omega^1\in\fgl(\fg_{-1})/\fg_0$ is given in 
Figure \ref{fig:Arep}).
 \begin{figure}[ht]\centering
    \begin{tikzpicture}
\node (top) at (0, 2) {$e_1\otimes\omega^1 \equiv e_2\otimes\omega^2\equiv -e_3\otimes\omega^3\equiv -e_4\otimes\omega^4$ mod $\fg_0$};
\node (left) at (-3, 0) {$e_4\otimes\omega^1\equiv-e_3\otimes\omega^2$ mod $\fg_0$};
\node (right) at (3, 0) {$e_1\otimes\omega^4\equiv- e_2\otimes\omega^3$ mod $\fg_0$};
\draw[->] (top) -- (left) node[midway, above left] {$\psi^-$};
\draw[->] (top) -- (right) node[midway, above right] {$\psi^+$};
    \end{tikzpicture}
\caption{Infinitesimal action of $\fg_0$ on $\xi\in\fgl(\fg_{-1})/\fg_0$.}\label{fig:Arep}
 \end{figure}

Thus, the terms of $\mathbb{A}$ in \eqref{descriptionArep} are 
$\langle 1\rangle=\langle \xi_+=e_1\otimes\omega^4\rangle$,
$\langle-1\rangle=\langle\xi_-=e_4\otimes\omega^1\rangle$ and
$\langle0\rangle=\langle\xi=e_1\otimes\omega^1\rangle$.
 \end{proof}

As a $\g_0$-module, $\fg_{-1}$ splits into two $(1|1)$-dimensional submodules $V_1\oplus V_2$, where $V_1 = \langle e_1, e_4 \rangle$ and $V_2 = \langle e_2, e_3 \rangle$, so
we get a reduction $\fg_0^\dag=\fgl(1|1)\oplus\fgl(1|1)\subset\fgl(\fg_{-1})$, corresponding
geometrically to the direct sum $\mathcal{D}=\mathcal{D}_-\oplus\mathcal{D}_+$ into $(1|1)$-dimensional
isotropic subdistributions (isotropic with respect to the tensorial bracket 
$\Lambda^2\mathcal{D}\to\mathcal{T}M/\mathcal{D}$). 

Next, we consider a further reduction to $\fg_0\subset\fg_0^\dag$.

 \begin{corollary}
There exist precisely two proper subalgebras of $\fg_0^\dag$ which contain $\fg_0$. Namely
 \begin{equation}
\mathfrak{k}_+=\fg_0+\langle\xi_+=e_1\otimes\omega^4\rangle\quad\text{and}\quad 
\mathfrak{k}_-=\fg_0+\langle\xi_-=e_4\otimes\omega^1\rangle.
 \end{equation}
 \end{corollary}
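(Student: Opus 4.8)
The plan is to turn the classification into a short piece of representation theory followed by one decisive bracket. First I would note that any subalgebra $\fk$ with $\fg_0\subseteq\fk\subseteq\fg_0^\dag$ satisfies $[\fg_0,\fk]\subseteq\fk$, so $\fk/\fg_0$ is a $\fg_0$-submodule of $\fg_0^\dag/\fg_0$. Since $\fg_0^\dag$ consists exactly of the block-diagonal endomorphisms preserving the split $\fg_{-1}=V_1\oplus V_2$, its classes modulo $\fg_0$ are the block-diagonal part of $\fgl(\fg_{-1})/\fg_0$, which by Lemma \ref{quotientp23i} is the indecomposable summand $\mathbb{A}$. Thus $\fg_0^\dag/\fg_0\cong\mathbb{A}$, spanned by the representatives $\xi=e_1\otimes\omega^1$, $\xi_+=e_1\otimes\omega^4$, $\xi_-=e_4\otimes\omega^1$, and the candidate subalgebras are in bijection with those $\fg_0$-submodules of $\mathbb{A}$ whose preimage is bracket-closed.

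Next I would enumerate the submodules of $\mathbb{A}$. Reading the $(E,N)$-weights from Table \ref{tab:g-1_weightsp23i} and the action in Figure \ref{fig:Arep}, the odd vectors $\xi_+,\xi_-$ are annihilated by $\psi^\pm$ and are $N$-eigenvectors with opposite nonzero eigenvalues, while the even vector $\xi$ is sent onto $\xi_\mp$ by $\psi^\pm$. Hence $\langle\xi_+,\xi_-\rangle$ is the socle and $\langle\xi\rangle$ the simple top; any submodule containing $\xi$ is all of $\mathbb{A}$, and the two distinct $N$-eigenlines force the proper nonzero submodules to be exactly $\langle\xi_+\rangle$, $\langle\xi_-\rangle$, and $\langle\xi_+,\xi_-\rangle$. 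These are the only subspaces that can sit strictly between $\fg_0$ and $\fg_0^\dag$.

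It then remains to test bracket-closure of the three preimages. For the one-dimensional submodules this is immediate: $\xi_\pm$ are odd with $\xi_\pm^2=0$, so $\{\xi_\pm,\xi_\pm\}=0$, and $[\fg_0,\xi_\pm]\subseteq\fg_0+\langle\xi_\pm\rangle$ by submodularity; therefore $\fk_+=\fg_0+\langle\xi_+\rangle$ and $\fk_-=\fg_0+\langle\xi_-\rangle$ are genuine subalgebras. The step I expect to be the main obstacle is the two-dimensional socle, where closure is governed entirely by the single anticommutator $\{\xi_+,\xi_-\}$. I would compute it as a composition of the rank-one endomorphisms $e_1\otimes\omega^4$ and $e_4\otimes\omega^1$, obtaining $\{\xi_+,\xi_-\}=e_1\otimes\omega^1+e_4\otimes\omega^4$, the projector onto $V_1$, and then decide membership in $\fg_0=\CC Z\oplus\fgl(1|1)$ by matching it against the diagonal elements $Z,E$, using that $Z$ acts as $-1$ on all of $\fg_{-1}$ while $E$ acts as the scalars $-s_2$ on $V_1$ and $-s_3$ on $V_2$. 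This comparison—where the hypothesis $s_2\neq s_3$ is precisely what is needed, cf.\ the relation $e_1\otimes\omega^1\equiv-e_4\otimes\omega^4\ (\mathrm{mod}\ \fg_0)$ recorded in Figure \ref{fig:Arep}—is what settles whether the socle yields a further proper subalgebra and thereby fixes the count. Resolving this one anticommutator completes the argument and isolates $\fk_+$ and $\fk_-$ as the two reductions relevant for the geometry, carrying the two components of the associated supervariety.
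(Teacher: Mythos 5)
Your reduction to $\fg_0$-submodules of $\mathbb{A}=\fg_0^\dag/\fg_0$ and your enumeration of the three proper nonzero submodules $\langle\xi_+\rangle$, $\langle\xi_-\rangle$, $\langle\xi_+,\xi_-\rangle$ are correct — and in fact more careful than the paper's own proof, which asserts that $\langle\xi_\pm\rangle$ are ``the only two proper $\fg_0$-submodules'' and thereby never confronts the socle. The gap is that you stop exactly at the step you yourself flag as decisive: you set up the anticommutator $\{\xi_+,\xi_-\}$ but never evaluate whether it lies in $\fg_0$, and your closing sentence simply asserts the count of two without that evaluation. Carrying the computation out does not give what your conclusion presupposes. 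One has $\{\xi_+,\xi_-\}=\pm(e_1\otimes\omega^1+e_4\otimes\omega^4)=\pm\op{id}_{V_1}$; the anticommutator of the two odd root vectors of $\fgl(V_1)\cong\fgl(1|1)$ is central and acts as a scalar on $V_1$, so it can never come out as $e_1\otimes\omega^1-e_4\otimes\omega^4$. Since $Z$ acts as $-1$ on all of $\fg_{-1}$ while $E$ acts as $-s_2$ on $V_1$ and $-s_3$ on $V_2$, one finds $\op{id}_{V_1}=\tfrac{1}{s_3-s_2}\,(E-s_3Z)\in\fg_0$ precisely under the standing hypothesis $s_2\neq s_3$ — which is exactly what the congruence $e_1\otimes\omega^1\equiv-e_4\otimes\omega^4\ (\op{mod}\ \fg_0)$ in Figure \ref{fig:Arep}, which you cite, records.

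Consequently $[\xi_\pm,\xi_\pm]=0$ and $[\xi_+,\xi_-]\in\fg_0$, so $\fg_0+\langle\xi_+,\xi_-\rangle$ \emph{is} closed under the bracket: it is a $(3|4)$-dimensional subalgebra sitting strictly between $\fg_0$ (of dimension $(3|2)$) and $\fg_0^\dag$ (of dimension $(4|4)$). Your own method thus produces a third intermediate subalgebra rather than eliminating it, so the final sentence of your argument does not follow from what precedes it. To salvage the statement you would either need an argument excluding $\langle\xi_+,\xi_-\rangle$ that does not exist at the purely algebraic level, or you must read ``precisely two'' as ``precisely two minimal ones'' (equivalently, the two not containing both $\xi_+$ and $\xi_-$); under that reading the rest of the section is unaffected, since it only uses that $\mathcal{W}$ fails to be preserved by $\fk_\pm$, hence a fortiori by anything containing them. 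As written, your proof is incomplete at its crucial step, and completing that step honestly shows the count needs this qualification.
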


 \begin{proof}
From the proof of \ref{quotientp23i} we know:
    \begin{equation*}
        \frac{\fgl(1|1)\oplus\fgl(1|1)}{\fg_0}=\frac{\mathcal{P}_h(0)\oplus\mathcal{P}_h(0)}{\CC\langle Z\rangle\oplus\mathcal{P}_h(0)}=\mathbb{A},
    \end{equation*}
where $\mathbb{A}$ is the $(1|2)$-dimensional indecomposable $\fg_0$-representation from \eqref{descriptionArep}. 
Therefore $\mathbb{A}_\pm=\langle\xi_\pm\rangle$ define the only two proper $\fg_0$-submodules 
in $\fg_0^\dag/\fg_0$. Since $[\xi_\pm,\xi]=\pm\xi_\pm$, we conclude that $\fk_\pm=\fg_0+\langle\xi_\pm\rangle$ are the only Lie superalgebras satisfying $\fg_0\subset\fk_\pm\subset\fg_0^\dag$.
 \end{proof}
 
As the $G_0$-orbits in the projectivization of neither $\g_{-1}=V_1\oplus V_2$ nor of its components 
reveal the parameter $a$ of $D(2,1;a)$, we consider the action in the endomorphism module
 \[
\fg_{-1}\otimes\fg_{-1}^\ast=
(V_1\otimes V_1^\ast)\oplus(V_1\otimes V_2^\ast)\oplus(V_2\otimes V_1^\ast)\oplus(V_2\otimes V_2^\ast).
 \]
Since $V_1=\langle-s_2,0\rangle$, $V_2=\langle-s_3,0\rangle$ as $\g_0$-modules, we get
 \[
V_1\otimes V_1^\ast=\mathcal{P}_h(0)=V_2\otimes V_2^\ast.
 \]
The decompositions of $V_1\otimes V_2^\ast$ and $V_2\otimes V_1^\ast$ 
(for generic values of parameters) contain only typical two-dimensional modules, and hence these are
not relevant for our goals (in fact, the corresponding supervarieties carry no moduli).
We thus consider the following module
 $$
W=V_1\otimes V_1^\ast\oplus V_2\otimes V_2^\ast.
 $$
Let us call a superpoint in $W$ generic if its evaluation does not belong to $(V_1 \otimes V_1^\ast)_{\bar0}\cup(V_2 \otimes V_2^\ast)_{\bar0}$. 
 
 \begin{proposition}
The $G_0$-orbit $\mathcal{W}\subset\mathbb{P}(W)$ through a generic superpoint $o$ 
is not preserved by $\fk_1$ or $\fk_2$. Hence its symmetry superalgebra is equal to $\fg_0$.
 \end{proposition}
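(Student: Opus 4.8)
The plan is to convert the statement into an infinitesimal tangency test at the base point $o$. Since $\mathcal{W}=G_0\cdot o$ is a single orbit, the inclusion $\fg_0\subseteq\mathfrak{sym}(\mathcal{W})$ is automatic, and any $\phi\in\fgl(\fg_{-1})$ preserving $\mathcal{W}$ must satisfy the tangency condition $\phi\cdot o\in T_o\mathcal{W}=\fg_0\cdot o$. So it suffices to compute $T_o\mathcal{W}$ once and then verify that neither $\xi_+\cdot o$ nor $\xi_-\cdot o$ lies in it (this rules out the two intermediate subalgebras $\fk_+,\fk_-$ of the preceding Corollary), and more generally that no transverse direction is tangent.

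First I would determine $T_o\mathcal{W}$ from the $\fg_0$-action, which is conjugation on $\fgl(\fg_{-1})$. The grading element $Z$ acts trivially, and since $W\cong\mathcal{P}_h(0)\oplus\mathcal{P}_h(0)$ has $E$-eigenvalue $0$ on each summand we get $E\cdot o=0$; on the even representative the diagonal generators carry $N$-weight $0$, so $N\cdot o=0$ as well. Hence $T_o\mathcal{W}$ is the purely odd span $\langle\psi^+\cdot o,\psi^-\cdot o\rangle$. Writing the even part as $o=\alpha\,e_1\otimes\omega^1+\beta\,e_4\otimes\omega^4+\gamma\,e_2\otimes\omega^2+\delta\,e_3\otimes\omega^3$ and using that on $\fg_{-1}$ one has $\psi^+=-e_4\otimes\omega^1-e_3\otimes\omega^2$ and $\psi^-=s_2\,e_1\otimes\omega^4+s_3\,e_2\otimes\omega^3$ as endomorphisms, a short super-bracket computation gives
\begin{align*}
\psi^+\cdot o&=(\beta-\alpha)\,\xi_-+(\delta-\gamma)\,e_3\otimes\omega^2,\\
\psi^-\cdot o&=s_2(\beta-\alpha)\,\xi_++s_3(\delta-\gamma)\,e_2\otimes\omega^3,
\end{align*}
so each tangent generator genuinely couples the odd part of the $V_1$-block to that of the $V_2$-block.

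Next I would compute the candidate directions by the same rules, obtaining $\xi_+\cdot o=(\beta-\alpha)\,\xi_+$ and $\xi_-\cdot o=(\alpha-\beta)\,\xi_-$, each supported entirely in the $V_1$-block. Expanding $\xi_+\cdot o\in\langle\psi^+\cdot o,\psi^-\cdot o\rangle$ in the four odd generators $\xi_+,\xi_-,e_2\otimes\omega^3,e_3\otimes\omega^2$ forces the coefficient of $e_2\otimes\omega^3$ to vanish, i.e.\ $\gamma=\delta$; the same relation is forced by $\xi_-\cdot o$. For a generic superpoint each $\fgl(1|1)$-block of $o$ is regular, so $\alpha\neq\beta$ and $\gamma\neq\delta$, whence $\xi_\pm\cdot o\notin T_o\mathcal{W}$ and neither $\fk_+$ nor $\fk_-$ preserves $\mathcal{W}$. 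To upgrade this to $\mathfrak{sym}(\mathcal{W})=\fg_0$ I would invoke Lemma \ref{quotientp23i}: modulo $\fg_0$ every transverse direction lies either in $\mathbb{A}=\langle\xi_+,\xi_-\rangle$ or in one of the typical blocks $\mathbb{B}_1,\dots,\mathbb{B}_4$. The latter are off-diagonal, so $\mathbb{B}_i\cdot o$ lands in $V_1\otimes V_2^\ast\oplus V_2\otimes V_1^\ast$, which meets $W\supseteq T_o\mathcal{W}$ only in $0$; tangency therefore forces the $\mathbb{B}$-components of $\phi$ to annihilate $o$, and for generic $o$ these images are linearly independent so those components vanish. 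The $\mathbb{A}$-part vanishes by the computation above, leaving $\phi\in\fg_0$.

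The main obstacle I foresee is the bookkeeping needed to exclude a nonzero mixed combination of $\xi_+,\xi_-$ and the $\mathbb{B}_i$ from being tangent, together with pinning down the precise genericity locus. The clean way around the first point is the block decomposition of $W$ versus its off-diagonal complement: since $T_o\mathcal{W}\subset W$ while the $\mathbb{B}_i$ push $o$ into the complement, the diagonal and off-diagonal contributions to $\phi\cdot o$ cannot cancel, so the test splits into the $\mathbb{B}$-nondegeneracy and the two-dimensional comparison inside $W$ carried out above. For the second point one verifies that the relevant genericity is exactly that both $\fgl(1|1)$-summands of $o$ are noncentral, i.e.\ $\alpha\neq\beta$ and $\gamma\neq\delta$, which holds on a dense open subset of the generic locus and in particular for the chosen $o$.
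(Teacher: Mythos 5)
Your proposal is correct and takes essentially the same route as the paper: both reduce the claim to the tangency test $\xi_\pm\cdot o\in\widehat{T}_o\mathcal{W}$ at the base superpoint and show it fails, the only difference being that you compute the tangent space infinitesimally via commutators at a general diagonal representative $\alpha e_1\otimes\omega^1+\beta e_4\otimes\omega^4+\gamma e_2\otimes\omega^2+\delta e_3\otimes\omega^3$, while the paper exponentiates $\psi^\pm$ and differentiates at the specific point $o=[e_1\otimes\omega^1+e_2\otimes\omega^2]$. Your two refinements --- pinning the genericity actually used down to $\alpha\neq\beta$, $\gamma\neq\delta$, and observing that the typical blocks $\mathbb{B}_i$ push $o$ out of $W$ and so cannot contribute to a tangent direction --- are correct and make explicit what the paper leaves implicit, but they do not alter the method.
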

 
 \begin{proof} 
Consider the following basis of the $(4|4)$-dimensional space $W$
 \begin{equation}\label{basis44}
e_1\otimes\omega^1,e_4\otimes\omega^4,e_2\otimes\omega^2,e_3\otimes\omega^3 \quad|\quad e_1\otimes\omega^4,e_4\otimes\omega^1,e_2\otimes\omega^3,e_3\otimes\omega^2.
 \end{equation}
One can show that the orbit of $G_0^\dag=\exp\fg_0^\dag$ through a generic superpoint contains a diagonal element 
$a_1\cdot e_1\otimes\omega^1+a_2\cdot e_2\otimes\omega^2+a_3\cdot e_3\otimes\omega^3+a_4\cdot e_4\otimes\omega^4$, and that $[a_1:a_2:a_3:a_4]$ is an invariant of this Lie supergroup action.
(Note that, by genericity, the evaluations of both pairs $(a_1,a_4)$ and $(a_2,a_3)$ are nonzero.) 
 
We want to find the $G_0$-orbit $\mathcal{W}$ through a generic superpoint $o\in\mathbb{P}(W)$.
For simplicity, we choose
$o=[e_1\otimes\omega^1+e_2\otimes\omega^2]\in\mathbb{P}(V_1\otimes V_1^\ast\oplus V_2\otimes V_2^\ast)$
(the computations for other superpoints are similar). We note that the isotropy at that point is spanned by $Z,E,N$
and so we compute: 
 \[
\hspace{-2pt} 
e^{\theta\psi^+}_{|W}=
\begin{pmatrix}
1 & 0 & 0 & 0 & -\theta & 0 & 0 & 0 \\
0 & 1 & 0 & 0 & -\theta & 0 & 0 & 0 \\
0 & 0 & 1 & 0 & 0 & 0 & -\theta & 0 \\
0 & 0 & 0 & 1 & 0 & 0 & -\theta & 0 \\
0 & 0 & 0 & 0 & 1 & 0 & 0 & 0 \\
\theta & -\theta & 0 & 0 & 0 & 1 & 0 & 0 \\
0 & 0 & 0 & 0 & 0 & 0 & 1 & 0 \\
0 & 0 & \theta & -\theta & 0 & 0 & 0 & 1
\end{pmatrix}\!,\ \
e^{\tau\psi^-}_{|W}=\begin{pmatrix}
1 & 0 & 0 & 0 & 0 & s_2 \tau & 0 & 0 \\
0 & 1 & 0 & 0 & 0 & s_2 \tau & 0 & 0 \\
0 & 0 & 1 & 0 & 0 & 0 & 0 & s_3 \tau \\
0 & 0 & 0 & 1 & 0 & 0 & 0 & s_3 \tau \\
s_2 \tau & -s_2 \tau & 0 & 0 & 1 & 0 & 0 & 0 \\
0 & 0 & 0 & 0 & 0 & 1 & 0 & 0 \\
0 & 0 & s_3 \tau & -s_3 \tau & 0 & 0 & 1 & 0 \\
0 & 0 & 0 & 0 & 0 & 0 & 0 & 1
\end{pmatrix},
 \]
whence (we denote $\hat{o}$ an affine lift of $o$)
 \[
    \hat{o}:=\begin{pmatrix}
1\\ 0\\ 1\\ 0\\ 0\\ 0\\ 0\\ 0
    \end{pmatrix}
\overset{e^{\theta\psi^+}}{\longmapsto}
    \begin{pmatrix}
1\\ 0\\ 1\\ 0\\ 0\\ \theta\\ 0\\ \theta\\
     \end{pmatrix}
\overset{e^{\tau\psi^-}}{\longmapsto}
    \begin{pmatrix}
1+ s_2 \tau \theta  \\ s_2 \tau \theta \\ 1+s_3 \tau \theta \\
s_3 \tau \theta \\ s_2 \tau \\ \theta \\ s_3 \tau \\ \theta
    \end{pmatrix}\!,\quad
\widehat{T}_{o}\mathcal{W}=
   \left\langle\begin{pmatrix}
1 \\ 0\\ 1\\ 0\\ 0\\ 0\\ 0\\ 0
     \end{pmatrix},\begin{pmatrix}
0 \\ 0\\ 0\\ 0\\ 0\\ 1\\ 0\\ 1
      \end{pmatrix},\begin{pmatrix}
0 \\ 0\\ 0\\ 0\\ s_2\\ 0\\ s_3\\ 0
      \end{pmatrix}\right\rangle\!.
 \]
Since $\xi_+\cdot \hat{o}=-e_1\otimes\omega^4\notin\widehat{T}_o\mathcal{W}$ and
$\xi_-\cdot \hat{o}=e_4\otimes\omega^1\notin\widehat{T}_o\mathcal{W}$, 
then the supervariety $\mathcal{W}$ is preserved by $\fg_0$ but not by $\fk_1$ or $\fk_2$.
 \end{proof}
 
Note that the projections of the $G_0$ orbit through $o$ to 
$\mathbb{P}(V_1\otimes V_1^\ast)$ and $\mathbb{P}(V_2\otimes V_2^\ast)$ can have ranks 1 or 2,
according to the rank of evaluations of the corresponding endomorphisms. 
Assuming the minimal rank 1 in each case (and moreover that the corresponding nilpotent coefficients $a_k$ vanish), 
we get four possibilities, of which we restrict to the following combination of coefficients: $a_1=a_2=1$
at the even-even and $a_3=a_4=0$ at the odd-odd parts of the corresponding diagonal element. 
In other words, we choose $o = [e_1\otimes\omega^1+e_2\otimes\omega^2]$ as out generic superpoint.
 
The supervariety $\mathcal{W}=G_0\cdot o$ is described in the coordinates $(x_1,\dots,x_4|\xi_1,\dots,\xi_4)$, 
corresponding to the basis \eqref{basis44}, is given by
 $$
x_3-x_4=x_1-x_2,\ x_4=ax_2,\ \xi_3=a\xi_1,\ \xi_4=\xi_2,\ x_1x_2=\xi_1\xi_2.
 $$
In other words, it is the image of $\mathbb{P}^{0|2}$ with affine coordinates $(z|\nu,\theta)$:
 $$
\mathcal{W}= \{[z^2:-\nu\theta:z^2+(1-a)\nu\theta:-a\nu\theta:z\nu:z\theta:az\nu:z\theta]\}\subset\mathbb{P}(W).
 $$
Define the cone structure 
$\mathcal{W}_M=M^{3|3}\times\mathcal{W}\subset\mathbb{P}\op{End}\mathcal{T}(M^{3|3})$
similar to Section \ref{Gpi2}, but now use ``supertranslations'' from $M=\exp(\fm)$, corresponding to its
basis $e_1,e_2,e_3,e_4,[e_1,e_3],[e_3,e_4]$. 

Note that these vector fields are non-commutative. The corresponding nilpotent Lie supergroup 
acts simply transitively by automorphisms of $\mathcal{W}_M$. Here we use the affine version of $M^{3|3}$ 
instead of the ``compact version'' $G/P_{23}^\I$, but they are locally equivalent.

 \begin{theorem}\label{Tpi23}
The symmetry of the cone structure $\mathcal{W}_M$ is the Lie superalgebra $D(2,1;a)$. 
 \end{theorem}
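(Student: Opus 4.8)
The plan is to bound $\mathfrak{sym}(\mathcal{W}_M)$ from above and below and match the two, following the pattern of Theorem \ref{Tpi2}, but now accounting for the depth-two (non-holonomic) structure $\fm=\fg_{-2}\oplus\fg_{-1}$ and the non-reductivity of $\fg_0=\CC\oplus\fgl(1|1)$. For the lower bound, the flat model $G/P_{23}^\I$ carries, by construction, a $G$-invariant cone field assembled from the $G_0$-invariant supervariety $\mathcal{W}$, so $\fg=\op{Lie}(G)$ embeds into $\mathfrak{sym}(\mathcal{W}_M)$; concretely the negative part $\fg_{<0}=\fm$ is realized by the non-commuting supertranslations of $\exp(\fm)$ acting simply transitively (the basis $e_1,\dots,e_4,[e_1,e_3],[e_3,e_4]$), and $\fg_0$ by the linear cone symmetries. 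Since the affine model used here is locally equivalent to $G/P_{23}^\I$, this yields $\dim\mathfrak{sym}(\mathcal{W}_M)\ge\dim\fg$.

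For the upper bound I would invoke the Tanaka--Weisfeiler formalism of \cite{KST2}: the symbol of an infinitesimal symmetry has associated graded embedding into $\pr(\fm,\fg_0)$, with negative part $\fm$, degree-zero part confined to $\fg_0$, and higher parts injecting into $\pr_{>0}(\fm,\fg_0)$. The degree-zero confinement is exactly the content of the preceding proposition: a grade-preserving derivation of $\fm$ preserving $\mathcal{W}$ must respect the canonical isotropic splitting $\mathcal{D}=\mathcal{D}_-\oplus\mathcal{D}_+$, hence lie in $\fg_0^\dag$, and $\mathcal{W}$ --- being fixed by $\fg_0$ but by neither $\fk_1$ nor $\fk_2$ --- cuts this down to $\fg_0$. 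By Proposition \ref{propPi23prol} we have $\pr(\fm,\fg_0)=\fg$, so $\dim\mathfrak{sym}(\mathcal{W}_M)\le\dim\fg$; combined with the lower bound this forces $\mathfrak{sym}(\mathcal{W}_M)=\fg=D(2,1;a)$.

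The main obstacle is the degree-zero step. Unlike the reductive case $\fp_2^\I$, here $\fg_0$ is a \emph{proper} subalgebra of the full grade-zero derivation algebra $\der_0(\fm)$ (indeed $\pr(\fm)$ is infinite-dimensional), so one must genuinely verify that the cone $\mathcal{W}$ rigidifies the structure group all the way down to $\fg_0$, and not merely to an intermediate $\fg_0^\dag$, $\fk_1$ or $\fk_2$. This is precisely where the indecomposable $\fgl(1|1)$-module analysis of Lemma \ref{quotientp23i} and the explicit $G_0$-orbit computation for $\mathcal{W}$ carry the weight of the argument; with those in hand the positive prolongation is controlled automatically by Proposition \ref{propPi23prol}, and the proof closes.
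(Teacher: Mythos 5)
Your proposal is correct and follows essentially the same route as the paper: the lower bound comes from the built-in supertranslations of $\exp(\fm)$ together with the linear cone symmetries forming $\fg_0$, and the upper bound from the fact that the Tanaka--Weisfeiler prolongation $\pr(\fm,\fg_0)=\fg$ (Proposition \ref{propPi23prol}) majorizes the symmetry algebra by \cite{KST2}, with the degree-zero rigidification to $\fg_0$ supplied by the preceding orbit computation showing $\mathcal{W}$ is preserved by neither $\fk_1$ nor $\fk_2$. The paper states this more tersely by reference to Theorem \ref{Tpi2}, but the content is the same.
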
 
 
 \begin{proof}
This is again straightforward from the above results and the computation of the Tanaka--Weisfeiler prolongation
from Section \ref{pi23}. The proof follows the steps of Theorem \ref{Tpi2}.
 \end{proof}

\subsection{Superspace $M^{3|4}=G/P_{123}^\I$}\label{Gpi123}

This is yet another twistor construction: the superspace $M^{3|4}$ is a $\mathbb{P}^1$ bundle over
$M^{2|4}$ whose fiber is the odd projectivization of the distribution of the latter: recall that the fiber of 
$M^{2|4}$ over $M^{1|4}$ consists of isotropic odd 2-planes in $\mathcal{C}$, now we take odd lines in those
2-planes. In other words, $M^{3|4}$ is the flag variety $\op{Fl}_{1,2}(\mathcal{C})$ with the fiber over
$J^1(\C^{0|2},\C^{1|0})$ consisting of the pairs $(\ell,\Pi)$, where $\Pi$ is a self-dual 2-plane in $\mathcal{C}$
and $\ell\subset\Pi$ is a line. (This can also be considered as a certain jet-space.)

The contact algebra $\mathfrak{c}\simeq\mathfrak{j}$ on $M^{1|4}$ naturally prolongs to a transitive action 
on $M^{3|4}$. Its grading can be described in terms of coordinates on contact $M^{1|4}$ from 
Section \ref{Gpi12} through the following weights on generating functions: 
$w(1)=0$, $w(\theta_1)=1$, $w(\theta_2)=w(\nu_2)=2$, $w(\nu_1)=3$, $w(y)=4$, and $w(X_f)=w(f)-4$. 
This gives a $|4|$-grading on the vector fields representing $D(2,1;a)$. 

The following shows the initial gradation of $\mathfrak{c}\supset\g$:
 \begin{center}
 \begin{tabular}{c|c|c}
$k$ &  $\mathfrak{c}_{k}$ & $\op{s}\!.\!\dim\mathfrak{c}_{k}$\\
\hline
$-4$ & 1 & $(1|0)$ \\
$-3$ & $\theta_1$ & $(0|1)$ \\
$-2$ & $\theta_2,\nu_2$ & $(0|2)$ \\
$-1$ & $\theta_1\theta_2$, $\theta_1\nu_2$, $\nu_1$ & $(2|1)$ \\
$0$ & $y$, $\theta_1\nu_1,\theta_2\nu_2$ & $(3|0)$ \\
$1$ & $y\theta_1$, $\theta_2\nu_1,\nu_1\nu_2$, $\theta_1\theta_2\nu_2$ & $(2|2)$ \\
$2$ & $y\theta_2,y\nu_2$, $\theta_1\theta_2\nu_1,\theta_1\nu_1\nu_2$ & $(0|4)$ \\
$3$ & $y\nu_1$, $y\theta_1\theta_2$, $y\theta_1\nu_2$, $\theta_2\nu_1\nu_2$ & $(2|2)$ \\
$4$ & $y^2$, $y\theta_1\nu_1$, $y\theta_2\nu_2$, $\theta_1\theta_2\nu_1\nu_2$ & $(4|0)$
 \end{tabular}
 \end{center}

Now the space $\mathfrak{c}_1$ decomposes into $\C^{2|0}$ and $\C^{0|2}=\C^{0|1}\otimes\C^2$.
Our higher order structure reduction effects in reducing the latter to one 
$\C^{0|1}=\langle y\theta_1+a\theta_1\theta_2\nu_2\rangle$. 
The following reductions $\g_k\subset\mathfrak{c}_k$ for $k\ge1$ are given by the Tanaka--Weisfeiler prolongation, 
in particular they stop at the level $k=4$.
One may interpret the reduction via the same differential equation as in case 1.

We can also realize this algebra by vector fields locally in the chart 
$\C^{3|4}(x_1,x_2,x_3|\xi_1,\xi_2,\xi_3,\xi_4)\subset M^{3|4}$. The vector fields, corresponding to the
generators of $\m$, are the following ($u$ are even fields, $v$ are odd fields, minus superscript indicates negative
gradation; $\kappa$ is an inessential even parameter to be specified below; one may take $\kappa=1$ for simplicity).
 \begin{gather*}
u^-_1 = \p_{x_1} + \xi_1\p_{\xi_2} + \kappa\xi_1\xi_3\p_{x_3} + x_2\xi_1\p_{\xi_4},\
u^-_2 = \p_{x_2} + \xi_1\p_{\xi_3} + \xi_1\xi_2\p_{x_3} + x_1\xi_1\p_{\xi_4},\
u^-_3 = (\kappa+1)\p_{x_3}, \\
v^-_1 = \p_{\xi_1},\
v^-_2 = \p_{\xi_2} + \kappa\xi_3\p_{x_3} + x_2\p_{\xi_4},\
v^-_3 = \p_{\xi_3} + \xi_2\p_{x_3} + x_1\p_{\xi_4},\
v^-_4 = \p_{\xi_4} + (\kappa + 1)\xi_1\p_{x_3}.
 \end{gather*}
The structure equations are 
$[u^-_1,v^-_1]=v^-_2$, $[u^-_2,v^-_1]=v^-_3$, $[u^-_2,v^-_2]=v^-_4$, 
$[u^-_1,v^-_3]=v^-_4$, $[v^-_1,v^-_4]=-u^-_3$, $[v^-_2,v^-_3]=u^-_3$.
Next, $0^\text{th}$ Tanaka--Weisfeiler prolongation computed for these vector fields gives
 \begin{gather*}
u^0_1 = x_1\p_{x_1} + \xi_2\p_{\xi_2} + \xi_4\p_{\xi_4} + x_3\p_{x_3},\quad
u^0_2 = x_2\p_{x_2} + \xi_3\p_{\xi_3} + \xi_4\p_{\xi_4} + x_3\p_{x_3},\\
u^0_3 = \xi_1\p_{\xi_1} + \xi_2\p_{\xi_2} + \xi_3\p_{\xi_3} + \xi_4\p_{\xi_4} + 2x_3\p_{x_3},
 \end{gather*}
and $Z=u^0_1+u^0_2+u^0_3$ is the grading element.

The next prolongation yields the two-parameter even space
 \begin{gather*}
u^+_1 = x_1^2\p_{x_1} - \xi_2\p_{\xi_1} - (x_2\xi_2+x_1\xi_3-\xi_4)\p_{\xi_3} - x_1(x_2\xi_2-\xi_4)\p_{\xi_4} - \kappa\xi_2(x_1\xi_3-\xi_4)\p_{x_3},\\
u^+_2 = x_2^2\p_{x_2} - \xi_3\p_{\xi_1} - (x_2\xi_2+x_1\xi_3-\xi_4)\p_{\xi_2} - x_2(x_1\xi_3-\xi_4)\p_{\xi_4} - \xi_3(x_2\xi_2-\xi_4)\p_{x_3},
 \end{gather*}
and the two-parameter odd space, which we write in a linear combination with an even parameter:
 \begin{align*}
v^+_1 =&\ (a+1)(x_1\xi_1-\xi_2)\p_{x_1} + (a\kappa-1)(x_2\xi_1-\xi_3)\p_{x_2} + (a+1)\xi_1\xi_2\p_{\xi_2}\\
 &+ (a\kappa-1)\xi_1\xi_3\p_{\xi_3} + (ax_3+a(\kappa+1)\xi_1\xi_4-\xi_2\xi_3)\p_{\xi_4} + a(\kappa+1)x_3\xi_1\p_{x_3}.
 \end{align*}
Fixing this parameter we compute the further prolongations:
 \begin{align*}
v^+_2 =&\ -(a+1)x_1(x_1\xi_1-\xi_2)\p_{x_1} + (a\kappa-1)(x_1\xi_3-\xi_4)\p_{x_2} + (a+1)\xi_1\xi_2\p_{\xi_1}\\
 &+ ((a+1)\xi_1(x_2\xi_2+x_1\xi_3-\xi_4) - a(x_3+\xi_2\xi_3))\p_{\xi_3} 
 + x_1((a+1)\xi_1(x_2\xi_2-\xi_4)+\xi_2\xi_3-ax_3)\p_{\xi_4} \\
 &+ ((a+1)\kappa\xi_1(x_1\xi_2\xi_3-\xi_2\xi_4)-ax_3\xi_2)\p_{x_3},\\
v^+_3 =&\ (a+1)(x_2\xi_2-\xi_4)\p_{x_1} - (a\kappa-1)x_2(x_2\xi_1-\xi_3)\p_{x_2} + (a\kappa-1)\xi_1\xi_3\p_{\xi_1}\\
 &+ ((a\kappa-1)\xi_1(x_2\xi_2+x_1\xi_3-\xi_4) + a(\kappa\xi_2\xi_3-x_3))\p_{\xi_2} + x_2((a\kappa-1)\xi_1(x_1\xi_3-\xi_4)\\
 &+\xi_2\xi_3 -ax_3)\p_{\xi_4} - ((a\kappa-1)\xi_3(x_2\xi_1\xi_2-\xi_1\xi_4) + a\kappa x_3\xi_3)\p_{x_3},\\
v^+_4 =&\ -(a+1)x_1(x_2\xi_2-\xi_4)\p_{x_1} - (a\kappa-1)x_2(x_1\xi_3-\xi_4)\p_{x_2} + (\xi_2\xi_3-ax_3)\p_{\xi_1}
 - (a\kappa-1)\xi_2(x_1\xi_3-\xi_4)\p_{\xi_2} \\
 &- (a+1)\xi_3(x_2\xi_2-\xi_4)\p_{\xi_3} - (\kappa+1)\xi_2\xi_3\xi_4\p_{x_3},\\
u^+_3 =&\ a(a+1)(\kappa+1)(x_1x_2\xi_1\xi_2-x_1\xi_1\xi_4+\xi_2\xi_4)\p_{x_1} + a(a\kappa-1)(\kappa+1)(x_1x_2\xi_1\xi_3 - x_2\xi_1\xi_4+\xi_3\xi_4)\p_{x_2} \\
 &+ a(\kappa+1)\xi_1(ax_3-\xi_2\xi_3)\p_{\xi_1}
 + a(\kappa+1)((a\kappa-1)\xi_1\xi_2(x_1\xi_3-\xi_4)+ax_3\xi_2)\p_{\xi_2} \\
 &- a(\kappa+1)((a+1)\xi_1(x_2\xi_2\xi_3+\xi_3\xi_4) - ax_3\xi_3)\p_{\xi_3} \\
 &+ a(\kappa+1)\xi_4(ax_3-\xi_2\xi_3)\p_{\xi_4} + a(\kappa+1)((\kappa+1)\xi_1\xi_2\xi_3\xi_4+ax_3^2)\p_{x_3}.
 \end{align*}
 
 \begin{theorem}\label{Tpi123}
Vector fields $(u_i^-,u_j^0,u_k^+|v_i^-,v_j^0,v_k^+)$ realize the algebra $D\bigr(2,1;a(\kappa)\bigl)$
with the parameter $a(\kappa)=\frac{a\kappa-1}{a+1}$ in the space 
$M^{3|4}=\C^{3|4}(x_1,x_2,x_3|y_1,y_2,y_3,y_4)$ equipped with the distribution
 $$
\mathcal{D}=\langle\p_{x_1}+y_3\p_{y_4},\p_{x_2}+y_2\p_{y_4},
\p_{y_1}+x_1\p_{y_2}+x_2\p_{y_3}+x_1x_2\p_{y_3}+(\kappa x_1y_3+x_2y_2-(\kappa+1)y_4)\p_{y_4})\rangle
 $$
naturally split into the sum of two even and one odd lines as indicated by the generators.
 \end{theorem}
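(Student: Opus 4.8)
The plan is to read the realized superalgebra off the $\fp_{123}^\I$-gradation of Section \ref{pi123} layer by layer, to deduce closure and the isomorphism type from the prolongation already computed, to extract the parameter from the odd--odd brackets, and finally to verify that $\mathcal{D}$ is preserved.

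First I would confirm the negative part. A direct evaluation of the six supercommutators listed shows that $\fm=\langle u_1^-,u_2^-,u_3^-\mid v_1^-,v_2^-,v_3^-,v_4^-\rangle$ is the graded nilpotent Lie superalgebra with layers $\fg_{-1}=\langle u_1^-,u_2^-\mid v_1^-\rangle$, $\fg_{-2}=\langle v_2^-,v_3^-\rangle$, $\fg_{-3}=\langle v_4^-\rangle$ and $\fg_{-4}=\langle u_3^-\rangle$, whose super-dimensions $(2|1,0|2,0|1,1|0)$ reproduce the symbol of $\fp_{123}^\I$ from Section \ref{pi123}. Since $u_1^-,u_2^-,v_1^-$ bracket-generate $\fm$ and agree at the origin with the three displayed generators of $\mathcal{D}$, the distribution $\mathcal{D}$ is a bracket-generating rank-$(2|1)$ distribution modelled on $\fm$, with $\fg_0=\fh$ acting diagonally; its splitting into two even and one odd line is the weight decomposition of $\fg_{-1}$.

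Next I would treat the non-negative part. One checks that $u_1^0,u_2^0,u_3^0$ are grade-preserving derivations of $\fm$ spanning an abelian $\fg_0\cong\fh$, with grading element $Z=u_1^0+u_2^0+u_3^0$ acting by $-k$ on $\fg_{-k}$. For the positive fields I would not recompute the prolongation but invoke Proposition \ref{propPi123prol}, which gives $\pr(\fg_{\leq1})=\fg$: the prolongation is finite, of type $(9|8)$, terminating at layer $4$. It then suffices to confirm that each $u_k^+,v_k^+$ satisfies the derivation identity \eqref{conditionprolongation} against $\fm$ and that the $u_k^+,v_k^+$ exhaust $\fg_1,\dots,\fg_4$; the abstract result guarantees that all remaining brackets close, so the listed fields span a Lie superalgebra isomorphic to a member of the $D(2,1;\cdot)$ family.

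The parameter identification is the crux. Within the family the isomorphism type is read off from the even part $\fg_{\bar0}=\sl_2\oplus\sl_2\oplus\sl_2$ spanned by the $u$-fields: computing the odd--odd brackets among the $v$-fields and expressing the three $\sl_2$-triples with their relative normalizations in the form of the defining bracket of $\Gamma(s_1,s_2,s_3)$ yields the triple $(s_1:s_2:s_3)$ as an explicit function of $a$ and $\kappa$. By Definition \ref{d21alpha} and the scaling/permutation criterion $\Gamma(s_1,s_2,s_3)\cong\Gamma(s_1',s_2',s_3')$ iff $s_i'=\lambda s_{\tau(i)}$, this triple collapses to the single invariant $a(\kappa)=\frac{a\kappa-1}{a+1}$. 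The only genuine difficulty is the bookkeeping: carrying the $a$- and $\kappa$-dependent coefficients through the brackets accurately enough to land on this Möbius expression rather than on an $S_3$-conjugate of it. Finally, invariance of $\mathcal{D}$ is structural --- the listed fields are the fundamental fields of the $G$-action on $G/P_{123}^\I$ and $\mathcal{D}$ is $G$-invariant by construction (Section \ref{Gpi123}) --- but can also be verified directly from $[X,\Gamma(\mathcal{D})]\subseteq\Gamma(\mathcal{D})$, which by the super-Leibniz rule reduces to a finite check of the brackets of each generator $X$ with the three displayed frame fields of $\mathcal{D}$.
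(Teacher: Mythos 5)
Your proposal is correct and follows essentially the same route as the paper's proof: direct verification of the structure relations of $\fm$, identification of the non-negative part via the Tanaka--Weisfeiler prolongation of Proposition \ref{propPi123prol} (with the higher-order reduction), extraction of $(s_1:s_2:s_3)$ and hence $a(\kappa)$ from the odd--odd brackets, and invariance of $\mathcal{D}$ via the duality between the right-invariant symmetry fields and the left-invariant frame of $\mathcal{D}$. Your version merely spells out the bookkeeping that the paper compresses into a few lines.
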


 \begin{proof}
The explicit form of the vector fields realizing $\fm$ follows from direct verification of the structure relations
(in other words, $M^{1|4}\simeq\exp\fm$).
Then the fields realizing $\fp$ are obtained by mimicking relations for the Tanaka--Weisfeiler prolongation,
including the higher order reduction. Comparing the brackets of $\fg_{\bar1}$ with those in $D(2,1;a)$
gives the parameters $s_l$. Finally, generators of the distribution $\mathcal{D}$ are obtained from the
condition of commutation with the generators of $\fm$: if the latter are right-invariant vector fields, the former
become left-invariant; we indicate only the basis of $\fg_{-1}$, and this basis is defined up to scale since $\fp$ is Borel.
 \end{proof}

\medskip

\subsection{Superspace $M^{3|4}_\diamond=G/P_{123}^\IV$}\label{Gpiv123}

This is the only case where the geometry is determined by the distribution. 
In fact, the distribution $\mathcal{D}^{0|3}$ on the manifold $M_\diamond^{3|4}$ is naturally split 
into the direct sum of three $(0|1)$-dimensional subdistributions, which are generated by odd vector fields.
(This is because $\fg_0=\fh$ for the Borel parabolic, but also follows from the brackets of \eqref{v1v2v3} below.)
 
Choosing coordinates $(x_{12},x_{31},x_{23}|\xi_1,\xi_2,\xi_3,\theta)$ on $M$ we define
the generators of $\mathcal{D}$: 
 \begin{gather}\label{v1v2v3}
 \begin{array}{rl}
& v_1=\p_{\xi_1}+\xi_2\p_{x_{12}}+s_1\xi_2\xi_3\p_{\theta},\\ 
& v_2=\p_{\xi_2}+\xi_3\p_{x_{23}}+s_2\xi_3\xi_1\p_{\theta},\\ 
& v_3=\p_{\xi_3}+\xi_1\p_{x_{31}}+s_3\xi_1\xi_2\p_{\theta}.
 \end{array}
 \end{gather}
Let us note that the change of variable $\theta\mapsto\theta+k\xi_1\xi_2\xi_3$ results in the change of parameters
$s_i\mapsto s_i-k$. Thus we can fix these parameters (and the coordinate freedom)
by $s_1+s_2+s_3=0$, and this will be assumed in what follows.
 
The only nontrivial brackets in \eqref{v1v2v3} are the following:
 \begin{gather}
v_{12}=[v_1,v_2] = \p_{x_{12}}+(s_1-s_2)\xi_3\p_\theta,\notag\\
v_{23}=[v_2,v_3] = \p_{x_{23}}+(s_2-s_3)\xi_1\p_\theta,\label{v123a}\\
v_{13}=[v_3,v_1] = \p_{x_{13}}+(s_3-s_1)\xi_2\p_\theta,\notag\\
[v_1,v_{23}]=(s_2-s_3)\p_{\theta},\quad [v_2,v_{31}]=(s_3-s_1)\p_{\theta},\quad
[v_3,v_{12}]=(s_1-s_2)\p_{\theta},\label{v123b}
 \end{gather}
and the vector fields $v_1,v_2,v_3, v_{12},v_{13},v_{23}, \p_\theta$ generate 
$\mathcal{T}M_\diamond^{3|4}$.

 \begin{theorem}\label{Tpiv123}
The symmetry algebra of the distribution  $\mathcal{D}=\langle v_1,v_2,v_3\rangle$ is $D(2,1;a)$.
 \end{theorem}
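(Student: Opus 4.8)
The plan is to deduce the statement from the Tanaka--Weisfeiler prolongation already computed in Proposition \ref{propPiv123prol}, combined with the abstract symmetry bound \eqref{symineq} and the fact that $M=G/P_{123}^\IV$ is itself a homogeneous realization. Concretely, I would first identify the graded nilpotent symbol of $\mathcal{D}$ and check that it is regular, and then sandwich the symmetry superalgebra between $\fg$ and $\pr(\fm)$.

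First I would verify that the sheaf of symbols of $\mathcal{D}=\langle v_1,v_2,v_3\rangle$ from \eqref{v1v2v3} is pointwise isomorphic to $\fm=\fm_{123}^\IV$. The brackets \eqref{v123a} show that $[\fg_{-1},\fg_{-1}]=\fg_{-2}=\langle v_{12},v_{23},v_{13}\rangle$ is $(3|0)$-dimensional, while \eqref{v123b} give $[\fg_{-1},\fg_{-2}]=\fg_{-3}=\langle\p_\theta\rangle$; since $v_1,v_2,v_3,v_{12},v_{23},v_{13},\p_\theta$ frame $\mathcal{T}M$, the superdistribution is bracket-generating with $\fg_{-1}$ fundamental and $\mathfrak{z}(\fm)=\fg_{-3}$. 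Reading off the structure constants and comparing with the $\Gamma(s_1,s_2,s_3)$ brackets of Definition \ref{d21alpha}, one checks that the parameter of the resulting symbol is exactly $a=s_3/s_2$ modulo the $S_3$-action, so the symbol is $\fm_{123}^\IV$ for that $a$. Regularity (constant symbol type along $M$) is immediate from homogeneity of $G/P$, and can also be seen directly since the structure functions above are constant.

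Next I would invoke the super Tanaka theory of \cite{KST2}: for a regular bracket-generating superdistribution with symbol $\fm$, every infinitesimal symmetry is determined by a finite jet, and evaluation along the weak derived flag embeds $\mathfrak{sym}(M,\mathcal{D})$ into $\pr(\fm)$, yielding \eqref{symineq}. By Proposition \ref{propPiv123prol} we have $\pr(\fm)=\fg=D(2,1;a)$; here $\pr_0(\fm)=\der_0(\fm)=\fh=\fg_0$, so no structure reduction is needed and the distribution alone is rigid. Hence $\mathfrak{sym}(M,\mathcal{D})\hookrightarrow\fg$ as graded Lie superalgebras, in particular $\dim\mathfrak{sym}(M,\mathcal{D})\le\dim\fg=(9|8)$. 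For the reverse inclusion, $\mathcal{D}$ is by construction the $G$-invariant superdistribution on $M=G/P_{123}^\IV$ corresponding to $\fg_{-1}$, so $G$ acts by symmetries and induces $\fg\to\mathfrak{sym}(M,\mathcal{D})$ whose kernel is the largest ideal of $\fg$ contained in $\fp$; since $\fg$ is simple and $\fp\neq\fg$, this kernel is $0$, giving $\fg\hookrightarrow\mathfrak{sym}(M,\mathcal{D})$. The two embeddings force $\mathfrak{sym}(M,\mathcal{D})=\fg=D(2,1;a)$, both locally on the chart and globally, matching Table \ref{SymmA}. (Alternatively, one could solve the determining equations $[X,\mathcal{D}]\subseteq\mathcal{D}$ directly and exhibit explicit prolongation generators in the spirit of Theorem \ref{Tpi123}, but the prolongation bound makes this unnecessary.)

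The main obstacle is the injectivity $\mathfrak{sym}(M,\mathcal{D})\hookrightarrow\pr(\fm)$ in the purely odd, higher-depth super setting: here $\fg_{-1}$ is $(0|3)$-dimensional and $\fm$ has depth $3$, so one must be careful that the graded frame-bundle and jet argument of \cite{KST2} applies and that no extra even symmetries are hidden in $\fg_{\bar0}$. This is precisely guaranteed by regularity of the symbol together with $\pr_0(\fm)=\der_0(\fm)$, so once the symbol is pinned down the bound is automatic; the genuinely laborious input is the prolongation identity $\pr(\fm)=\fg$ itself, which I would take from Proposition \ref{propPiv123prol} (Appendix \ref{B3}).
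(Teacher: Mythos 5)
Your proposal is correct and shares the paper's skeleton: identify the symbol of $\mathcal{D}$ with $\fm=\fm_{123}^\IV$, get the upper bound from $\pr(\fm)=\fg$ (Proposition \ref{propPiv123prol}) via \eqref{symineq}, and then establish the reverse inclusion. The one genuine difference is in the lower bound. The paper realizes $\fg$ inside $\mathfrak{sym}(M,\mathcal{D})$ by exhibiting explicit symmetry vector fields (Table \ref{F:sym}), whereas you argue abstractly: the coordinate model is the big cell $\exp\fm\subset G/P_{123}^\IV$ with its invariant distribution, $G$ acts by symmetries, and simplicity of $\fg$ forces the induced map $\fg\to\mathfrak{sym}(M,\mathcal{D})$ to be injective since its kernel is an ideal contained in $\fp$. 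That argument is valid, but it rests on the identification of the explicit $\mathcal{D}$ of \eqref{v1v2v3} with the invariant distribution on the big cell --- which holds because the $v_i$ have constant structure functions equal to the $\fm$-brackets, hence frame the invariant distribution on $\exp\fm$ --- and you should make that identification explicit rather than calling it true ``by construction.'' Your route is shorter and cleaner as a pure existence argument; the paper's explicit realization costs more work but is not redundant, since Table \ref{F:sym} is reused to derive the generating functions of Table \ref{F:gf} and Theorem \ref{Tpiv123plus}, and it independently certifies the lower bound without appealing to the supergroup $G$ or to properties of the super quotient $G/P$.
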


 \begin{proof}
The bracket relations between $v_i$ from \eqref{v1v2v3} are the same as among the generators of $\g_{-1}$ 
in the $\mathbb{Z}$-grading associated with $\fp_{123}^\IV$. 
The upper bound for symmetry follows from the computation of the Tanaka--Weisfeiler prolongation. 
The lower bound follows from the explicit realization of symmetry in Table \ref{F:sym}, 
where we omit the generators $R_2,R_3$, etc, due to cyclic symmetry in indices $1,2,3$, 
and also shorten cyclic-repeated terms in the generator of $\g_3$. 
 \end{proof}

 \begin{table}[h] 
 \[
 \begin{array}{|c|c|} \hline
 \mbox{Level} & \mbox{Symmetries}\\ \hline\hline
-3 & \p_{\theta} \\ \hline
-2 &  \p_{x_{12}}, \quad \p_{x_{23}}, \quad \p_{x_{31}} \\ \hline
-1 & S_1 = \p_{\xi_1} - \xi_3 \p_{x_{31}} + (s_3 \xi_2 \xi_3 + (s_2 - s_3) x_{23}) \p_{\theta}
\quad \text{cycl: } [S_2\dots S_3]\\ \hline
0 & Z_1 = \xi_1 \p_{\xi_1} + x_{12} \p_{x_{12}} + x_{31} \p_{x_{31}} + \theta \p_{\theta}\qquad 
\qquad\ \text{cycl: } [Z_2\dots Z_3] \\ \hline
1 & \begin{array}{ll}
R_1 =&\!\!\! (s_1 - s_2)x_{12}S_2 + (s_3 - s_1) x_{31} S_3 - \xi_1\xi_2(s_1 - s_2)\p_{\xi_2} \\
 &\!\!\!+ (\theta - s_1 \xi_1 \xi_2 \xi_3) \p_{x_{23}} - (s_1-s_2)(s_3-s_1)x_{12}x_{31}\p_{\theta}\quad
 [R_2\dots R_3]
 \end{array} \\ \hline
2 & 
 \begin{array}{ll}
R_{12} = &\!\!\! (s_1 - s_2) x_{12} (\xi_1\p_{\xi_1}+\xi_2\p_{\xi_2}-\xi_3\p_{\xi_3}+ x_{12}\p_{x_{12}} 
+\theta \p_{\theta})\\ 
 &\!\!\! +(\theta - s_2\xi_1\xi_2\xi_3) \p_{\xi_3}+ \xi_2\theta\p_{x_{23}} + s_2\xi_1\xi_2\theta\p_{\theta}\quad
\qquad [R_{23}\dots R_{31}]
 \end{array} \\ \hline
3 &
 \begin{array}{l}
\bigl( (s_1-s_2)(s_3-s_1)x_{12} (\xi_3\xi_1-x_{31}x_{12})  - (s_2-s_3)\xi_1\theta\bigr) \p_{\xi_1}\ \
(+\,\text{cycl})\\
+\bigl( (s_1 - s_2) x_{12} (s_3 \xi_1 \xi_2 \xi_3 + x_{23} (s_2 - s_3) \xi_1 - \theta)\bigr) \p_{x_{12}}\ \ 
(+\,\text{cycl})\\
+\bigl( s_2(s_2-s_3)(s_1-s_3)x_{23}x_{31}\xi_1\xi_2 + s_3 (s_3-s_1)(s_2-s_1) x_{31}x_{12}\xi_2\xi_3 \\
+s_1(s_1-s_2)(s_3-s_2)x_{12}x_{23}\xi_3\xi_1 - (s_1-s_2) (s_2-s_3)(s_3-s_1)x_{12}x_{23}x_{31}\bigr) \p_{\theta}
 \end{array} \\ \hline
 \end{array}
 \]
 \caption{Symmetries of $\mathcal{D}$; one representative for each slot $\g_{-1},\dots,\g_2$}\label{F:sym}
 \end{table}

Next, under the assumption that $s_1,s_2,s_3$ are pairwise distinct, we observe that:
 \begin{enumerate}
\item the derived distribution $\mathcal{C}:= [\mathcal{D},\mathcal{D}]=\langle v_{ij}|v_k\rangle$ is a 
mixed contact distribution;
\item $\mathcal{D}\subset\mathcal{C}$ is a non-holonomic Legendrian distribution, i.e.\ $\mathcal{D}\neq
[\mathcal{D},\mathcal{D}]\subset\mathcal{C}$.
 \end{enumerate}
The contact 1-form $\sigma\in\op{Ann}(\mathcal{C})$ is given by:
 \begin{equation}\label{E:sigma}
\sigma = d\theta - s_2\xi_2\xi_3 d\xi_1 - s_3\xi_3\xi_1 d\xi_2 - s_1\xi_1\xi_2 d\xi_3
- (s_2-s_3)\xi_1 dx_{23} - (s_3-s_1)\xi_2 dx_{31} - (s_1-s_2)\xi_3 dx_{12}.
 \end{equation}
 
Let us change the variables
 \begin{gather*}
\psi = \theta - (s_2-s_3)x_{23}\xi_1 - (s_3-s_1)x_{31}\xi_2 - (s_1-s_2)x_{12}\xi_3,\\
\psi_1 = s_2\xi_2\xi_3 - (s_2-s_3)x_{23},\ \psi_2 = s_3\xi_3\xi_1 - (s_3-s_1)x_{31},\
\psi_3 = s_1\xi_1\xi_2 - (s_1-s_2)x_{12}.
 \end{gather*}
Note that $\psi$ is odd, while $\psi_i$ are even, thus $M^{3|4}_\diamond=
\C^{3|4}(\psi_1,\psi_2,\psi_2|\xi_1,\xi_2,\xi_3,\psi)$.  
In these coordinates, contact form \eqref{E:sigma} takes the canonical form
 \begin{equation*}
\sigma= d\psi - (d\xi_1)\psi_1 - (d\xi_2) \psi_2 - (d\xi_3) \psi_3.
 \end{equation*}
In the new coordinates our superdistributions have the following generators (indices vary over $1,2,3$):
 \[
\mathcal{D}= \langle v_i = \p_{\xi_i} + \psi_i\p_\psi + \sum_j\psi_{ij}\p_{\psi_j}\rangle\subset
\mathcal{C}= \langle \p_{\psi_i}| \p_{\xi_j} + \psi_j\p_\psi\rangle,
 \]
where $\psi_{ij} = -\psi_{ji}$.  Explicitly,
 \begin{equation}\label{nPDE}
\boxed{\psi_{12} = -s_3\xi_3, \quad \psi_{23} = -s_1\xi_1, \quad \psi_{31} = -s_2\xi_2}.
 \end{equation}
These equations describe a 2nd order PDE, but this is merely a short-hand notation for $\mathcal{D}$ above.  
(Note: This PDE has no solutions!)  More precisely, we have
 \begin{gather*}
v_1 = \p_{\xi_1} + \psi_1 \p_\psi - s_3 \xi_3 \p_{\psi_2} + s_2 \xi_2 \p_{\psi_3}\\
v_2 = \p_{\xi_2} + \psi_2 \p_\psi - s_1 \xi_1 \p_{\psi_3} + s_3 \xi_3 \p_{\psi_1}\\
v_3 = \p_{\xi_3} + \psi_3 \p_\psi - s_2 \xi_2 \p_{\psi_1} + s_1 \xi_1 \p_{\psi_2}
 \end{gather*}
and compare \eqref{v123a}-\eqref{v123b}:
 \begin{gather*}
v_{12}=[v_1,v_2] = (s_2-s_1) \p_{\psi_3}, \quad
v_{23}=[v_2,v_3] = (s_3-s_2) \p_{\psi_1}, \quad
v_{31}=[v_3,v_1] = (s_1-s_3) \p_{\psi_2};\\
[v_1,v_{23}]=(s_2-s_3)\p_\psi,\quad [v_2,v_{31}]=(s_3-s_1)\p_\psi,\quad [v_3,v_{12}]=(s_1-s_2)\p_\psi.
 \end{gather*}

By naturality, any infinitesimal symmetry $S$ of $\mathcal{D}$ is also a symmetry of $\mathcal{C} = 
[\mathcal{D},\mathcal{D}]$, so $S$ is a contact vector field. This corresponds to a generating function 
$f = \iota_{S}\sigma$ as follows:
 \begin{equation}\label{oddXf}
X_f = -\sum_{i=1}^3(\p_{\psi_i} f) D_{\xi_i} + f \p_{\psi} - (-1)^{|f|}\sum_{i=1}^3 (D_{\xi_i} f) \p_{\psi_i}.
 \end{equation}
The Lagrange--Jacobi bracket, given by $[X_f,X_g]=X_{\{f,g\}}$, has the formula
 \begin{equation}\label{LJBodd}
\{f,g\} = f(\p_\psi g) +(-1)^{|f|} (\p_\psi f) g - \sum_{i=1}^3 (\p_{\psi_i} f)(D_{\xi_i} g) 
- (-1)^{|f|} \sum_{i=1}^3 (D_{\xi_i} f)(\p_{\psi_i} g).
 \end{equation}

Now, contracting the symmetry fields given by Table \ref{F:sym} with $\sigma$ given by \eqref{E:sigma}
and rewriting in new coordinates, we obtain the generating functions in Table \ref{F:gf}. 
 
 \begin{table}[h]
 \[
 \begin{array}{|c|c|cc} \hline
 \mbox{Level} & \mbox{Symmetries}\\ \hline\hline
-3 & 1\\ \hline
-2 & \xi_1, \quad \xi_2, \quad \xi_3\\ \hline
-1 & \psi_1 - s_1 \xi_2 \xi_3, \quad \psi_2 - s_2 \xi_3 \xi_1, \quad \psi_3 - s_3 \xi_1 \xi_2 \\ \hline
0 & \psi - \psi_1 \xi_1, \quad \psi - \psi_2 \xi_2,\quad \psi - \psi_3 \xi_3\\ \hline
1 & \begin{array}{l}
 \psi_2 \psi_3 + (s_2 \psi_2 \xi_2 - s_3 \psi_3 \xi_3 + (s_3-s_2)\psi) \xi_1,\\
 \psi_3 \psi_1 + (s_3 \psi_3 \xi_3 - s_1 \psi_1 \xi_1 + (s_1-s_3)\psi) \xi_2,\\
 \psi_1 \psi_2 + (s_1 \psi_1 \xi_1 - s_2 \psi_2 \xi_2 + (s_2-s_1)\psi) \xi_3
 \end{array} \\ \hline
2 & \begin{array}{l}
\psi(\psi_1 - s_1 \xi_2 \xi_3) + \psi_1(s_1 \xi_1 \xi_2 \xi_3 - \psi_2 \xi_2 - \psi_3 \xi_3),\\
\psi(\psi_2 - s_2 \xi_3 \xi_1) + \psi_2(s_2 \xi_1 \xi_2 \xi_3 - \psi_3 \xi_3 - \psi_1 \xi_1),\\
\psi(\psi_3 - s_3 \xi_1 \xi_2) + \psi_3(s_3 \xi_1 \xi_2 \xi_3 - \psi_1 \xi_1 - \psi_2 \xi_2)
 \end{array}\\ \hline
3 & \begin{array}{l}
 \psi_1 \psi_2 \psi_3 - \psi((s_2 - s_3)\psi_1 \xi_1 + (s_3 - s_1)\psi_2 \xi_2 + (s_1 - s_2) \psi_3 \xi_3)\\
 \quad + 2 s_1 \psi_2 \psi_3 \xi_2 \xi_3 + 2 s_2 \psi_3 \psi_1 \xi_3 \xi_1 + 2 s_3 \psi_1 \psi_2 \xi_1 \xi_2
 \end{array} \\ \hline
 \end{array}
 \]
 \caption{Symmetries of $\mathcal{D}$ expressed as generating functions}\label{F:gf}
 \end{table}

Now we can summarize our computations to reformulate Theorem \ref{Tpiv123} in all cases aside from the standard 
$\mathfrak{osp}(4,2)$ as follows.
 
 \begin{theorem}\label{Tpiv123plus}
If $s_1,s_2,s_3$ are pairwise distinct (and all nonzero with zero sum), 
then the contact symmetry algebra of PDE system \eqref{nPDE} is $D(2,1;a)$ with $a=s_3/s_2$. 
The vector fields are given by \eqref{oddXf} with the generating functions in Table \ref{F:gf} 
and the Lie superalgebra structure is given by \eqref{LJBodd}.
 \end{theorem}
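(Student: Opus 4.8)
The plan is to deduce Theorem~\ref{Tpiv123plus} from Theorem~\ref{Tpiv123} by translating the statement about symmetries of the distribution $\mathcal{D}$ into contact language, and then to pin down the explicit data (parameter, generating functions, bracket) by direct computation. First I would argue that the contact symmetries of the PDE system \eqref{nPDE} coincide with the symmetries of $\mathcal{D}$. Since \eqref{nPDE} is merely a shorthand for $\mathcal{D}$, a contact symmetry of the system is exactly a contact vector field whose flow preserves $\mathcal{D}$; conversely, any symmetry of $\mathcal{D}$ preserves the derived distribution $\mathcal{C}=[\mathcal{D},\mathcal{D}]$ by naturality, hence is automatically a contact vector field. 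The hypothesis that $s_1,s_2,s_3$ are pairwise distinct is what guarantees that $\mathcal{C}$ is a genuine mixed contact distribution: inspecting the brackets \eqref{v123a}--\eqref{v123b}, the second-level brackets $[v_i,v_{jk}]$ are nonzero precisely because the differences $s_i-s_j$ do not vanish, so the Levi form is nondegenerate and the contact framework applies. Thus the two symmetry algebras agree, and Theorem~\ref{Tpiv123} supplies the abstract isomorphism with $D(2,1;a)$.

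Next I would fix the parameter and the explicit realization. The bracket relations among $v_1,v_2,v_3$ reproduce the $\fg_{-1}$ part of the $\fp_{123}^\IV$-grading, with structure constants governed by $s_1,s_2,s_3$; comparing with Definition~\ref{d21alpha}, in which $s_2$ is normalized to $1$, identifies the parameter as $a=s_3/s_2$. For the generating functions, I would contract each symmetry field of Table~\ref{F:sym} with the contact form $\sigma$ of \eqref{E:sigma} via $f=\iota_S\sigma$, then rewrite the result in the coordinates $(\psi_1,\psi_2,\psi_3\,|\,\xi_1,\xi_2,\xi_3,\psi)$ in which $\sigma$ is canonical; this produces exactly the entries of Table~\ref{F:gf}. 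Finally, since the bracket of contact vector fields satisfies $[X_f,X_g]=X_{\{f,g\}}$ with the Lagrange--Jacobi bracket \eqref{LJBodd}, the functions of Table~\ref{F:gf} close under \eqref{LJBodd} and carry the $D(2,1;a)$ structure, the vector fields themselves being recovered from \eqref{oddXf}.

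The main obstacle is twofold. The completeness of the list---that there are no contact symmetries beyond those displayed---rests on the upper bound from the Tanaka--Weisfeiler prolongation computation $\pr(\fm)=\fg$ of Proposition~\ref{propPiv123prol}, which I would invoke rather than reprove; it is this finiteness that prevents spurious symmetries from appearing at higher jet order. The genuinely computational part is verifying that the generating functions of Table~\ref{F:gf} close under \eqref{LJBodd} with the structure constants of $D(2,1;a)$ and the asserted value $a=s_3/s_2$. This is lengthy but routine, and I would organize it grade by grade over the pieces $\fg_{-3},\dots,\fg_3$, exploiting the cyclic $S_3$-symmetry in the indices $1,2,3$ (already used to abbreviate Tables~\ref{F:sym} and~\ref{F:gf}) to reduce the number of independent brackets one must check; the decisive brackets are those joining $\fg_{-1}$ with $\fg_1$, since these are where the parameter $a$ enters and where agreement with $\Gamma(s_1,s_2,s_3)$ must be confirmed.
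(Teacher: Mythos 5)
Your proposal is correct and follows essentially the same route as the paper: the theorem is obtained as a reformulation of Theorem \ref{Tpiv123}, using the observation that any symmetry of $\mathcal{D}$ preserves $\mathcal{C}=[\mathcal{D},\mathcal{D}]$ and is hence contact (with the pairwise-distinctness of the $s_i$ guaranteeing nondegeneracy of $\mathcal{C}$), the Tanaka--Weisfeiler bound of Proposition \ref{propPiv123prol} for completeness, and the contraction $f=\iota_S\sigma$ of the fields in Table \ref{F:sym} to produce Table \ref{F:gf}. The identification $a=s_3/s_2$ and the closure under the Lagrange--Jacobi bracket \eqref{LJBodd} are exactly the paper's concluding computations.
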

 
Finally, let us give some words of caution, in regards to the odd contact structure, cf.\ \cite[Section D.3.2-3]{BL}.
The Lagrange--Jacobi bracket of \eqref{E:sigma} is parity-reversing, i.e.\ $|\{f,g\}| = |f|+|g|+1$.
This implies $|X_f|=|f|+1$, and furthermore the Lagrange--Jacobi identity becomes
 \[
\{f,\{g,h\}\} =  \{\{f,g\},h\} + (-1)^{(|f|+1)(|g|+1)} \{g,\{f,h\}\}.
 \]

\section{Twistor correspondences and outlook}\label{Concl}\label{S5}

\subsection{Generalized flag supervarieties.}\label{TwSec}

We have constructed six different (flat) supergeometries with symmetry $D(2,1;a)$. 
These are united into the twistor diagram in Figure \ref{TwCorr}. Along any one of these fibrations, 
the pushforward defines an isomorphism of symmetries, and vise versa there is a unique lift of symmetries
from the base to the total space. 

Below, we characterize these correspondences, namely, for each fiber bundle we indicate a construction of
the supermanifold and the superdistribution for the total space from the base and otherwise around, but we skip
discussing relations between higher-order reductions.

 \begin{figure}[h!]\begin{center}\begin{tikzpicture}
\node (T1) {$M^{3|4}=G/P_{123}^\I$};
\node (T2) [right = 2cm of T1] {$M^{3|4}_\diamond=G/P_{123}^\IV$};
\node (T3) [below left = 0.5cm and -0.2cm of T1] {$M^{2|4}=G/P_{12}^\I$};
\node (T4) [below left = 0.5cm and -0.2cm of T2] {$M^{3|3}=G/P_{23}^\I$};
\node (T5) [below left = 0.5cm and -0.2cm of T3]  {$M^{1|4}=G/P_1^\I$};
\node (T6) [below left = 0.5cm and -0.2cm of T4] {$M^{2|2}=G/P_2^\I$};
\draw[->] (T1) -- node[pos=0.25, left] {$\pi_{12}$\,\,} (T3);
\draw[->] (T1) -- node[pos=0.4, right] {\,$\pi_{23}$} (T4);
\draw[->] (T2) -- node[pos=0.25, left] {$\pi_\diamond$\,\,} (T4);
\draw[->] (T3) -- node[pos=0.3, left] {$\pi_1$\,\,} (T5);
\draw[->] (T3) -- node[pos=0.4, right] {\,$\pi_2$} (T6);
\draw[->] (T4) -- node[pos=0.3, left] {$\pi_3$\,\,} (T6);
 \end{tikzpicture}\end{center}
 \caption{Twistor correspondences of $D(2,1;a)$ flag varieties.}\label{TwCorr}
 \end{figure}
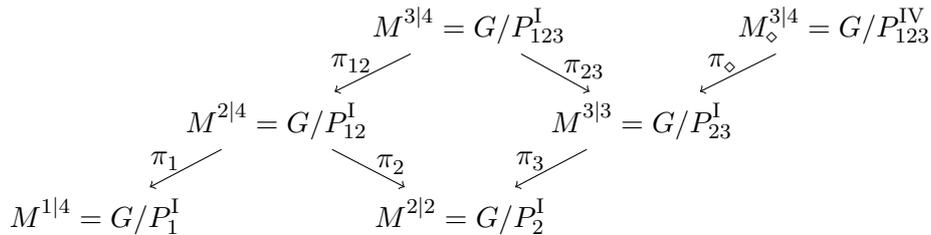

We will use the notation $\cD$ for the distribution on the base of the fibration, corresponding to $\g_{-1}\subset\fm$
(in the grading corresponding to the respective parabolic), and $\hcD$ for the analogous distribution on the total space. 
Let us recall that a Cauchy characteristic of a distribution $\cC$ on a supermanifold $M$ is a vector (or line) in the kernel 
of the bracket $\Lambda^2\cC\to\mathcal{T}M/\cC$.

\smallskip

\underline{$\pi_1$:} 
As noted in Section \ref{Gpi12}, the $\mathbb{P}^1$ bundle $\pi_1:M^{2|4}\to M^{1|4}$
is the twistor fibration, namely the fiber $\pi_1^{-1}(o)$ consists of self-dual null planes in the contact space
$\cC_o$ under a choice of its orientation. Here and beyond, by $o$ we will understand a superpoint of the base, 
given through the functor of points \cite{CCF,Va}. This gives the construction of $M^{2|4}$ through $M^{1|4}$. 

On the other hand, the distribution $\hcD$ of $M^{2|4}$ is such that $\hcD^2=[\hcD,\hcD]$ contains 
one Cauchy characteristic -- in terms of the generators from Section \ref{pi12}, this is the 
even vector $Y_2$. Quotient by it constitutes the projection $\pi_1$, which maps $\hcD^2$ to the distribution 
$\cD=\cC$ on $M^{1|4}$ and pushes forward the reductions (see Remark \ref{Rkp1p12} where the regrading via
$\op{ad}_{\theta_1\theta_2}$ corresponds to a choice of null plane in $\mathcal{C}$ and thus relates to $\pi_1$). 

\smallskip

\underline{$\pi_{12}$:} 
Similarly, as noted in Section \ref{Gpi123}, the $\mathbb{P}^1$ bundle $\pi_{12}:M^{3|4}\to M^{2|4}$ 
is the projectivization of the odd part $\cD_{\bar1}$ of $M^{2|4}$,
singled out by the condition of being the maximal linear subspace of vectors with vanishing self-bracket, 
that is the variety of odd lines in $\cD$. This gives the construction of $M^{2|4}$ through $M^{1|4}$. 

On the other hand, the second derived distribution $\hcD^3=[\hcD,\hcD^2]$ of $M^{3|4}$ contains the whole 
plane of Cauchy characteristics -- $\langle Y_2,Y_3\rangle$ in terms of generators from Section \ref{pi123}.
The two generators are singled out by brackets of $\fm$, and the quotient by $Y_2$ (resp. $Y_3$) 
gives the projection $\pi_{12}$ (resp.\ the projection to the flag supervariety corresponding to the parabolic
$\fp_{13}^\I$ outer equivalent to $\fp_{12}^\I$ modulo a change of parameter $a$), 
which maps $\hcD^3$ to the distribution $\cD^2$ on $M^{2|4}$.
This recovers both the distribution $\cD$ and the reductions.

\smallskip

\underline{$\pi_2$:} The $\mathbb{P}^{0|2}$ bundle $\pi_2:M^{2|4}\to M^{2|2}$ has fiber consisting of
$(1|0)$-dimensional lines $\ell\subset\mathcal{V}_1\subset\mathcal{T}M^{2|2}$. Here
$\mathcal{V}=\mathcal{V}_1\cup\mathcal{V}_2$ and one can equivalently restrict to the second component
of the reduced odd bi-quadric from Section \ref{Gpi2}. Indeed, such lines $\ell$ are parametrized by two odd
parameters, whence the type of the fiber. This gives the construction of $M^{2|4}$ through $M^{2|2}$. 

Conversely, the distribution $\hcD$ of $M^{2|4}$ is such that the space of (not necessary graded) 
vectors $v$ in it with $[v,v]=0\,\op{mod}\hcD$ is the union of the even line $\langle Y_2\rangle$ and 
the odd plane $\langle yxx,yxy\rangle$, in terms of generators from Section \ref{pi12}. 
The quotient by it does not preserve the distribution and gives holonomic $M^{2|2}$.

\smallskip

\underline{$\pi_3$:} The $\mathbb{P}^{1|1}$ bundle $\pi_2:M^{3|3}\to M^{2|2}$ has fiber consisting of
$(1|1)$-dimensional planes $\Pi\subset\mathcal{V}_1\subset\mathcal{T}M^{2|2}$ (again we choose one
component of $\mathcal{V}$, but $\mathcal{V}_2$ can be considered similarly, leading to an equivalent flag
variety $G/P_3^\I$ as the base). Such planes are parameterized by one even and one odd parameters,
whence the type of the fiber; note that for the Lie superalgebra 
$\mathfrak{f}=\mathfrak{sl}(2|1)\simeq\mathfrak{osp}(2|2)$ from Section \ref{Gpi2}
and its distinguished Borel subalgebra (numerated white-grey node) 
the parabolic $\fp_1^\I$ (in matrix realization \eqref{sl13=so22} $b_1=b_2=0$) is the stabilizer of a $(1|0)$ line, 
while the parabolic $\fp_2^\I$ (in matrix realization \eqref{sl13=so22} $b_2=a_5=0$) is the stabilizer of 
a $(1|1)$ plane in its $\C^{2|2}$ representation. This gives the construction of $M^{3|3}$ through $M^{2|2}$. 
 
Conversely, there exist two distinguished subdistributions of dimension three in $\hcD$ of $M^{3|3}$ 
(see the next subsection). One of them is $\langle Y_2,Y_3,yxy\rangle$ in the notations of Section \ref{pi23};
this contains the maximal isotropic sub-distribution $\langle Y_3,yxy\rangle$, the quotient by which gives $M^{2|2}$. 

\smallskip

\underline{$\pi_{23}$:} A description of the $\mathbb{P}^{0|1}$ fiber of the bundle 
$\pi_{23}:M^{3|4}\to M^{3|3}$ in terms of the variety $\mathcal{W}$ from Section \ref{Gpi23} is more involved 
and will be omitted. However the base is internally recovered from the distribution $\hcD$ on the total space
$M^{3|4}$ as follows: the derived distribution $\hcD^2=[\hcD,\hcD]$ has exactly one Cauchy characteristic, 
it is $yxx\in(\g_{-1})_{\bar1}$ in notations of Section \ref{pi123}. The quotient by it is precisely the flag variety $M^{3|3}$. (Note that the distribution $\hcD$ is naturally split into three line fields - two even and one odd;
the quotient by the first one gives the projection $\pi_{12}$,
the quotient by the second one gives an outer related, while the last quotient is $\pi_{23}$.)

\smallskip

\underline{$\pi_\diamond$:} Similarly omitting the construction of the total space of the 
$\mathbb{P}^{0|1}$ bundle $\pi_\diamond:M^{3|4}_\diamond\to M^{3|3}$ in terms of the base,
let us consider the reverse construction. In this case, the distribution $\hcD$ on $M^{3|4}_\diamond$
is split into three odd lines (they play similar roles). 
Quotient by one of them, for instance $xyy$ in notations of Section \ref{piv123},
gives precisely the flag variety $M^{3|3}$.

\subsection{Symmetries of $(M^{3|3},\cD^{2|2})$.}\label{S3322}

This is the remaining case to determine $\pr(\fm_{23}^\I)$ that coincides with 
$\mathfrak{sym}(M^{3|3},\cD^{2|2})$.
The geometric methods presented in Section \ref{TwSec} show useful here.
The distribution $\cD=\langle e_1,e_2,f_1,f_2\rangle$, with structure relations 
 $$
[e_1,f_1]=[e_2,f_2]=f_3,\quad [f_1,f_2]=2e_3,
 $$
can be realized in coordinates $(y_1,y_2,y_3|\xi_1,\xi_2,\xi_3)$ as 
 $$
e_1=\p_{y_1},\ e_2=\p_{y_2},\ e_3=\p_{y_3},\ 
f_1=\p_{\xi_1}+\xi_2\p_{y_3}+y_1\p_{\xi_3},\ f_2=\p_{\xi_2}+\xi_1\p_{y_3}+y_2\p_{\xi_3},\ f_3=\p_{\xi_3}.
 $$

 \begin{theorem}
The symmetry algebra of $(M^{3|3},\cD^{2|2})$ is parametrized by two functions of four arguments
$h=h(y_3,\xi_1,\xi_2,\xi_3)$ and $g=g(y_3,\xi_1,\xi_2,\xi_3)$. More precisely, it is given by 
 \begin{equation}\label{Xgh}
X=-\tfrac12(-1)^{|h|}\bigl(f_2(h)\cdot f_1+f_1(h)\cdot f_2\bigr)+h\cdot e_3+\tilde{g}\cdot f_3+m_1\cdot e_1+m_2\cdot e_2,
 \end{equation}
where $\tilde{g}=g+\tfrac12(-1)^{|h|}(f_2(h)\cdot y_1+f_1(h)\cdot y_2\bigr)$ and $m_i=-(-1)^{|g|}f_i(g)$.
The algebra is graded with weights $w(y_1)=w(y_2)=w(\xi_1)=w(\xi_2)=1$, $w(y_3)=w(\xi_3)=2$.
 \end{theorem}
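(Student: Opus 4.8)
The plan is to compute directly in the global frame $e_1,e_2,e_3,f_1,f_2,f_3$ of $\mathcal{T}M^{3|3}$, writing an unknown homogeneous symmetry as $X=A^1e_1+A^2e_2+A^3e_3+B^1f_1+B^2f_2+B^3f_3$ with coefficient functions $A^i,B^j\in\mathcal{O}_M$. Since $\cD=\langle e_1,e_2,f_1,f_2\rangle$ and $[\cD,\cD]=\mathcal{T}M$, the field $X$ is an infinitesimal symmetry precisely when $[X,e_1],[X,e_2],[X,f_1],[X,f_2]$ have vanishing $e_3$- and $f_3$-components; the remaining components merely record the induced action inside $\cD$ and impose no condition. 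So the first step is to expand these four brackets using the given structure relations $[e_i,f_i]=f_3$, $[f_1,f_2]=2e_3$ (all other frame brackets vanishing), together with the graded Leibniz rule $[\varphi V,W]=\varphi[V,W]-(-1)^{(|\varphi|+|V|)|W|}(W\varphi)V$, keeping careful track of parities. Conceptually, the $h$-part of \eqref{Xgh} is a contact-Hamiltonian field for the step-two structure with Reeb direction $e_3$, while $g$ and $f_3$ supply an additional odd layer; but the explicit formula is cleanest to obtain by the direct route.

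First I would read off the conditions from $[X,e_1],[X,e_2]\in\cD$: the $e_3$-components force $\p_{y_1}A^3=\p_{y_2}A^3=0$, so $A^3=:h$ is a function of $(y_3,\xi_1,\xi_2,\xi_3)$ only, and the $f_3$-components give $B^1=-\p_{y_1}B^3$, $B^2=-\p_{y_2}B^3$. Next, the $e_3$-components of $[X,f_1],[X,f_2]\in\cD$ express $B^1,B^2$ through $f_2(h),f_1(h)$, while the $f_3$-components express $A^1,A^2$ through $f_1(B^3),f_2(B^3)$. Writing $B^3=:\tilde g$, the two descriptions of $B^1,B^2$ combine into a first-order system giving $\p_{y_1}\tilde g$ and $\p_{y_2}\tilde g$ proportional to $f_2(h)$ and $f_1(h)$; its mixed-partial compatibility holds because $\p_{y_2}f_2(h)=\p_{\xi_3}h=\p_{y_1}f_1(h)$, so the system integrates, producing $\tilde g$ as an explicit $y_1,y_2$-affine expression plus a free function $g=g(y_3,\xi_1,\xi_2,\xi_3)$, namely $\tilde g=g+\tfrac12(-1)^{|h|}(f_2(h)y_1+f_1(h)y_2)$. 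Substituting back then determines $A^1,A^2$ as $m_i=-(-1)^{|g|}f_i(g)$ and $B^1,B^2$ as the displayed multiples of $f_2(h),f_1(h)$, yielding the closed formula \eqref{Xgh}.

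The argument then splits into necessity (carried out above) and sufficiency: one checks that the field built from \emph{arbitrary} $(h,g)$ via \eqref{Xgh} does satisfy all four bracket conditions, which is a direct recomputation exploiting $f_i^2=0$ and $f_1f_2+f_2f_1=2e_3$. Finally the grading assertion follows by assigning $w(y_1)=w(y_2)=w(\xi_1)=w(\xi_2)=1$ and $w(y_3)=w(\xi_3)=2$, so that $e_1,e_2,f_1,f_2$ are homogeneous of weight $-1$ and $e_3,f_3$ of weight $-2$; the correspondence $(h,g)\mapsto X$ is weight-homogeneous, and a short check shows the Lie bracket respects these weights, presenting $\sym(M^{3|3},\cD)$ as a graded Lie superalgebra parametrized by the pair $(h,g)$.

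The main obstacle is the bookkeeping of super-signs: since $f_1,f_2$ are odd and the frame is non-holonomic, every bracket produces factors $(-1)^{|h|}$, $(-1)^{|\tilde g|}$ that must be tracked consistently, and the precise coefficients in \eqref{Xgh} (and the $y_1,y_2$-linear terms in $\tilde g$) emerge only after correctly integrating the compatible system and simplifying via $f_i(h)=\p_{\xi_i}h+\dots$. A secondary point requiring care is confirming that the $e_1,e_2,f_1,f_2$-components of the four brackets impose no further restrictions, so that $h$ and $g$ are genuinely unconstrained functions of exactly the four variables $(y_3,\xi_1,\xi_2,\xi_3)$ and in particular cannot depend on $y_1,y_2$; this is what makes $\sym(M^{3|3},\cD)=\pr(\fm_{23}^\I)$ infinite-dimensional, in agreement with Section \ref{pi23}.
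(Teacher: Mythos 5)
Your strategy is correct but genuinely different from the paper's. You work entirely downstairs on $M^{3|3}$: expand $[X,e_i]$, $[X,f_j]$ in the global frame, kill the $e_3$- and $f_3$-components, and integrate the resulting overdetermined but compatible system for $B^3=\tilde g$ (the key compatibility $\p_{y_2}f_2(h)=\p_{\xi_3}h=\p_{y_1}f_1(h)$ that you identify is exactly what makes the $y_1,y_2$-affine ansatz close up). The paper instead passes to the quotient by the distinguished even plane $\Delta=\langle e_1,e_2\rangle$ -- singled out invariantly as the intersection of the two maximal isotropic $3$-planes in $\fg_{-1}$ -- obtains a degenerate contact structure on $\bar M^{1|3}$ whose symmetries are given by the standard contact formula in $\bar h$ plus a free Cauchy-characteristic coefficient $\bar g$, and then lifts uniquely to $M^{3|3}$ by requiring preservation of $\op{Ann}(\cD)=\langle\alpha_1,\alpha_2\rangle$. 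The two routes buy different things: yours is self-contained and makes the necessity/sufficiency split and the absence of further constraints completely explicit, at the cost of heavy super-sign bookkeeping (in particular, when you integrate $\p_{y_1}\tilde g\propto f_2(h)$, $\p_{y_2}\tilde g\propto f_1(h)$ you must track the single cross term $y_1y_2\,\p_{\xi_3}h$ carefully against the closed form $\tfrac12(-1)^{|h|}(f_2(h)y_1+f_1(h)y_2)$, and likewise distinguish $f_i(g)$ from $f_i(\tilde g)$ when solving for $m_i$); the paper's route explains \emph{why} the answer has the structure of a contact superalgebra of $\C^{1|2}$ with coefficients in functions on $\C^{1|3}$ extended by an abelian ideal of shifts, which is what feeds into Table \ref{SymmA} and explains a priori why $h,g$ depend only on the four quotient coordinates $(y_3,\xi_1,\xi_2,\xi_3)$. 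Your final grading argument and the dimension count agree with the paper's.
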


This implies the structure of $\hat{\g}=\pr(\fm_{23}^\I)$, in particular dimensions of the graded components:
 \begin{table}[h!]\[
\begin{array}{c|c|c|c|c|c|c|c}
k & -2 & -1 & 0 & 1 & 2 & 3 & \dots \\ \hline 
\dim\hat{\g}_k & (1|1) & (2|2) & (3|3) & (4|4) & (4|4) & (4|4) & \dots
\end{array}\]
 \end{table}

 \begin{proof}
Consider the Lie superalgebra structure of $\fm_{23}^\I$.
For a vector $v=a_1e_1+a_2e_2+b_1f_1+b_2f_2\in\g_{-1}$ the condition $[v,v]=0$ writes $b_1b_2=0$, 
so $\g_{-1}$ has two distinguished 3-dimensional subspaces $\langle e_1,e_2|f_1\rangle$
and $\langle e_1,e_2|f_2\rangle$ that intersect by the even 2-plane $\langle e_1,e_2\rangle$.
Also, maximal isotropic (not necessarily graded) 2-planes form two pencils 
$\langle e_1,e_2+rf_2\rangle$, $\langle e_2,e_1+rf_1\rangle$, where $r\in\mathbb{P}^1$. 
Note that those pencils again intersect by $\langle e_1,e_2\rangle$, so it is distinguished.

Let us quotient the distribution $\cD$ by the subdistribution $\Delta=\langle e_1,e_2\rangle$. Since $e_i$ are not 
Cauchy characteristics, a larger distribution $[\cD,\Delta]$ descends to the quotient space $\bar{M}^{1|3}$.
Namely the quotient distribution $\bar{\cD}$ in coordinates ($y_3|\xi_1,\xi_2,\xi_3)$ on $\bar{M}$ is generated by 
 $$
\bar{f}_1=\p_{\xi_1}+\xi_2\p_{y_3},\ \bar{f}_2=\p_{\xi_2}+\xi_1\p_{y_3},\ \bar{f}_3=\p_{\xi_3}
 $$
with the only non-trivial bracket $[\bar{f_1},\bar{f}_2]=2\bar{e}_3$, $\bar{e}_3=\p_{y_3}$. 
Thus the distribution is degenerate contact with one Cauchy characteristic, so its symmetry is
computed by the standard formula
 $$
\bar{X}= -\tfrac12(-1)^{|\bar{h}|}\bigl(\bar{f}_2(\bar{h})\cdot\bar{f}_1+\bar{f}_1(\bar{h})\cdot\bar{f}_2\bigr)
+\bar{h}\cdot\bar{e}_3+\bar{g}\cdot\bar{f}_3.
 $$
Here we use that $\op{Ann}(\cD)$ is generated by $\alpha_1=dy_3+\xi_1d\xi_2+\xi_2d\xi_1$,
$\alpha_2=d\xi_3-y_1d\xi_1-y_2d\xi_2$ and only $\alpha_1$ descends to the quotient, where it defines the
degenerate contact structure $\bar{\cD}$.
 
The algebra of symmetries acts on the space of odd 2-planes transversal to $\bar{f}_3$ in $\bar{\cD}$, 
which can be identified with $M^{3|3}$ via coordinization 
$\bar{\cD}=\langle f_1=\bar{f}_1+y_1\bar{f}_3,f_2=\bar{f}_2+y_2\bar{f}_3\rangle$. 
In formulas, keeping the same coordinates and denoting $\tilde{X}$ the field $\bar{X}$
with $\bar{f}_i$ changed to $f_i$ and $\bar{h},\bar{g}$ to $\tilde{h},\tilde{g}$, the lift is given by 
 $$
X=\tilde{X}+m_1\cdot e_1+m_2\cdot m_2,
 $$
which by the condition that $\langle\alpha_1,\alpha_2\rangle$ is preserved, allows to uniquely determine $m_i$.
 \end{proof}
 
To get generators of $\hat{\g}_k$ assume $h,g$ polynomial (or analytic) and decompose 
$h(y_3,\xi_1,\xi_2,\xi_3)=\sum h_{ijpq}\xi_1^i\xi_2^j\xi_3^py_3^q$ and note that 
$\xi_1^i\xi_2^j\xi_3^py_3^q\p_{y_3}$ has weight $i+j+(p+q-1)$ and parity $i+j+p$.
Similarly, $g(y_3,\xi_1,\xi_2,\xi_3)=\sum g_{ijpq}\xi_1^i\xi_2^j\xi_3^py_3^q$ and note that 
$\xi_1^i\xi_2^j\xi_3^py_3^q\p_{\xi_3}$ has weight $i+j+2(p+q-1)$ and parity $i+j+p-1$.
This implies the above table of dimensions. 

The Lie superalgebra of symmetries is generated by a contact vector field $X_f$ in an odd contact space $\C^{1|2}$ 
with the generating function $f$ depending on variables in the space $\C^{1|3}=\C^{1|2}\oplus\C^{0|1}$, 
and also by an Abelian algebra of shifts along the second factor, the odd line $\C^{0|1}$ with coefficients
being arbitrary functions on $\C^{1|3}$. This finishes the justification of Table \ref{SymmA}.

\subsection{On $D(2,1;a)$ non-flat geometries.}

The geometries considered in this paper have curved analogues. While the general approach to their construction,
following the steps of parabolic geometries \cite{CS}, is complicated and is outside the scope of this work,
let us indicate the deformation approach to their construction.

For instance, for the flat geometry of type $(G,P_{123}^\IV)$, considered in Section \ref{Gpiv123},
the deformation is obtained by perturbing the Legendrian distribution $\cD\subset\cC$ while keeping 
the mixed contact structure $\cC$ untouched. This is equivalent to changing the right-hand-side of 
the odd PDE system \eqref{nPDE} by generic odd functions on the space $\C^{3|4}$. 
In general, this has smaller symmetry.
 
Similarly, perturbing the coefficients of the odd bi-quadric $\mathcal{V}$ in \eqref{Vsupervar}, we obtain 
a geometry of type $(G,P_2^\I)$ that is not necessarily flat, and has smaller symmetry.
It would be interesting to further understand such geometries and to establish
the submaximal symmetry dimension, i.e.\ study the supersymmetric dimension gap phenomenon.

\appendix

\section{Irreducible representations of $\fsl(2|1)$}\label{appendixsl21}

The Lie superalgebra $\fsl(2|1)$ has four even generators $X,H,Y,I$, four odd generators 
$F^\pm$, $\overline{F}^\pm$ and (the only nontrivial) relations:
 \begin{align*}
[H,X]&=2X,  \quad& [H,F^\pm]&=\pm F^\pm\!\!, \quad& [H,\overline{F}^\pm]&=\pm\overline{F}^\pm\!\!,\\
[H,Y]&=-2Y, \quad& [I,F^\pm]&=F^\pm\!\!, \quad& [I,\overline{F}^\pm]&=-\overline{F}^\pm\!\!,\\
[X,Y]&=H,    \quad& [X,F^-]&=-F^+\!\!, \quad& [X,\overline{F}^-]&=\overline{F}^+\!\!,\\
[F^+,\overline{F}^-]&=\tfrac12\left(I-H\right), \quad& [Y,F^+]&=-F^-\!\!, \quad& 
[Y,\overline{F}^+]&=\overline{F}^-\!\!,\\
[F^-,\overline{F}^+]&=\tfrac12\left(I+H\right), \quad& [F^+,\overline{F}^+]&=X, \quad&
[F^-,\overline{F}^-]&=Y.
 \end{align*}

The irreducible representations of $\fsl(2|1)$ are characterized by a pair of labels $(b,m)$ where $b\in\mathbb{C}$ and $m\in\mathbb{N}\cup\{0\}$, cf.\ \cite{Frappat}. 
The representations $\rho(b,m)$ with $b\neq\pm m$ are typical and have dimension $4m$. 
The representations $\rho(\pm m,m)$ are atypical and have dimension $2m+1$. 

The $4m$-dimensional typical representation $\rho(b,m)$ of $\fsl(2|1)$ decomposes under the even part 
$\CC\oplus\fsl_2$ as
 \begin{equation}
\rho(b,m)=\pi_m(b)\oplus\pi_{m-1}(b+1)\oplus\pi_{m-1}(b-1)\oplus\pi_{m-2}(b),
 \end{equation}
where $m$ is the highest $H$-weight, and $b$ is the $I$-weight of the highest $H$-weight vector.

The case $m=1$ is special with the $4$-dimensional representation
 \begin{equation}
\rho(b,1)=\pi_1(b)\oplus\pi_0(b+1)\oplus\pi_0(b-1).
 \end{equation}

 \begin{example}
For the adjoint representation
$\fsl(2|1)=\rho(0,2)=\pi_2(0)\oplus\pi_1(1)\oplus\pi_1(-1)\oplus\pi_0(0)$: 
 \begin{itemize}
\item $\pi_2(0)$ is generated by $X,H,Y$; 
\item $\pi_1(1)$ is generated by $F^+,F^-$;
\item $\pi_1(-1)$ is generated by $\overline{F}^+,\overline{F}^-$;
\item $\pi_0(0)$ is generated by $I$.
 \end{itemize}
 \end{example}

\section{Details on computations of prolongations}\label{B0}

\subsection{Proof of Proposition \ref{propPi23prol} for $\fp_{23}^{\rm I}$}\label{B1}

Let $\{e_1=Y_2,e_2=Y_3,e_3=yxy, e_4=yyx\}$ be a basis of $g_{-1}=\CC^{2|2}$, 
and $\{e_5=Y_1, e_6=yyy\}$ be a basis of $\fg_{-2}$. Let $\omega^i$ be the dual of $e_i$ for $i=1,\dots,6$. 

The non-zero bracket relations in $\fm$ are (see Section \ref{pi23}):
\begin{equation*}
 [e_1,e_3]=e_6,\quad [e_2,e_4]=e_6,\quad [e_3,e_4]=s_1 e_5.
\end{equation*}

\smallskip

First, consider $\pr_1$.
We have $\fg_0\otimes\fg_{-1}^\ast=\CC^{3|2}\otimes\CC^{2|2}=\CC^{10|10}$. If 
$\varphi=(a_1H_1+a_2H_2+a_3H_3)\otimes\omega^1+(b_1H_1+b_2H_2+b_3H_3)\otimes\omega^2+(c_1xyy+c_2yxx)\otimes\omega^3+(d_1xyy+d_2yxx)\otimes\omega^4$ is an even vector in $\pr_1(\fg_{\leq0})\subset\fg_0\otimes\fg_{-1}^\ast$, then:
 \begin{eqnarray*}
0&=&\varphi[e_1,e_2]=[\varphi e_1,e_2]+[e_1,\varphi e_2]=[a_1H_1+a_2H_2+a_3H_3,Y_3]+[Y_2,b_1H_1+b_2H_2+b_3H_3]\\
&=&-2a_3Y_3+2b_2Y_2=0\Rightarrow \boxed{a_3=0},\,\boxed{b_2=0},\\
0&=&\varphi[e_2,e_3]=[\varphi e_2,e_3]+[e_2,\varphi e_3]=[b_1H_1+b_3H_3,yxy]+[Y_3,c_1xyy+c_2yxx]\\
&=& -(b_1+b_3)yxy+c_2yxy=-2b_1yxy\Rightarrow \boxed{b_1=0},\\
0&=&\varphi[e_1,e_4]=[\varphi e_1,e_4]+[e_1,\varphi e_4]=[a_1H_1+a_2H_2,yyx]+[Y_2,d_1xyy+d_2yxx]\\
&=& -(a_1+a_2)yyx+d_2yyx=-2a_1yyx\Rightarrow \boxed{a_1=0},\\
0&=&\varphi[e_3,e_3]=2[\varphi e_3,e_3]=2[c_1xyy+c_2yxx,e_3]=2c_1s_3Y_3\Rightarrow\boxed{c_1=0},\\
0&=&\varphi[e_4,e_4]=2[\varphi e_4,e_4]=2[d_1xyy+d_2yxx,e_4]=2d_1s_2Y_2\Rightarrow\boxed{d_1=0}.
 \end{eqnarray*}
We compute
 \begin{eqnarray*}
\varphi e_6 &=&\varphi[e_1,e_3]=[\varphi e_1,e_3]+[e_1,\varphi e_3]=[a_2H_2,yxy]+[Y_2,c_2yxx]=
a_2yxy+c_2yyx,\\
\varphi e_6&=&\varphi[e_2,e_4]=[\varphi e_2,e_4]+[e_2,\varphi e_4]=[b_3H_3,yyx]+[Y_3,d_2yxx]=
b_3yyx+d_2yxy,\\
\varphi e_5&=&s_1^{-1}\varphi[e_3,e_4]=s_1^{-1}([\varphi e_3,e_4]+[e_3,\varphi e_4])=0,
 \end{eqnarray*}
whence $\boxed{a_2=d_2}$ and $\boxed{b_3=c_2}$. Then we verify
 \[
\varphi[e_1,e_5]=\varphi[e_1,e_6]=\varphi[e_2,e_5]=\varphi[e_2,e_6]=\varphi[e_3,e_5]=\varphi[e_3,e_6]
=\varphi[e_4,e_5]=0.
 \]
We conclude that $(\pr_1(\fg_{\leq0}))_{\bar0}=\langle H_2\otimes\omega^1+ yxx\otimes\omega^4, H_3\otimes\omega^2+yxx\otimes\omega^3\rangle$.
 
For the odd part, consider $\varphi=(a_1H_1+a_2H_2+a_3H_3)\otimes\omega^3 +(b_1H_1+b_2H_2+b_3H_3)\otimes\omega^4+(c_1xyy+c_2yxx)\otimes\omega^1+(d_1xyy+d_2yxx)\otimes\omega^2\in 
\pr_1(\fg_{\leq0})\subset\fg_0\otimes\fg_{-1}^\ast$, then:
  \begin{eqnarray*}
0&=&\varphi[e_1,e_2]=[\varphi e_1,e_2]+[e_1,\varphi e_2]=[c_1xyy+c_2yxx,Y_3]+[Y_2,d_1xyy+d_2yxx]\\
  &=&-c_2yxy+d_2yyx\Rightarrow\boxed{c_2=0},\,\boxed{d_2=0},\\
0&=&\varphi[e_2,e_5]=[\varphi e_2,e_5]+[e_2,\varphi e_5] =[d_1xyy,Y_1]\\
  &&{}+s_1^{-1}[Y_3,(a_3-a_1-a_2)yyx-(b_1-b_2+b_3)yxy]=-s_1^{-1}(a_1+a_2-a_3+s_1d_1)yyy,\\
0&=&\varphi[e_3,e_6]=[\varphi e_3,e_6]-[e_3,\varphi e_6]=[a_1H_1+a_2H_2+a_3H_3,yyy]
  -[yxy,d_1s_2  Y_2+2b_3Y_3]\\
  &=& -(a_1+a_2+a_3-s_2d_1)yyy,\\
0&=&\varphi[e_3,e_3]=2[\varphi e_3,e_3]=2[a_1H_1+a_2H_2+a_3H_3,yxy]=-2(a_1-a_2+a_3),\\
0&=&\varphi[e_1,e_5]=[\varphi e_1,e_5]+[e_1,\varphi e_5]=[c_1xyy,Y_1]\\
  &&+s_1^{-1}[Y_2,(-a_1-a_2+a_3)yyx-(b_1-b_2+b_3)yxy]= -s_1^{-1}(b_1-b_2+b_3+s_1c_1)yyy,\\
0&=&\varphi[e_4,e_6]=[\varphi e_4,e_6]-[e_4,\varphi e_6]=[b_1H_1+b_2H_2+b_3H_3,yyy]
  -[yyx,c_1s_3 Y_3+2a_2Y_2]\\
  &=& -(b_1+b_2+b_3-s_3c_1)yyy,\\
0&=&\varphi[e_4,e_4]=2[\varphi e_4,e_4]=2[b_1H_1+b_2H_2+b_3H_3,yyx]=-2(b_1+b_2-b_3),
\end{eqnarray*}
thus $\boxed{a_1=-\tfrac12s_1d_1},\,\boxed{a_2=\tfrac12s_2d_1},\,\boxed{a_3=-\tfrac12s_3d_1}$ and
$\boxed{b_1=-\tfrac12s_1c_1},\,\boxed{b_2=-\tfrac12s_2c_1},\,\boxed{b_3=\tfrac12s_3c_1}$. Further,
  \begin{eqnarray*}
\varphi e_6 &=&\varphi[e_1,e_3]
 =s_3c_1 Y_3+2a_2 Y_2,\\
\varphi e_6&=&\varphi[e_2,e_4]
 =s_2d_1 Y_2+2b_3Y_3,\\
\varphi e_5&=& s_1^{-1}\varphi[e_3,e_4]
 =d_1yyx+c_1yxy,
 \end{eqnarray*}
and so we get
 \[
\varphi[e_2,e_3]= \varphi[e_1,e_4]= \varphi[e_1,e_6]= \varphi[e_2,e_6]= \varphi[e_5,e_6]=0.
 \]
We conclude that $(\pr_1(\fg_{\leq0})_{\bar1}=\langle(-s_1H_1+s_2H_2-s_3H_3)\otimes\omega^3+2 xyy\otimes\omega^2,(-s_1H_1-s_2H_2+s_3H_3)\otimes\omega^4+2 xyy\otimes\omega^1\rangle$
and consequently $\pr_1(\fg_{\leq0})=\fg_1$.

\medskip

Next, consider $\pr_2$.
We have $\fg_1\otimes\fg_{-1}^\ast=\CC^{2|2}\otimes\CC^{2|2}=\CC^{8|8}$.   
If $\varphi=(a_1f_1+a_2f_2)\otimes\omega^1+(b_1f_1+b_2f_2)\otimes\omega^2+(c_1f_3+c_2f_4)\otimes\omega^3+(d_1f_3+d_2f_4)\otimes\omega^4$ is an even vector in 
$\pr_2(\fg_{\leq0})\subset\fg_1\otimes\fg_{-1}^\ast$, then:
 \begin{eqnarray*}
0&=&\varphi[e_1,e_2]=[\varphi e_1,e_2]+[e_1,\varphi e_2]=[a_1f_1+a_2f_2,e_2]+[e_1,b_1f_1+b_2f_2]\\
  &=& a_2H_3-b_1H_2 \Rightarrow\boxed{a_2=0},\,\boxed{b_1=0},\\
0 &=&\varphi[e_2,e_3]=[\varphi e_2,e_3]+[e_2,\varphi e_3]=[b_2f_2,e_3]+[e_2,c_1f_3+c_2f_4]\\
  &=&b_2yxx+c_1xyy\Rightarrow\boxed{b_2=0},\,\boxed{c_1=0},\\
0&=&\varphi[e_1,e_4]=[\varphi e_1,e_4]+[e_1,\varphi e_4]=[a_1f_1,yyx]+[Y_2,d_1f_3+d_2f_4]\\
  &=&a_1yxx+d_2xyy\Rightarrow\boxed{a_1=0},\,\boxed{d_2=0}.
 \end{eqnarray*}
Then we compute:
 \begin{eqnarray*}
\varphi e_6 &=&\varphi[e_1,e_3]=[\varphi e_1,e_3]+[e_1,\varphi e_3]=[e_1,c_2f_4]=c_2xyy,\\
\varphi e_6 &=&\varphi[e_2,e_4]=[\varphi e_2,e_4]+[e_2,\varphi e_4]=[Y_3,d_1f_3]=d_1xyy    
   \Rightarrow\boxed{d_1=c_2},\\
\varphi e_5 &=&s_1^{-1}\varphi[e_3,e_4]=s_1^{-1}([\varphi e_3,e_4]+[e_3,\varphi e_4])
  =\frac{c_2}{s_1}[xxy,yyx]+\frac{d_1}{s_1}[yxy,xyx]\\
   &=& \frac{c_2}{2s_1}\left(s_1H_1+s_2H_2-s_3H_3\right)
   +\frac{d_1}{2s_1}\left(s_1H_1-s_2H_2+s_3H_3\right)=c_2H_1,\\
0 &=&\varphi[e_1,e_5]=[\varphi e_1,e_5]+[e_1,\varphi e_5]
   =\tfrac12[Y_2,c_2H_1]=0,\\
0 &=&\varphi[e_1,e_6]=[\varphi e_1,e_6]+[e_1,\varphi e_6]=[Y_2,c_2xyy]=0.
 \end{eqnarray*}
We conclude that $(\pr_2(\fg_{\leq0}))_{\bar0}=\langle f_4\otimes\omega^3+f_3\otimes\omega^4\rangle$.

For the odd part, consider $\varphi=(a_1f_3+a_2f_4)\otimes\omega^1+(b_1f_3+b_2f_4)\otimes\omega^2
+(c_1f_1+c_2f_2)\otimes\omega^3+(d_1f_1+d_2f_2)\otimes\omega^4\in\pr_2(\fg_{\leq0})\subset
\fg_1\otimes\fg_{-1}^\ast$, then:
 \begin{eqnarray*}
0 &=&\varphi[e_2,e_3]=[\varphi e_2,e_3]+[e_2,\varphi e_3]=[b_1f_3+b_2f_4,e_3]+[e_2,c_1f_1+c_2f_2]\\
  &=&b_1\left(\frac{1}{s_1}H_1-\frac{1}{s_2}H_2+\frac{1}{s_3}H_3\right)-c_2H_3
  \Rightarrow\boxed{b_1=0},\,\boxed{c_2=0},\\
0&=&\varphi[e_1,e_4]=[\varphi e_1,e_4]+[e_1,\varphi e_4]=[a_1f_3+a_2f_4,yyx]+[Y_2,d_1f_1+d_2f_2]\\
  &=&a_2\left(\frac{1}{s_1}H_1+\frac{1}{s_2}H_2-\frac{1}{s_3}H_3\right)-d_1H_2
 \Rightarrow\boxed{a_2=0},\,\boxed{d_1=0},\\
0&=&\varphi[e_1,e_2]=[\varphi e_1,e_2]+[e_1,\varphi e_2]=[a_1f_3,e_2]+[e_1,b_2f_4]=(b_2-a_1)xyy \Rightarrow\boxed{b_2=a_1}.
  \end{eqnarray*}
Furthermore,
 \begin{eqnarray*}
\varphi e_6 &=&\varphi[e_1,e_3]=[a_1f_3,e_3]+[e_1,c_1f_1]
=a_1\left(\frac{1}{s_1}H_1-\frac{1}{s_2}H_2+\frac{1}{s_3}H_3\right)-c_1H_2,\\
\varphi e_6&=&\varphi[e_2,e_4]=[b_2f_4,yyx]+[Y_3,d_2f_2]
=b_2\left(\frac{1}{s_1}H_1+\frac{1}{s_2}H_2-\frac{1}{s_3}H_3\right)-d_2H_3.
 \end{eqnarray*}
Therefore $\boxed{s_2c_1=-2a_1}$, $\boxed{s_3d_2=-2a_1}$. We conclude that 
$(\pr_2(\fg_{\leq0}))_{\bar1}=\langle f_3\otimes\omega^1+f_4\otimes\omega^2
-\frac{2}{s_2}f_1\otimes\omega^3-\frac{2}{s_3}f_2\otimes\omega^4\rangle$ and consequently
$\pr_2(\fg_{\leq0})=\fg_2$.

\medskip

Finally, consider $\pr_3$.
We have $\fg_2\otimes\fg_{-1}^\ast=\CC^{1|1}\otimes\CC^{2|2}=\CC^{4|4}$.  
If $\varphi=af_5\otimes\omega^1+b f_5\otimes\omega^2+c f_6\otimes\omega^3+d f_6\otimes\omega^4$ 
is an even vector in $\pr_3(\fg_{\leq0})\subset\fg_2\otimes\fg_{-1}^\ast$, then:
 \begin{eqnarray*}
\varphi e_6 &=&\varphi[e_1,e_3]=[\varphi e_1,e_3]+[e_1,\varphi e_3]=[af_5,e_3]+[e_1,cf_6]=axxy+cxyx,\\
\varphi e_6&=&\varphi[e_2,e_4]=[\varphi e_2,e_4]+[e_2,\varphi e_4]=[bf_5,e_4]+[e_2,df_6]=bxyx+dxxy.\\
 \end{eqnarray*}
Hence $\boxed{a=d}$, $\boxed{b=c}$. Furthermore,
 \begin{eqnarray*}
0 &=&\varphi[e_2,e_3]=[\varphi e_2,e_3]+[e_2,\varphi e_3]=[bf_5,e_3]+[e_2,cf_6]=bxxy+cxxy
   \Rightarrow\boxed{b=-c},\\
0&=&\varphi[e_1,e_4]=[\varphi e_1,e_4]+[e_1,\varphi e_4]=[af_5,e_4]+[Y_2,df_6]=axyx+bxxy
   \Rightarrow\boxed{a=-b}.
  \end{eqnarray*}
Therefore $\pr_3(\fg_{\leq0})_{\bar0}=0$. For the odd part, consider 
$\varphi=a f_6\otimes\omega^1+b f_6\otimes\omega^2+c f_5\otimes\omega^3+d f_5\otimes\omega^4
\in\pr_3(\fg_{\leq0})\subset\fg_2\otimes\fg_{-1}^\ast$, then:
 \begin{eqnarray*}
\varphi e_6 &=&\varphi[e_1,e_3]=[\varphi e_1,e_3]+[e_1,\varphi e_3]=[af_6,e_3]+[e_1,cf_5]=as_2f_1,\\
\varphi e_6&=&\varphi[e_2,e_4]=[\varphi e_2,e_4]+[e_2,\varphi e_4]=[bf_6,e_4]+[Y_3,df_5]=bs_3f_2.\\
 \end{eqnarray*}
Hence $\boxed{a=0},\,\boxed{b=0}$. Furthermore,
 \begin{eqnarray*}
  s_1\varphi e_5&=&\varphi[e_3,e_4]=[\varphi e_3,e_4]-[e_3,\varphi e_4]=[cf_5,e_4]-[e_3,df_5]=cf_3-df_4\\
 0 &=&\varphi[e_3,e_5]=[\varphi e_3,e_5]-[e_3,\varphi e_5]=[cf_5,e_5]-[e_3,\frac{1}{s_1}(cf_3-df_4)]=\\
 &=&cH_1-\frac{c}{s_1}(\frac{s_1}{2}H_1-\frac{s_2}{2}H_2+\frac{s_3}{2}H_3)\Rightarrow\boxed{c=0}\\
 0&=&\varphi[e_4,e_5]=[\varphi e_4,e_5]-[e_4,\varphi e_4]=[df_5,e_5]+[e_4,\frac{1}{s_1}(cf_3-df_4)]=\\
 &=&dH_1-\frac{c}{s_1}(\frac{s_1}{2}H_1+\frac{s_2}{2}H_2-\frac{s_3}{2}H_3)\Rightarrow\boxed{d=0}.
  \end{eqnarray*}
Therefore $\pr_3(\fg_{\leq0})_{\bar1}=0$ and we have finished the proof. \hfill\qed

\subsection{Proof of Proposition \ref{propPi123prol} for $\fp_{123}^\I$}\label{B2}

Let $\{e_1=Y_2,e_2=Y_3,e_3=yxx\}$ be a basis of $\fg_{-1}$, $\{e_4=yyx,e_5=yxy\}$ be a basis of $\fg_{-2}$, 
$\{e_6=yyy\}$ be a basis of $\fg_{-3}$ and $\{e_7=Y_1\}$ be a basis of $\fg_{-4}$. 
Let $\omega^i$ be the dual of $e_i$ for $i=1,\dots,7$. 
The non-zero bracket relations in $\fm$ are (see Section \ref{pi123}):
 \begin{equation*}
[e_1,e_3]=e_4,\quad [e_2,e_3]=e_5,\quad [e_1,e_5]=e_6,\quad [e_2,e_4]=e_6,\quad [e_4,e_5]=s_1 e_7,\quad [e_3,e_6]=-s_1 e_7.
 \end{equation*}

\smallskip

First, consider $\pr_2$.
We have $\fg_1\otimes\fg_{-1}^\ast=\CC^{2|1}\otimes\CC^{2|1}=\CC^{5|4}$.
Let $\{f_1=X_2,f_2=X_3,f_3=xyy\}$ be a basis of $\fg_{1}$. 
If $\varphi=(a_1f_1+a_2f_2)\otimes\omega^1+(b_1f_1+b_2f_2)\otimes\omega^2+c f_3\otimes\omega^3$ 
is an even vector in $\pr_2(\fg_{\leq1})\subset\fg_1\otimes\fg_{-1}^\ast$, then:
 \begin{eqnarray*}
0&=&\varphi[e_1,e_2]=[\varphi e_1,e_2]+[e_1,\varphi e_2]=[a_1f_1+a_2f_2,e_2]+[e_1,b_1f_1+b_2f_2]=\\
   &=&a_2H_3+b_1H_2\Rightarrow\boxed{a_2=0},\,\boxed{b_1=0},\\
\varphi e_4&=&\varphi[e_1,e_3]=[\varphi e_1,e_3]+[e_1,\varphi e_3]=[a_1f_1,e_3]+[e_1,cf_3]=0,\\
0&=&\varphi[e_1,e_4]=[\varphi e_1,e_4]+[e_1,\varphi e_4]=[a_1f_1,e_4]=a_1e_3\Rightarrow\boxed{a_1=0},\\
\varphi e_5&=&\varphi[e_2,e_3]=[\varphi e_2,e_3]+[e_2,\varphi e_3]=[b_2f_2,e_3]+[e_2,cf_3]=0,\\
0&=&\varphi[e_2,e_5]=[\varphi e_2,e_5]+[e_2,\varphi e_5]=[b_2f_2,e_5]=b_2e_3\Rightarrow\boxed{b_2=0},\\
0&=&\varphi[e_3,e_4]=[\varphi e_3,e_4]+[e_3,\varphi e_4]=[cf_3,e_4]=cs_2e_1\Rightarrow\boxed{c=0}.
 \end{eqnarray*}
Thus $(\fg_2)_{\bar0}=0$.
For the odd part, consider $\varphi=a f_3\otimes\omega^1+b f_3\otimes\omega^2+ (cf_1+df_2)\otimes\omega^3\in\pr_2(\fg_{\leq1})\subset\fg_1\otimes\fg_{-1}^\ast$, then:
 \begin{eqnarray*}
\varphi e_4&=&\varphi[e_1,e_3]=[af_3,e_3]+[e_1,cf_1+df_2]
   =a\left(-\frac{s_1}{2}H_1+\frac{s_2}{2}H_2+\frac{s_3}{2}H_3\right)-cH_2,\\
0&=&\varphi[e_1,e_4]=[af_3,e_4]+[e_1,a\left(-\frac{s_1}{2}H_1+\frac{s_2}{2}H_2+\frac{s_3}{2}H_3\right)
   -cH_2]=2(as_2-c)e_1,\\
\varphi e_5&=&\varphi[e_2,e_3]=[bf_3,e_3]+[e_2,cf_1+df_2]
   =b\left(-\frac{s_1}{2}H_1+\frac{s_2}{2}H_2+\frac{s_3}{2}H_3\right)-bH_3,\\
0&=&\varphi[e_2,e_5]=[bf_3,e_5]+[e_1,b\left(-\frac{s_1}{2}H_1+\frac{s_2}{2}H_2+\frac{s_3}{2}H_3\right)
   -dH_3]=2(bs_3-d)e_2.
 \end{eqnarray*}
Hence $\boxed{c=s_2a}$, $\boxed{d=s_3b}$. Since $(\fg_2)_{\bar1}\subset\pr_2(\fg_{\leq1})$, we conclude
$\pr_2(\fg_{\leq1})=\CC^{0|2}=\fg_2$.

\medskip

Next, consider $\pr_3$.
We have $\fg_2\otimes\fg_{-1}^\ast=\CC^{0|2}\otimes\CC^{2|1}=\CC^{2|4}$.
Let $\{f_4=xxy,f_5=xyx\}$ be a basis of $g_2$. 
If $\varphi=(af_4+bf_5)\otimes \omega^3$ is an even vector in 
$\pr_3(\fg_{\leq1})\subset\fg_2\otimes\fg_{-1}^\ast$, then:
 \begin{eqnarray*}
\varphi e_4&=&\varphi[e_1,e_3]=[\varphi e_1,e_3]+[e_1,\varphi e_3]=[e_1,af_4+bf_5]=af_3,\\
0&=&\varphi[e_3,e_4]=[\varphi e_3,e_4]+[e_3,\varphi e_4]=[af_4+bf_5,e_4]+[e_3,af_3]=\\
   &=&a\left(\frac{s_1}{2}H_1+\frac{s_2}{2}H_2-\frac{s_3}{2}H_3\right)+a\left(-\frac{s_1}{2}H_1
   +\frac{s_2}{2}H_2+\frac{s_3}{2}H_3\right)=as_2 H_2\Rightarrow\boxed{a=0},\\
\varphi e_5&=&\varphi[e_2,e_3]=[\varphi e_2,e_3]+[e_2,\varphi e_3]=[e_2,af_4+bf_5]=bf_3,\\
0&=&\varphi[e_3,e_5]=[\varphi e_3,e_5]+[e_3,\varphi e_5]=[af_4+bf_5,e_5]+[e_3,bf_3]=\\
   &=&b\left(\frac{s_1}{2}H_1-\frac{s_2}{2}H_2+\frac{s_3}{2}H_3\right)+b\left(-\frac{s_1}{2}H_1
   +\frac{s_2}{2}H_2+\frac{s_3}{2}H_3\right)=bs_3 H_3\Rightarrow\boxed{b=0}.
\end{eqnarray*}
Therefore $\pr_3(\fg_{\leq1})_{\bar0}=0$. For the odd part, consider 
$\varphi=(af_4+bf_5)\otimes\omega^1+ (cf_4+df_5)\otimes\omega^2\in
\pr_3(\fg_{\leq1})\subset\fg_2\otimes\fg_{-1}^\ast$, then:
 \begin{eqnarray*}
0&=&\varphi[e_1,e_2]=[\varphi e_1,e_2]+[e_1,\varphi e_2]=[af_4+bf_5,e_3]+[e_1,cf_4+df_5]=-(b+c)f_3,\\
\varphi e_4&=&\varphi[e_1,e_3]=[\varphi e_1,e_3]+[e_1,\varphi e_3]=[af_4+bf_5,e_3]=-as_2 f_1-bs_3 f_2,\\
0&=&\varphi[e_1,e_4]=[af_4+bf_5,e_4]+[e_1,-as_2 f_1-bs_3 f_2]=\tfrac12a(s_1H_1+3s_2H_2-s_3H_3),\\
\varphi e_5&=&\varphi[e_2,e_3]=[\varphi e_2,e_3]+[e_2,\varphi e_3]=[cf_4+df_5,e_3]=-cs_2 f_1-ds_3 f_2,\\
0&=&\varphi[e_2,e_5]=[cf_4+df_5,e_5]+[e_2,-cs_2 f_1-ds_3 f_2]=\tfrac12d(s_1H_1-s_2H_2+3s_3H_3).
\end{eqnarray*}
Hence $\boxed{a=0}$, $\boxed{d=0}$ and $\boxed{b=-c}$. We conclude 
$\pr_3(\fg_{\leq1})=\CC^{0|1}=\fg_3$.

\medskip

Next, consider $\pr_4$. We have $\fg_3\otimes\fg_{-1}^\ast=\CC^{0|1}\otimes\CC^{2|1}\CC^{1|2}$.
Let $\{f_6=xxx\}$ be a basis of $\fg_3$. 
If $\varphi=af_6\otimes\omega^1+b f_6\otimes\omega^2$ is an even vector in 
$\pr_4(\fg_{\leq1})\subset\fg_3\otimes\fg_{-1}^\ast$, then:
 \begin{eqnarray*}
0&=&\varphi[e_1,e_2]=[\varphi e_1,e_2]+[e_1,\varphi e_2]=[af_6,e_2]+[e_1,bf_6]=-af_4+bf_5,
 \end{eqnarray*}
therefore $\boxed{a=0}$, $\boxed{b=0}$. 
For the odd part, only $\varphi=cf_6\otimes\omega^3$ is the candidate. However, since 
$\fg_4\subset\pr_4(\fg_{\leq1})$, $c$ is un-restricted, and we conclude $\pr_4(\fg_{\leq1})=\CC^{1|0}=\fg_4$.

\medskip

Finally, consider $\pr_5$. Let $\{f_7=X_1\}$ be a basis of $\fg_4$. 
We have $\fg_4\otimes\fg_{-1}^\ast=\CC^{1|0}\otimes\CC^{2|1}=\CC^{2|1}$.
If $\varphi=af_7\otimes\omega^1+b f_7\otimes\omega^2$ is an even vector in
$\pr_5(\fg_{\leq1})\subset\fg_4\otimes\fg_{-1}^\ast$, then:
 \begin{eqnarray*}
\varphi e_4&=&\varphi[e_1,e_3]=[\varphi e_1,e_3]+[e_1,\varphi e_3]=[af_7,e_3]=af_6,\\
0&=&\varphi[e_1,e_4]=[\varphi e_1,e_4]+[e_1,\varphi e_4]=[af_7,e_4]+[e_1,af_6]=2af_5,\\
\varphi e_5&=&\varphi[e_2,e_3]=[\varphi e_2,e_3]+[e_2,\varphi e_3]=[bf_7,e_3]=bf_6,\\
 0&=&\varphi[e_2,e_5]=[\varphi e_2,e_5]+[e_2,\varphi e_5]=[bf_7,e_5]+[e_2,bf_6]=2bf_4.
 \end{eqnarray*}
Hence $\boxed{a=0}$, $\boxed{b=0}$, and $\pr_5(\fg_{\leq1})_{\bar0}=0$. For the odd part, 
consider $\varphi= c f_7\otimes\omega^3\in\pr_5(\fg_{\leq1})\subset\fg_4\otimes\fg_{-1}^\ast$, then:
 \begin{eqnarray*}
\varphi e_4&=&\varphi[e_1,e_3]=[\varphi e_1,e_3]+[e_1,\varphi e_3]=[e_1,cf_7]=0,\\
0&=&\varphi[e_3,e_4]=[\varphi e_3,e_4]+[e_3,\varphi e_4]=[cf_7,e_4]=ce_5\neq0.
 \end{eqnarray*}
Hence $\boxed{c=0}$, and we conclude $\pr_5(\fg_{\leq1})_{\bar1}=0$. \hfill\qed

\subsection{Proof of Proposition \ref{propPiv123prol} for $\fp_{123}^\IV$}\label{B3}

Let $\{e_1=xyy,e_2=yxy,e_3=yyx\}$ be a basis of $\fg_{-1}$, $\{e_4=Y_1,e_5=Y_2,e_6=Y_3\}$ be a basis of 
$\fg_{-2}$, and $\{e_7=yyy\}$ be a basis of $\fg_{-3}$. Let $\omega^i$ be the dual of $e_i$ for $i=1,\dots,7$.
Note that the gradation is parity consistent, so the prolongations will be parity consistent as well. 

The non-zero bracket relations in $\fm$ are the following (see Section \ref{piv123}):
 \begin{equation*}
[e_1,e_2]=s_3 e_6,\enspace [e_1,e_3]=s_2 e_5,\enspace [e_2,e_3]=s_1 e_4,\enspace
[e_1,e_4]=-e_7,\enspace [e_2,e_5]=- e_7, \enspace [e_3,e_6]=-e_7.
 \end{equation*}

\smallskip

First, consider $\pr_0$. We have $\fg_{-1}\otimes\fg_{-1}^\ast=\CC^{0|3}\otimes\CC^{0|3}=\CC^{9|0}$.
If $\varphi=(a_1e_1+a_2e_2+a_3e_3)\otimes\omega^1+(b_1e_1+b_2e_2+b_3e_3)\otimes\omega^2+(c_1e_1+c_2e_2+c_3e_3)\otimes\omega^3\in\pr_0(\fm)\subset\fg_{-1}\otimes\fg_{-1}^\ast$, then:
 \begin{eqnarray*}
s_3\varphi e_6 &=&\varphi[e_1,e_2]=[\varphi e_1,e_2]+[e_1,\varphi e_2]=
   a_3s_1e_4+b_3s_2e_5+s_3(a_1+b_2)e_6,\\
s_2\varphi e_5 &=&\varphi[e_1,e_3]=[\varphi e_1,e_3]+[e_1,\varphi e_3]=
   a_2s_1e_4+s_2(a_1+c_3)e_5+c_2s_3e_6,\\
s_1\varphi e_4 &=&\varphi[e_2,e_3]=[\varphi e_2,e_3]+[e_2,\varphi e_3]=
   s_1(b_2+c_3)e_4+b_1s_2e_5+c_1s_3e_6.
 \end{eqnarray*}
This yields 
 \begin{eqnarray*}
0&=&\varphi[e_1,e_5]=[\varphi e_1,e_5]+[e_1,\varphi e_5]=[a_1e_1+a_2e_2+a_3e_3,e_5]\\
   && {}+[e_1,s_2^{-1}(a_2s_1e_4+c_2s_3e_6)+(a_1+c_3)e_5]=a_2\frac{s_3}{s_2}e_7 
   \Rightarrow\boxed{a_2=0},\\
0&=&\varphi[e_1,e_6]=[\varphi e_1,e_6]+[e_1,\varphi e_6]=[a_1e_1+a_2e_2+a_3e_3,e_6]\\ 
   && {}+[e_1,s_3^{-1}(a_3s_1e_4+b_3s_2e_5)+(a_1+b_2)e_6]=a_3\frac{s_2}{s_3}e_7
   \Rightarrow\boxed{a_3=0},\\
0&=&\varphi[e_2,e_4]=[\varphi e_2,e_4]+[e_2,\varphi e_4]=[b_1e_1+b_2e_2+b_3e_3,e_4]\\
   && {}+[e_2,s_1^{-1}(b_1s_2e_5+c_1s_3e_6)+(b_2+c_3)e_4]=b_1\frac{s_3}{s_1}e_7
   \Rightarrow\boxed{b_1=0},\\
0&=&\varphi[e_2,e_6]=[\varphi e_2,e_6]-[e_2,\varphi e_6]=[b_1e_1+b_2e_2+b_3e_3,e_6]\\
   && {}+[e_2,s_3^{-1}(a_3s_1e_4+b_3s_2e_5)+(a_1+b_2)e_6]=b_3\frac{s_1}{s_3}e_7
   \Rightarrow\boxed{b_3=0},\\
0&=&\varphi[e_3,e_4]=[\varphi e_3,e_4]+[e_3,\varphi e_4]=[c_1e_1+c_2e_2+c_3e_3,e_4]\\
   && {}+[e_3,s_1^{-1}(b_1s_2e_5+c_1s_3e_6)+(b_2+c_3)e_4]=c_1\frac{s_2}{s_1}e_7
   \Rightarrow\boxed{c_1=0},\\
0&=&\varphi[e_3,e_5]=[\varphi e_3,e_5]+[e_3,\varphi e_5]=[c_1e_1+c_2e_2+c_3e_3,e_5]\\
   && {}+[e_3,s_2^{-1}(a_2s_1e_4+c_2s_3e_6)+(a_1+c_3)e_5]=c_2\frac{s_1}{s_2}e_7
   \Rightarrow\boxed{c_2=0}.
 \end{eqnarray*}
Furthermore,
 \begin{eqnarray*}
-\varphi e_7&=&\varphi[e_1,e_4]=[\varphi e_1,e_4]+[e_1,\varphi e_4]=[a_1e_1,e_4]+[e_1,(b_2+c_3)e_4]=-(a_1+b_2+c_3)e_7,\\
-\varphi e_7&=&\varphi[e_2,e_5]=[\varphi e_2,e_5]+[e_2,\varphi e_5]=[b_2e_2,e_5]+[e_2,(a_1+c_3)e_5]=-(a_1+b_2+c_3)e_7,\\
-\varphi e_7&=&\varphi[e_3,e_6]=[\varphi e_3,e_6]+[e_3,\varphi e_6]=[c_3e_3,e_6]+[e_3,(a_1+b_2)e_6]=-(a_1+b_2+c_3)e_7,
 \end{eqnarray*}
which implies
 \[
\varphi[e_1,e_7]=\varphi[e_1,e_1]=\varphi[e_2,e_2]=\varphi[e_3,e_3]=\varphi[e_7,e_7]=0.
 \]
We conclude that $\pr_0(\fm)=\langle  e_1\otimes\omega^1, e_2\otimes\omega^2, e_3\otimes\omega^3\rangle
=\fg_0$.

\medskip

Next, consider $\pr_1$. We have $\fg_0\otimes\fg_{-1}^\ast=\CC^{3|0}\otimes\CC^{0|3}=\CC^{0|9}$.
If $\varphi=(a_1H_1+a_2H_2+a_3H_3)\otimes\omega^1+(b_1H_1+b_2H_2+b_3H_3)\otimes\omega^2+(c_1H_1+c_2H_2+c_3H_3)\otimes\omega^3\in\pr_1(\fm)\subset\fg_0\otimes\fg_{-1}^\ast$, then:
 \begin{eqnarray*}
s_3\varphi e_6 &=&\varphi[e_1,e_2]=[\varphi e_1,e_2]-[e_1,\varphi e_2]=(a_2-a_1-a_3)e_2+(b_1-b_2-b_3)e_1,\\
s_2\varphi e_5 &=&\varphi[e_1,e_3]=[\varphi e_1,e_3]-[e_1,\varphi e_3]=(a_3-a_1-a_2)e_3+(c_1-c_2-c_3)e_1,\\
s_1\varphi e_4 &=&\varphi[e_2,e_3]=[\varphi e_2,e_3]-[e_2,\varphi e_3]=(b_3-b_1-b_2)e_3+(c_2-c_1-c_3)e_2.
 \end{eqnarray*}
Furthermore, 
 \begin{eqnarray*}
0&=&\varphi[e_1,e_5]=[\varphi e_1,e_5]-[e_1,\varphi e_5]=-2a_2e_5-[e_1,s_2^{-1}
   \bigl((c_1-c_2-c_3)e_1+(a_3-a_1-a_2)e_3\bigr)]\\
   &=& (a_1-a_2-a_3)e_5\Rightarrow\boxed{a_1-a_2-a_3=0},\\
0&=&\varphi[e_2,e_6]=[\varphi e_2,e_6]-[e_2,\varphi e_6]=-2b_3e_6-[e_2,s_3^{-1}
   \bigl((b_1-b_2-b_3)e_1+(a_2-a_1-a_3)e_2\bigr)]\\
  &=& (b_2-b_1-b_3)e_6\Rightarrow\boxed{b_2-b_1-b_3=0},\\
0&=&\varphi[e_3,e_4]=[\varphi e_3,e_4]-[e_3,\varphi e_4]=-2c_1e_4-[e_3,s_1^{-1}
   \bigl((c_2-c_1-c_3)e_2+(b_3-b_1-b_2)e_3\bigr)]\\
   &=& (c_3-c_1-c_2)e_4\Rightarrow\boxed{c_3-c_1-c_2=0}
 \end{eqnarray*}
and this implies $\varphi[e_1,e_6]=\varphi[e_2,e_4]=\varphi[e_3,e_5]=0$. In addition, 
 \begin{eqnarray*} 
-\varphi e_7&=&\varphi[e_1,e_4]=-2a_1e_4-[e_1,s_1^{-1}
   \bigl((c_2-c_1-c_3)e_2+(b_3-b_1-b_2)e_3\bigr)]=\\
  &=& -2a_1e_4-\frac{s_2}{s_1}(b_3-b_1-b_2)e_5-\frac{s_3}{s_1}(c_2-c_1-c_3)e_6,\\
-\varphi e_7&=&\varphi[e_2,e_5]=-2b_2e_5-[e_2,s_2^{-1}
   \bigl((c_1-c_2-c_3)e_1+(a_3-a_1-a_2)e_3\bigr)]=\\
  &=& -2b_2e_5-\frac{s_1}{s_2}(a_3-a_1-a_2)e_4-\frac{s_3}{s_2}(c_1-c_2-c_3)e_6,\\
-\varphi e_7&=&\varphi[e_3,e_6]=-2c_3e_6-[e_3,s_3^{-1}
   \bigl((b_1-b_2-b_3)e_1+(a_2-a_1-a_3)e_2\bigr)]=\\
   &=& -2c_3e_6-\frac{s_1}{s_3}(a_2-a_1-a_3)e_4-\frac{s_2}{s_3}(b_1-b_2-b_3)e_5.
   \end{eqnarray*}
Consequently, 
$\boxed{\frac{a_2}{s_2}=\frac{a_3}{s_3}=-\frac{a_1}{s_1}}$, 
$\boxed{\frac{b_1}{s_1}=\frac{b_3}{s_3}=-\frac{b_2}{s_2}}$, 
$\boxed{\frac{c_1}{s_1}=\frac{c_2}{s_2}=-\frac{c_3}{s_3}}$, and we conclude that
$\pr_1(\fm)=\langle(-s_1H_1+s_2H_2+s_3H_3)\otimes\omega^1,(s_1H_1-s_2H_2+s_3H_3)\otimes\omega^2,(s_1H_1+s_2H_2-s_3H_3)\otimes\omega^3\rangle=\fg_1$.

\medskip

Next,  consider $\pr_2$. We have $\fg_1\otimes\fg_{-1}^\ast=\CC^{0|3}\otimes\CC^{0|3}=\CC^{9|0}$.
If $\varphi=(a_1f_1+a_2f_2+a_3f_3)\otimes\omega^1+(b_1f_1+b_2f_2+b_3f_3)\otimes\omega^2+(c_1f_1+c_2f_2+c_3f_3)\otimes\omega^3\in\pr_2(\fm)\subset\fg_1\otimes\fg_{-1}^\ast$, then:
 \begin{eqnarray*}
 s_3\varphi e_6 &=&\varphi[e_1,e_2]=[\varphi e_1,e_2]+[e_1,\varphi e_2]=
 \frac{a_2-b_1}{2}s_1H_1+\frac{b_1-a_2}{2}s_2H_2+\frac{a_2+b_1}{2}s_3H_3,\\
 s_2\varphi e_5 &=&\varphi[e_1,e_3]=[\varphi e_1,e_3]+[e_1,\varphi e_3]=
 \frac{a_3-c_1}{2}s_1H_1+\frac{a_3+c_1}{2}s_2H_2+\frac{c_1-a_3}{2}s_3H_3,\\
 s_1\varphi e_4 &=&\varphi[e_2,e_3]=[\varphi e_2,e_3]+[e_2,\varphi e_3]=
 \frac{b_3+c_2}{2}s_1H_1+\frac{b_3-c_2}{2}s_2H_2+\frac{c_2-b_3}{2}s_3H_3,
 \end{eqnarray*}
whence
 \begin{eqnarray*}
-\varphi e_7&=&\varphi[e_1,e_4]=[a_1f_1+a_2f_2+a_3f_3,e_4]+\left[e_1,
  \frac{b_3+c_2}{2}H_1+\frac{b_3-c_2}{2}\frac{s_2}{s_1}H_2+\frac{c_2-b_3}{2}\frac{s_3}{s_1}H_3\right]\\
  &=& \left(-\frac{b_3+c_2}{2}+\frac{b_3-c_2}{2}\frac{s_2}{s_1}+\frac{c_2-b_3}{2}\frac{s_3}{s_1}\right)e_1
  -a_3e_2-a_2e_3,\\
-\varphi e_7&=&\varphi[e_2,e_5]=[b_1f_1+b_2f_2+b_3f_3,e_5]+\left[e_2,
   \frac{a_3-c_1}{2}\frac{s_1}{s_2}H_1+\frac{a_3+c_1}{2}H_2+\frac{c_1-a_3}{2}\frac{s_3}{s_2}H_3\right]\\
  &=& -b_3e_1
  +\left(\frac{a_3-c_1}{2}\frac{s_1}{s_2}-\frac{a_3+c_1}{2}+\frac{c_1-a_3}{2}\frac{s_3}{s_2}\right)e_2-b_1e_3,\\
-\varphi e_7&=&\varphi[e_3,e_6]=[c_1f_1+c_2f_2+c_3f_3,e_6]+\left[e_3,
   \frac{a_2-b_1}{2}\frac{s_1}{s_3}H_1+\frac{b_1-a_2}{2}\frac{s_2}{s_3}H_2+\frac{a_2+b_1}{2}H_3\right]\\
  &=& -c_2e_1-c_1e_2
  +\left(\frac{a_2-b_1}{2}\frac{s_1}{s_3}+\frac{b_1-a_2}{2}\frac{s_2}{s_3}-\frac{a_2+b_1}{2}\right)e_3.
 \end{eqnarray*}
Thus $\boxed{b_1=a_2}$, $\boxed{c_2=b_3}$, $\boxed{a_3=c_1}$, and
$\varphi e_4=b_3H_1$, $\varphi e_5=c_1H_2$, $\varphi e_6=a_2H_3$. Furthermore,
 \begin{eqnarray*}
0&=&\varphi[e_1,e_5]=[\varphi e_1,e_5]+[e_1,\varphi e_5]=-a_1e_3\Rightarrow\boxed{a_1=0},\\
0&=&\varphi[e_2,e_6]=[\varphi e_2,e_6]+[e_2,\varphi e_6]=-b_2e_1\Rightarrow\boxed{b_2=0},\\
0&=&\varphi[e_3,e_4]=[\varphi e_3,e_4]+[e_3,\varphi e_4]=-c_3e_2\Rightarrow\boxed{c_3=0},
 \end{eqnarray*}
and we also compute $\varphi[e_1,e_6]=\varphi[e_2,e_4]=\varphi[e_3,e_5]=0$. 
We conclude $\pr_2(\fm)=\langle f_2\otimes\omega^1+ f_1\otimes\omega^2, f_3\otimes\omega^1+ f_1\otimes\omega^3,f_3\otimes\omega^2+ f_2\otimes\omega^3\rangle=\fg_2$.

\medskip

Next, consider $\pr_3$.
We have $\fg_2\otimes\fg_{-1}^\ast=\CC^{3|0}\otimes\CC^{0|3}=\CC^{0|9}$.
If $\varphi=(a_1f_4+a_2f_5+a_3f_6)\otimes\omega^1+(b_1f_4+b_2f_5+b_3f_6)\otimes\omega^2+(c_1f_4+c_2f_5+c_3f_6)\otimes\omega^3\in\pr_3(\fm)\subset\fg_2\otimes\fg_{-1}^\ast$, then:
 \begin{eqnarray*}
s_3\varphi e_6 &=&\varphi[e_1,e_2]=[\varphi e_1,e_2]-[e_1,\varphi e_2]=a_3f_1+b_3f_2+(a_1+b_2)f_3,\\
s_2\varphi e_5 &=&\varphi[e_1,e_3]=[\varphi e_1,e_3]-[e_1,\varphi e_3]=a_2f_1+(a_1+c_3)f_2+c_2f_3,\\
s_1\varphi e_4 &=&\varphi[e_2,e_3]=[\varphi e_2,e_3]-[e_2,\varphi e_3]=(b_2+c_3)f_1+b_1f_2+c_1f_3,
 \end{eqnarray*}
and so
 \begin{eqnarray*}
-\varphi e_7&=&\varphi[e_1,e_4]=[a_1f_4+a_2f_5+a_3f_6,e_4]
   -\Bigr[e_1,s_1^{-1}\bigl((b_2+c_3)f_1+b_1f_2+c_1f_3\bigr)\Bigl]\\
  &=& \left(a_1+\frac{b_2+c_3}2\right)H_1-\frac{b_2+c_3}2\frac{s_2}{s_1}H_2
  -\frac{b_2+c_3}2\frac{s_3}{s_1}H_3,\\
-\varphi e_7&=&\varphi[e_2,e_5]=[b_1f_4+b_2f_5+b_3f_6,e_5]
   -\Bigr[e_2,s_2^{-1}\bigl(a_2f_1+(a_1+c_3)f_2+c_2f_3\bigr)\Bigr]\\
  &=& -\frac{a_1+c_3}2\frac{s_1}{s_2}H_1+\left(b_2+\frac{a_1+c_3}2\right)H_2
  -\frac{a_1+c_3}2\frac{s_3}{s_2}H_3,\\
-\varphi e_7&=&\varphi[e_3,e_6]=[c_1f_4+c_2f_5+c_3f_6,e_6]
   -\Bigl[e_3,s_3^{-1}\bigl(a_3f_1+b_3f_2+(a_1+b_2)f_3\bigr)\Bigr]\\
  &=& -\frac{a_1+b_2}2\frac{s_1}{s_3}H_1-\frac{a_1+b_2}2\frac{s_2}{s_3}H_2
  +\left(c_3+\frac{a_1+b_2}2\right)H_3.
 \end{eqnarray*}
This implies $\boxed{\frac{a_1}{s_1}=\frac{b_2}{s_2}=\frac{c_3}{s_3}}$ and we continue:
 \begin{eqnarray*}
0&=&\varphi[e_1,e_5]=[\varphi e_1,e_5]-[e_1,\varphi e_5]=
   \frac{a_2}{2s_2}\left(s_1H_1+s_2H_2-s_3H_3\right)\Rightarrow\boxed{a_2=0},\\
0&=&\varphi[e_1,e_6]=[\varphi e_1,e_6]-[e_1,\varphi e_6]=
   \frac{a_3}{2s_3}\left(s_1H_1-s_2H_2+s_3H_3\right)\Rightarrow\boxed{a_3=0},\\
0&=&\varphi[e_2,e_4]=[\varphi e_2,e_4]-[e_2,\varphi e_4]=
   \frac{b_1}{2s_1}\left(s_1H_1+s_2H_2-s_3H_3\right)\Rightarrow\boxed{b_1=0},\\
0&=&\varphi[e_2,e_6]=[\varphi e_2,e_6]-[e_2,\varphi e_6]=
   \frac{b_3}{2s_3}\left(-s_1H_1+s_2H_2+s_3H_3\right)\Rightarrow\boxed{b_3=0},\\
0&=&\varphi[e_3,e_4]=[\varphi e_3,e_4]-[e_3,\varphi e_4]=
   \frac{c_1}{2s_1}\left(s_1H_1-s_2H_2+s_3H_3\right)\Rightarrow\boxed{c_1=0},\\
0&=&\varphi[e_3,e_5]=[\varphi e_3,e_5]-[e_3,\varphi e_5]=
   \frac{c_2}{2s_2}\left(-s_1H_1+s_2H_2+s_3H_3\right)\Rightarrow\boxed{c_2=0}.
\end{eqnarray*}
We conclude that $\pr_3(\fm)=\langle s_1 f_4\otimes\omega^1+s_2 f_5\otimes \omega^2+s_3 f_6\otimes\omega^3\rangle=\fg_3$.

\medskip

Finally, consider $\pr_4$.
We have $\fg_3\otimes\fg_{-1}^\ast=\CC^{0|1}\otimes\CC^{0|3}=\CC^{3|0}$.
If $\varphi=f_7\otimes(a\omega^1+b\omega^2+c\omega^3)\in\pr_4(\fm)\subset\fg_3\otimes\fg_{-1}^\ast$, then:
 \begin{eqnarray*}
s_3\varphi e_6 &=&\varphi[e_1,e_2]=[\varphi e_1,e_2]+[e_1,\varphi e_2]=[af_7,e_2]+[e_1,bf_7]=as_2f_5+bs_1f_4,\\
s_2\varphi e_5 &=&\varphi[e_1,e_3]=[\varphi e_1,e_3]+[e_1,\varphi e_3]=[af_7,e_3]+[e_1,cf_7]=as_3f_6+cs_1f_4, \\
s_1\varphi e_4 &=&\varphi[e_2,e_3]=[\varphi e_2,e_3]+[e_2,\varphi e_3]=[bf_7,e_3]+[e_2,cf_7]=bs_3f_6+cs_2f_5,
 \end{eqnarray*}
whence
 \begin{eqnarray*}
0&=&\varphi[e_1,e_5]=[af_7,e_5]+[e_1,s_2^{-1}(as_3f_6+cs_1f_4)]=a\frac{s_1}{s_2}f_2\Rightarrow\boxed{a=0}, \\
0&=&\varphi[e_2,e_4]=[bf_7,e_4]+[e_2,s_1^{-1}(bs_3f_6+cs_2f_5)]=b\frac{s_2}{s_1}f_1\Rightarrow\boxed{b=0}, \\
0&=&\varphi[e_3,e_4]=[cf_7,e_4]+[e_3,s_1^{-1}(bs_3f_6+cs_2f_5)]=c\frac{s_3}{s_1}f_1\Rightarrow\boxed{c=0}.
 \end{eqnarray*}
We conclude that $\pr_4(\fm)=\fg_4=0$, which finishes the proof. \hfill\qed

\section{Hochshild--Serre spectral sequence}\label{ApHS}

For a Lie superalgebra $\fm$ and its module $\fg$, the generalized Spencer cohomology $H^n(\fm,\fg)$ 
is the cohomology of the Chevalley-Eilenberg complex $C^n(\fm,\fg)=\fg\otimes\Lambda^n\fm^*$ with 
the differential $\pd$ given by the standard formula with the sign rule \cite{Kac,L}. 
We assume $\fg$ is a simple Lie superalgebra, namely $D(2,1;a)$, and $\fm$ the nilradical $\fg_{-}$ 
of the opposite parabolic $\fp^\text{op}=\fp^\perp\subset\fg$, where orthogonality is computed
with respect to the analog of the Killing form \eqref{KFB}.

To compute this cohomology we use the spectral sequence \cite{Fuks}. Consider the filtration\linebreak 
$0=F^{n+1}C^n\subset F^nC^n\subset\cdots \subset F^1C^n\subset F^0C^n=C^n$, where for 
$q,p\in\NN\cup0$:
 \begin{equation*}
F^pC^n(\fm,\fg)=\{\varphi\in C^n(\fm,\fg):\,
\iota_{v_1}\cdots\iota_{v_m}\varphi=0\ \forall\,v_j\in\fm_{\bar0},\,\forall\,m>n-p\}.
 \end{equation*}
 
The Hochshild--Serre spectral sequence is associated to the filtration, $(E_s^{\bullet,\bullet},\partial_s)_{s\geq0}$, 
with differentials $\partial_s:E_s^{p,q}\longrightarrow E_s^{p+s,q+1-s}$ induced by the Chevalley--Eilenberg 
differentials, and it converges to the generalized Spencer cohomology
$E^{\bullet,\bullet}_\infty\Rightarrow H^\bullet(\fm,\fg)$. The initial page is
 \begin{equation*}
E_0^{p,q}=\frac{F^pC^{p+q}(\fm,\fg)}{F^{p+1}C^{p+q}(\fm,\fg)}=
\fg\otimes\Lambda^p(\fm_{\bar1})^\ast\otimes\Lambda^q(\fm_{\bar0})^\ast  
 \end{equation*}
and $E_{s+1}=H^\bullet(E_s,\partial_s)$. This implies generally, for all $p,q\geq0$:
 \begin{equation}\label{E1pq}
E_1^{p,q}=H^q\left(\fm_{\bar0},\fg\otimes\Lambda^p(\fm_{\bar1}^\ast)\right).
 \end{equation}
By diagram reasons, for the cohomology we are interested in, the stabilization occurs as follows:
 \begin{proposition}\label{H0H1H2}
$H^0(\fm,\fg)=E_2^{0,0}$,  $H^1(\fm,\fg)=E_2^{1,0}\oplus E_3^{0,1}$,  $H^2(\fm,\fg)=E_3^{2,0}\oplus E_3^{1,1}\oplus E_4^{0,2}$. 
 \end{proposition}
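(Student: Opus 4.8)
The plan is to read the three displayed identities directly off the geometry of the Hochschild--Serre spectral sequence, treating it as an object supported in the region $p,q\ge 0$ and tracking, for each low bidegree, exactly which differentials $\partial_s\colon E_s^{p,q}\to E_s^{p+s,q+1-s}$ can be nonzero. Two facts drive everything: (i) $E_s^{p,q}=0$ whenever $p<0$ or $q<0$, since $\Lambda^q(\fm_{\bar0}^*)=0$ for $q<0$ and $\Lambda^p(\fm_{\bar1}^*)=0$ for $p<0$; and (ii) the filtration is bounded in each cochain degree (the chain $0=F^{n+1}C^n\subset\cdots\subset F^0C^n=C^n$ is finite), so the sequence converges to $H^\bullet(\fm,\fg)$ and, for fixed total degree $n$, only the finitely many terms with $p+q=n$, $0\le q\le n$ contribute. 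The sequence need not be bounded in the $p$-direction (the odd symmetric powers do not truncate), but this is irrelevant here since $\partial_s$ lowers $q$ by $s-1$ and $q\ge 0$.

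First I would dispose of $H^0$: the only term of total degree $0$ is $E_\infty^{0,0}$, nothing maps into $E_s^{0,0}$ (its source $E_s^{-s,s}$ has $p<0$), and the outgoing $\partial_s$ targets $E_s^{s,1-s}$, which vanishes for $s\ge2$; hence $E_\infty^{0,0}=E_2^{0,0}$. For degree $1$ the associated graded of $H^1$ is $E_\infty^{1,0}\oplus E_\infty^{0,1}$. For $E^{1,0}$ both the incoming source $E_s^{1-s,s}$ and the outgoing target $E_s^{1+s,1-s}$ leave the supported region once $s\ge2$, so $E_\infty^{1,0}=E_2^{1,0}$. For $E^{0,1}$ nothing ever maps in (source $E_s^{-s,s}$, $p<0$), while the outgoing differentials are $\partial_1$ to $E_1^{1,1}$ and $\partial_2$ to $E_2^{2,0}$, all later targets $E_s^{s,2-s}$ ($s\ge3$) having $q<0$; thus $E_\infty^{0,1}=E_3^{0,1}$.

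The analogous bookkeeping for degree $2$ gives the three corners: $E_\infty^{2,0}=E_3^{2,0}$, the last surviving differential into it being $\partial_2$ from $E_2^{0,2}$ (its outgoing $\partial_2$ lands in $E_2^{4,-1}=0$); $E_\infty^{1,1}=E_3^{1,1}$, the last surviving differential out being $\partial_2$ to $E_2^{3,0}$, with nothing entering from page $2$ on (source $E_2^{-1,3}$, $p<0$); and $E_\infty^{0,2}=E_4^{0,2}$, into which nothing ever maps and whose last outgoing differential is $\partial_3\colon E_3^{0,2}\to E_3^{3,0}$, all later targets $E_s^{s,3-s}$ ($s\ge4$) having $q<0$.

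Finally I would resolve the extension problem. Each $H^n(\fm,\fg)$ carries the filtration induced by $F^\bullet C^n$, with associated graded $\bigoplus_{p+q=n}E_\infty^{p,q}$; since we work over $\CC$, and the differentials preserve the internal grading $i$ recorded in Table \ref{SpH}, this filtration splits, yielding $H^1(\fm,\fg)=E_2^{1,0}\oplus E_3^{0,1}$ and $H^2(\fm,\fg)=E_3^{2,0}\oplus E_3^{1,1}\oplus E_4^{0,2}$. The only genuinely delicate point is to be sure no differential has been overlooked, i.e.\ that the page indices $2,3,4$ in the statement are exactly the pages at which each corner stabilizes; this is precisely the quadrant bookkeeping above, and the asymmetry among the three corners of $H^2$ (stabilizing at pages $3$, $3$, $4$ respectively) is what makes the proposition nontrivial.
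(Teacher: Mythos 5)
Your diagram chase is correct and is exactly the argument the paper leaves implicit behind the phrase ``by diagram reasons'': in each corner you track the incoming sources $E_s^{p-s,q+s-1}$ and outgoing targets $E_s^{p+s,q+1-s}$ until both leave the region $p,q\ge0$, and the finite filtration in each total degree gives convergence and a vector-space splitting of the associated graded. One index slip: the last differential into $E^{2,0}$ is $\partial_2\colon E_2^{0,1}\to E_2^{2,0}$ (not from $E_2^{0,2}$, which maps to $E_2^{2,1}$), consistent with your own earlier analysis of the $E^{0,1}$ corner; this does not affect the conclusion that stabilization occurs at $E_3^{2,0}$.
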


Here we follow closely the development in \cite{G3super} in a similar situation. In particular, because
in the pure parity contact grading $\fm_{\bar1}$ is the trivial $\fm_{\bar0}$ module, from \eqref{E1pq} we conclude:
 \begin{lemma}\label{H1H2}
For the $\fp_1^\I$ consistent gradation of $\fg=D(2,1;a)$ we have: 
$E_1^{p,q}=H^q(\fm_{\bar0},\fg)\otimes\Lambda^p(\fm_{\bar1})^*$.
 \end{lemma}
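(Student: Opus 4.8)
The plan is to exploit the fact that in the contact grading $\fm_{\bar0}$ is \emph{central} in $\fm$, which collapses the coefficient module in \eqref{E1pq}. First I would recall from Section \ref{pi1} that the $\fp_1^\I$ grading is the contact one, $\fm=\fg_{-2}\oplus\fg_{-1}$, with $\fm_{\bar0}=\fg_{-2}=\langle Y_1\rangle$ purely even (one-dimensional) and $\fm_{\bar1}=\fg_{-1}$ purely odd of dimension $(0|4)$. Because $\fg_{-2}=\mathfrak{z}(\fm)$ is the center of the Heisenberg superalgebra $\fm$, we have $[\fm_{\bar0},\fm_{\bar1}]=[\fg_{-2},\fg_{-1}]=0$, so $\fm_{\bar0}$ acts trivially on $\fm_{\bar1}$, and therefore also on $\fm_{\bar1}^*$ and on every $\Lambda^p(\fm_{\bar1})^*$.

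Next I would feed this into the general identity \eqref{E1pq}, $E_1^{p,q}=H^q\bigl(\fm_{\bar0},\fg\otimes\Lambda^p(\fm_{\bar1}^*)\bigr)$, and invoke the standard factorization of Lie (super)algebra cohomology across a trivial tensor factor: if $T$ is a trivial $\fm_{\bar0}$-module, then $H^q(\fm_{\bar0},M\otimes T)\cong H^q(\fm_{\bar0},M)\otimes T$. Taking $M=\fg$ with its (nontrivial) adjoint $\fm_{\bar0}$-action and $T=\Lambda^p(\fm_{\bar1})^*$, which is trivial by the previous step, yields $E_1^{p,q}=H^q(\fm_{\bar0},\fg)\otimes\Lambda^p(\fm_{\bar1})^*$, exactly the asserted form.

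The substance behind the factorization is that the Chevalley--Eilenberg complex computing $H^q(\fm_{\bar0},\fg\otimes T)$ is $\fg\otimes T\otimes\Lambda^\bullet\fm_{\bar0}^*$, whose differential involves only the $\fm_{\bar0}$-action on the coefficients; since that action is trivial on $T$, the differential equals that of $C^\bullet(\fm_{\bar0},\fg)$ tensored with $\mathrm{id}_T$, so the complex---and hence its cohomology---splits off the constant factor $T$. The only point demanding care is that the Koszul signs of the super setting stay confined to the $\fg\otimes\Lambda^\bullet\fm_{\bar0}^*$ part and never touch $T$; granting this routine bookkeeping, the factorization is immediate. This sign check is the sole---and very mild---obstacle; otherwise the proof is a direct consequence of the centrality of $\fg_{-2}$ applied to \eqref{E1pq}.
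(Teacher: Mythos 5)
Your proof is correct and follows essentially the same route as the paper: the paper likewise deduces the lemma from \eqref{E1pq} by observing that in the pure-parity contact grading $\fm_{\bar1}=\fg_{-1}$ is a trivial $\fm_{\bar0}=\fg_{-2}$-module (since $\fg_{-2}$ is central in the Heisenberg superalgebra), so the trivial tensor factor $\Lambda^p(\fm_{\bar1})^*$ pulls out of the cohomology. Your additional remarks on the Chevalley--Eilenberg complex and Koszul signs merely spell out what the paper leaves implicit.
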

Now and until the end of this section, we restrict to the contact grading associated to $\fp_1^\I$.

\smallskip

In order to compute $E^{p,q}_1$, we need the structure of the $\fg_0$-modules $\Lambda^p(\fm_{\bar1})^*$ 
and $H^q(\fm_{\bar0},\fg)$ for $p+q\leq3$. We will obtain the latter using Kostant's theorem.
Recall that the Weyl group of $\fg$ is that of the even part $\fg_{\bar0}$ and for a parabolic $\fp$
the group $W_\fp$ is defined similarly. 

The Hasse diagram $W^\fp$ is a subset of $W$ isomorphic to the coset $W/W_\fp$ split into disjoint subsets 
$W^\fp(i)$ according the length of representative words. In particular, for a set $\chi\subset\mathcal{A}$ of crosses 
on a Dynkin diagram $\Xi$ marking the parabolic $\fp$, 
$W^\fp(0)=\emptyset$, $W^\fp(1)=\{(j)\,|\,j\in\chi\}$ and 
$W^\fp(2)=\{(jk)\,:\,\chi\ni j\neq k\in\chi\cup\mathcal{N}(j),\}$, where 
$\mathcal{N}(j)=\{i\in\mathcal{A}\,|\,C_{ij}\neq0\}$ is the set of neighbors of $j$ 
(connected to $j$-th node by an edge in the Dynkin diagram).

 \begin{theorem}[\cite{Ko}]
Let $\fg$ be a $\ZZ$-graded semisimple Lie algebra with highest weight $\lambda$. 
Define $\rho=\sum_i\lambda_i$ where $\lambda_i$ are the fundamental weights. 
The affine action of $W$ is $w\cdot\lambda=w(\lambda+\rho)-\rho$. Then
 \begin{equation*}
H^k(\fm,\fg)=\bigoplus_{w\in W^{\fp}(k)}\mathbb{M}_{-w\cdot\lambda}
 \end{equation*}
where $\mathbb{M}_\mu$ is the simple $\fg_0$-module (irrep) with lowest weight $\mu$.
 \end{theorem}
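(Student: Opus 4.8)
The plan is to follow Kostant's Hodge-theoretic argument, converting the computation of $H^k(\fm,\fg)$ into the identification of harmonic cochains for a Laplacian built from an invariant inner product. By additivity of cohomology in the coefficients, it suffices to treat an irreducible $\fg$-module $V_\lambda$ of highest weight $\lambda$ in place of $\fg$; the $\fg_0$-module structure on $H^k(\fm,V_\lambda)$ comes from the adjoint action of $\fg_0\cong\fp/\fm$. First I would fix a compact real form of $\fg$ and the associated positive-definite Hermitian form, inducing such a form on $V_\lambda$ and on the Chevalley--Eilenberg complex $C^\bullet(\fm,V_\lambda)=\Lambda^\bullet\fm^\ast\otimes V_\lambda$. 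This gives the differential $\partial$ an adjoint codifferential $\partial^\ast$ and hence a Laplacian $\Box=\partial\partial^\ast+\partial^\ast\partial$. Finite-dimensional Hodge theory then yields a $\fg_0$-module isomorphism $H^k(\fm,V_\lambda)\cong\ker\Box|_{C^k}$, so the whole problem reduces to describing the harmonic space.

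The technical core is to evaluate $\Box$ in terms of Casimir operators. Writing $\Omega_\fg$ and $\Omega_{\fg_0}$ for the Casimirs of $\fg$ and $\fg_0$ relative to the chosen invariant form, I would establish Kostant's identity that, on a $\fg_0$-highest-weight vector of $\fh$-weight $\mu$ in $C^\bullet$, the operator $\Box$ acts by the scalar $\tfrac12\bigl(\|\lambda+\rho\|^2-\|\mu+\rho\|^2\bigr)$, where $\|\cdot\|^2$ is the quadratic form dual to the invariant form and $\rho=\sum_i\lambda_i$ is the half-sum of positive roots. Consequently such a vector is harmonic precisely when $\|\mu+\rho\|=\|\lambda+\rho\|$.

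The remaining step is combinatorial. Since $\lambda+\rho$ is dominant and regular, the only dominant-regular weights of the same length in its $W$-orbit are the $W$-translates, so $\|\mu+\rho\|=\|\lambda+\rho\|$ together with $\fg_0$-dominance of $\mu$ forces $\mu+\rho=w(\lambda+\rho)$ for a unique $w\in W$; $\fg_0$-dominance then selects exactly the minimal-length coset representatives, i.e.\ $w\in W^\fp$, and $\mu=w\cdot\lambda$. A weight count shows that $w\cdot\lambda$ first occurs in the cochain degree equal to the cardinality of the inversion set $\Delta(\fm)\cap w^{-1}\Delta^-$, which is exactly $\ell(w)$, and that it occurs with multiplicity one, contributing a single irreducible $\fg_0$-module. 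Labelling it by its extreme weight $-w\cdot\lambda$ (the lowest-weight convention appearing because $\fm=\fg_-$), and summing over $w\in W^\fp(k)$, gives the claimed formula.

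The hard part will be the Casimir identity for $\Box$: it requires expanding $\partial$ and $\partial^\ast$ in a root-vector basis adapted to the triangular decomposition, using invariance of the form to pair terms, and verifying that all cross terms cancel so that $2\Box$ collapses to $\Omega_\fg-\Omega_{\fg_0}$ up to the weight-dependent scalar above. A secondary subtlety, worth isolating as a lemma, is the multiplicity-one statement together with the degree matching $k=\ell(w)$, both of which follow from showing that the extremal weight $w\cdot\lambda$ is realized by the single subset $\Delta(\fm)\cap w^{-1}\Delta^-$ of roots.
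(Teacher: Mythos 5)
This theorem is stated in the paper only as a cited classical result (Kostant 1961, \cite{Ko}); no proof is given there, and your sketch reproduces exactly the standard Hodge-theoretic argument from that reference --- the Laplacian built from a compact-form inner product, the Casimir eigenvalue formula $\tfrac12\bigl(\|\lambda+\rho\|^2-\|\mu+\rho\|^2\bigr)$ on $\fg_0$-highest-weight vectors, and the inversion-set combinatorics matching harmonic cocycles with $w\in W^{\fp}(k)$ in degree $\ell(w)$ with multiplicity one. The outline is correct, including your handling of the lowest-weight labelling $\mathbb{M}_{-w\cdot\lambda}$ forced by the paper's convention $\fm=\fg_-$ in place of the positive nilradical.
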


Below, $\fg=\fg_{-2}\oplus\fg_{-1}\oplus\fg_0\oplus\fg_1\oplus\fg_2$ is the contact grading associated to 
$\fp_1^\I$, $\fg_0=\CC\oplus\sl_2\oplus\sl_2$, and by $\VV_{k,l}[d]$ resp.\ $\UU_{k,l}[d]$ we denote the 
even resp.\ odd irreducible representation of $\fg_0$ with $\sl_2\oplus\sl_2$-weight $(k,l)$ and $\CC$-degree $d$.
We will use the notations of Section \ref{pi1}.

 \begin{figure}[h!]\centering
 \begin{minipage}{0.5\textwidth}\centering\begin{small}
 \[
\begin{array}{|c|c|c|}\hline
k &  \Lambda^p(\mathfrak{m}_{\bar1})^\ast \vphantom{\frac{A^a}{A}} & \text{lowest weight vector} \\
\hline
3 & \begin{array}{l}
     \UU_{3,3}[3] \\ 
     \UU_{1,1}[3]
    \end{array}& \begin{array}{c}
     \omega^1\wedge \omega^1\wedge \omega^1 \vphantom{\frac{A^A}{A}} \\ 
     \omega^1\wedge \omega^1\wedge \omega^4-\omega^1\wedge \omega^2\wedge \omega^3
    \end{array}\\
\hline
2 & \begin{array}{l}
     \VV_{2,2}[2] \\ 
     \VV_{0,0}[2]
    \end{array} & \begin{array}{c}
     \omega^1\wedge \omega^1 \vphantom{\frac{A^A}{A}} \\ 
     \omega^1\wedge \omega^4- \omega^2\wedge \omega^3
    \end{array}\\
\hline
1 &  \UU_{1,1}[1] & \omega^1 \vphantom{\frac{A^A}{A}} \\
    \hline
    0 & \mathbb{C} & \\
    \hline
\end{array}
 \]
\end{small}
 \caption{The modules $\Lambda^p\left(\fm_{\bar1}\right)^*$.\!\!\!}
    \end{minipage}\hfill
    \begin{minipage}{0.5\textwidth}\centering\begin{small}
 \[
\begin{array}{|c|c|c|}\hline
q &  H^q(\mathfrak{m}_{\bar0},\fg) \vphantom{\frac{A^a}{A}} & \text{lowest weight vector}\\
\hline
3 & 0 &\\
\hline
2 & 0 &\\
\hline
1 & \begin{array}{l}
 \VV_{0,0}[4] \\ \UU_{1,1}[3] \\ \VV_{2,0}[2] \\ \VV_{0,2}[2] 
    \end{array}
 & 
 \begin{array}{r}
 [X_1\otimes\omega^5] \vphantom{\frac{A^A}{A}} \\{}
 [xyy\otimes\omega^5] \\{} [Y_2\otimes\omega^5] \\{} [Y_3\otimes\omega^5]
    \end{array}\\
    \hline
    0 & \begin{array}{l}
 \VV_{2,0}[0] \\ \VV_{0,2}[0] \\ \UU_{1,1}[-1] \\ \VV_{0,0}[-2]     
    \end{array} & 
    \begin{array}{l}
 [Y_2] \vphantom{\frac{A^A}{A}} \\{}
 [Y_3] \\{} [e_4] \\{} [e_5]
    \end{array}\\
    \hline
\end{array}
 \]
 \end{small}
    \caption{The modules $H^q(\fm_{\bar0},\fg)$; \ $\omega^5$ dual to $s_1X_1$.}
    \end{minipage}
\end{figure}

For $p+q=k\leq2$, $E_1^{p,q}$ only has degrees $d\leq 5$, and we are only interested in cohomology
$H^{d,k}(\fm,\fg)=H^k(\fm,\fg)_d$ for $d\geq0$. We only display the terms of degrees $0\leq d\leq5$ 
in Table \ref{table:DE1}:

\begin{tiny}
\begin{table}[h!]
\[
\hspace{-10pt}\begin{array}{|c|c|c|c|}
\hline
0 & 0 & 0 & 0 \vphantom{\frac{A}{A}}\\ \hline 
\begin{array}{c}
     \VV_{0,0}[4] \\ \oplus\\
     \UU_{1,1}[3]\\ \oplus\\
     \VV_{2,0}[2]\\ \oplus\\
     \VV_{0,2}[2]
\end{array} & 
\begin{array}{c}
     \UU_{1,1}[5]\\ \oplus\\
     \VV_{2,2}[4] \oplus\VV_{2,0}[4] \oplus\VV_{0,2}[4] \oplus\VV_{0,0}[4] \\ \oplus\\
     \UU_{3,1}[3]\oplus\UU_{1,1}[3] \\ \oplus\\ 
     \UU_{1,3}[3]\oplus\UU_{1,1}[3]
\end{array} & 
\begin{array}{c}
     \UU_{3,3}[5]\oplus\UU_{3,1}[5] \oplus\UU_{1,3}[5] \oplus\UU_{1,1}[5] \vphantom{\frac{A^A}{A}}\\ \oplus\\
     \UU_{1,1}[5] \\ \oplus\\
     \VV_{4,2}[4] \oplus \VV_{2,2}[4] \oplus \VV_{0,2}[4] \\ \oplus\\ 
     \VV_{2,4}[4] \oplus \VV_{2,2}[4] \oplus \VV_{2,0}[4] \\ \oplus\\
     \VV_{2,0}[4] \\ \oplus\\
     \VV_{0,2}[4]
\end{array} & *\\ 
\hline
\begin{array}{c}
    \VV_{2,0}[0] \\ \oplus\\
    \VV_{0,2}[0]
\end{array} & 
\begin{array}{c}
    \UU_{3,1}[1] \oplus\UU_{1,1}[1] \\\oplus\\
    \UU_{1,3}[1] \oplus\UU_{1,1}[1]\\ \oplus\\
    \VV_{2,2}[0] \oplus\VV_{2,0}[0] \oplus\VV_{0,2}[0] \oplus\VV_{0,0}[0]
\end{array} & 
\begin{array}{c}
\VV_{4,2}[2] \oplus\VV_{2,2}[2] \oplus\VV_{0,2}[2] \vphantom{\frac{A^A}{A}}\\ \oplus\\
\VV_{2,4}[2] \oplus\VV_{2,2}[2] \oplus\VV_{2,0}[2] \\ \oplus\\
\VV_{2,0}[2] \\ \oplus\\
\VV_{0,2}[2] \\ \oplus\\
\UU_{3,3}[1] \oplus\UU_{3,1}[1] \oplus\UU_{1,3}[1] \oplus\UU_{1,1}[1] \\ \oplus\\
\UU_{1,1}[1] \\ \oplus\\
\VV_{2,2}[0] \\ \oplus\\
\VV_{0,0}[0]
\end{array}
& 
\begin{array}{c}
     \UU_{5,3}[3] \oplus\UU_{3,3}[3] \oplus\UU_{1,3}[3] \\ \oplus\\
     \UU_{3,5}[3] \oplus\UU_{3,3}[3] \oplus\UU_{3,1}[3] \\ \oplus\\
     \UU_{3,1}[3] \oplus\UU_{1,1}[3] \\ \oplus\\ 
     \UU_{1,3}[3] \oplus\UU_{1,1}[3] \\ \oplus\\
     \VV_{4,4}[2] \oplus\VV_{4,2}[2] \oplus\VV_{2,4}[2] \oplus\VV_{2,2}[2] \\ \oplus\\
     \VV_{2,2}[2] \oplus\VV_{0,2}[2] \oplus\VV_{2,0}[2] \oplus\VV_{0,0}[2] \\ \oplus\\
     \UU_{3,3}[1] \\ \oplus\\
     \UU_{1,1}[1] 
    \end{array}
\\ \hline
\end{array}
\]
\caption{The $E_1$-page: modules $E_1^{p,q}=\Lambda^p(\fm_{\bar1})^*\otimes H^q(\fm_{\bar0},\fg)$ for $p+q\leq3$, $d\leq5$.}
\label{table:DE1}
\end{table}
\end{tiny}

The differential $\partial: E_{1}^{p,q}\rightarrow E_{1}^{p+1,q}$ is $(\mathfrak{g}_0)_{\bar0}$-equivariant. 
Since $\VV_{2,0}[2]\not\subset E_{1}^{1,1}$, $\VV_{2,0}[2]\subset E_1^{0,1}$ is in $\ker(\partial)$, and since 
$\VV_{2,0}[2]\not\subset E_{1}^{-1,1}=0$, $\VV_{2,0}[2]\subset E_{1}^{0,1}$ is not in $\im(\partial)$. 
Therefore  $\VV_{2,0}[2]\subset E_{2}^{0,1}$. Analogous arguments show that  
$\VV_{0,2}[2]\subset E_{2}^{0,1}$, $\UU_{3,1}[3],\UU_{1,3}[3]\subset E_{2}^{1,1}$.

 \begin{lemma}\label{Arep}
Let $\mathbb{M}\subset E_s^{p,q}$ be a simple $(\mathfrak{g}_0)_{\bar0}$-module. Assume 
$\mathbb{M}\not\subset\ker({\partial:E_s^{p,q}\rightarrow E_s^{p+s,q-s+1}})$. 
Then $\mathbb{M}\not\subset E_{s+1}^{p,q}$, and 
$\p\mathbb{M}\subset\im(\partial:E_s^{p,q}\rightarrow E_s^{p+s,q-s+1})$. 
If $\p\mathbb{M}\simeq\mathbb{M}$ appears in $E_s^{p+s,q-s+1}$ with multiplicity one, 
we can also conclude that $\mathbb{M}\not\subset E_{s+1}^{p+s,q-s+1}$.
 \end{lemma}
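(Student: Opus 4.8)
The plan is to exploit that the entire Hochschild--Serre spectral sequence is one of $(\fg_0)_{\bar0}$-modules, so that every page differential is equivariant and Schur's lemma applies to the simple module $\mathbb{M}$. First I would record this equivariance. The reductive algebra $(\fg_0)_{\bar0}$ acts on the Chevalley--Eilenberg complex $C^\bullet(\fm,\fg)=\fg\otimes\Lambda^\bullet\fm^*$ commuting with $\partial$, because both $\fg$ and $\fm$ are $\fg_0$-modules; moreover it preserves the filtration $F^pC^n$, since the latter is defined by contraction against even vectors $v_j\in\fm_{\bar0}$ and $(\fg_0)_{\bar0}$ preserves the grading and parity, hence $\fm_{\bar0}$. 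Consequently each $E_s^{p,q}$ is a $(\fg_0)_{\bar0}$-module, the induced differentials $\partial_s\colon E_s^{p,q}\to E_s^{p+s,q-s+1}$ are equivariant, and $E_{s+1}^{p,q}=\ker(\partial_s)/\im(\partial_s)$ holds as modules. Finally, all these modules are completely reducible (here $(\fg_0)_{\bar0}$ is reductive, namely $\CC\oplus\sl_2\oplus\sl_2$ in the contact case), so isotypic components are well defined and submodules split off as direct summands.

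For the first assertion, restrict the outgoing differential to $\mathbb{M}\subset E_s^{p,q}$. Then $\ker(\partial_s)\cap\mathbb{M}$ is a submodule of the simple module $\mathbb{M}$, hence equals $0$ or $\mathbb{M}$; the hypothesis $\mathbb{M}\not\subset\ker(\partial_s)$ forces $\ker(\partial_s)\cap\mathbb{M}=0$, i.e.\ $\partial_s|_{\mathbb{M}}$ is injective. Thus no nonzero vector of $\mathbb{M}$ is a cocycle, so $\mathbb{M}$ contributes nothing to $E_{s+1}^{p,q}=\ker(\partial_s)/\im(\partial_s)$, which is the statement $\mathbb{M}\not\subset E_{s+1}^{p,q}$. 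The second assertion $\partial\mathbb{M}\subset\im(\partial_s)$ is immediate from the definition, since $\partial\mathbb{M}=\partial_s(\mathbb{M})$ is by construction contained in the image of $\partial_s$ in $E_s^{p+s,q-s+1}$.

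For the last assertion I would combine injectivity with the multiplicity-one hypothesis. Injectivity gives $\partial_s(\mathbb{M})\cong\mathbb{M}$ as a submodule of $E_s^{p+s,q-s+1}$; since the $\mathbb{M}$-isotypic component there is a single copy and the modules are completely reducible, $\partial_s(\mathbb{M})$ must be exactly that isotypic component. As $E_{s+1}^{p+s,q-s+1}$ is obtained by quotienting $\ker(\partial_s)$ by $\im(\partial_s)\supseteq\partial_s(\mathbb{M})$, the entire $\mathbb{M}$-isotypic part is annihilated, yielding $\mathbb{M}\not\subset E_{s+1}^{p+s,q-s+1}$.

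The only genuinely delicate point, which I would state before invoking Schur, is that all these containments must be read at the level of isotypic multiplicities: the conclusions are clean precisely because $(\fg_0)_{\bar0}$ is reductive, so Schur's lemma is available and submodules split off. Establishing the $(\fg_0)_{\bar0}$-equivariance of the whole spectral sequence---i.e.\ the invariance of the filtration---is routine but is the hypothesis on which the remaining steps rest, so I would verify it explicitly at the outset.
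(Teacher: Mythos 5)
Your proof is correct and supplies exactly the argument the paper presupposes: the paper states Lemma \ref{Arep} without proof, but its surrounding text (the observation that $\partial\colon E_1^{p,q}\to E_1^{p+1,q}$ is $(\fg_0)_{\bar0}$-equivariant, followed by Schur-type deductions) makes clear that equivariance of the filtered complex plus complete reducibility of $(\fg_0)_{\bar0}$-modules is the intended mechanism. Your closing caveat --- that the conclusions must be read at the level of isotypic multiplicities, so the first assertion is clean only when the copy of $\mathbb{M}$ in $E_s^{p,q}$ is the whole isotypic component --- is exactly the right qualification, and it is consistent with how the paper applies the lemma (Table \ref{D:E1lwv} is restricted to multiplicity-one modules, with the multiplicity-two cases handled separately in Table \ref{D:E1lwvdouble}).
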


 \begin{example}
Consider the module $\UU_{3,1}[1]\subset E_1^{1,0}=H^0(\fm_{\bar0},\fg)\otimes(\fm_{\bar1})^*$, 
which has lowest weight vector $\phi=[Y_2]\otimes\omega^1$. 
Note that $(\partial\phi)(e_1,e_3)=-e_3\cdot[Y_2]=-[e_4]\neq0$, and 
$(\partial\phi)(e_1,e_3)\in  H^0(\fm_{\bar0},\fg)$. Then $\UU_{3,1}[1]\not\subset\ker(\partial)$, 
$\UU_{3,1}[1]\not\subset E_2^{1,0}$, and $\UU_{3,1}[1]\subset\im(\partial)$. Since $\UU_{3,1}[1]$ appears 
in $E_1^{2,0}$ with multiplicity one, we conclude $\UU_{3,1}[1]\not\subset E_2^{2,0}$. 
 \end{example}

We can use Lemma \ref{Arep} with $s=1$ for the representations in Table \ref{D:E1lwv}.

 \begin{table}[h!]\begin{small}
 \[
 \begin{array}{|c|c|l|l|} \hline
 (p,q) 
& \mbox{$(\fg_{0})_{\bar 0}$-module} & \vphantom{\frac{A^a}{A}}
 \mbox{Lowest weight vector $\phi \in E_1^{p,q}$} & \mbox{$\partial\phi\neq0$}\\ \hline
 (0,0) 
& \VV_{2,0}[0] & [Y_2] & (\partial \phi)(e_1) \neq 0\\ \cline{2-4}
& \VV_{0,2}[0] & [Y_3] & (\partial \phi)(e_1) \neq 0\\ \cline{2-4} \hline
 (1,0)
& \VV_{2,2}[0] & [e_4]\otimes\omega^1 & (\partial \phi)(e_1,e_1) \neq 0\\ \cline{2-4}
& \VV_{0,0}[0] &  [e_1]\otimes\omega^1+[e_2]\otimes\omega^2+[e_3]\otimes\omega^3+[e_4]\otimes\omega^4
& (\partial \phi)(e_1,e_4) \neq 0\\ \cline{2-4}
& \UU_{3,1}[1] &  [Y_2]\otimes\omega^1
& (\partial \phi)(e_1,e_1) \neq 0\\ \cline{2-4} 
& \UU_{1,3}[1] &  [Y_3]\otimes\omega^1
& (\partial \phi)(e_1,e_1) \neq 0\\ \cline{2-4} \hline
 (2,0) 
& \UU_{3,3}[1] & [e_4]\otimes\omega^1\wedge \omega^1 & (\partial \phi)(e_1,e_1,e_1) \neq 0\\ 
 \cline{2-4}
& \VV_{2,4}[2] & [Y_3]\otimes\omega^1\wedge \omega^1 & (\partial \phi)(e_1,e_1,e_1) \neq 0\\ \cline{2-4}
& \VV_{4,2}[2] & [Y_2]\otimes\omega^1\wedge \omega^1 & (\partial \phi)(e_1,e_1,e_1) \neq 0\\ \cline{2-4}\hline
 (0,1)
& \VV_{0,0}[4] & [X_1\otimes\omega^5] & (\partial \phi)(e_1) \neq 0\\ \cline{2-4}
& \UU_{1,1}[3] & [xyy\otimes\omega^5] & (\partial \phi)(e_2) \neq 0\\ \hline
 (1,1) 
& \UU_{1,1}[5] & [X_1\otimes\omega^5]\otimes\omega^1 & (\partial \phi)(e_1,e_1) \neq 0\\ \cline{2-4}
\cline{2-4}
& \VV_{2,2}[4] & [xyy\otimes\omega^5]\otimes\omega^1 & (\partial \phi)(e_1,e_2) \neq 0\\ \cline{2-4}
& \VV_{2,0}[4] & [xyx\otimes\omega^5]\otimes\omega^1+[xyy\otimes\omega^5]\otimes\omega^3 & (\partial \phi)(e_1,e_4) \neq 0 \\ \cline{2-4}
& \VV_{0,2}[4] & [xxy\otimes\omega^5]\otimes\omega^1+[xyy\otimes\omega^5]\otimes\omega^2 & (\partial \phi)(e_1,e_4) \neq 0\\ \hline
 \end{array}
 \]
 \end{small}
 \caption{Modules in $E_{1}^{p,q}= H^q(\fm_{\bar{0}},\fg)\otimes\Lambda^p (\fm_{\bar1})^\ast$, $p+q\leq3$, with multiplicity one, $0\leq d\leq5$.} \label{D:E1lwv}
 \end{table} 
 
Of the two copies of $\UU_{1,1}[3]$ in $E_1^{1,1}$, only one survives to $E_2$, since there is no $\UU_{1,1}[3]$ in $E_1^{2,1}$, therefore $2\UU_{1,1}[3]\subset\ker(\partial_1)$, and there is one $\UU_{1,1}[3]$ in $\im(\partial)$.

 \begin{example}
A lowest weight vector in $2\UU_{1,1}[1]\subset E_1^{1,0}$ is of the form
 \begin{equation*}
\phi=a\left([H_2]\otimes\omega^1+2[Y_2]\otimes\omega^2\right)+b\left([H_3]\otimes\omega^1+2[Y_3]\otimes\omega^3\right),
 \end{equation*}
and $\partial\phi=0$ if and only if $a=-b$. For generic $a,b$ we get $\phi\not\in\ker(\partial)$, 
but the condition $a=-b$ yields a vector in $\ker(\partial)$, therefore $\UU_{1,1}[1]\subset E_2^{1,0}$, 
since there is no $\UU_{1,1}[1]$ in $E_1^{0,0}$. 

A lowest weight vector in $2\UU_{1,1}[1]\subset E_1^{2,0}$ is of the form
 \begin{eqnarray*}
&\psi=c\left(2[e_1]\otimes\omega^1\wedge\omega^1+2[e_2]\otimes\omega^1\wedge\omega^2+2[e_3]\otimes\omega^1\wedge\omega^3+[e_4]\otimes(\omega^1\wedge\omega^4+\omega^2\wedge \omega^3)\right)\\
&+d\left([e_4]\otimes(\omega^1\wedge \omega^4-\omega^2\wedge\omega^3)\right)
 \end{eqnarray*}
and $\partial\psi=0$ if and only if $c=-3d$. For generic $c,d$ we get $\psi\not\in\ker(\partial)$, 
but the condition $c=-3d$ yields a vector in $\ker(\partial)$ that is also in $\im(\partial)$, 
since in that case $\partial\phi\sim\psi$. 
 \end{example}
 
Similar arguments determine weather the modules in Table \ref{D:E1lwvdouble} survive to the second page. 
 
 \begin{center}\begin{table}[h!]\begin{tiny}
 \[
 \hspace{-25pt} \begin{array}{|c|c|l|l|} \hline
 (p,q) & \mbox{$(\fg_{0})_{\bar 0}$-module} & \vphantom{\frac{A^a}{A}}
\mbox{Form of l.w.v. in $E_1^{p,q}$} & \mbox{Remarks}\\ \hline
 (1,0) 
 & 2\UU_{1,1}[1] & 
\varphi=a\left([H_2]\otimes\omega^1+2[Y_2]\otimes\omega^2\right)+b\left([H_3]\otimes\omega^1+2[Y_3]\otimes\omega^3\right) &
 \begin{array}{c}
 \partial \varphi= 0\Leftrightarrow b=-a,\\
 \partial \varphi\sim\phi\Leftrightarrow b\neq-a,
 \end{array}
 \\ \hline
 (2,0)  
  & 2\UU_{1,1}[1] & 
  \begin{array}{c}
\phi=a\left(2[e_1]\otimes\omega^1\wedge\omega^1+2[e_2]\otimes\omega^1\wedge\omega^2+2[e_3]\otimes
\omega^1\wedge\omega^3+[e_4]\otimes(\omega^1\wedge\omega^4+\omega^2\wedge \omega^3)\right)\\
+b\left([e_4]\otimes(\omega^1\wedge \omega^4-\omega^2\wedge\omega^3)\right)\\
  \end{array}  &
 \partial\phi=0\Leftrightarrow b=-3a \\ \cline{2-4}\\ \cline{2-4} & 2\VV_{2,2}[2] &
 \xi=a\left([H_2]\otimes\omega^1\wedge  \omega^1+2[Y_2]\otimes\omega^1\wedge \omega^2\right)
    +b\left([H_3]\otimes\omega^1\wedge \omega^1+2[Y_3]\otimes\omega^1\wedge \omega^3\right)  &
  \partial \xi= 0\Leftrightarrow b=-a \\ \cline{2-4}
  &  2\VV_{2,0}[2]& \xi=a[Y_2]\otimes\left(\omega^1\wedge \omega^4-\omega^2\wedge\omega^3\right) +b[X_3]\otimes\left(\omega^1\wedge \omega^1-[H_3]\otimes\omega^1\wedge\omega^3-[Y_3]\otimes\omega^3\wedge\omega^3\right)
  & \partial \xi=0\Leftrightarrow b=-a \\ \cline{2-4}
  &  2\VV_{0,2}[2]& \xi=a[Y_3]\otimes\left(\omega^1\wedge \omega^4-\omega^2\wedge\omega^3\right) +b\left([X_2]\otimes\omega^1\wedge \omega^1-[H_2]\otimes\omega^1\wedge\omega^2-[Y_2]\otimes\omega^2\wedge\omega^2\right)  & \partial \xi=0\Leftrightarrow b=-a \\ \hline
 \end{array}
 \]
 \end{tiny}
 \caption{Modules in $E_{1}^{p,q}=H^q(\fm_{\bar{0}},\fg)\otimes\Lambda^p (\fm_{\bar1})^\ast$, $p+q\leq2$, 
 with multiplicity more than one, $0\leq d\leq5$.} \label{D:E1lwvdouble}
 \end{table} \end{center} 

We get the $E_2$-page of the spectral sequence in Table \ref{table:DE2}.
 
\begin{table}[h!]
 \[
\begin{array}{|c|c|c|c|}\hline
0 & 0 &  &\\ \hline         
\VV_{2,0}[2]\oplus\VV_{0,2}[2] & \UU_{3,1}[3]\oplus\UU_{1,3}[3]\oplus\UU_{1,1}[3]  & * & \\ \hline
0 & \UU_{1,1}[1] & \VV_{2,2}[2]\oplus\VV_{2,0}[2]\oplus\VV_{0,2}[2] & * \\  \hline
\end{array}
 \]
 \caption{The $E_2$-page: the modules $E_2^{p,q}$ for $p+q\leq3$ and $0\leq d\leq5$.}\label{table:DE2}
\end{table}

 \begin{proposition}
For the contact grading of $\fg=D(2,1;a)$ associated with $\fp_1^\I$:
 \begin{equation*}
H^{d,1}(\mathfrak{m},\mathfrak{g}) = 
\Big\{\begin{array}{lr} 0 & d\neq 1,\\
\UU_{1,1}[1] & d=1. \end{array}
 \end{equation*}
\end{proposition}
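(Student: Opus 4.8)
The plan is to extract $H^1(\fm,\fg)$ directly from the Hochschild--Serre spectral sequence assembled above, rather than from the Chevalley--Eilenberg complex itself. By Proposition \ref{H0H1H2} one has $H^1(\fm,\fg)=E_2^{1,0}\oplus E_3^{0,1}$. Reading the $E_2$-page in Table \ref{table:DE2}, the term $E_2^{1,0}=\UU_{1,1}[1]$ is concentrated in internal degree $d=1$, while $E_2^{0,1}=\VV_{2,0}[2]\oplus\VV_{0,2}[2]$ is concentrated in degree $d=2$. Here $E_2^{0,1}$ has no lower-degree part because all of $H^1(\fm_{\bar0},\fg)$ lives in degrees $\geq2$, and $E_2^{1,0}$ has no part in degrees $-1,0$ because those classes of $E_1^{1,0}$ are removed by $\partial_1$ (for instance the degree $-1$ copy of $\UU_{1,1}[-1]\cong\fg_{-1}$ lies in $\im(\partial_1\colon E_1^{0,0}\to E_1^{1,0})$, the map sending $[e_4]$ to the bracket $\pm[\,\cdot\,,e_4]\in\fg_{-2}$, which is an isomorphism by Schur; the remaining degree-$0$ classes are eliminated exactly as in the examples preceding Table \ref{table:DE2}). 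Consequently $E_2^{1,0}$ already contributes the claimed $\UU_{1,1}[1]$ at $d=1$, and the \emph{only} remaining potential contribution to $H^1$ in a degree $\neq1$ is the degree-$2$ term $E_3^{0,1}$. So the entire statement reduces to proving $E_3^{0,1}=0$.

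To this end, observe that $E_3^{0,1}=\ker\bigl(\partial_2\colon E_2^{0,1}\to E_2^{2,0}\bigr)$, since the incoming differential $\partial_2\colon E_2^{-2,2}\to E_2^{0,1}$ has zero source. Comparing with Table \ref{table:DE2}, the target $E_2^{2,0}=\VV_{2,2}[2]\oplus\VV_{2,0}[2]\oplus\VV_{0,2}[2]$ contains each of $\VV_{2,0}[2]$ and $\VV_{0,2}[2]$ with multiplicity one. Because $\partial_2$ is $(\fg_0)_{\bar0}$-equivariant, it must map $\VV_{2,0}[2]\subset E_2^{0,1}$ into the isomorphic copy $\VV_{2,0}[2]\subset E_2^{2,0}$ (and likewise $\VV_{0,2}[2]$ into $\VV_{0,2}[2]$), so by Schur's lemma injectivity of $\partial_2$ is equivalent to its nonvanishing on the lowest weight vector of each of the two summands of $E_2^{0,1}$. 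The concrete computation is the heart of the argument: I would represent these two classes by the lowest weight vectors $[Y_2\otimes\omega^5]$ and $[Y_3\otimes\omega^5]$ of $H^1(\fm_{\bar0},\fg)$, lift each to a cochain $\phi\in\fg\otimes\fm^*$ that is closed for the first-page differential $\partial_1$, apply the full super Chevalley--Eilenberg differential $\pd$, and read off the induced class in $E_2^{2,0}$; by the multiplicity-one structure, nonvanishing can be detected on a single bracket evaluation of the type $(\pd\phi)(e_1,e_1)$ used repeatedly via Lemma \ref{Arep}. Matching $\sl_2\oplus\sl_2$-weights then identifies the images with the lowest weight vectors $[Y_2]\otimes\omega^1\wedge\omega^1$ and $[Y_3]\otimes\omega^1\wedge\omega^1$ of the corresponding summands of $E_2^{2,0}$ recorded in Table \ref{D:E1lwv}.

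The main obstacle is precisely this realization of the transgression $\partial_2$. Unlike the first-page differentials handled by Lemma \ref{Arep}, computing $\partial_2$ requires choosing a genuine $\partial_1$-cocycle representative of the $E_2^{0,1}$-class, carefully tracking the Koszul signs of the odd Chevalley--Eilenberg complex, and certifying that the output is nonzero \emph{modulo} $\im(\partial_1)$ in $E_2^{2,0}$ rather than merely nonzero as a cochain. Once both lowest weight vectors are shown to map to nonzero classes, $\partial_2$ is injective on $E_2^{0,1}$, whence $E_3^{0,1}=0$. Combining this with the first paragraph yields $H^{d,1}(\fm,\fg)=\UU_{1,1}[1]$ for $d=1$ and $0$ for $d\neq1$, as claimed; the degree bounds $-1\leq d\leq4$ on $C^1(\fm,\fg)=\fg\otimes\fm^*$ guarantee that no contributions can arise outside the degrees already analyzed, so no separate argument is needed beyond the displayed range.
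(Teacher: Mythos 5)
Your overall strategy coincides with the paper's: decompose $H^1(\fm,\fg)=E_2^{1,0}\oplus E_3^{0,1}$, observe that $E_2^{1,0}=\UU_{1,1}[1]$ sits in degree $1$ and $E_2^{0,1}=\VV_{2,0}[2]\oplus\VV_{0,2}[2]$ in degree $2$, and reduce everything to showing that $\partial_2\colon E_2^{0,1}\to E_2^{2,0}$ is injective, which by equivariance and multiplicity one comes down to nonvanishing on the two lowest weight vectors $[Y_2\otimes\omega^5]$ and $[Y_3\otimes\omega^5]$. This is exactly Lemma \ref{Arep} with $s=2$, as the paper uses it. The problem is that the step you yourself identify as ``the heart of the argument'' is never performed: you describe how one \emph{would} lift the classes, apply the Chevalley--Eilenberg differential and test the result, but the proposition is only proved once that evaluation is actually carried out and shown to be nonzero. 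As it stands, the proposal establishes $E_3^{0,1}\subseteq\VV_{2,0}[2]\oplus\VV_{0,2}[2]$ and no more.

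Worse, the two concrete details you do commit to in the sketch would lead the computation astray. Equivariance of $\partial_2$ forces the image of $\VV_{2,0}[2]\subset E_2^{0,1}$ into the isotypic copy of $\VV_{2,0}[2]$ inside $E_2^{2,0}$, whose lowest weight vector is the $\partial_1$-closed combination involving $[Y_2]\otimes(\omega^1\wedge\omega^4-\omega^2\wedge\omega^3)$ from Table \ref{D:E1lwvdouble}; the vectors $[Y_2]\otimes\omega^1\wedge\omega^1$ and $[Y_3]\otimes\omega^1\wedge\omega^1$ that you name are the lowest weight vectors of $\VV_{4,2}[2]$ and $\VV_{2,4}[2]$ --- the wrong $\sl_2\oplus\sl_2$-weight, and modules that do not even survive to $E_2$. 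Consequently, the test evaluation you propose, $(\pd\phi)(e_1,e_1)$, returns zero: since $[e_1,e_1]=0$ one has $\pd\omega^5=s_1(\omega^1\wedge\omega^4-\omega^2\wedge\omega^3)$ with no $\omega^1\wedge\omega^1$ component, so taken literally your test would (wrongly) suggest the classes survive. The paper instead evaluates at $(e_1,e_4)$ and finds $\pd\varphi(e_1,e_4)=s_1[Y_2]\neq0$. Your worry about nonvanishing modulo $\im(\partial_1)$ is legitimate but easily dispatched: $E_1^{1,0}=H^0(\fm_{\bar0},\fg)\otimes(\fm_{\bar1})^*$ contains no $\VV_{2,0}[2]$ or $\VV_{0,2}[2]$ in degree $2$, so the cochain-level nonvanishing in the correct isotypic component already certifies a nonzero class in $E_2^{2,0}$.
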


 \begin{proof}
By Proposition \ref{H0H1H2}, $H^1(\fm,\fg)=E_2^{1,0}\oplus E_3^{0,1}$. From the previous computations we get
$E_2^{1,0}=\UU_{1,1}[1]$, $E_3^{0,1}\subset\VV_{2,0}[2]\oplus\VV_{0,2}[2]$. Thus, we need to compute 
the differential $\partial:E_2^{0,1}\longrightarrow E_2^{2,0}$ on $\VV_{2,0}[2]$ and $\VV_{0,2}[2]$.

A lowest weight vector of $\VV_{2,0}[2]$ is $\varphi=[Y_2\otimes\omega^5]$. 
Since $\partial\varphi= \partial Y_2\wedge\omega^5+[Y_2]\otimes\partial\omega^5$ and 
$\partial\omega^5=s_1\omega^1\wedge\omega^4-s_1\omega^2\wedge\omega^3$, we have
 \[
\partial\varphi(e_1,e_4)=s_1[Y_2]\neq0,
 \]
and therefore $\VV_{2,0}[2]\not\subset E_3^{0,1}$ and $\VV_{2,0}[2]\not\subset E_3^{2,0}$ by 
Lemma \ref{Arep} with $s=2$.

Similarly, a lowest weight vector of $\VV_{0,2}[0]$ is $\psi=[Y_3\otimes\omega^5]$, so we have
 \[
\partial\psi(e_1,e_4)=s_1[Y_3]\neq0\\
 \]
and therefore $\VV_{0,2}[2]\not\subset E_3^{0,1}$ and $\VV_{0,2}[2]\not\subset E_3^{0,2}$.

Lastly, by Proposition \ref{H0H1H2}, we get $H^1(\fm,\fg)=E_2^{1,0}\oplus E_3^{0,1}=\UU_{1,1}[1]$.
 \end{proof}

 \begin{proposition}
For the contact grading of $\fg=D(2,1;a)$ associated with $\fp_1^\I$:
 \begin{equation*}
H^{d,2}(\mathfrak{m},\mathfrak{g}) = 
\Big\{ \begin{array}{lr} 0 & d\neq 2,\\ \VV_{2,2}[2] & d=2. \end{array}
 \end{equation*}
 \end{proposition}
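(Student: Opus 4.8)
The plan is to feed Proposition \ref{H0H1H2}, which gives $H^2(\fm,\fg)=E_3^{2,0}\oplus E_3^{1,1}\oplus E_4^{0,2}$, and to evaluate the three summands using the $E_2$-page already recorded in Table \ref{table:DE2}. The last summand is immediate: reading off $E_2^{0,2}=0$, every later page at this spot is a subquotient of $0$, so $E_4^{0,2}=0$. The first summand is almost free of charge, because it is governed by exactly the differential $\partial_2\colon E_2^{0,1}\to E_2^{2,0}$ that was analyzed in the preceding computation of $H^1$.

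Indeed, there it was shown that the lowest weight vectors of $\VV_{2,0}[2]$ and $\VV_{0,2}[2]$ in $E_2^{0,1}$ are \emph{not} in $\ker\partial_2$, via $\partial\varphi(e_1,e_4)=s_1[Y_2]\neq0$ and the analogous identity producing $s_1[Y_3]$. Since $\VV_{2,0}[2]$ and $\VV_{0,2}[2]$ each occur with multiplicity one inside $E_2^{2,0}=\VV_{2,2}[2]\oplus\VV_{2,0}[2]\oplus\VV_{0,2}[2]$, Lemma \ref{Arep} with $s=2$ shows these two summands of $E_2^{2,0}$ are annihilated. The surviving $\VV_{2,2}[2]$ lies in no image of $\partial_2$ by $\fg_0$-equivariance, as it does not appear in $E_2^{0,1}$, and it maps into $E_2^{4,-1}=0$; hence $E_3^{2,0}=\VV_{2,2}[2]$, which is then stable under all higher differentials for degree reasons. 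So I would treat the $E_3^{2,0}$ term purely as a corollary of the $H^1$ analysis.

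The genuine work, and the main obstacle, is to prove $E_3^{1,1}=0$. As the incoming differential originates in $E_2^{-1,2}=0$, we have $E_3^{1,1}=\ker(\partial_2\colon E_2^{1,1}\to E_2^{3,0})$, where $E_2^{1,1}=\UU_{3,1}[3]\oplus\UU_{1,3}[3]\oplus\UU_{1,1}[3]$ sits entirely in degree $d=3$. I would handle each isotypic summand separately, following the template of Lemma \ref{Arep}: pick the lowest weight cochain representative coming from $H^1(\fm_{\bar0},\fg)\otimes(\fm_{\bar1})^*$, apply the Chevalley--Eilenberg differential, and exhibit a pair of arguments on which the value is nonzero inside the degree-$3$ part of $E_2^{3,0}$, a subquotient of $H^0(\fm_{\bar0},\fg)\otimes\Lambda^3(\fm_{\bar1})^*$. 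The delicate points are carrying the $\partial_2$ zig-zag correctly and confirming that the three target modules actually occur in $E_2^{3,0}$: they arise in tensor products such as $\VV_{0,2}[0]\otimes\UU_{3,3}[3]\supset\UU_{3,1}[3]$, $\VV_{2,0}[0]\otimes\UU_{3,3}[3]\supset\UU_{1,3}[3]$, and $\VV_{2,0}[0]\otimes\UU_{1,1}[3]\supset\UU_{1,1}[3]$, so that $\partial_2$ can indeed be injective on each summand. Once each lowest weight vector is shown not to lie in $\ker\partial_2$, Lemma \ref{Arep} ejects all three modules, giving $E_3^{1,1}=0$.

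Assembling the pieces then yields $H^2(\fm,\fg)=\VV_{2,2}[2]\oplus0\oplus0=\VV_{2,2}[2]$, concentrated in degree $d=2$, which is the assertion; note this matches the entry $\CC_2^{9|0}$ for $\fm_1^\I$ in Table \ref{SpH}, since $\dim\VV_{2,2}=9$ and it is even. I expect the injectivity of $\partial_2$ on $E_2^{1,1}$ to be the only step requiring real computation, the $E_3^{2,0}$ part being inherited and $E_4^{0,2}$ being trivial.
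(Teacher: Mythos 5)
Your proposal is correct and follows essentially the same route as the paper: the splitting $H^2=E_3^{2,0}\oplus E_3^{1,1}\oplus E_4^{0,2}$ from Proposition \ref{H0H1H2}, the immediate vanishing of $E_4^{0,2}$, inheriting $E_3^{2,0}=\VV_{2,2}[2]$ from the $H^1$ analysis via Lemma \ref{Arep}, and killing each of $\UU_{3,1}[3]$, $\UU_{1,3}[3]$, $\UU_{1,1}[3]$ in $E_2^{1,1}$ by evaluating $\partial$ on a lowest weight representative — the paper does exactly this, using the triple $(e_1,e_4,e_1)$ (note the target $\Lambda^3(\fm_{\bar 1})^*$ requires three arguments, not a pair as you wrote). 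The only step you defer, the explicit nonvanishing of $\partial_2$ on the three summands of $E_2^{1,1}$, is precisely the computation the paper carries out.
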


 \begin{proof}
By lemma \ref{H1H2}, $H^2(\fm,\fg)=E_3^{2,0}\oplus E_3^{1,1}\oplus E_4^{0,2}$. 
The previous computations tell us $E_2^{0,2}=0$, $E_3^{1,1}\subset\UU_{3,1}[3]\oplus\UU_{1,3}[3]\oplus\UU_{1,1}[3]$, $E_3^{0,2}\subset\VV_{2,2}[2]$.
 \begin{itemize}
\item The lowest weight vector of $\UU_{3,1}[3]$ is $\varphi=[Y_2\otimes\omega^5]\otimes\omega^1$, so
 \[
\partial \varphi(e_1,e_4,e_1)= s_1[Y_2]\neq0
 \]
and therefore $\UU_{3,1}[3]\not\subset E_3^{1,1}$.
\item Similarly, $\UU_{1,3}[3]\not\subset E_3^{1,1}$
\item The highest weight vectors of $\UU_{1,1}[3]$ are of the form
 \[
\phi=a\left([H_2\otimes\omega^5]\otimes\omega^1+2[ Y_2\otimes\omega^5]\otimes\omega^2\right)+b\left([H_3\otimes\omega^5]\otimes+2[ Y_3\otimes\omega^5]\otimes\omega^3\right),
 \]
so we compute $\partial\phi(e_1,e_4,e_1)= a[H_2]+b[H_3]$.
 \end{itemize}

We can obtain the $E_3$-page in Table \ref{table:DE3}:
 \begin{table}[h!]\[
\begin{array}{|c|c|c|c|}\hline
0 & 0 &  &\\ \hline 
0 & 0 & * & \\ \hline
0 & \UU_{1,1}[1] & \VV_{2,2}[2] & * \\ \hline
 \end{array}\]
\caption{The $E_3$-page: the modules $E_3^{p,q}$ for $p+q\leq3$ and $0\leq d\leq5$.}\label{table:DE3}
 \end{table}

Lastly, since $H^2(\fm,\fg)=E_3^{2,0}\oplus E_3^{1,1}\oplus E_4^{0,2}$ by \eqref{H1H2}, 
so $H^2(\fm,\fg)=\VV_{2,2}[2]$.
 \end{proof}

This finishes the computation of generalized Spencer cohomology $H^d(\fm,\fg)$ in degrees $d\leq2$ for the 
$\fp_1^\I$ grading of $\fg=D(2,1;a)$. The other five gradings, displayed in Table \ref{SpH} of the Introduction, 
are obtained by similar but longer computations (though $H^0=\fg_{-\nu}$ is straightforward). 
We made an independent computer verification in each case.

\bigskip\bigskip

\textsc{Acknowledgment.}
We would like to thank Andrea Santi for helpful discussions and Andreu Llabr\'{e}s for his Maple package.
The research leading to these results has received funding from 
the Tromsø Research Foundation (project “Pure Mathematics in Norway”) and the UiT Aurora project MASCOT;
this publication is based upon work from COST Action CaLISTA CA21109 supported by COST 
(European Cooperation in Science and Technology). 

\bigskip

\end{document}